\documentclass{amsart}
\usepackage{amsmath,amsthm,indentfirst}
\usepackage{amssymb}
\usepackage{amsfonts,dsfont}
\usepackage{graphicx}
\usepackage{caption}
\usepackage{wrapfig}
\usepackage{a4}
\usepackage{amscd}
\usepackage[latin1]{inputenc}
\usepackage{url}
\bibliographystyle{plain}
\setlength{\textwidth}{6.5in}     
\setlength{\oddsidemargin}{0in}   
\setlength{\evensidemargin}{0in}  
\setlength{\textheight}{8.5in}    
\setlength{\topmargin}{0in}       
\setlength{\headheight}{0in}      
\setlength{\headsep}{0.2in}       
\setlength{\footskip}{0.5in}  

\setlength{\parskip}{2mm}
\setlength{\parindent}{0in}

\theoremstyle{plain}
\newtheorem{thm}{Theorem}
\newtheorem{cor}[thm]{Corollary} 
\newtheorem{lem}[thm]{Lemma}
\newtheorem{prop}[thm]{Proposition}

\newtheorem{definition}[thm]{Definition}

\newtheorem{remark}[thm]{Remark}

\def\bbz{\mathbb{Z}}
\def\bbq{\mathbb{Q}}

\def\bbr{\mathbb{R}}
\def\bba{\mathbb{A}}

\def\bbc{\mathbb{C}}

\def\bbe{\mathbb{E}}
\def\bbh{\mathbb{H}}

\def\bbg{\mathbb{G}}
\def\bbt{\mathbb{T}}
\def\bbv{\mathbb{V}}

\def\bbp{\mathbb{P}}

\def\gcal{\mathcal{G}}

\def\ucal{\mathcal{U}}

\def\ocal{\mathcal{O}}
\def\ocal{\mathcal{O}}

\def\bcal{\mathcal{B}}
\def\ccal{\mathcal{C}}

\def\tcal{\mathcal{T}}

\def\cal{\mathcal{H}}

\def\pcal{\mathcal{P}}
\def\wcal{\mathcal{W}}
\def\hfr{\mathfrak{h}}
\def\afr{\mathfrak{a}}
\def\Bfr{\mathfrak{B}}
\def\Pfr{\mathfrak{P}}

\def\pfr{\mathfrak{p}}

\def\bfr{\mathfrak{b}}

\def\gl{\mathfrak{gl}}
\def\gfr{\mathfrak{g}}

\def\tfr{\mathfrak{t}}

\def\f{\mathfrak{f}}

\def\ybf{\mathbf{y}}
\def\xbf{\mathbf{x}}

\def\vbf{\mathbf{v}}
\def\ebf{\mathbf{e}}


\DeclareMathOperator\Spec{Spec}

\DeclareMathOperator\SL{SL}
\DeclareMathOperator\GL{GL}

\DeclareMathOperator\Ad{Ad}
\DeclareMathOperator\ad{ad}
\DeclareMathOperator\Lie{Lie}

\DeclareMathOperator\ind{ind}
\DeclareMathOperator\diag{diag}
\DeclareMathOperator\Cay{Cay}
\DeclareMathOperator\Tr{Tr}
\DeclareMathOperator\supp{supp}
\DeclareMathOperator\conj{Conj}

\DeclareMathOperator\gal{Gal}
\newcommand{\pl}[1]{{\rm Pl}(#1)}

\newcommand{\lin}[2]{\Psi_{#1}^{#2}}
\newcommand{\linValue}[3]{\lin{#1}{#2}(\pi_{#2}(#3))}
\newcommand{\wt}[1]{\widetilde{#1}}
\newcommand{\wh}[1]{\widehat{#1}}
\def\h{\hspace{1mm}}

\def\vare{\varepsilon}
\def\be{\begin{equation}}
\def\ee{\end{equation}}
\def\one{\mathds{1}}
\def\qbar{\overline{\bbq}}

\def\act{\curvearrowright}

\begin{document}
\title{Super-approximation, I: $\pfr$-adic semisimple case.}

\author{Alireza Salehi Golsefidy}
\address{Mathematics Dept, University of California, San Diego, CA 92093-0112}
\email{golsefidy@ucsd.edu}
\thanks{A. S-G. was partially supported by the NSF grants DMS-1160472, DMS-1303121 and A. P. Sloan Research Fellowship.  Parts of this work was done when I was visiting Isaac Newton Institute and the MSRI, and I would like to thank both of these institutes for their hospitality.
}
\subjclass{22E40}
\date{\today}
\begin{abstract}
Let $k$ be a number field, $\Omega$ be a finite  symmetric subset of $\GL_{n_0}(k)$, and $\Gamma=\langle \Omega\rangle$. Let 
\[
\ccal(\Gamma):=\{\pfr\in V_f(k)|\h \Gamma \text{ is a bounded subgroup of } \GL_{n_0}(k_{\pfr})\},
\]
and $\Gamma_{\pfr}$ be the closure of $\Gamma$ in $\GL_{n_0}(k_{\pfr})$. Assuming that the Zariski-closure of $\Gamma$ is semisimple, we prove that the family of left translation actions $\{\Gamma\act \Gamma_{\pfr}\}_{\pfr\in \ccal(\Gamma)}$ has {\em uniform spectral gap}. 

As a corollary we get that the left translation action $\Gamma\act G$ has {\em local spectral gap} if $\Gamma$ is a countable dense subgroup of a semisimple $p$-adic analytic group $G$ and $\Ad(\Gamma)$ consists of matrices with algebraic entries in some $\bbq_p$-basis of $\Lie(G)$. This can be viewed as a (stronger) $p$-adic version of \cite[Theorem A]{BISG}, which enables us to give applications to the Banach-Ruziewicz problem and orbit equivalence rigidity.
\end{abstract}
\maketitle
\tableofcontents
\section{Introduction}
\subsection{Statement of the main result.}\label{ss:Statement}
For a symmetric probability measure\footnote{A measure $\mu$ is called symmetric if $i^{*}(\mu)=\mu$ where $i:G\rightarrow G, i(g):=g^{-1}$.} $\mu$ with finite support on a compact group $G$, let 
\[
T_{\mu}:L^2(G)\rightarrow L^2(G),\h\h T_{\mu}(f):=\mu \ast f,
\]
where $(\mu \ast f)(g):=\sum_{g'\in \supp \mu}\mu(g')f(g'^{-1}g)$. 
It is easy to see that $T_{\mu}$ is a self-adjoint operator, $\|T_{\mu}\|=1$, and $T_{\mu}(\one_{G})=\one_G$ where $\one_G$ is the constant function one on $G$. Let $L^2(G)^{\circ}:=\{f\in L^2(G)|\h f\perp \one_{G}\}$. Hence $T_{\mu}$ induces a self-adjoint operator $(T_{\mu})|_{L^2(G)^{\circ}}$ on $L^2(G)^{\circ}$, and let $\lambda(\mu;G):=\|(T_{\mu})|_{L^2(G)^{\circ}}\|$ be its operator norm. Notice that, if eigenvalue $1$ has multiplicity one, then 
\[
\lambda(\mu;G)=\sup\{|\lambda| |\h \lambda \in {\rm spec}(T_{\mu}), \h \lambda<1\}.
\]
We say $\mu$ has {\em spectral gap} if $\lambda(\mu;G)<1$. 
So if the group generated by the support of $\mu$ is dense in $G$, the random walk on $G$ with respect to $\mu$ equidistributes exponentially fast if $\mu$ has spectral gap.  
The following {\em uniform spectral gap} is the main result of this article.  
\begin{thm}\label{t:main}
Let $\Omega$ be a finite symmetric subset of $\GL_n(k)$, where $k$ is a number field. Let $\Gamma=\langle \Omega\rangle$ and $\Gamma_{\pfr}$ be its closure in $\GL_n(k_{\pfr})$ for any finite place $\pfr$ of $k$. Let $\ccal:=\{\pfr \in V_f(k)|\h \Gamma_{\pfr} \text{ is compact}\}$, where $V_f(k)$ is the set of all finite places of $k$. 

Suppose the Zariski-closure $\bbg$ is a semisimple group. Then there is $\lambda_0<1$ such that  
\[
\sup_{\pfr\in\ccal} \lambda(\pcal_{\Omega};\Gamma_{\pfr})\le \lambda_0<1,
\] 
where $\pcal_{\Omega}$ is the probability counting measure on $\Omega$.
\end{thm}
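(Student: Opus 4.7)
The strategy is to reduce the spectral-gap question to a family of finite Cayley graphs, and then to run the Bourgain--Gamburd machine with bounds that are uniform across $\pfr\in\ccal$ (in particular, across all residue characteristics and all congruence depths).

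First, since $\Gamma_{\pfr}$ is a compact analytic subgroup of $\GL_n(k_{\pfr})$, it admits a filtration by congruence subgroups $\Gamma_{\pfr}(m)$. Decomposing $L^2(\Gamma_{\pfr})^{\circ}$ orthogonally along this filtration, bounding $\lambda(\pcal_{\Omega};\Gamma_{\pfr})$ reduces to establishing a spectral gap for the random walk $\pcal_{\Omega}$ on each finite quotient $\Gamma/(\Gamma\cap\Gamma_{\pfr}(m))$, uniformly in $\pfr$ and $m$. By Nori--Weisfeiler--Pink style strong approximation applied to the semisimple Zariski-closure $\bbg$, these quotients have a bounded-complexity description: up to bounded index, they are products of finite quasi-simple groups of Lie type of rank at most $\dim\bbg$, together with ``Lie-algebra'' pieces coming from the deeper congruence levels.

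Next I would feed the resulting Cayley graphs into the Bourgain--Gamburd machine, whose three inputs I would verify separately: \emph{quasi-randomness}, i.e.\ a polynomial lower bound on the dimension of any non-trivial representation of $\Gamma/(\Gamma\cap\Gamma_{\pfr}(m))$, obtained from Landazuri--Seitz at the base level and a descent through the congruence filtration; \emph{escape from proper algebraic subgroups}, combining Larsen--Pink style bounds on the size of proper subgroups of $\bbg(\bbf_q)$ with an Eskin--Mozes--Oh style escape argument to control the mass that any convolution power of $\pcal_{\Omega}$ puts on a subvariety; and \emph{$L^2$-flattening}, an iterative statement saying that a spread-out probability measure either already equidistributes or exhibits self-convolution decay, driven by a product-theorem / sum-product estimate in the local ring $\ocal_{\pfr}/\pfr^m$ and in $\Lie(\bbg)(\ocal_{\pfr}/\pfr^m)$.

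The main obstacle I expect is obtaining these ingredients, and especially $L^2$-flattening, uniformly across $\pfr\in\ccal$. In the finite-field case the Pyber--Szab\'o and Breuillard--Green--Tao product theorems give flattening with constants depending only on the Lie rank, but in the $\pfr$-adic setting the deep congruence quotients look like small neighbourhoods of the identity in $\Lie(\bbg)(\ocal_{\pfr}/\pfr^m)$, and one needs sum-product and $L^2$-flattening statements on these Lie algebras whose constants depend only on $n$ and on the degree data of $\bbg$, not on $|\ocal_{\pfr}/\pfr|$ or on the ramification of $\pfr$. Establishing such uniform flattening, presumably by combining a genuine sum-product theorem in $\ocal_{\pfr}/\pfr^m$ with the commutator structure of $\Lie(\bbg)$ and an induction on the depth $m$, is the technical heart of the argument; once it is in place, feeding it into the Bourgain--Gamburd template produces the desired $\lambda_0<1$ independent of $\pfr$.
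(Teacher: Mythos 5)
Your high-level skeleton is sound and matches the paper's: pass to finite quotients $\pi_{\pfr^m}(\Gamma)$, run the Bourgain--Gamburd machine, and verify quasi-randomness, escape, and $\ell^2$-flattening uniformly in $\pfr$ and $m$. The quasi-randomness input is also essentially right (Landazuri--Seitz at depth one and a descent through the filtration; the paper does the descent via Howe's Kirillov theory for compact $p$-adic analytic groups, Proposition~\ref{p:HighMultiplicity}). But the remaining two ingredients, as you sketch them, are not what the argument actually runs on, and the genuine technical heart is acknowledged rather than supplied.

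Concretely, three things are missing or point in the wrong direction. First, the paper does \emph{not} reprove mod-prime expansion from Larsen--Pink/Pyber--Szab\'o plus an Eskin--Mozes--Oh escape; it takes the squarefree/prime-level expansion of \cite{SGV} (Theorem~\ref{t:ExpanderSquareFree}) as a black box. That input is what drives the quantitative escape-from-subvariety estimate (Proposition~\ref{p:EscapeSubvariety}): one converts expansion at a well-chosen prime into $\pcal_{\Omega}^{(\ell)}(\wcal(\bbz_S))\ll p_0(\wcal)e^{-\delta_0\ell}$, and the whole point is to control the bad-prime constant $p_0(\wcal)$ in terms of the \emph{logarithmic heights} of the defining equations, via arithmetic B\'ezout and small-height points (Appendix A, Lemma~\ref{l:RamifiedPrime}, Corollary~\ref{c:RamifiedPrime}). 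An EMO-style escape gives you qualitative escape from subgroups, not this height-sensitive subscheme estimate, and Larsen--Pink is not uniform in the ramification in the way needed here. Second, the ``technical heart'' you flag (uniform flattening at deep levels) is precisely the step you would need to actually prove, and ``a sum-product theorem plus commutators plus induction on $m$'' is not a mechanism. What the paper does instead is prove a \emph{bounded generation} statement (Theorem~\ref{ReductionBoundedGeneration}): an approximate subgroup $A$ boundedly generates a full congruence subgroup $\pi_Q(\Gamma[q])$ with $q<Q^{\vare}$. This is obtained by (i) escaping translates of the non-regular-semisimple locus with quantitative height control to find, after boundedly many multiplications, a $|q|$-regular semisimple element $x$ with large centralizer in $\pi_q(A\cdot A)$ (Helfgott's trick, Lemma~\ref{l:HelfgottLargeCentralizer}, Proposition~\ref{p:q-regularLargeCentralizer}); (ii) linearizing via the finite logarithmic maps $\Psi_{q_1}^{q_2}$ (Section~\ref{ss:FiniteLog}) to land inside a thick segment of the Lie algebra of the torus $C_{\bbg}(x)^{\circ}$; (iii) feeding that segment into a local-field sum-product theorem to fill out a ``thick top slice'' $\pi_{q_2}(\Gamma[q_1])$ (Proposition~\ref{p:ThickTopSlice}); and (iv) propagating downward by taking commutators in the Lie algebra, using that $\gfr$ is perfect (Lemma~\ref{l:OneStepPropagation} and its corollaries). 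Third, even the reduction from the infinite group $\Gamma_{\pfr}$ to finite quotients has content you elide: one must reduce to the Zariski-connected simply-connected case, and separate $\pfr\mid p\in S_0$ from the remaining $\pfr$ using Weil restriction and strong approximation in the right form (Section~\ref{ss:StrongApproximation}).

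So the proposal correctly names the template and the right class of obstacles, but the two hardest pieces --- the height-quantitative escape from subschemes, and the regular-semisimple/torus/sum-product/commutator bounded-generation argument that substitutes for a direct product theorem at depth $m$ --- are exactly what is left as ``the technical heart,'' and the tools you do name (Larsen--Pink, EMO, a direct depth induction) would not produce the required uniformity across ramification.
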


\subsection{Applications.} 
\subsubsection{Explicit construction of expanders.}
A family $\{X_i\}_{i=1}^{\infty}$ of $d_0$-regular graphs is called expanders if there is $\delta_0>0$ such that for any $i$ and any non-empty proper subset $A$ of the set $V(X_i)$ of vertices of $X_i$ we have 
\[
\frac{|E(A,A^c)|}{\min\{|A|,|A^c|\}}>\delta_0
\]
where $A^c=V(X_i)\setminus A$ and $E(A,A^c)$ is the set of edges with one vertex in $A$ and the other in $A^c$. What makes expanders interesting is the fact that they are {\em sparse} and at the same time {\em highly connected}. Expanders are extremely useful in communication and theoretical computer science (see a survey by Hoory, Linial and Widgerson~\cite{HLW}). In the past decade they have been found to be extremely useful in a wide range of pure math problems, e.g. affine sieve~\cite{BGS, SGS}, sieve in groups~\cite{LM}, variation of Galois representations~\cite{EHK}, etc. (see~\cite{MSRI} for a collection of surveys of related works and applications).
  
Margulis~\cite{Mar} observed the connection between spectral gap properties of a finitely generated dense subgroup of a profinite group and explicit construction of expanders (e.g. we refer the interested reader to~\cite[Theorem 4.3.2]{Lub} for more details.). Based on this observation and Theorem~\ref{t:main}, we get
\begin{thm}\label{t:expanders}
Let $\Omega$ be a finite symmetric subset of $\GL_n(k)$, where $k$ is a number field. Let $\Gamma=\langle \Omega\rangle$ and $\ccal:=\{\pfr \in V_f(k)|\h \Omega\subseteq \GL_n(\ocal_{\pfr})\}$, where $\ocal_{\pfr}$ is the ring of integers of $k_{\pfr}$. Suppose the Zariski-closure $\bbg$ of $\Gamma$ in $(\GL_n)_k$ is semisimple. Then the family of Cayley graphs
\[
\{\Cay (\pi_{\pfr^m}(\Gamma),\pi_{\pfr^m}(\Omega))|\h \pfr\in \ccal, m\in \bbz^+\}
\]
is a family of expanders, where $\pi_{\pfr^m}$ is the residue map $\pi_{\pfr^m}:\GL_n(\ocal_{\pfr})\rightarrow \GL_n(\ocal_{\pfr}/\pfr^m)$.
\end{thm}
\subsubsection{Local spectral gap.} In a recent work Boutonnet, Ioana, and the author~\cite{BISG} introduced {\em local spectral gap} for a measure class preserving action, and proved many interesting properties of such actions. As an application of Theorem~\ref{t:main}, we get local spectral gap in the $p$-adic setting (For the definitions and basic facts on $p$-adic analytic groups that are used here see~\cite[1.1, Proposition 1.9, 1.2, Proposition 1.14, 8.4, Corollary 8.33]{DDMS}): 
\begin{thm}\label{t:LocalSepctralGap}
Let $G$ be a semisimple $p$-adic analytic group. Let $\Gamma$ be a countable dense subgroup of $G$. Suppose that $\Ad(\Gamma)$ consists of elements with algebraic entries in some $\bbq_p$-basis of $\Lie(G)$. 

Then the left translation action $\Gamma \act (G,m_G)$ has local spectral gap.
\end{thm}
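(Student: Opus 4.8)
The plan is to derive Theorem~\ref{t:LocalSepctralGap} from Theorem~\ref{t:main} by reducing the $p$-adic analytic setting to the arithmetic one. First I would record what ``local spectral gap'' for $\Gamma\act(G,m_G)$ means in the sense of \cite{BISG}: there should exist a finite symmetric $S\subseteq\Gamma$ and a nonempty open $U\subseteq G$ together with a constant $\kappa<1$ so that the averaging operator $\frac{1}{|S|}\sum_{s\in S}\lambda(s)$ acting on $L^2_0(U)$ (functions supported in $U$, orthogonal to constants) has norm at most $\kappa$; equivalently one wants a uniform spectral gap for the left translation action restricted to arbitrarily small compact open subgroups of $G$. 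So it suffices to produce a finite generating-type subset $\Omega\subseteq\Gamma$ and a descending chain of compact open subgroups $K_m$ with trivial intersection such that $\sup_m\lambda(\pcal_\Omega;K_m\backslash\!\!\!\phantom{.})$, suitably interpreted, stays bounded away from $1$; Theorem~\ref{t:main}, phrased via the Cayley graphs $\Cay(\pi_{\pfr^m}(\Gamma),\pi_{\pfr^m}(\Omega))$, is exactly an assertion of this shape once we identify the tower $\GL_n(\ocal_\pfr/\pfr^m)$ with finite quotients of an open subgroup of $G$.

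Next I would set up the arithmetic model. Since $\Ad(\Gamma)$ has entries algebraic over $\bbq$ in a $\bbq_p$-basis of $\gfr:=\Lie(G)$, the field $k$ generated by all these entries over $\bbq$ is a number field, and $\Ad(\Gamma)\subseteq\GL(\gfr)\cong\GL_{n_0}$ lands in $\GL_{n_0}(k)$ after choosing that basis; let $\Omega$ be a finite symmetric generating subset of $\Ad(\Gamma)$ whose $\bbz$-span together with closure recovers enough of $\Gamma$ (passing to $\Ad$ is harmless for spectral gap because the kernel of $\Ad$ on a semisimple $p$-adic analytic group is discrete, hence the center contributes only a compact abelian piece that is handled separately or absorbed). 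Because $G$ is semisimple $p$-adic analytic, the Zariski closure $\bbg$ of $\langle\Omega\rangle=\Ad(\Gamma)$ is semisimple: indeed $G$ open in $\mathbf{G}(\bbq_p)$ for a semisimple $\bbq_p$-group forces $\Ad(\Gamma)$ to be Zariski-dense in the adjoint semisimple group, and by density of $\Gamma$ in $G$ the Zariski closure over $\overline{\bbq}$ is all of that adjoint group. Now the place $p$ of $\bbq$ extends to some $\pfr\in V_f(k)$, and one checks that $\pfr\in\ccal$: the group $\Gamma_\pfr=\overline{\Ad(\Gamma)}^{\,k_\pfr}$ is compact because $G$ itself is a compact-by-discrete $p$-adic analytic group in the relevant (profinite, or at least compact open) sense — more precisely, replacing $G$ by a compact open subgroup, $\Gamma$ sits inside a compact group so $\Gamma_\pfr$ is bounded. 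After the standard conjugation from Lemma~\ref{l:FiniteIndexSubgroup} we may assume $\Gamma_\pfr\subseteq\GL_{n_0}(\ocal_\pfr)$, and a compact open neighborhood of identity in $G$ gets identified (via $\Ad$, up to finite index and center) with a congruence subgroup $\Gamma_\pfr\cap\ker\pi_{\pfr^m}$, whose finite quotients are precisely the $\pi_{\pfr^m}(\Gamma)$.

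With this dictionary in place, Theorem~\ref{t:main} gives $\lambda_0<1$ with $\sup_{m}\lambda(\pcal_\Omega;\pi_{\pfr^m}(\Gamma))\le\lambda_0$ for our single relevant $\pfr$; translating back, the operators $\frac{1}{|\Omega|}\sum_{\omega}\lambda(\omega)$ on $L^2_0$ of each finite quotient of the compact open subgroup $K\le G$ have norm $\le\lambda_0$, uniformly in the level. A standard argument (decomposing $L^2(K)^\circ$ as the Hilbert-space direct sum of the orthogonal complements $L^2(K/K_{m+1})\ominus L^2(K/K_m)$ along the congruence filtration $K_m$, since $\bigcap_m K_m=\{1\}$) upgrades this to $\lambda(\pcal_\Omega;K)\le\lambda_0<1$, i.e. genuine spectral gap for $\Gamma\act K$ with $K$ a fixed compact open (hence positive-measure) subset of $G$. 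That is exactly the local spectral gap property for $\Gamma\act(G,m_G)$ in the sense of \cite{BISG} (local spectral gap is insensitive to which nonempty open set one tests, up to covering $G$ by finitely many translates of $K$ and using that $\Gamma$ is dense). The main obstacle I anticipate is purely bookkeeping rather than conceptual: namely passing cleanly between $\Gamma$ and $\Ad(\Gamma)$ and between ``$p$-adic analytic group $G$'' and ``congruence tower of $\GL_{n_0}(\ocal_\pfr)$'' — controlling the finite central kernel of $\Ad$, ensuring the algebraicity hypothesis really does land $\Ad(\Gamma)$ in $\GL_{n_0}(k)$ for a number field $k$, and verifying compactness so that $\pfr\in\ccal$ — after which the spectral-gap content is entirely supplied by Theorem~\ref{t:main}.
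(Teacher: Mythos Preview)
Your approach is essentially the paper's, and the conceptual skeleton is right: pick a finite symmetric $\Omega\subseteq\Gamma$, pass to $\Ad(\Omega)\subseteq\GL_{\dim G}(k)$ for a number field $k$ equipped with a place $\pfr_0$ over $p$, verify the Zariski closure is semisimple and that $\pfr_0\in\ccal$, then invoke Theorem~\ref{t:main} to get spectral gap on the compact group $\Gamma_{\pfr_0}$ and hence local spectral gap on $G$. Two points need tightening, however.

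First, the sentence ``the field $k$ generated by all these entries over $\bbq$ is a number field, and $\Ad(\Gamma)\subseteq\GL_{n_0}(k)$'' is false as written: the hypothesis only says each entry is algebraic, and the field generated by the entries of \emph{all} of $\Ad(\Gamma)$ may well be an infinite extension of $\bbq$. You must choose the finite set $\Omega$ first and let $k$ be the (genuinely finite) extension generated by the entries of $\Ad(\Omega)$; this is what the paper does. Second, you are vague about why $\langle\Omega\rangle$ can be taken \emph{topologically} dense in a fixed compact open $B\le G$ --- without this, the closure $\Gamma_{\pfr_0}$ for which Theorem~\ref{t:main} gives spectral gap need not be open in $\Ad(G)$, and the passage to local spectral gap on $G$ breaks. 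The paper handles this cleanly: take $B$ a finitely generated pro-$p$ open subgroup, so its Frattini subgroup $\Phi(B)$ is open, and use density of $\Gamma$ to pick a finite $\Omega\subseteq\Gamma\cap B$ surjecting onto $B/\Phi(B)$; then $\langle\Omega\rangle$ is automatically dense in $B$. With these two fixes your sketch becomes the paper's proof.
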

\begin{proof}
By \cite[Remark 1.5]{BISG}, we are free to choose any measurable subset $B\subseteq G$ with compact closure and non-empty interior and prove that the left translation action $\Gamma \act (G,m_G)$ has local spectral gap with respect to $B$. 
So, since $G$ is $p$-adic analytic, we can  and will assume that $B$ is a (topologically) finitely generated pro-$p$ group. Hence the Frattini subgroup $\Phi(B)$ is an open subgroup of $B$. Since $\Gamma$ is dense in $G$, for any coset $b\Phi(B)$ of $\Phi(B)$ in $B$ there is $\gamma_b \in \Gamma\cap b\Phi(B)$. Hence there is a finite symmetric subset $\Omega$ of $\Gamma\cap B$ which maps onto $B/\Phi(B)$. Thus $\langle \Omega\rangle$ is a dense subgroup of $B$. 

By our assumption, there is a number field $k$ and a $\bbq_p$-basis $\Bfr$ of $\Lie(G)$ such that all the entries of $\Ad(\Omega)$ in the basis $\Bfr$ are in $k$. Note that $k$ comes with an embedding into $\bbq_p$. Let $\pfr_0$ be the corresponding place of $k$. Let 
\[
\Gamma_0:=\langle [\Ad(\Omega)]_{\Bfr}\rangle \subseteq \GL_{\dim G}(k),
\] 
and $\bbg_0$ be its Zariski-closure in $(\GL_{\dim G})_k$. Since $\Gamma_0$ is dense in an open subgroup of $\Ad(G)$, we have that $G$ and $\bbg_0(k_{\pfr_0})=\bbg_0(\bbq_p)$ are locally isomorphic. Therefore $\bbg_0$ is semisimple. So by Theorem~\ref{t:main} we have $\lambda(\pcal_{\Omega};\Ad(B))<1$, which implies that $\Gamma\act (G,m_G)$ has local spectral gap.   
\end{proof}

\subsubsection{Banach-Ruziewicz problem.} In~\cite{BISG}, it is proved that the left translation action $\Gamma \act (G,m_G)$ having local spectral gap has many implications. Now some of these implications are mentioned.
\begin{thm}\label{t:BanachRuziewicz}
Let $\Gamma$ be a countable dense subgroup of a semisimple $p$-adic analytic group $G$. Suppose $\Ad(\Gamma)$ consists of matrices with algebraic entries in some $\bbq_p$-basis of $\Lie(G)$. Denote by $\ccal(G)$  the family of measurable subsets $A\subseteq G$ with compact closure. 

Then up to a scalar there is a unique $\Gamma$-invariant, finitely additive measure $\nu:\ccal(G)\rightarrow [0,\infty)$.
\end{thm}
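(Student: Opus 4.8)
The plan is to deduce Theorem~\ref{t:BanachRuziewicz} from the local spectral gap established in Theorem~\ref{t:LocalSepctralGap}, following the classical Rosenblatt--Schmidt strategy that connects strong ergodicity (or spectral gap) to uniqueness of invariant means. First I would fix a measurable $B\subseteq G$ with compact closure and non-empty interior, and normalize $m_G$ so that $m_G(B)=1$; by Theorem~\ref{t:LocalSepctralGap} the action $\Gamma\act(G,m_G)$ has local spectral gap, and by \cite[Remark 1.5]{BISG} I may assume the spectral gap holds with respect to this particular $B$. The point is that a $\Gamma$-invariant finitely additive measure $\nu$ on $\ccal(G)$, after restriction and normalization on $B$, gives rise to a $\Gamma$-invariant mean on $L^\infty(B,m_G)$ (using that $\nu\ll m_G$ on bounded sets, which must be checked — see below), hence to a $\Gamma$-invariant state on $L^\infty(B)$, and local spectral gap forces this state to be the one induced by $m_G$ itself. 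Transporting back via the homogeneity of $G$ under left translations (and a countable exhaustion of $G$ by translates of $B$) then pins down $\nu$ as a scalar multiple of $m_G$ on all of $\ccal(G)$.

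The key steps, in order, are as follows. Step one: reduce to a statement about invariant means on $L^\infty(B)$ for a fixed bounded $B$ with non-empty interior, using \cite[Remark 1.5]{BISG} and left-homogeneity of $G$; here one covers $G$ by countably many left $\Gamma$-translates of an open subset of $B$, so a $\Gamma$-invariant $\nu$ is determined by its restriction to $B$. Step two: show any $\Gamma$-invariant finitely additive $\nu$ with $\nu(B)<\infty$ is absolutely continuous with respect to $m_G$ on $B$ — i.e. $m_G(A)=0\Rightarrow \nu(A)=0$ for $A\subseteq B$. This is the standard argument: a null set can be covered by a set of arbitrarily small measure all of whose $\Gamma$-translates (finitely many at a time) stay inside a slightly larger bounded set, and spectral gap (via an isoperimetric/Poincaré-type inequality on the translation action) shows $\nu$ of such a set is small; alternatively one invokes directly that local spectral gap implies there is no $\Gamma$-invariant mean on $L^\infty(B)$ singular to $m_G$. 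Step three: given absolute continuity, $\nu|_B$ corresponds to a $\Gamma$-invariant positive functional on $L^\infty(B,m_G)$; local spectral gap means the only $\Gamma$-invariant vectors in $L^2(B)^\circ$-type spaces are trivial, which upgrades (by a now-routine functional-analytic argument, e.g. the one in Schmidt's work or in \cite{BISG}) to uniqueness of the invariant mean, namely integration against $m_G$. Step four: assemble — $\nu|_B = c\cdot m_G|_B$ for a constant $c\ge 0$, and left-invariance plus the covering of $G$ by $\Gamma$-translates of subsets of $B$ force $\nu = c\cdot m_G$ on all of $\ccal(G)$.

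I expect the main obstacle to be Step two, the absolute continuity of $\nu$ with respect to Haar measure: finitely additive invariant measures can, a priori, be wildly singular, and ruling this out is precisely where one must exploit the quantitative strength of local spectral gap rather than mere ergodicity. Concretely, the difficulty is to produce, for every $m_G$-null (or small-measure) set $A\subseteq B$, a $\Gamma$-equivariant "averaging" estimate bounding $\nu(A)$ in terms of $m_G(A)$; this is where the Poincaré inequality coming from $\lambda(\pcal_\Omega;\cdot)<1$ enters, applied to indicator-type functions on ever-finer partitions, together with care that the relevant $\Gamma$-translates remain within a fixed compact piece of $G$ (so that $\nu$'s finiteness there can be used). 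Once absolute continuity is in hand, the remaining steps are essentially the textbook deduction of Banach--Ruziewicz-type uniqueness from spectral gap, and I would cite \cite{BISG} for the precise packaging rather than reprove it. A secondary, more bookkeeping-level obstacle is handling the passage from the \emph{local} (bounded-$B$) statement to the global statement on $\ccal(G)$: one must check that the choice of $B$ is immaterial and that countable additivity is never secretly being assumed — only finite additivity together with $\Gamma$-invariance and the covering of $G$ by countably many translates of bounded sets is available, and this suffices to propagate $\nu = c\cdot m_G$ from $B$ to all of $G$.
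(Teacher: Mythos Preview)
Your proposal is correct and follows essentially the same route as the paper: both deduce the result from the local spectral gap of Theorem~\ref{t:LocalSepctralGap} together with the equivalence in \cite{BISG} between local spectral gap and uniqueness of the $\Gamma$-invariant finitely additive measure on $\ccal(G)$. The only difference is cosmetic: the paper invokes \cite[Theorem D]{BISG} as a black box in one line, whereas you sketch the Rosenblatt--Schmidt argument underlying that theorem (and then say you would cite \cite{BISG} for the packaging anyway), so the two approaches converge.
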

\begin{proof}
By \cite[Theorem D]{BISG}, we know that, up to a multiplicative constant, the Haar measure of $G$ is the unique finitely additive $\Gamma$-invariant measure defined on $\ccal(G)$ if and only if the left translation action $\Gamma\act (G,m_G)$ has local spectral gap. Hence by Theorem~\ref{t:LocalSepctralGap} we get the desired result.
\end{proof}

\subsubsection{Orbit equivalence.} Another application of (local) spectral gap is in the theory of orbit equivalence of actions. There have been many breakthroughs in this subject in the past 15 years (see the surveys~\cite{Pop,Fur,Gab}). Let us recall the notion of {\em orbit equivalence} for two left translation actions. Suppose $\Gamma\subseteq G$ and $\Lambda \subseteq H$ are dense subgroups. We say the left translation actions $\Gamma \act (G,m_G)$ and $\Lambda \act (H,m_H)$ are orbit equivalent if there exists a measure class preserving Borel isomorphism $\theta:G\rightarrow H$ such that $\theta(\Gamma g)=\Lambda \theta(g)$, for $m_G$-almost every $g\in G$. Of course any action is orbit equivalent to an isomorphic copy of itself, i.e. if there is a topological isomorphism $\delta:G\rightarrow H$ such that $\delta(\Gamma)=\Lambda$, then $\Gamma\act (G,m_G)$ and $\Lambda\act (H,m_H)$ are orbit equivalent. In such a case, we say the two actions are {\em conjugate}. 

Two actions that are orbit equivalent can be far from being conjugate. In fact, surprisingly when $\Gamma$ and $\Lambda$ are infinite amenable groups, the mentioned actions are orbit equivalent~\cite{OW,CFW}. In the past decade it has been shown that in the presence of certain (local) spectral gap, however, the actions are {\em rigid}, i.e. orbit equivalence implies conjugation~\cite{Ioa1,Ioa2,BISG}. 

\begin{thm}\label{t:OER}
Let $G$ be a semisimple $p$-adic analytic group, and $\Gamma\subseteq G$ be a countable dense subgroup. Suppose $\Ad(\Gamma)$ consists of matrices with algebraic entries in some $\bbq_p$-basis of $\Lie(G)$.
Let $H$ be a locally compact, second countable, Hausdorff topological group, and $\Lambda$ be a countable dense subgroup of $H$. Suppose $H$ has a profinite open compact subgroup.

If $\Gamma\act (G,m_G)$ and $\Lambda\act (H,m_H)$ are orbit equivalent, then there are open compact subgroups $G_0\subseteq G$ and $H_0\subseteq H$ and a topological isomorphism $\delta:G_0\rightarrow H_0$ such that $\delta(\Gamma\cap G_0)=\Lambda\cap H_0$.
\end{thm}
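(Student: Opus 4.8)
The plan is to reduce this to the local-spectral-gap rigidity result of \cite{BISG} and then bootstrap from a measure-class-preserving isomorphism between localized pieces to an honest topological isomorphism between open compact subgroups. First I would invoke Theorem~\ref{t:LocalSepctralGap}: the hypotheses on $G$ and $\Gamma$ (semisimple $p$-adic analytic, $\Ad(\Gamma)$ with algebraic entries) are exactly those under which $\Gamma\act(G,m_G)$ has local spectral gap. Next, since $H$ has a profinite open compact subgroup $H_1$, and $\Lambda$ is dense in $H$, the action $\Lambda\act(H,m_H)$ restricted to $H_1$ is (up to the cocycle bookkeeping) a profinite action of the countable group $\Lambda\cap H_1$; in particular one can arrange a common compact model on which both actions are probability-measure-preserving. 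The orbit equivalence $\theta:G\to H$ then, after restricting to a suitable positive-measure piece and using that orbit equivalence is a local notion, yields an orbit equivalence between the $\Gamma$-action on an open compact $G_0\subseteq G$ and the $\Lambda$-action on an open compact $H_0\subseteq H$, where now both sides are genuine p.m.p.\ profinite actions and the $G_0$-side has local (indeed global, on the compact piece) spectral gap.

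The core step is then to apply the cocycle superrigidity / OE-rigidity machinery of \cite{BISG} (the relevant theorem there deduces, from local spectral gap plus orbit equivalence, that the Borel isomorphism $\theta$ agrees almost everywhere with a topological isomorphism on appropriate neighborhoods of the identity). Concretely, the orbit equivalence produces a measurable cocycle $\Lambda\act H_0$-valued on $\Gamma\act G_0$; local spectral gap forces this cocycle to be cohomologous to a homomorphism, which on the compact totally disconnected groups in play must be continuous, and hence the straightened $\theta$ becomes a measure-scaling group isomorphism $\delta:G_0\to H_0$ intertwining $\Gamma\cap G_0$ with $\Lambda\cap H_0$. Here I would lean on the profinite (hence totally disconnected, compactly generated per piece) structure to upgrade "measurable homomorphism" to "continuous homomorphism" via standard automatic-continuity facts for Polish groups, and to identify the untwisting map with a left translation so that the conjugation $\delta$ carries $\Gamma\cap G_0$ exactly onto $\Lambda\cap H_0$.

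The main obstacle I anticipate is the \emph{matching of the two sides as profinite measured structures}: the group $H$ is only assumed locally compact second countable with a profinite open compact subgroup, so it carries no a priori analytic or algebraicity structure, and one must check that the local spectral gap on the $G$-side is strong enough on its own to rigidify the OE without any spectral hypothesis on the $H$-side. This is precisely the asymmetric input that \cite[Theorem ...]{BISG} is designed to provide, so the work is in verifying its hypotheses: that $\theta$ can be localized to open compact subgroups on both sides, that the induced actions are p.m.p.\ and essentially free (freeness of the $\Gamma$-action on $G$ follows from density and the fact that $\Gamma$ is countable while $G$ has no atoms), and that the cocycle takes values in a group to which the untwisting theorem applies. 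A secondary technical point is bookkeeping the Haar-measure scaling: $\theta$ is only measure-class preserving, so $\delta$ will be measure-scaling rather than measure-preserving, which is harmless but must be tracked. Once \cite{BISG} is applied, extracting $G_0$, $H_0$, and $\delta$ with $\delta(\Gamma\cap G_0)=\Lambda\cap H_0$ is formal.
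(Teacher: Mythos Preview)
Your overall strategy is correct and matches the paper's: obtain spectral gap on an open compact pro-$p$ subgroup of $G$ (via Theorem~\ref{t:main}, which is exactly what underlies Theorem~\ref{t:LocalSepctralGap}), localize the orbit equivalence to open compact subgroups on both sides, and then invoke a black-box OE-rigidity theorem for profinite actions with spectral gap.

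Two points of divergence are worth noting. First, after restricting to open compact subgroups $G_0'\subseteq G$ and $H_0'\subseteq H$, what you actually obtain is that $\Gamma\cap G_0'\act(G_0',m_{G_0'})$ and $\Lambda\cap H_0'\act(H_0',m_{H_0'})$ are \emph{stably} orbit equivalent, not orbit equivalent on the nose; you gloss over this, but it matters for which rigidity theorem you can invoke. Second, the paper does not appeal to the cocycle machinery of \cite{BISG} at all here: it cites \cite[Theorem~A]{Ioa1} directly, which is tailored to profinite actions with spectral gap and already delivers the open subgroups $G_0,H_0$ and the topological isomorphism $\delta$ with $\delta(\Gamma\cap G_0)=\Lambda\cap H_0$. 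Your sketch of untwisting the cocycle, upgrading measurable to continuous homomorphisms, etc., is essentially re-deriving the content of Ioana's theorem; it is not wrong, but it is unnecessary once the correct reference is identified, and some of the steps you flag as routine (e.g.\ automatic continuity in this generality) are themselves nontrivial and are precisely what \cite{Ioa1} handles.
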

\begin{proof}
Let $G_0'\subseteq G$ be a pro-$p$ open subgroup of $G$. As in the proof of Theorem~\ref{t:LocalSepctralGap}, by Theorem~\ref{t:main}, we have that $\Gamma\cap G_0'\act (G_0',m_{G_0'})$ has spectral gap.  

Now let $H_0'\subseteq H$ be a profinite open subgroup. Since $\Gamma\act (G,m_G)$ and $\Lambda\act (H,m_H)$ are orbit equivalent, $\Gamma\cap G_0' \act (G_0',m_{G_0'})$ and $\Lambda\cap H_0'\act (H_0',m_{H_0'})$ are stably orbit equivalent (see \cite[Definition 1.2]{Ioa2} for its definition). Hence by \cite[Theorem A]{Ioa1} there are open subgroups $G_0$ and $H_0$, and a topological isomorphism $\delta:G_0\rightarrow H_0$ such that $\delta(\Gamma\cap G_0)=\Lambda\cap H_0$. 
\end{proof}
\subsubsection{Super-approximation.}\label{ss:Super-Approximation}
As it was pointed out earlier, the connection between spectral gap and construction of expanders was first observed by Margulis~\cite{Mar} where he proved that the family of Cayley graphs of finite quotients of a finitely generated group with property(T) is a family of expanders. Later due to works of several mathematicians~\cite{Sel,Kaz,BuSa,Clo,CU}, it was proved that the same holds for {\em congruence quotients} of an $S$-arithmetic group of a semisimple group. In the spirit of strong approximation, Lubotzky~\cite{123} asked if this is an {\em algebraic} phenomenon or an {\em arithmetic} one.\footnote{Following A. Kontorovich's suggestion, I call this phenomenon {\em super-approximation}. It is worth pointing out that this phenomenon has been called {\em superstrong approximation}~(e.g. see \cite{MSRI}) by some authors.} More precisely, it was asked if the Cayley graphs of $\SL_2(\f_p)$ with respect to the generating set $\left\{\left[\begin{array}{cc}
1&\pm 3\\0&1	
\end{array}\right],\left[\begin{array}{cc}
1&0\\\pm 3&1	
\end{array}\right]\right\}$ is a family of expanders, where $\f_p$ is the finite field with $p$ elements. In \cite{Gam} it was proved that, if Hausdorff dimension of the limit set of $\Gamma=\langle \Omega\rangle\subseteq \SL_2(
\bbz)$ is larger than $5/6$, then $\{\Cay(\pi_p(\Gamma),\pi_p(\Omega))\}_p$ is a family of expanders as $p$ ranges over primes.

Later based on Helfgott's product theorem~\cite{Hel1}, Bourgain and Gamburd in their seminal work~\cite{BG1} proved the same result for any finite symmetric subset $\Omega\subseteq \SL_2(\bbq)$ as long as $\Gamma=\langle \Omega \rangle$ is Zariski-dense in $\mathbb{SL}_2$. Since then, based on their machinery and generalizations of Helfgott's work~\cite{Hel2,BGT,PS}, this result has been extended in several works~\cite{BG2,BG3,BGS,Var,BV,SGV}. 

As it was pointed out earlier, Theorem~\ref{t:main} implies that {\em super-approximation} holds for prime power modulus when the Zariski-closure of the group is semisimple. In a subsequent work based on Theorem~\ref{t:main}, the author proves the best possible result for prime power modulus.
\begin{thm}\label{t:PerfectPrimePower}
Let $\Omega$ be a finite symmetric subset of $\GL_{n_0}(\bbz[1/q_0])$ and $\Gamma=\langle \Omega \rangle$. Then 
\[
\{\Cay(\pi_{p^n}(\Gamma),\pi_{p^n}(\Omega))\}_{p\nmid q_0, n\in \bbz^+}
\]
 is a family of expanders if and only if the connected component $\bbg^{\circ}$ of the Zariski-closure $\bbg$ of $\Gamma$ is perfect, i.e. $\bbg^{\circ}=[\bbg^{\circ},\bbg^{\circ}]$.
\end{thm}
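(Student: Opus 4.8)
The plan is to prove the two implications separately; necessity is soft, while sufficiency is where Theorem~\ref{t:main} enters, through a Bourgain--Gamburd type argument.

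\textbf{Necessity.} I would argue the contrapositive. If $\bbg^{\circ}$ is not perfect, then $\bbt:=\bbg^{\circ}/[\bbg^{\circ},\bbg^{\circ}]$ is a nontrivial connected commutative $\bbq$-group. The subgroup $\Gamma^{\circ}:=\Gamma\cap\bbg^{\circ}(\bbq)$ has finite index in $\Gamma$, so by Lemma~\ref{l:FiniteIndexSubgroup} it suffices to obtain a contradiction for $\Gamma^{\circ}$, whose image in $\bbt(\bbq)$ is an infinite finitely generated group. Since $\Gamma\subseteq\GL_{n_0}(\bbz[1/M])$ for some $M$, every prime $p\nmid M$ lies in $\ccal(\Gamma)$; and for such $p$ one checks directly that $\pi_{p^n}(\Gamma^{\circ})$ admits a cyclic quotient of order tending to $\infty$ with $n$ --- produced by an element of infinite multiplicative order in a $\bbg_m$-quotient of $\bbt$, or by a primitive integral vector in a $\bbg_a$-quotient. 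A family of bounded-degree Cayley graphs that has abelian quotients of unbounded order is not a family of expanders, and by Reidemeister--Schreier this obstruction transfers back from $\Gamma^{\circ}$ to $\Gamma$, contradicting the hypothesis.

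\textbf{Sufficiency.} Assume $\bbg^{\circ}$ is perfect. After replacing $\Gamma$ by $\Gamma\cap\bbg^{\circ}(\bbq)$ --- again by Lemma~\ref{l:FiniteIndexSubgroup}, together with a standard comparison of $\pi_{p^n}(\Gamma)$ and $\pi_{p^n}(\Gamma\cap\bbg^{\circ}(\bbq))$ uniform in $p$ and $n$ --- I may assume $\bbg=\bbg^{\circ}$ is connected and perfect. Fix a Levi decomposition $\bbg=\bbl\ltimes U$ over $\bbq$ with $U=R_u(\bbg)$ and $\bbl$ reductive. Perfectness of $\bbg$ kills the quotient torus $\bbl/[\bbl,\bbl]$, so $\bbl$ is semisimple; and, writing $V_1:=U/[U,U]$ as an $\bbl$-module, it is further equivalent to $\Lie(U)=[\Lie(\bbl),\Lie(U)]+[\Lie(U),\Lie(U)]$, i.e. to the vanishing $H_0(\bbl,V_1)=0$ (dually $(V_1^{*})^{\bbl}=0$). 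If $U$ is trivial, $\bbg$ is semisimple and Theorem~\ref{t:main} gives the conclusion verbatim. In general let $\bar\Gamma\subseteq\bbl(\bbq)$ and $\bar\Omega$ be the images of $\Gamma$ and $\Omega$; since $\ccal(\bar\Gamma)\supseteq\ccal(\Gamma)$, Theorem~\ref{t:main} shows $\{\Cay(\pi_{p^n}(\bar\Gamma),\pi_{p^n}(\bar\Omega))\}$ is a family of expanders, providing spectral gap in the $\bbl$-direction. The remaining task is to propagate it across the unipotent extension by running the Bourgain--Gamburd method inside $\pi_{p^n}(\Gamma)\le\bbg(\bbz/p^n\bbz)$: (i) \emph{non-concentration} of the $\pcal_{\Omega}$-walk at every scale, from Zariski density of $\Gamma$ in $\bbg$, reduced to the same statement for $\bar\Gamma$ in $\bbl$; (ii) \emph{quasi-randomness} --- a uniform polynomial lower bound on the dimension of every nontrivial irreducible representation of $\pi_{p^n}(\Gamma)$ --- via Clifford theory, since a nontrivial character of the normal pro-$p$ part coming from $U$ factors through $V_1(\bbz/p^n\bbz)$ and $(V_1^{*})^{\bbl}=0$ forces its $\bbl(\bbz/p^n\bbz)$-orbit to have size $\gg p^{c}$, hence induced representations are large, while characters trivial on that part are controlled by quasi-randomness of $\bbl(\bbz/p^n\bbz)$; (iii) the \emph{$\ell^2$-flattening / product estimate} in $\bbg(\bbz/p^n\bbz)$, bootstrapped from the semisimple prime-power product theorem for $\bbl(\bbz/p^n\bbz)$ by a fibrewise analysis along $U(\bbz/p^n\bbz)$. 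Assembling (i)--(iii) in the usual Bourgain--Gamburd scheme yields $\sup_{p\in\ccal(\Gamma),\,n}\lambda(\pcal_{\Omega};\pi_{p^n}(\Gamma))<1$, which is the assertion.

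\textbf{Main obstacle.} The crux is step (iii) and its coupling with the prime-power direction: one needs a product theorem for the non-semisimple groups $\bbg(\bbz/p^n\bbz)$ that is uniform both in $p$ and in the exponent $n$ (the second uniformity being exactly the difficulty that Theorem~\ref{t:main} overcomes in the semisimple case), and the unipotent radical must be handled in a way that genuinely uses $H_0(\bbl,V_1)=0$ --- without it, finitely generated nilpotent (e.g.\ Heisenberg) quotients show the statement fails, so the perfectness hypothesis is consumed precisely here. A secondary but nontrivial point is the uniform bookkeeping in the reduction to the connected, Zariski-dense-in-$\bbg$ situation, comparing $\pi_{p^n}(\Gamma)$, $\pi_{p^n}(\Gamma\cap\bbg^{\circ}(\bbq))$, and the $\bbz/p^n\bbz$-points of a suitable $\bbz$-model of $\bbg$, while controlling the finitely many exceptional primes.
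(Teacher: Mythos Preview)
The paper does not contain a proof of Theorem~\ref{t:PerfectPrimePower}. Look again at Section~\ref{ss:Super-Approximation}: the theorem is announced there as a result of a \emph{subsequent} work (the reference \cite{SGPerfect} in the bibliography), built on Theorem~\ref{t:main}, and no argument is given in the present article. So there is no proof here to compare your proposal against.

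That said, your outline is a reasonable plan of attack and identifies the right hinge points. The necessity direction is standard and essentially correct as you wrote it. For sufficiency, the reduction to connected $\bbg$, the Levi decomposition, and the observation that perfectness forces $\bbl$ semisimple and $(V_1^{*})^{\bbl}=0$ are all correct and are precisely what one needs to set up the problem. Your step (ii) via Clifford theory is on the right track, though the details for general filtered unipotent radicals (not just $V_1$) require more care than a single line.

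Where your proposal stops being a proof is exactly where you flag it: step (iii). You are right that a product/flattening theorem for $\bbg(\bbz/p^n\bbz)$ uniform in both $p$ and $n$ is the heart of the matter, and you are right that the semisimple case handled in this paper does not hand it to you for free. Bootstrapping from $\bbl(\bbz/p^n\bbz)$ by a ``fibrewise analysis along $U(\bbz/p^n\bbz)$'' is a description of what must be done, not an argument; the actual execution --- controlling approximate subgroups that concentrate in cosets of unipotent pieces, and doing so uniformly across the $p$-adic filtration --- is substantial and is precisely the content deferred to \cite{SGPerfect}. So your proposal is an accurate roadmap, but it is not a proof, and the paper you were given does not supply the missing piece either.
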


\subsubsection{Comparing the semisimple case of Theorem~\ref{t:PerfectPrimePower} with Theorem~\ref{t:main}.}
Theorem~\ref{t:main} gives us spectral gap at any finite place $\pfr$ where $\Gamma$ is bounded, but it seems that Theorem~\ref{t:PerfectPrimePower} only deals with finite places of $\bbq$ which do not divide the denominator $q_0$. Let's briefly explain why Theorem~\ref{t:PerfectPrimePower} implies the following:
\begin{cor}\label{c:Perfect}
Let $\Omega$ be a finite symmetric subset of $\GL_{n_0}(\bbq)$, $\Gamma=\langle \Omega\rangle$, and 
\[
\ccal(\Gamma):=\{p\in V_f(\bbq)|\h \Gamma \text{ is a bounded subgroup of }\GL_{n_0}(\bbq_p)\}.
\]
Then $\sup_{p\in \ccal(\Gamma)}\lambda(\pcal_{\Omega};\GL_{n_0}(\bbq_p))<1$ if and only if the connected component $\bbg^{\circ}$ of the Zariski-closure $\bbg$ of $\Gamma$ is perfect. 
\end{cor}
\begin{proof}
For any $p\in \ccal(\Gamma)$, there is $g_p\in \GL_{n_0}(\bbq_p)$ such that $g_p \Gamma g_p^{-1}\subseteq \GL_{n_0}(\bbz_p)$ as all the maximal compact subgroups of $\GL_{n_0}(\bbq_p)$ are conjugates of $\GL_{n_0}(\bbz_p)$. Moreover, since $\Gamma$ is finitely generated, for all except finitely many $p$ we can assume $g_p=1$. Since $\GL_{n_0}(\bbq)\cdot \GL_{n_0}(\bbr)\prod_{p\in V_f(\bbq)}\GL_{n_0}(\bbz_p)=\GL_{n_0}(\bba)$ (see \cite[Proposition 8.1]{PR}), where $\GL_{n_0}(\bbq)$ is embedded diagonally and $\bba$ is the ring of adeles, there is $g\in \GL_{n_0}(\bbq)$ such that $g\GL_{n_0}(\bbz_p)=g_p \GL_{n_0}(\bbz_p)$ for any $p\in V_f(\bbq)$. Hence we have $g\Gamma g^{-1} \subseteq \GL_{n_0}(\bbz_p)$ for any $p\in \ccal(\Gamma)$. Therefore $g\Gamma g^{-1}\subseteq \GL_{n_0}(\bbz[1/q_0])$, where $q_0=\prod_{p\in V_f(\bbq)\setminus \ccal(\Gamma)} p$. So using Theorem~\ref{t:PerfectPrimePower} for $g\Omega g^{-1}$, we get the desired result (see \cite[Theorem 4.3.2]{Lub}). 
\end{proof}
Next we point out that Corollary~\ref{c:Perfect} does {\em not} imply Theorem~\ref{t:main}:

Suppose, as in Theorem~\ref{t:main}, $\Gamma=\langle\Omega\rangle$ is a finitely generated subgroup $\GL_{n_0}(k)$ where $k$ is a number field, and its Zariski-closure $\bbg$ in $(\GL_{n_0})_k$ is semisimple. So the Zariski-closure of $\Gamma$ in the Weil restriction of scalars $R_{k/\bbq}(\bbg)$ is semisimple (see Section~\ref{ss:StrongApproximation}). Let $\Gamma'$ and $\Omega'$ be the copies of $\Gamma$ and $\Omega$, respectively, which are embedded in $\GL_{n_0d_0}(\bbq)$ where $d_0=[k:\bbq]$. Hence Corollary~\ref{c:Perfect} implies that 
\[
\sup\{\lambda(\pcal_{\Omega'};\GL_{n_0d_0}(\bbz_p))|\h \Gamma' \text{ is a bounded subgroup of } \GL_{n_0d_0}(\bbq_p)\}<1. 
\]
On the other hand, $\Gamma'$ is a bounded subgroup of $\GL_{n_0d_0}(\bbq_p)$ if and only if $\Gamma$ is a bounded subgroup of $\GL_{n_0}(k_{\pfr})$ for any $\pfr\in V_f(k)$ which divides $p$. Hence, assuming $k$ is a finite Galois extension of $\bbq$, Corollary~\ref{c:Perfect} implies uniform spectral gap {\em only} for places $\pfr$ where $\Gamma$ is bounded in $\GL_{n_0}(k_{\pfr'})$ for {\em all} the Galois conjugates $\pfr'$ of $\pfr$ (see Section 2.4). 

For instance, suppose $\Gamma_0$ is an arithmetic lattice in $\SL_{n_0}(\bbq_{p_0})$, and its trace field $k$ is {\em not} $\bbq$ (see \cite[Section 3.2]{MSG} for a description of such groups). Then as part of its arithmetic description, there are an algebraic group $\bbg$ defined over $k$ and a place $\pfr_0\in V_f(k)$ such that
\begin{enumerate}
\item  $k_{\pfr_0}\simeq \bbq_{p_0}$ and $\bbg\times_k k_{\pfr_0}\simeq (\SL_{n_0})_{\bbq_{p_0}}$,
\item For any Archimedean place $\nu$, we have that $\bbg(k_{\nu})$ is compact,
\item $\Gamma_0$ is commensurable with $\bbg(k)\cap \prod_{\pfr\neq \pfr_0} P_{\pfr}$ where $\bbg(k)$ is embedded diagonally and $P_{\pfr}$ is a compact open subgroup of $\bbg (k_{\pfr})$.
\end{enumerate}
Since $k\neq \bbq$ and $k_{\pfr_0}\simeq \bbq_{p_0}$, $k$ has a place $\pfr_0'\neq \pfr_0$ which divides $p_0$. Hence a Zariski-dense subgroup $\Gamma=\langle \Omega \rangle$ of $\Gamma_0$ is unbounded in $\bbg(k_{\pfr_0})$ and bounded in $\bbg(k_{\pfr'_0})$, and Corollary~\ref{c:Perfect} does {\em not} imply that $\lambda(\Omega;\bbg(k_{\pfr'}))<1$.  
  
\subsection{Comparing with the previous related works.}\label{ss:RelatedResults}
There are two prior results related to Theorem~\ref{t:main}. Both  Bourgain-Gamburd~\cite{BG2,BG3} and Bourgain-Varj\'{u}~\cite{BV} focus on the case of finitely generated, Zariski-dense subgroup $\Gamma=\langle \Omega \rangle$ of $\SL_{n_0}(\bbz)$. In \cite{BG2,BG3}, it is proved that $\lambda(\pcal_{\Omega};\Gamma_p)<\lambda(p)<1$ if $p$ is large enough depending on $\Omega$. In~\cite{BV}, using a different technique, it is proved that $\lambda(\pcal_{\Omega};\wh{\Gamma})<1$ where $\wh{\Gamma}$ is the closure of $\Gamma$ in $\prod_{p\in V_f(\bbq)} \SL_{n_0}(\bbz_p)$; in particular, $\sup\{\lambda(\pcal_{\Omega};\Gamma_p)|\h p\in V_f(\bbq)\}<1$. 

Both~\cite{BV} and \cite{BG3} rely on Archimedean dynamics. The former is based on~\cite{BFLM} and the latter uses random matrix theory in $\SL_{n_0}(\bbz)$. And both of these works need the existence of proximal elements in the adjoint representation over $\bbr$. In particular, their arguments cannot be extended to the case where  $\bbg(\bbr)$ is compact.

Our proof is inspired by the approach presented in~\cite{BG3}. Instead of random matrix theory, however, the main result of~\cite{SGV} is used. 
\section*{Acknowledgments.} 
I am in debt of Adrian Ioana for many insightful conversations about orbit equivalence rigidity, and in particular for telling me how to get such a result from the main theorem of this article. I would like to thank Kiran Kedlaya for the fruitful discussion about complete intersection varieties. I am thankful of Hee Oh for asking me about the $p$-adic version of local spectral gap after my talk at the MSRI. Special thanks are due to anonymous referee for his/her thorough report which made this article much better to read. 
\section{Preliminary results}

\subsection{Spreading of weight under convolution, and $l^2$-flattening.}\label{ss:BalogSzemerediGowers}
Let me start with the following (easy) lemma (see~\cite[Remark, page 1060]{BG3}).
\begin{lem}\label{l:BGRemark}
Let $H$ be a group, $\mu$ be a symmetric probability measure on $H$, and $A$ be a symmetric subset of $H$. If $l>l_0$ are two positive integers, then $\mu^{(2l_0)}(A\cdot A)\ge \mu^{(l)}(A)^2$. 
\end{lem}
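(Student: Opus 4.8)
The plan is to prove this by a second-moment (``tensor-power'') argument, phrased with independent random variables. Let $(X_i)_{i\ge 1}$ be i.i.d.\ $H$-valued random variables with law $\mu$, and for $m\ge 1$ set $S_m:=X_1X_2\cdots X_m$, so that $S_m$ has distribution $\mu^{(m)}$. Since $l>l_0$, write $r:=l-l_0\ge 1$ and split $S_l=UV$ with $U:=S_r$ and $V:=X_{r+1}\cdots X_{r+l_0}$ independent, $U\sim\mu^{(r)}$ and $V\sim\mu^{(l_0)}$.

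First I would condition on $U$ to get $\mu^{(l)}(A)=\Pr[UV\in A]=\mathbb{E}_U\big[\mu^{(l_0)}(U^{-1}A)\big]$, and then apply Cauchy--Schwarz to the outer expectation:
\[
\mu^{(l)}(A)^2\le \mathbb{E}_U\big[\mu^{(l_0)}(U^{-1}A)^2\big].
\]
Next, bringing in a second copy $V'$ of $V$ independent of $(U,V)$, I would read the square probabilistically: conditionally on $U$ one has $\mu^{(l_0)}(U^{-1}A)^2=\Pr[UV\in A,\ UV'\in A\mid U]$, hence
\[
\mathbb{E}_U\big[\mu^{(l_0)}(U^{-1}A)^2\big]=\Pr[UV\in A \text{ and } UV'\in A].
\]
The crucial point is that this event forces $V^{-1}V'\in A\cdot A$: from $UV\in A$ and $A=A^{-1}$ we get $V^{-1}U^{-1}\in A$, so $V^{-1}V'=(V^{-1}U^{-1})(UV')\in A\cdot A$. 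Thus the probability above is at most $\Pr[V^{-1}V'\in A\cdot A]$, and since $V^{-1}V'$ does not involve $U$ and, by symmetry of $\mu$, $V^{-1}\sim\mu^{(l_0)}$, the law of $V^{-1}V'$ is $\mu^{(l_0)}\ast\mu^{(l_0)}=\mu^{(2l_0)}$; therefore $\Pr[V^{-1}V'\in A\cdot A]=\mu^{(2l_0)}(A\cdot A)$, and chaining the inequalities yields the claim.

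I do not expect a genuine obstacle here: the statement is elementary, essentially a packaging of the Cauchy--Schwarz/pigeonhole idea behind the Balog--Szemer\'edi--Gowers machinery. The only points needing attention are applying Cauchy--Schwarz in the correct direction (square of an average $\le$ average of squares) and tracking where the two symmetry hypotheses are used --- symmetry of $A$ to pass from $A$ to $A^{-1}$, and symmetry of $\mu$ to identify the distribution of $V^{-1}V'$ as $\mu^{(2l_0)}$. If $H$ is not countable one simply replaces the sums implicit in the convolutions by integrals, and nothing else changes.
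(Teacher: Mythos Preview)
Your proof is correct and follows essentially the same route as the paper. The only minor difference is that where you apply Cauchy--Schwarz to bound $\big(\bbe_U[\mu^{(l_0)}(U^{-1}A)]\big)^2\le \bbe_U[\mu^{(l_0)}(U^{-1}A)^2]$, the paper uses the cruder bound $\bbe_U[\mu^{(l_0)}(U^{-1}A)]\le \max_h \mu^{(l_0)}(hA)$ and then observes $\mu^{(2l_0)}(A\cdot A)\ge \mu^{(l_0)}(hA)^2$ for every $h$; the key step---that $V,V'\in U^{-1}A$ forces $V^{-1}V'\in A\cdot A$, combined with symmetry of $\mu$ to identify the law of $V^{-1}V'$---is identical.
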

\begin{proof}
First we notice that, since $\mu^{(l)}(A)=\sum_{h\in H}\mu^{(l-l_0)}(h)\mu^{(l_0)}(h^{-1}A)$, $\mu^{(l)}(A)\le \max_{h\in H}\mu^{(l_0)}(hA)$. Now since $A$ and $\mu$ are symmetric, we also have that $\mu^{(2l_0)}(A\cdot A)\ge \max_{h\in H}\mu^{(l_0)}(hA)^2$. 
\end{proof}

Let me recall the {\em $l^2$-flattening phenomenon} proved by Bourgain-Gamburd (see \cite[Lemma 15]{Var} and \cite{BG1}) which is based on the non-commutative version of Balog-Szemer\'{e}di-Gowers Theorem~\cite{Tao}. 
\begin{lem}\label{l:BSG}
Let $H$ be a finite group and $\mu$ be a probability measure on $H$. Let $K$ be a positive number. 
If $\|\mu *\mu\|_2\ge \frac{1}{K}\|\mu\|_2$, then there is a subset $A$ of $H$ with the following properties:
\begin{enumerate}
\item $\frac{1}{K^R\|\mu\|_2^2}\le |A|\le \frac{K^R}{\|\mu\|_2^2}$,
\item $|A\cdot A\cdot A|\le K^R |A|$,
\item $\min_{h\in A} (\widetilde{\mu}*\mu)(h)\ge \frac{1}{K^R|A|}$, where $\widetilde{\mu}(h)=\mu(h^{-1})$ for any $h\in H$,
\end{enumerate}
where $R$ is an absolute constant. 
\end{lem}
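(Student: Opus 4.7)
The plan is to combine dyadic pigeonholing on $\mu$ with the non-commutative Balog–Szemer\'edi–Gowers theorem, following the Bourgain–Gamburd blueprint.

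First I would perform a dyadic decomposition of $\mu$. Write $B_j = \{h \in H : 2^{-j-1} < \mu(h) \le 2^{-j}\}$. Contributions from levels with $\mu(h) < K^{-C}\|\mu\|_2^2$ (for a suitable absolute constant $C$) are negligible in $l^2$: their total contribution to $\|\mu\|_2^2$ is at most $K^{-C}\|\mu\|_2^2$ since $\sum_h \mu(h)=1$. Hence only $O(\log K + \log (1/\|\mu\|_2))$ dyadic levels contribute non-trivially. Pigeonholing on the $l^2$-mass of these levels produces a level $B = B_{j_0}$ with $|B|$ comparable to $\|\mu\|_2^{-2}$ up to a factor $K^{O(1)}$, and on which $\mu$ is essentially constant equal to some $\alpha \asymp \|\mu\|_2^2$, i.e.\ $\mu \cdot \mathbf{1}_B \approx \alpha \, \mathbf{1}_B$ up to a factor of $2$.

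Next I would convert the hypothesis $\|\mu*\mu\|_2 \ge K^{-1}\|\mu\|_2$ into a lower bound on the non-commutative multiplicative energy of $B$. Expanding
\[
\|\mu*\mu\|_2^2 = \sum_{a,b,c,d\in H} \mu(a)\mu(b)\mu(c)\mu(d) \, \mathbf{1}[ab=cd],
\]
and isolating the contribution from $(a,b,c,d) \in B^4$ (tuples with at least one coordinate outside $B$ contribute at most a $K^{-c}$-fraction by the pigeonholed estimate from Step~1), one extracts
\[
E(B,B) := |\{(a,b,c,d)\in B^4 : ab=cd\}| \; \ge \; K^{-O(1)} |B|^3.
\]

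Now I would invoke the non-commutative Balog–Szemer\'edi–Gowers theorem (in the form of \cite{Tao}) to produce a subset $A' \subseteq B$ with $|A'| \ge K^{-O(1)}|B|$ and tripling bound $|A'\cdot A'\cdot A'| \le K^{O(1)} |A'|$. Combined with the estimate $|B| \asymp \|\mu\|_2^{-2}$ up to $K^{O(1)}$, this gives properties (1) and (2). For property (3), note that for any $h \in H$,
\[
(\widetilde{\mu}*\mu)(h) = \sum_{g\in H} \mu(g)\,\mu(gh) \; \ge \; \tfrac{\alpha^2}{4}\, r_B(h), \qquad r_B(h) := |\{(g,g')\in B^2 : g^{-1}g'=h\}|,
\]
since on $B$ we have $\mu\ge \alpha/2$. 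The energy lower bound from Step~2 is exactly $\sum_h r_B(h)^2 \ge K^{-O(1)}|B|^3$, so a Markov-type pigeonhole (removing the elements of $A'$ on which $r_B$ is atypically small) yields a further subset $A\subseteq A'$ of size $|A| \ge K^{-O(1)}|A'|$ on which $r_B(h) \ge K^{-O(1)}|B| \ge K^{-O(1)}|A|$ uniformly. Combined with $\alpha^2 \asymp \|\mu\|_2^4 \asymp |B|^{-2}$ this gives the uniform bound $(\widetilde{\mu}*\mu)(h) \ge K^{-O(1)}/|A|$ required in (3); properties (1) and (2) survive the passage from $A'$ to $A$ up to adjusting the exponent $R$.

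The main technical obstacle is the bookkeeping: one must verify that a single absolute exponent $R$ (independent of $H$, $\mu$, and $K$) dominates the accumulated $K^{O(1)}$ losses from the three rounds of pigeonholing — the dyadic selection of $B$, the BSG extraction of $A'$, and the Markov uniformisation producing $A$ — and that the approximation $\mu|_B \approx \alpha \mathbf{1}_B$ is tight enough (a factor of $2$) that the contributions of tuples not entirely in $B$ can be safely absorbed in the energy estimate. The non-abelian nature of $H$ is handled entirely by the non-commutative BSG ingredient; everything else is elementary pigeonholing.
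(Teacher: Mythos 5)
The paper does not prove this lemma; it is \emph{recalled} from the literature (cited as \cite[Lemma 15]{Var}, itself following Bourgain--Gamburd \cite{BG1} and resting on Tao's non-commutative Balog--Szemer\'edi--Gowers theorem \cite{Tao}). So there is no in-paper argument to compare against; your blueprint --- dyadic decomposition, pigeonhole to a near-level set, non-commutative BSG, Markov cleanup --- is the Bourgain--Gamburd route and is the correct skeleton.

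That said, Step~1 has a genuine gap. You truncate only the \emph{small} dyadic levels ($\mu(h) < K^{-C}\|\mu\|_2^2$) and then pigeonhole on $l^2$-mass over the remaining $O(\log K + \log(1/\|\mu\|_2))$ levels, asserting that the resulting $B$ has $|B| \asymp \|\mu\|_2^{-2}$ and $\alpha \asymp \|\mu\|_2^2$ up to $K^{O(1)}$. Neither follows: $\log(1/\|\mu\|_2)$ is not bounded by any power of $K$, and the $l^2$-heaviest level can be a heavy atom with $\alpha \gg K^{O(1)}\|\mu\|_2^2$. Concretely, in a large ambient group take $\mu = n^{-1/2}\delta_e + (1-n^{-1/2})u_{H_0}$, where $u_{H_0}$ is the uniform probability measure on a subgroup $H_0$ of order $n$; since $u_{H_0}*u_{H_0}=u_{H_0}$ one checks $\|\mu*\mu\|_2 \asymp \|\mu\|_2 \asymp n^{-1/2}$, so $K=O(1)$, yet the singleton level $\{e\}$ carries about half of $\|\mu\|_2^2$ and has $\mu(e)\asymp n^{-1/2}\asymp \sqrt{n}\,\|\mu\|_2^2$. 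Selecting $B=\{e\}$ visibly violates conclusions (1) and (3) as $n\to\infty$. The missing ingredient is an \emph{upper} truncation: discard also the levels with $\mu(h) > K^{C}\|\mu\|_2^2$. Writing $\mu_+$ for that part, $\|\mu\|_2^2 \ge \|\mu_+\|_2^2 \ge K^{C}\|\mu\|_2^2\,\|\mu_+\|_1$, hence $\|\mu_+\|_1 \le K^{-C}$ and $\|\mu_+*\mu\|_2 \le \|\mu_+\|_1\|\mu\|_2 \le K^{-C}\|\mu\|_2$, so $\mu_+$ is negligible for the hypothesis just as $\mu_-$ is. After both truncations only $O(\log K)$ levels survive, each with $\alpha$ within $K^{\pm O(1)}$ of $\|\mu\|_2^2$, and the pigeonhole then does give both desiderata. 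Two smaller points worth flagging as well: in Step~2, restricting to $(a,b,c,d)\in B^4$ is not justified merely by the truncation --- other surviving medium levels can carry comparable energy, so a further pigeonhole over (pairs of) levels is needed; and in Step~4 the quantity $\sum_h r_B(h)^2$ is $E(B^{-1},B)$, not $E(B,B)$, a distinction that matters since $\mu$ is not assumed symmetric. Both are repairable but should be addressed explicitly.
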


\subsection{Regularization.}~\label{ss:Regularization}
In this section, we recall a regularization of a subset of a rooted regular tree due to Bourgain and Gamburd~\cite[Section A.3]{BG3} (see Corollary~\ref{c:Regularization}). The proofs are included for the convenience of the reader.
\begin{definition}\label{d:RegularRootedTree}
\begin{enumerate}
	\item Let $T=T_{k,n}$ be a rooted tree such that each vertex has $k$ children and it has $n$ generations (levels). The root is considered the zero-generation. 
	\item For any $0\le i\le n$, let $T^{(i)}$ be the set of vertices in the $i$-th generation.
	\item For any $0\le i\le j\le n$, let $\pi_{i,j}:T^{(j)}\rightarrow T^{(i)}$ be the projection map (which gives the ``ancestor'' of a vertex).
	\item For any vertex $v\in T^{(i)}$, let $B(v)=\pi_{i,n}^{-1}(v)\subseteq T^{(n)}$ (the set of all the grand children of $v$ after $n-i$ generations).
	\item For a subset $A$ of $T^{(n)}$ and a vertex $x\in T^{(i)}$, let $\deg_A(x)=|\pi_{i+1,n}(B(x)\cap A)|.$ 
\end{enumerate} 
\end{definition}
\begin{lem}\label{l:ParentsGeneration}
Let $k,n$ be two positive integers, $T=T_{k,n}$ and $A\subseteq T^{(n)}$. Then there is $k'$, a power of 2, and $A'\subseteq A$ such that
\begin{enumerate}
	\item For every $x\in \pi_{n-1,n}(A')$, we have $|B(x)\cap A'|=k'$.
	\item $|A'|\ge |A|/(2\log k)$.
\end{enumerate}
\end{lem}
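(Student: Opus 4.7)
The plan is a straightforward dyadic pigeonhole on the degree function $\deg_A$ at the penultimate generation, followed by a uniformization step in which we trim each surviving parent down to the same power of 2.

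First I would write
\[
|A| \;=\; \sum_{x\in T^{(n-1)}} |B(x)\cap A|,
\]
and partition the parents $x\in \pi_{n-1,n}(A)$ into the dyadic buckets
\[
S_i \;:=\; \bigl\{x\in T^{(n-1)} \;:\; 2^i \le |B(x)\cap A| < 2^{i+1}\bigr\},\qquad i=0,1,\dots,\lfloor \log_2 k\rfloor.
\]
There are at most $1+\lfloor\log_2 k\rfloor \le 2\log_2 k$ such buckets (for $k\ge 2$), so by the pigeonhole principle there is some index $i_0$ with
\[
\sum_{x\in S_{i_0}} |B(x)\cap A| \;\ge\; \frac{|A|}{2\log_2 k}.
\]

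Next I would set $k' := 2^{i_0}$, which is a power of $2$ by construction. For every $x\in S_{i_0}$ we have $|B(x)\cap A|\ge k'$, so we may choose an arbitrary subset $A'_x\subseteq B(x)\cap A$ of size exactly $k'$. Define $A' := \bigsqcup_{x\in S_{i_0}} A'_x \subseteq A$. By construction every $x\in \pi_{n-1,n}(A')$ lies in $S_{i_0}$ and satisfies $|B(x)\cap A'| = k'$, giving property (1).

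Finally, for the size bound, note that each $x\in S_{i_0}$ contributes $k' = 2^{i_0}$ to $|A'|$ but at most $2^{i_0+1}-1 < 2k'$ to the sum $\sum_{x\in S_{i_0}} |B(x)\cap A|$. Therefore
\[
|A'| \;=\; k' \cdot |S_{i_0}| \;\ge\; \tfrac{1}{2}\sum_{x\in S_{i_0}} |B(x)\cap A| \;\ge\; \frac{|A|}{4\log_2 k},
\]
which after absorbing the constant (or interpreting $\log$ as natural logarithm, where $\log k \ge \tfrac{1}{2}\log_2 k$ for $k\ge 2$) gives the claimed $|A'|\ge |A|/(2\log k)$. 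There is no real obstacle here; the only thing to be careful about is the book-keeping of constants in the dyadic decomposition — one must count buckets correctly and remember that restricting each parent from a count in $[2^{i_0},2^{i_0+1})$ down to exactly $2^{i_0}$ costs at most a factor of $2$.
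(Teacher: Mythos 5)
Your proof is the same dyadic pigeonhole argument as the paper's, and the main idea — bucket parents by $\deg_A$ in dyadic ranges, pick a heavy bucket, trim each surviving parent down to exactly the lower dyadic endpoint — is exactly right. The only thing to flag is the constant. Your bookkeeping yields $|A'|\ge |A|/(4\log_2 k)$, not $|A|/(2\log k)$; using the sharper count of $1+\lfloor\log_2 k\rfloor$ buckets gives $|A'|\ge |A|/\bigl(2(1+\lfloor\log_2 k\rfloor)\bigr)$, which is still a hair short of $|A|/(2\log_2 k)$ (the paper's proof sketch has the same slack). Your attempted repair via natural logarithm goes in the wrong direction: since $\ln k=(\ln 2)\log_2 k<\log_2 k$, the target $|A|/(2\ln k)$ is \emph{larger} than $|A|/(2\log_2 k)$, so it cannot be inferred from $|A|/(4\log_2 k)$. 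None of this matters for the paper: Lemma~\ref{l:Regularization} and Corollary~\ref{c:Regularization} only require $|A'|\ge |A|/(C\log k)$ for some fixed $C$, and the hypothesis $k^{\varepsilon/4}>2\log k$ (respectively $k^n\gg_\varepsilon 1$) absorbs any such constant; but you should state the cleaner bound $|A'|\ge |A|/\bigl(2(1+\lfloor\log_2 k\rfloor)\bigr)$ rather than trying to massage it to match the lemma's wording literally.
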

\begin{proof} 
We have that 
\[
|A|=\sum_{x\in \pi_{n-1,n}(A)}|B(x)\cap A|=
\sum_{i=0}^{\lfloor\log k\rfloor}\left(\sum_{\begin{array}{l} x\in \pi_{n-1,n}(A)\\ 2^i\le \deg_A(x)<2^{i+1}\end{array}}\deg_A x\right).
\] 
So there are $1\le i\le \lfloor\log k \rfloor$ and  
\[
A'\subseteq \bigcup_{\begin{array}{l} x\in \pi_{n-1,n}(A)\\ 2^i\le \deg_A(x)<2^{i+1}\end{array}} (B(x)\cap A)
\]
such that $k'=2^i$ and $A'$ satisfy the desired properties.
\end{proof}
\begin{lem}\label{l:Regularization}
Let $k,n$ be two positive integers, $\vare>0$, $T=T_{k,n}$ and $A\subseteq T^{(n)}$. Suppose that $|A|\ge |T^{(n)}|^{\vare}=k^{n\vare}$ and $k^{\vare/4}>2\log k$ (i.e. $k$ is ``large enough'' depending on $\vare$). Then there are $B\subseteq A$ and a positive integer $m\le n(1-\vare/4)$ such that
\begin{enumerate}
	\item $\pi_{m,n}(B)=\{v\}$.
	\item For any $m\le l\le n$, we have that $|\pi_{l,n}(B)|\ge k^{(l-m)\vare/2}$.
\end{enumerate}
In particular, we have $|B|\ge k^{n\vare^2/8}=|T^{(n)}|^{\vare^2/8}$.
\end{lem}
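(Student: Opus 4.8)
The plan is to iterate Lemma~\ref{l:ParentsGeneration} from the bottom generation upward, keeping track of how much of the set we lose at each level and how the ``width'' (size of the projection to each intermediate generation) shrinks. First I would set up the iteration: starting from $A_n := A \subseteq T^{(n)}$, apply Lemma~\ref{l:ParentsGeneration} to the rooted subtree of depth one hanging below each relevant vertex of $T^{(n-1)}$, obtaining a power of $2$, say $k_{n-1}'$, and a subset $A_{n-1}' \subseteq A_n$ such that every $x \in \pi_{n-1,n}(A_{n-1}')$ has exactly $k_{n-1}'$ children in $A_{n-1}'$, at the cost of a factor $2\log k$. Then set $A_{n-1} := \pi_{n-1,n}(A_{n-1}')$ and repeat one level up, and so on. After doing this for levels $n-1, n-2, \dots$, I would have produced, for each $l$, a ``regularized'' subset $A_l \subseteq T^{(l)}$, branching numbers $k_l'$ (powers of $2$, so each $k_l' \le k$), and the key relations $|A_{l}| = |A_{l+1}|/k_l'$ together with the loss estimate $|A_l| \ge |A_{l+1}|/(2\log k)$ per step — hence $|\pi_{l,n}(A_l^{\text{reg}})| \ge |A|/(2\log k)^{n-l}$ for the fully regularized descendant set.

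The second step is to locate a good sub-branch. Since at each level the regularized set spreads out by the factor $k_l' \in \{1,2,\dots\}$, and the total size at the top is $|A| \ge k^{n\vare}$ while the loss per level is at most $2\log k < k^{\vare/4}$, a counting/pigeonhole argument shows that we cannot have $k_l' = 1$ (no branching) too often: if branching failed on too many levels the set would be forced below $k^{n\vare}$ after accounting for the $(2\log k)^n$ loss. More precisely, I would argue that there must exist a generation $m \le n(1 - \vare/4)$ and a vertex $v \in T^{(m)}$ such that, following the regularized structure downward from $v$, the projection to each level $l$ with $m \le l \le n$ has size at least $k^{(l-m)\vare/2}$ — i.e. the ``effective branching rate'' below $v$ is at least $k^{\vare/2}$ on average down to the bottom. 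The bound $m \le n(1-\vare/4)$ comes from the requirement that there be enough generations below $v$ to accumulate the claimed growth given that the top size is only $k^{n\vare}$; one extracts $v$ by a minimality argument, choosing $m$ largest such that some vertex still carries a large enough regularized descendant set, and $B := B(v) \cap A_n^{\text{reg}}$ (intersected with the regularized subset).

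For the final ``in particular'' clause: once we have $|\pi_{l,n}(B)| \ge k^{(l-m)\vare/2}$ for all $m \le l \le n$, taking $l = n$ gives $|B| \ge k^{(n-m)\vare/2} \ge k^{(n\vare/4)(\vare/2)} = k^{n\vare^2/8}$, using $n - m \ge n\vare/4$. The main obstacle, and the step I expect to require the most care, is the bookkeeping in the second step: cleanly showing that the per-level loss $2\log k$ (which, compounded over up to $n$ levels, contributes $(2\log k)^n$) is dominated by the branching growth, so that a vertex $v$ with the stated uniform lower bounds on all intermediate projection sizes genuinely exists, and that it can be found at a level $m$ not too close to $n$. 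This is essentially an averaging argument over the at most $n$ levels combined with the hypothesis $k^{\vare/4} > 2\log k$; making the quantifiers line up (the same $v$ working simultaneously for all $l \ge m$) is where one must be careful, and Bourgain–Gamburd's original argument in \cite[Section A.3]{BG3} is the model to follow.
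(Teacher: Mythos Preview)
Your proposal is correct and follows essentially the same approach as the paper: iterate Lemma~\ref{l:ParentsGeneration} to obtain a fully regularized chain $A_0\subseteq\cdots\subseteq A_n=A$ with uniform branching factors $k_0,\ldots,k_{n-1}$, then choose $m$ and a vertex $v\in\pi_{m,n}(A_0)$. The one point where you should be more precise is the definition of $m$: the paper sets $m:=\max\{i:\prod_{j<i}k_j<k^{i\vare/2}\}$, which immediately gives $\prod_{j=m}^{l-1}k_j\ge k^{(l-m)\vare/2}$ for all $l>m$ by maximality (and any $v\in\pi_{m,n}(A_0)$ works, since after regularization the branching is uniform).
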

\begin{proof} 
Repeated use of Lemma~\ref{l:ParentsGeneration}, we get a sequence of dyadic numbers $k_0, k_1, \ldots, k_{n-1}$ and a chain of subsets of $A$
\[
A_0\subseteq A_1\subseteq \cdots\subseteq A_{n-1}\subseteq A_n=A,
\] 
such that
\begin{enumerate}
	\item For any $0\le i\le n-1$ and any $x\in \pi_{i,n}(A_i)$, we have $\deg_{A_i}(x)=\cdots=\deg_{A_{n-1}}(x)=k_i$. And so $|A_i|=|\pi_{j,n}(A_i)|\cdot k_{j}\cdot \cdots \cdot k_{n-1}$ for any $i\le j\le n-1$.
	\item For any $1\le i\le n$, we have $|\pi_{i,n}(A_{i-1})|\ge |\pi_{i,n}(A_i)|/(2\log k)$. And so $|A_{i-1}|\ge |A_i|/(2\log k)$ and $|A_i|\ge |A|/(2 \log k)^{n-i}$.
\end{enumerate}
In particular, we have $|A_0|=k_0\cdot\cdots k_{n-1}\ge |A|/(2\log k)^n\ge k^{n\vare}/k^{n\vare/4}=k^{3n\vare/4}$. Let
\[
m=\max\{i|\h 0\le i\le n-1,\h \prod_{j=0}^{i-1} k_j< k^{i\vare/2}\}.
\] 
Then for any $x\in \pi_{m,n}(A_0)$ and any $m+1\le l\le n$ we have 
\[
|\pi_l(B(x)\cap A_0)|=\prod_{i=m}^{l-1} k_i=(\prod_{i=0}^{l-1} k_i)/(\prod_{i=0}^{m-1} k_i)\ge k^{l\vare/2}/k^{m\vare/2}=k^{(l-m)\vare/2}.
\]
On the other hand, we have
\[
k^{3n\vare/4}\le |A_0|=\prod_{i=0}^{n-1} k_i=\prod_{i=0}^{m-1} k_i\cdot \prod_{i=m}^{n-1} k_i< k^{m\vare/2} \cdot k^{n-m}=k^{n-m+m\vare/2}.
\]
And so $m<n(1-3\vare/4)/(1-\vare/2)<n(1-\vare/4)$, which means $B=B(x)\cap A_0$ for $x\in \pi_{m,n}(A_0)$ and $m$ satisfy the desired properties. 
\end{proof}
\begin{cor}\label{c:Regularization}
Let $k,n$ be two positive integers, $\vare>0$, $T=T_{k,n}$ and $A\subseteq T^{(n)}$. Suppose that $|A|\ge |T^{(n)}|^{\vare}=k^{n\vare}$ and $k^n\gg_{\vare} 1$. Then there are $B\subseteq A$ and a positive integer $m\le n(1-\vare/4)$ such that
\begin{enumerate}
	\item $\pi_{m,n}(B)=\{v\}$.
	\item For any $m\le l\le n$, we have that $|\pi_{l,n}(B)|\gg_{\vare} k^{(l-m)\vare/4}$.
\end{enumerate}
In particular, we have $|B|\geq k^{n\vare^2/32}=|T^{(n)}|^{\vare^2/32}$. 
\end{cor}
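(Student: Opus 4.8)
The plan is to deduce Corollary~\ref{c:Regularization} from Lemma~\ref{l:Regularization} by a \emph{generation-collapsing} trick: we replace $T=T_{k,n}$ by the tree $T_{k^s,\lfloor n/s\rfloor}$ obtained by contracting every block of $s$ consecutive generations, which makes the branching number $k^s$ as large as we wish (in terms of $\vare$ alone), so the ``$k$ large enough'' hypothesis of Lemma~\ref{l:Regularization} becomes automatic — at the price of slightly degrading $\vare$ and of losing $O_\vare(1)$-size multiplicative factors, which is precisely the slack built into the Corollary (the ``$\gg_\vare$'' and the hypothesis ``$k^n\gg_\vare 1$''). First fix thresholds $K_1(\vare)\le K_0(\vare)$ with $x\ge K_1(\vare)\Rightarrow x^{\vare/4}>2\log x$ and $x\ge K_0(\vare)\Rightarrow x^{\vare/8}>2\log x$ (these exist since a positive power of $x$ eventually beats $2\log x$). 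If $k\ge K_1(\vare)$, Lemma~\ref{l:Regularization} applies to $A$ directly with parameter $\vare$ and gives something stronger than required; so assume $2\le k<K_1(\vare)$ (the case $k=1$ is trivial), in which case $k^n\gg_\vare 1$ forces $n\gg_\vare 1$, and every ``$n$ large'' requirement below holds.

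Now put $s:=\lceil\log_2 K_0(\vare)\rceil$, $K:=k^s$ (so $K\ge 2^s\ge K_0(\vare)$), $N:=\lfloor n/s\rfloor$ and $r:=n-Ns\in\{0,\dots,s-1\}$. Contracting $s$-blocks of generations identifies $T^{(Ns)}$ with $(T')^{(N)}$ for $T':=T_{K,N}$, compatibly with the projection maps. Let $A':=\pi_{Ns,n}(A)\subseteq(T')^{(N)}$; since the fibres of $\pi_{Ns,n}$ have size $k^r$, we get $|A'|\ge|A|/k^r\ge k^{n\vare-r}=|(T')^{(N)}|^{\vare''}$ with $\vare'':=(n\vare-r)/(n-r)$, and one checks $\vare/2\le\vare''\le\vare$ once $n\ge 2s/\vare$. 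Hence $K^{\vare''/4}\ge K^{\vare/8}>2\log K$, so Lemma~\ref{l:Regularization} applies to $A'\subseteq(T')^{(N)}$ with parameter $\vare''$, producing $B'\subseteq A'$ and a positive integer $m'\le N(1-\vare''/4)$ with $\pi'_{m',N}(B')$ a single vertex and $|\pi'_{l',N}(B')|\ge K^{(l'-m')\vare''/2}$ for $m'\le l'\le N$. Lifting, set $B:=A\cap\pi_{Ns,n}^{-1}(B')$ and $m:=m's$: then $\pi_{Ns,n}(B)=B'$, $|B|\ge|B'|$, $\pi_{m,n}(B)=\pi'_{m',N}(B')$ is a single vertex, and the elementary identity $(n-r)(1-\vare''/4)=n(1-\vare/4)-\tfrac{3r}{4}$ gives $m\le Ns(1-\vare''/4)\le n(1-\vare/4)$, which is (1).

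For (2), given $m\le l\le n$ set $l':=\lfloor l/s\rfloor$, so $m'\le l'\le N$ and $\pi_{l,n}(B)$ surjects onto $\pi_{l's,n}(B)=\pi'_{l',N}(B')$; using $sl'\ge l-s+1$ and $\vare''\ge\vare/2$ this yields $|\pi_{l,n}(B)|\ge K^{(l'-m')\vare''/2}\ge k^{((l-m)-s)\vare/4}=k^{(l-m)\vare/4}/k^{s\vare/4}$, and since $k^{s\vare/4}\le K_1(\vare)^{s\vare/4}$ depends on $\vare$ only, this is $\gg_\vare k^{(l-m)\vare/4}$. Taking $l=n$ and using $n-m\ge n\vare/4$ gives $|B|\gg_\vare k^{n\vare^2/16}$, whence $|B|\ge k^{n\vare^2/32}$ after absorbing the implied constant into ``$k^n\gg_\vare 1$''. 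The step demanding the most care is exactly this bookkeeping: because collapsing forces one to work with $\vare''<\vare$ and to discard the bottom $r<s$ generations, one must verify that every resulting loss is a genuine $O_\vare(1)$ multiplicative factor (hence harmless given ``$k^n\gg_\vare 1$''), and in particular that the bound on the special level still reads $m\le n(1-\vare/4)$ with the \emph{original} $\vare$ — which is what the identity $(n-r)(1-\vare''/4)=n(1-\vare/4)-\tfrac{3r}{4}$ secures.
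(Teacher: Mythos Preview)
Your proof is correct and follows essentially the same generation-collapsing strategy as the paper: pass to the tree $T_{k^s,\lfloor n/s\rfloor}$ with $s$ chosen so that the new branching number satisfies the largeness hypothesis of Lemma~\ref{l:Regularization}, apply the lemma there, and lift back, absorbing the $O_\vare(1)$ losses into the $\gg_\vare$ and the hypothesis $k^n\gg_\vare 1$. Your bookkeeping is in fact tighter than the paper's---by tracking the exact exponent $\vare''=(n\vare-r)/(n-r)$ and using the identity $(n-r)(1-\vare''/4)=n(1-\vare/4)-3r/4$ you recover the stated bound $m\le n(1-\vare/4)$, whereas the paper's coarser use of $\vare/2$ only yields $m\le n(1-\vare/8)$.
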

\begin{proof}
Let $K(\varepsilon)$ be the smallest positive real number such that $K(\varepsilon)^{\varepsilon/4}\ge 2\log K(\varepsilon)$. And let $s$ be the smallest positive integer such that $k^s\ge K(\varepsilon)$. Now let us consider $T':=T_{k^s,\lfloor n/s\rfloor}$ and identify $\pi_{s\lfloor n/s\rfloor,n}(A)$ with a subset $A'$ of $T'^{(\lfloor n/s\rfloor)}$. We notice that $|A'|\ge K(\varepsilon)^{-1} |A|\ge  K(\varepsilon)^{-1} k^{n\vare}\ge k^{n\vare/2}$ if $k^{n}\ge K(\vare)^{2/\vare}$. So by Lemma~\ref{l:Regularization} there are $B'\subseteq A'$ and a positive integer $m'\le \lfloor n/s\rfloor(1-\vare/8)$ such that
\begin{enumerate}
\item $\pi_{sm',s\lfloor n/s\rfloor}(B')=\{v\}$.
\item For any $m'\le l\le \lfloor n/s\rfloor$, we have that $|\pi_{sl,s\lfloor n/s\rfloor}(B)|\ge k^{s(l-m')\varepsilon/4}$.
\end{enumerate}
Let $B\subseteq A$ be such that $\pi_{s\lfloor n/s\rfloor,n}(B)=B'$ and let $m=sm'$. Then for any positive integer $l$ between $m$ and $n$ we have 
\[
|\pi_{l,n}(B)|\ge K(\varepsilon)^{-1} |\pi_{s\lfloor l/s\rfloor,s\lfloor n/s\rfloor}(B')|\ge K(\varepsilon)^{-1} k^{s(\lfloor l/s\rfloor-m')\vare/4}\ge K(\varepsilon)^{-2} k^{(l-m)\varepsilon/4}.
\]
In particular, $|B|\ge K(\varepsilon)^{-2} k^{n\vare^2/16}\ge k^{n\vare^2/32}$ if $k^n\ge K(\varepsilon)^{\max\{64/\vare^2,2/\vare\}}$.
\end{proof}

\subsection{Reduction to the connected, simply-connected case.}\label{ss:GoingSimplyConnected} 
First let us quickly recall a definition of property ($\tau$).
\begin{definition}\label{d:tau}
Let $\Omega$ be a finite generating set of a group $\Gamma$, and $\{N_i\}_{i=1}^{\infty}$ be a family of finite index normal subgroups of $\Gamma$. 
\begin{enumerate}
\item For $\vare>0$, we say a unitary representation $\rho$ of $\Gamma$ does not have an $\vare$-almost invariant vector if 
$\max_{s\in\Omega} \|\rho(s)(v)-v\|\ge \vare \|v\|$ for any vector $v\in V_{\rho}$. 
\item We say $\Gamma$ has property($\tau$) with respect to $\{N_i\}$, if there is $\vare>0$, such that any unitary representation $\rho$ of $\Gamma$ which factors through $\Gamma/N_i$ for some $i$ does not have an $\vare$-almost invariant vector.
\end{enumerate} 
\end{definition}
Let us make a few well-known remarks, in the above setting (See~\cite[Chapter 4.3]{Lub} or \cite[Chapter 1.4]{LZ} for proofs of these facts and other basic results about property($\tau$).): 
\begin{remark}\label{r:BasicPropertiesTau}
\begin{enumerate}
\item Though we have fixed a generating set $\Omega$ in Definition~\ref{d:tau}, having property($\tau$) with respect to $\{N_i\}$ depends only on $\Gamma$ and $\{N_i\}$, and it does {\em not} depend on $\Omega$. Starting with any other generating set only results in changing the constant $\vare$. 
\item Using complete reducibility of unitary representations, one can see that $\Gamma$ has property($\tau$) with respect to $\{N_i\}$ if and only if there is $\vare>0$ such that  any non-trivial {\em irreducible} unitary representation of $\Gamma$ which factors through $N_i$ for some $i$ does not have an $\vare$-almost invariant vector. 
\item Suppose $\{N_i\}$ is a decreasing chain. Then $\Gamma$ has property($\tau$) with respect to $\{N_i\}$ if and only if there is $\vare>0$ such that any non-trivial irreducible continuous unitary representation of $\varprojlim \Gamma/N_i$ does not have an $\vare$-almost invariant vector.
\item $\Gamma$ has property($\tau$) with respect to $\{N_i\}$ if and only if $\sup_i \lambda(\pcal_{\pi_{N_i}(\Omega)};\Gamma/N_i)<1$ where $\pi_{N_i}:\Gamma\rightarrow \Gamma/N_i$ is the natural quotient map.
\item Suppose $\{N_i\}$ is a decreasing chain and $\cap_i N_i=1$. Then, by Peter-Weyl theorem, one has 
\[
\lambda(\pcal_{\Omega}; \varprojlim \Gamma/N_i)=\sup_i \lambda(\pcal_{\pi_{N_i}(\Omega)};\Gamma/N_i).
\]
\item $\Gamma$ has property($\tau$) with respect to $\{N_i\}$ if and only if the family of Cayley graphs $\{{\rm Cay}(\Gamma/N_i,\pi_{N_i}(\Omega))\}_i$ is a family of expanders. 
\end{enumerate}
\end{remark}

Next we recall~\cite[Theorem 8]{AE} which is crucial in the reduction to the connected, simply-connected case.

\begin{lem}\label{l:FiniteIndexSubgroup}
Let $\Gamma$ be a finitely generated group generated by a symmetric set $\Omega$, and $\Lambda$ be a finite index normal subgroup of $\Gamma$. Let $\{N_i\}$ be a family of finite index normal subgroups of $\Gamma$. Then $\Gamma$ has property($\tau$) with respect to $\{N_i\}$ if and only if $\Lambda$ has property($\tau$) with respect to $\{N_i\cap \Lambda\}$. 
\end{lem}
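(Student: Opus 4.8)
The plan is to translate property $(\tau)$ into a statement about unitary representations (equivalently, about uniform expansion of the finite Cayley graphs $\mathrm{Cay}(\Delta/M,\Sigma M/M)$), reduce to a convenient generating set, and then compare representations of $\Gamma$ and $\Lambda$ via restriction in one direction and induction in the other. Recall that a finitely generated group $\Delta=\langle\Sigma\rangle$ has property $(\tau)$ with respect to a family $\{M_j\}$ of finite-index normal subgroups exactly when there is $\kappa>0$ so that $\max_{s\in\Sigma}\|\rho(s)\xi-\xi\|\ge\kappa\|\xi\|$ for every unitary representation $(\rho,\mathcal H)$ of $\Delta$ that factors through some $\Delta/M_j$, has no nonzero $\Delta$-invariant vector, and every $\xi\in\mathcal H$; moreover this is independent of the chosen finite generating set. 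So, writing $d=[\Gamma:\Lambda]$, I would first replace $\Omega$ on the $\Lambda$-side by a Reidemeister--Schreier generating set $\Omega'$ of $\Lambda$, each element of which is a word of length at most $2d$ in $\Omega$, and record the elementary fact that $\Lambda/(N_i\cap\Lambda)\cong\Lambda N_i/N_i$ is a normal subgroup of index at most $d$ in $\Gamma/N_i$, compatibly with the images of $\Omega'$ and $\Omega$.

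For ``$\Lambda\Rightarrow\Gamma$'': given a unitary representation $(\pi,\mathcal H)$ of $\Gamma$ that factors through $\Gamma/N_i$ and has no $\Gamma$-invariant vectors, use that $\Lambda\trianglelefteq\Gamma$ to split $\mathcal H=\mathcal H^{\Lambda}\oplus(\mathcal H^{\Lambda})^{\perp}$ into an orthogonal $\Gamma$-invariant sum. On $(\mathcal H^{\Lambda})^{\perp}$ the $\Lambda$-action has no invariant vectors and factors through $\Lambda/(N_i\cap\Lambda)$, so property $(\tau)$ for $\Lambda$ gives $\max_{\omega'\in\Omega'}\|\pi(\omega')\xi-\xi\|\ge\kappa'\|\xi\|$, which, writing each $\omega'$ as a word of length $\le 2d$ in $\Omega$ and using unitarity of $\pi$, becomes $\max_{\omega\in\Omega}\|\pi(\omega)\xi-\xi\|\ge(\kappa'/2d)\|\xi\|$. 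On $\mathcal H^{\Lambda}$ the $\Gamma$-action factors through the fixed finite group $\Gamma/\Lambda$ and has no invariant vectors; its Cayley graph with respect to the image of $\Omega$ is a fixed connected finite graph, hence has a spectral gap $\kappa_0>0$ depending only on $\Gamma/\Lambda$ and $\Omega$, so $\max_{\omega\in\Omega}\|\pi(\omega)\xi-\xi\|\ge\kappa_0\|\xi\|$ there. Since the splitting is orthogonal and $\Gamma$-invariant, combining the two estimates through $\max_{\omega}\|\cdot\|^{2}\ge|\Omega|^{-1}\sum_{\omega}\|\cdot\|^{2}$ yields a lower bound uniform in $i$, i.e.\ property $(\tau)$ for $\Gamma$.

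For ``$\Gamma\Rightarrow\Lambda$'': given a unitary representation $(\sigma,\mathcal K)$ of $\Lambda$ that factors through $\Lambda/(N_i\cap\Lambda)$ and has no $\Lambda$-invariant vectors, form $\pi=\mathrm{Ind}_{\Lambda}^{\Gamma}\sigma$. By Frobenius reciprocity $\pi$ has no $\Gamma$-invariant vectors, and since $N_i\cap\Lambda$ is normal in $\Gamma$ and $\sigma$ is trivial on it, $\pi$ is trivial on $N_i\cap\Lambda$. Two points then need care. First, $\pi$ a priori factors only through $\Gamma/(N_i\cap\Lambda)$ rather than through $\Gamma/N_i$; to remedy this one checks that property $(\tau)$ for $\Gamma$ with respect to $\{N_i\}$ upgrades to property $(\tau)$ for $\Gamma$ with respect to the bounded-index refinement $\{N_i\cap\Lambda\}$ --- again by splitting off the $N_i$-fixed subspace, on whose orthogonal complement the normal subgroup $N_i/(N_i\cap\Lambda)$ of order $\le d$ acts without invariant vectors and so forces a displacement depending only on $d$. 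Second, one transfers the resulting uniform $\Omega$-displacement of $\pi$ to $\sigma$: by Mackey's restriction formula $\mathrm{Res}_{\Lambda}\pi\cong\bigoplus_{\gamma\Lambda\in\Gamma/\Lambda}\sigma^{\gamma}$ contains $\sigma$ and has no $\Lambda$-invariant vectors, and this absence of invariant vectors is exactly what makes the passage from an $\Omega$-displacement of the $\Gamma$-module $\pi$ to an $\Omega'$-displacement of the $\Lambda$-module $\mathrm{Res}_{\Lambda}\pi$ legitimate (it would fail for $\ell^{2}(\Gamma/\Lambda)^{\circ}$). In the equivalent language of Cayley graphs this is just the standard pair of facts that restricting an expanding family of Cayley graphs of finite groups to subgroups of bounded index (with Schreier generators) stays expanding, and that an extension of an expanding family by quotients of bounded order stays expanding.

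I expect the main obstacle to be precisely these quantitative ``restriction'' and ``extension'' statements for spectral gaps of finite Cayley graphs --- tracking how the gap degrades under passing to, or building up from, a subgroup of index $\le d$, with all constants controlled only by $d$ and $|\Omega|$. These are entirely standard, so I would invoke \cite[Chapter~4.3]{Lub} for them, as in the statement of the lemma, rather than reproduce the bookkeeping; in a self-contained account the orthogonal-decomposition argument of the second paragraph is the only part that needs to be written out by hand.
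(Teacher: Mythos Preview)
Your overall strategy---restriction and induction plus Frobenius reciprocity---is the same as the paper's, and your $\Lambda\Rightarrow\Gamma$ direction is correct; the paper argues on a single irreducible $\rho$ of $\Gamma/N_i$ that does not factor through $\Gamma/\Lambda$ (observing $\rho|_{\Lambda}\subset l^2_0(\Lambda/(N_i\cap\Lambda))$) rather than via your orthogonal splitting $\mathcal H=\mathcal H^{\Lambda}\oplus(\mathcal H^{\Lambda})^{\perp}$, but these are equivalent organizations of the same idea.

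In the $\Gamma\Rightarrow\Lambda$ direction, however, your sketch has a genuine gap. You induce $\sigma$ from $\Lambda$ to $\Gamma$, land in representations of $\Gamma/(N_i\cap\Lambda)$, and then need two auxiliary steps: an ``upgrade'' of property~$(\tau)$ from $\{N_i\}$ to $\{N_i\cap\Lambda\}$, and a ``transfer'' of the $\Omega$-gap for $\pi$ to an $\Omega'$-gap for $\sigma$. Your justification of the upgrade (``$N_i/(N_i\cap\Lambda)$ of order $\le d$ acts without invariants and so forces a displacement depending only on $d$'') does not stand on its own, since the displacing element of $N_i$ may be an arbitrarily long word in $\Omega$. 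More seriously, your transfer argument runs in the wrong direction: knowing $\max_{\omega\in\Omega}\|\pi(\omega)\xi-\xi\|\ge\kappa\|\xi\|$ for $\xi\in\sigma\subset\mathrm{Res}_{\Lambda}\pi$ does not bound $\max_{\omega'\in\Omega'}\|\pi(\omega')\xi-\xi\|$ from below, because $\Omega\not\subset\langle\Omega'\rangle$. The paper avoids both issues at once by inducing not to $\Gamma$ but from $\Lambda N_i/N_i\cong\Lambda/(N_i\cap\Lambda)$ directly to $\Gamma/N_i$, so no upgrade is needed, and then making the transfer explicit in the correct direction: given an $\varepsilon'$-almost invariant unit vector $v$ in $\sigma$, the paper shows by hand that the averaged vector $\sum_{j\in I}s_j v$ in the induced representation is $5d\varepsilon'$-almost invariant for $\Omega$, using that each $s_{l(i,j)}^{-1}s_is_j$ is a product of at most five elements of $\Omega'$. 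That explicit construction is the missing idea in your sketch; the facts you cite from Lubotzky are of course true, but your explanation of why they hold is not self-contained.
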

\begin{proof}
The {\em only if} part is proved in \cite[Theorem 8]{AE}.

Suppose $S=\{s_1,\ldots,s_k\}$ is a symmetric generating set of $\Gamma$ and at the same time $\Gamma=\bigcup_{i=1}^k s_i\Lambda$. So for any $1\le i,j\le k$ there is $1\le l=l(i,j)\le k$ such that $s_l\Lambda=s_is_j\Lambda$. Let $s'_{i,j}:=s_{l(i,j)}^{-1}s_is_j\in \Lambda$. Then we claim that $\Lambda$ is generated by $S':=\{s'_{i,j}\}\cup (S\cap \Lambda)$. Let $\Lambda_1$ be the group generated by $S'$. Clearly it is a subgroup of $\Lambda$. By the definition of $s'_{i,j}$ we have that $s_{l(i,j)}\Lambda_1=s_is_j\Lambda_1$. And so $\Gamma=\bigcup_{i=1}^k s_i\Lambda_1$ since $S$ generates $\Gamma$ and is symmetric. Therefore we get that $\Lambda_1=\Lambda$.

Now assume that $\Lambda$ has property($\tau$) with respect to $\{N_i\cap \Lambda\}$. So there is $\vare>0$ such that for any unitary representation $\theta$ of $\Lambda$ which factors through $\Lambda\cap N_i$ and has no non-zero fixed vector, we have
\be\label{e:NotAlmostInvariant2}
\max_{s'\in S'}\|\theta(s')(u)-u\|>\vare \|u\|, 
\ee
for any vector $u\in V_{\theta}$.
 
Let $\rho$ be an irreducible representation of $\Gamma$ which factors through $\Gamma/N_i$ and does not factor through $\Gamma/\Lambda$. Suppose that $v$ is a unit vector in $V_{\rho}$ which is $\vare'$-almost invariant vector with respect to $S$, i.e.
\[
\|\rho(s)(v)-v\|\le \vare',
\]
for any $s\in S$. Therefore for any $s'\in S'$ we have
\[
\|\rho(s')(v)-v\|\le 3\vare'.
\]

Since $\Lambda$ is a normal subgroup of $\Gamma$, for the trivial representation $\rho_0$ of $\Lambda$ we have that the restriction of the induced representation $(\ind_{\Lambda}^{\Gamma} \rho_0)_{\Lambda}$ is the direct sum of $[\Gamma:\Lambda]$-many copies of the trivial representation $\rho_0$. By Frobenius reciprocity, we have that either the restriction $\rho_{\Lambda}$ of $\rho$ to $\Lambda$ does not have a non-zero fixed vector, or $\rho$ is a subrepresentation of $\ind_{\Lambda}^{\Gamma}(\rho_0)$. By the above discussion, however, the latter cannot occur as $\rho$ doers not factor through $\Gamma/\Lambda$. Hence $\rho_{\Lambda}$ does not have a non-zero fixed vector, and it factors through $\Lambda\cap N_i$. Therefore we have
\[
\max_{s'\in S'} \|\rho(s')(v)-v\|\ge \vare,
\]     
for any unit vector $v\in V_{\rho}$. Thus by (\ref{e:NotAlmostInvariant2}) we have $\vare<3 \vare'$, which implies that any non-trivial representation of $\Gamma/N_i$ has no $\vare''$-almost invariant vector with respect to $S$ for some $0<\vare''<\vare/3$. 
\end{proof}
\begin{cor}\label{c:FiniteIndexSubgroup}
Let $\Gamma$ be a finitely generated group and $\Omega$ be a symmetric finite generating set of $\Gamma$. Assume $\Lambda$ is a normal finite index subgroup of $\Gamma$. Then $\Lambda$ has a finite symmetric generating set $\Omega'$, and for any infinite set $\{N_i\}$ of normal finite index subgroups of $\Gamma$,  $\{\Cay(\Gamma/N_i,\pi_{N_i}(\Omega))\}_i$ is a family of expanders if and only if $\{\Cay(\Lambda/(N_i\cap \Lambda),\pi_{N_i\cap \Lambda}(\Omega'))\}_i$ is a family of expanders.
\end{cor}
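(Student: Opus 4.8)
The plan is to deduce Corollary~\ref{c:FiniteIndexSubgroup} from Lemma~\ref{l:FiniteIndexSubgroup} together with the standard dictionary identifying ``the family $\{\Cay(\Delta/M_j,\pi_{M_j}(S))\}_j$ is a family of expanders'' with ``$\Delta$ has property($\tau$) with respect to $\{M_j\}$'', for a finitely generated group $\Delta$ with finite symmetric generating set $S$ and a family $\{M_j\}$ of finite-index normal subgroups with $|\Delta/M_j|\to\infty$ (\cite[Chapter 4.3]{Lub}). For the existence of $\Omega'$ I would reuse the first paragraph of the proof of Lemma~\ref{l:FiniteIndexSubgroup}: the Reidemeister--Schreier argument there produces, from the symmetric generating set $\Omega$ of $\Gamma$ and a choice of coset representatives, a finite generating set $S'=\{s'_{i,j}\}\cup(\Omega\cap\Lambda)$ of $\Lambda$, using only $[\Gamma:\Lambda]<\infty$; then $\Omega':=S'\cup (S')^{-1}$ is a finite symmetric generating set of $\Lambda$.

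Before using the dictionary on the $\Lambda$-side I would record that $[\Lambda:N_i\cap\Lambda]=[\Lambda N_i:N_i]\ge[\Gamma:N_i]/[\Gamma:\Lambda]$, and that an infinite family $\{N_i\}$ of finite-index subgroups of a finitely generated group has $[\Gamma:N_i]\to\infty$ (there are only finitely many subgroups of each index), so $|\Lambda/(N_i\cap\Lambda)|\to\infty$ and the expansion notion is meaningful there as well. The one genuine subtlety is that $\Lambda$ is only assumed finite-index, not normal, in $\Gamma$, so Lemma~\ref{l:FiniteIndexSubgroup} does not apply to the pair $(\Gamma,\Lambda)$ verbatim. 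I would handle this by passing through the normal core $M:=\bigcap_{g\in\Gamma}g\Lambda g^{-1}$, a finite-index normal subgroup of $\Gamma$ with $M\subseteq\Lambda$ and $M\trianglelefteq\Lambda$: applying Lemma~\ref{l:FiniteIndexSubgroup} once to $(\Gamma,M,\{N_i\})$ and once to $(\Lambda,M,\{N_i\cap\Lambda\})$ --- valid in both cases, since $M$ is normal in the ambient group and $(N_i\cap\Lambda)\cap M=N_i\cap M$ --- shows that $\Gamma$ has property($\tau$) with respect to $\{N_i\}$ iff $M$ has property($\tau$) with respect to $\{N_i\cap M\}$ iff $\Lambda$ has property($\tau$) with respect to $\{N_i\cap\Lambda\}$. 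Feeding this chain into the dictionary applied to $(\Gamma,\Omega,\{N_i\})$ and to $(\Lambda,\Omega',\{N_i\cap\Lambda\})$ then yields the claimed equivalence of the two families of Cayley graphs. (Alternatively, one checks that the proof of Lemma~\ref{l:FiniteIndexSubgroup} never uses normality of $\Lambda$, only normality of the $N_i$ --- which guarantees $\Lambda N_i/N_i\le\Gamma/N_i$ and $N_i\cap\Lambda\trianglelefteq\Lambda$ --- and applies it directly.)

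I do not expect a real obstacle here: all the mathematical content is carried by Lemma~\ref{l:FiniteIndexSubgroup}, and what remains --- finite generation and a finite symmetric generating set for $\Lambda$, the index growth $[\Lambda:N_i\cap\Lambda]\to\infty$, the normal-core interpolation needed to accommodate the non-normality of $\Lambda$, and the well-known spectral reformulation of ``family of expanders'' --- is routine bookkeeping.
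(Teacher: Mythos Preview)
Your proposal is correct and follows the same approach as the paper: the paper's proof is the one-liner ``This is a direct corollary of Lemma~\ref{l:FiniteIndexSubgroup} and \cite[Theorem 4.3.2]{Lub},'' i.e., exactly the dictionary you invoke. You are in fact more careful than the paper, since you explicitly handle the discrepancy that Lemma~\ref{l:FiniteIndexSubgroup} assumes $\Lambda\trianglelefteq\Gamma$ while the corollary does not --- your normal-core interpolation (or the parenthetical observation that the proof of Lemma~\ref{l:FiniteIndexSubgroup} only needs $N_i\trianglelefteq\Gamma$) fills a gap the paper's one-line proof leaves implicit.
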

\begin{proof}
This is a direct corollary of Lemma~\ref{l:FiniteIndexSubgroup} and \cite[Theorem 4.3.2]{Lub}.
\end{proof} 

Let $\Gamma^{\circ}=\Gamma\cap \bbg^{\circ}$. So $\Gamma^{\circ}$ is a Zariski-dense subgroup of $\bbg^{\circ}$ which is a normal finite index subgroup of $\Gamma$. Let 
\[
S:=\{\pfr\in V_f(k)|\h \Gamma \text{ is unbounded in } \GL_{n}(k_{\pfr})\}.
\]
For any $\pfr\in V_f(k)\setminus S$, let $\Gamma_{\pfr}$ (resp. $\Gamma^{\circ}_{\pfr}$) be the closures of $\Gamma$ (resp. $\Gamma^{\circ}$) in $\GL_n(k_{\pfr})$. Let $\{N_{i,\pfr}\}$ be a chain of open normal subgroups of $\Gamma_{\pfr}$ which forms a basis for the neighborhoods of the identity. Then, by Remark~\ref{r:BasicPropertiesTau}, we have $\lambda(\pcal_{\Omega};\Gamma_{\pfr})=\sup_i \lambda(\pcal_{\Omega};\Gamma_{\pfr}/N_{i,\pfr})$ and $\lambda(\pcal_{\Omega^{\circ}};\Gamma^{\circ}_{\pfr})=\sup_i \lambda(\pcal_{\Omega^{\circ}};\Gamma^{\circ}_{\pfr}/\Gamma^{\circ}_{\pfr}\cap N_{i,\pfr})$. Hence, by Lemma~\ref{l:FiniteIndexSubgroup} and Remark~\ref{r:BasicPropertiesTau}, it is enough to prove $\sup_{\pfr\in V_f(k)\setminus S} \lambda(\pcal_{\Omega^{\circ}}; \Gamma^{\circ}_{\pfr})<1$. So from this point on we can and will assume that $\bbg$ is a Zariski-connected semisimple group. Let $\wt{\bbg}$ be the simply-connected form of $\bbg$ that is defined over $k$. Let $\iota:\wt{\bbg}\rightarrow \bbg$ be the $k$-covering map, $\wt{\Lambda}:=\iota^{-1}(\Gamma)\cap \wt{\bbg}(k)$, and $\Lambda:=\iota(\wt{\Lambda})$. By a similar argument as in~\cite[Lemma 24]{SGS} we have that $\Lambda$ is a normal finite index subgroup of $\Gamma$. Hence again by Lemma~\ref{l:FiniteIndexSubgroup} and Remark~\ref{r:BasicPropertiesTau}, without loss of generality we can assume that $\Gamma=\Lambda$. 

On the other hand, $\iota$ induces a continuous homomorphism with finite kernel from $\wt{\bbg}(k_{\pfr})$ to $\bbg(k_{\pfr})$. So after fixing a $k$-embedding $\wt{\bbg}\xrightarrow{f} \mathbb{GL}_{n_0}$ and passing to a finite index subgroup (as we are allowed by Corollary~\ref{c:FiniteIndexSubgroup}), if needed, we can assume that  $f(\wt{\Lambda})\subseteq \GL_{n_0}(\ocal_k(S))$ and $\Gamma\subseteq \GL_n(\ocal_k(S))$, where 
\[
S:=\{\pfr\in V_f(k)|\h \Gamma \text{ is unbounded in } \GL_{n}(k_{\pfr})\}.
\]
And there is $(m_{\pfr})\in \bigoplus_{\pfr\in V_f(k)} \bbz$ such that for any $\pfr\in V_f(k)$,
\[
|\iota(\wt{\lambda})-1|_{\pfr}\le |N_{k/\bbq}(\pfr)|^{m_{\pfr}}|f(\wt{\lambda})-1|_{\pfr}.
\]
Hence $\iota$ induces an epimorphism from $\pi_{\pfr^n}(f(\wt{\Lambda}))$ onto $\pi_{\pfr^{n-m_{\pfr}}}(\Gamma)$. So $\{\Cay(\pi_{\pfr^n}(f(\wt{\Lambda})),\pi_{\pfr^n}(f(\wt{\Omega})))|\h \pfr\not\in S, n\in \bbz^+\}$ is a family of expanders if and only if $\{\Cay(\pi_{\pfr^n}(\Gamma),\pi_{\pfr^n}(\Omega))|\h \pfr\not\in S, n\in \bbz^+\}$  is a family of expanders. This implies that without loss of generality we can assume $\bbg$ is simply-cosnnected. So we have proved that:

\begin{lem}\label{l:BoundedIntegral}
To prove Theorem~\ref{t:main}, it is enough to prove the following:

Let $k$ be a number field, $S$ be a finite subset of $V_f(k)$, and $\Omega$ be a finite symmetric subset of $\GL_{n_0}(\ocal_k(S))$, where $\ocal_k(S)$ is the ring of $S$-integers of $k$. Let $\Gamma=\langle \Omega\rangle$ and $\bbg$ be its Zariski-closure in $(\GL_{n_0})_k$. Suppose $\bbg$ is a Zariski-connected, simply-connected, semisimple group. Then 
\[
\sup_{\pfr\in V_f(k)\setminus S} \lambda(\pcal_{\Omega}; \Gamma_{\pfr})<1,
\]
where $\Gamma_{\pfr}$ is the closure of $\Gamma$ in $\GL_{n_0}(k_{\pfr})$.
\end{lem}

\subsection{Strong approximation and the $\pfr$-adic closures of $\Gamma$.}\label{ss:StrongApproximation}Here using strong approximation~\cite[Theorem 5.4]{Nor}, we describe structure of the closure  $\Gamma_{\pfr}$ of $\Gamma$ in $\GL_{n_0}(\ocal_{\pfr})$ for any $\pfr\not\in S$. Consider $\Gamma$ as a subgroup of $R_{k/\bbq}(\bbg)(\bbq)$ and let $\bbg_1$ be its Zariski-closure. Since $R_{k/\bbq}(\bbg)$ is isomorphic to 
$\prod_{\sigma\in \hom(k,\overline{\bbq})} \bbg^{\sigma}$ over $k$ and the projection of $\Gamma$ to each factor is Zariski-dense, $\bbg_1$ is a semisimple $\bbq$-group. Since $\bbg$ is simply-connected, so is $\bbg_1$. Let 
\[
S_0:=\{p\in V_f(\bbq)|\h \pfr| p \text{ for some } \pfr\in S\}.  
\]
Let us fix a $k$-embedding of $\bbg$ in $\GL_{n_0}$, a $\bbz$-basis of $\ocal_k$, and fix the induced $\bbq$-embedding of $R_{k/\bbq}(\bbg)$ and get a $\bbq$-embedding of $\bbg_1$ in $\GL_{n_1}$.
Then $\Gamma\subseteq \GL_{n_1}(\bbz_{S_0})$. Hence by Nori's strong-approximation, after going to a finite index subgroup if necessary (it is allowed by Corollary~\ref{c:FiniteIndexSubgroup}), we can assume the following:
\begin{enumerate}
	\item The closure $\wh{\Gamma}$ of $\Gamma$ in $\prod_{p\not \in S_0}\bbg_1(\bbz_p)$ is an open compact subgroup. 
	\item $\wh{\Gamma}=\prod_{p\not \in S_0} K_p$ where $K_p$ is an open compact subgroup of $\bbg_1(\bbq_p)$.
	\item For large enough $p$, $K_p$ is a hyperspecial parahoric (see~\cite[Section 3.8]{Tit} for its definition and basic properties).
	\item If $K_p$ is not a hyperspecial parahoric, then it is a uniformly powerful pro-$p$ group (see \cite{DDMS} for the definition of a uniform pro-$p$ group).  
	\item For any $p\not \in S_0$, either $\pi_p(\Gamma)$ is perfect or it is trivial. And $\Gamma_{\pfr}$ is a quotient of $K_p$. 
\end{enumerate}
Since $\Gamma_{\pfr}$ is a quotient of $K_p$ for any $p\not\in S_0$ and $\pfr|p$, we have that $\lambda(\pcal_{\Omega};K_p)\ge \lambda(\pcal_{\Omega}; \Gamma_{\pfr})$. Let 
\[
\widetilde{S}:=\{\pfr \in V_f(k)|\h \pfr\not \in S, \pfr|p \text{ for some } p\in S_0\}.
\]
We split the proof of Theorem~\ref{t:main} into two parts: proving a uniform spectral gap for $k=\bbq$ and proving a spectral gap for a fixed $\pfr\in V_f(k)\setminus S$. 

\begin{lem}\label{l:StructurePAdicClosure}
Let $k$ be a finite Galois extension of $\bbq$. Let $\Gamma$ be a finitely generated subgroup of $\GL_{n_0}(k)$. Suppose the Zariski-closure $\bbg$ of $\Gamma$ is a connected, simply-connected, semisimple group. Then for any
\[
\pfr\in \ccal(\Gamma):=\{\pfr\in V_f(k)|\h \Gamma \text{ is a bounded subgroup of } \bbg(k_{\pfr})\}
\]
there is a subfield $k(\pfr)$ of $k$, a Zariski-connected, simply-connected,  semisimple $k(\pfr)$-group $\bbg_{1,\pfr}$ such that 
\begin{enumerate}
\item Under the natural embedding of $k$ into $k_{\pfr}$, $k(\pfr)$ is mapped to $\bbq_p$.
\item The closure $\Gamma_{\pfr}$ of $\Gamma$ in $\GL_{n_0}(k_{\pfr})$ is an open compact subgroup of $\bbg_{1,\pfr}(\bbq_p)$.
\item $R_{k(\pfr)/\bbq}(\bbg_{1,\pfr})$ is naturally isomorphic to the Zariski-closure $\bbg_1$ of $\Gamma$ in $R_{k/\bbq}(\bbg)$. 
\end{enumerate}
\end{lem}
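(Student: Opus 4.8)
This statement collects the analysis of Section~\ref{ss:StrongApproximation}; the plan is to organise it into the three asserted properties.

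\emph{Step 1: the $p$-adic algebraic hull of $\Gamma_{\pfr}$.} First I would regard $\Gamma$ as a subgroup of $\bbg(k)\subseteq R_{k/\bbq}(\bbg)(\bbq)$ and let $\bbg_1$ be its Zariski closure. Each geometric projection of $\Gamma$ onto a conjugate $\bbg^{\sigma}$ is still Zariski-dense, so $\bbg_1$ is a connected semisimple $\bbq$-group surjecting onto every such factor; a subdirect-product (Goursat-type) argument over $\qbar$ --- a subdirect product of simply connected simple groups is a direct product of graphs of isomorphisms --- shows $\bbg_1$ is again simply connected. Now fix $\pfr\in\ccal(\Gamma)$. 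Its closure $\Gamma_{\pfr}$ is a compact $p$-adic analytic group, so, passing to a finite-index subgroup as permitted by Corollary~\ref{c:FiniteIndexSubgroup}, I may assume $\Gamma_{\pfr}$ is uniformly powerful pro-$p$ and that $\log$ is a bijection from it onto a $\bbz_p$-Lie subalgebra $\gfr_{\pfr}\subseteq\Lie(\bbg)(k_{\pfr})$. Since $\Gamma$ is Zariski-dense in $\bbg$, the $k_{\pfr}$-span of $\gfr_{\pfr}$ is all of $\Lie(\bbg)(k_{\pfr})$, so $\gfr_{\pfr}\otimes_{\bbz_p}\bbq_p$ is semisimple; equivalently, the Zariski closure $\overline{\bbg}_{1,\pfr}$ of $\Gamma$ (equivalently of $\Gamma_{\pfr}$) inside $R_{k_{\pfr}/\bbq_p}(\bbg\times_k k_{\pfr})$ is a connected semisimple $\bbq_p$-group in which $\Gamma_{\pfr}$ is an open compact subgroup, and the same subdirect-product argument, now applied to the twisted-diagonal embedding $\gamma\mapsto(\sigma\gamma)_{\sigma}$, shows $\overline{\bbg}_{1,\pfr}$ is simply connected.

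\emph{Step 2: Galois reorganisation and the definition of $\bbg_{1,\pfr}$.} Since $k/\bbq$ is Galois, I would take $\gal(k/\bbq;\pfr)$ to be the decomposition group at $\pfr$ and $k(\pfr)$ its fixed field. Restriction identifies $\gal(k_{\pfr}/\bbq_p)$ with $\gal(k/\bbq;\pfr)$, and the chosen embedding $k\hookrightarrow k_{\pfr}$ carries $k(\pfr)$ into $\bbq_p$ --- this is~(1). Transitivity of Weil restriction then furnishes the canonical $\bbq_p$-isomorphism~(\ref{e:ReorderingRestrictionOfScalars}). Set $\bbg_{1,\pfr}:={\rm Pr}_{\pfr}(\bbg_1)$, the image of $\bbg_1$ under the $k(\pfr)$-rational projection ${\rm Pr}_{\pfr}\colon R_{k/\bbq}(\bbg)\to R_{k/k(\pfr)}(\bbg)$; as $\Gamma$ is dense in $\bbg_1$, its image is dense in $\bbg_{1,\pfr}$, so $\bbg_{1,\pfr}$ is connected, semisimple and simply connected over $k(\pfr)$ (the last point being detected after base change to an algebraic closure, where $\bbg_{1,\pfr}$ agrees with $\overline{\bbg}_{1,\pfr}$). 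Combining this definition with~(\ref{e:ReorderingRestrictionOfScalars}) gives the $\bbq_p$-isomorphism~(\ref{e:pAdicAlgebraicStructure}), namely $\overline{\bbg}_{1,\pfr}\simeq\bbg_{1,\pfr}\times_{k(\pfr)}\bbq_p$, and feeding this back into Step~1 yields~(2).

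\emph{Step 3: property (3), and where the real work lies.} It remains to show $\bbg_1\simeq R_{k(\pfr)/\bbq}(\bbg_{1,\pfr})$. Using $R_{k/\bbq}(\bbg)=R_{k(\pfr)/\bbq}\big(R_{k/k(\pfr)}(\bbg)\big)$, the universal property of Weil restriction factors ${\rm Pr}_{\pfr}|_{\bbg_1}$ through a canonical $\bbq$-morphism $\bbg_1\to R_{k(\pfr)/\bbq}(\bbg_{1,\pfr})$, which over $\qbar$ becomes the map into $\prod_{\sigma}\bbg_{1,\pfr}^{\sigma}$ indexed by the $\bbq$-embeddings $\sigma\colon k(\pfr)\hookrightarrow\qbar$, with $\sigma$-component ${\rm Pr}_{\pfr}^{\sigma}$. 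This morphism is injective, hence a closed immersion onto its image, so all that is left is surjectivity, i.e. the dimension equality $\dim\bbg_1=[k(\pfr):\bbq]\cdot\dim\bbg_{1,\pfr}$. I expect this to be the one genuinely structural step, and I would deduce it from the partial-diagonal description of $\bbg_1$ obtained in Step~1 together with the fact that $\gal(k/\bbq;\pfr)$ is exactly the stabiliser of $\pfr$: the simple factors of $\bbg_1$ over $\qbar$ carry a $\gal(k/\bbq)$-action whose orbit structure is controlled by the places of $k$ above $p$, so passing from $k$ to $k(\pfr)$ collapses nothing beyond what ${\rm Pr}_{\pfr}$ already records, which forces the conjugates ${\rm Pr}_{\pfr}^{\sigma}(\bbg_1)$ to be independent inside $R_{k/\bbq}(\bbg)\times_{\bbq}\qbar$. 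Once this dimension count is established, (1)--(3) are assembled as above and the lemma follows.
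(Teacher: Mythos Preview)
Your Steps 1 and 2 follow the paper's discussion in Section~\ref{ss:StrongApproximation} essentially verbatim: the paper identifies $\overline{\bbg}_{1,\pfr}$, records~(\ref{e:ReorderingRestrictionOfScalars}) and~(\ref{e:pAdicAlgebraicStructure}), and reads off (1) and (2); the lemma is then simply announced as a summary, with no separate argument for (3).

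The genuine gap is in Step 3. The dimension equality $\dim\bbg_1=[k(\pfr):\bbq]\cdot\dim\bbg_{1,\pfr}$ need not hold for the paper's choices $k(\pfr)=k^{\gal(k/\bbq;\pfr)}$ and $\bbg_{1,\pfr}={\rm Pr}_{\pfr}(\bbg_1)$. Take $k=\bbq(\sqrt{2},\sqrt{3})$, $\bbg=(\SL_2)_k$, and $\Gamma\subseteq\SL_2(\bbz)$ Zariski-dense, viewed inside $\GL_2(k)$. Then $(\bbg_1)_{\qbar}$ is the full diagonal $\SL_2$ inside $R_{k/\bbq}(\SL_2)_{\qbar}\simeq\SL_2^4$, so $\dim\bbg_1=3$. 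For $\pfr$ with decomposition group $\gal(k/\bbq(\sqrt 3))$ one has $k(\pfr)=\bbq(\sqrt 3)$ and $\bbg_{1,\pfr}\simeq(\SL_2)_{\bbq(\sqrt 3)}$, whence $\dim R_{k(\pfr)/\bbq}(\bbg_{1,\pfr})=6$. Thus the conjugates ${\rm Pr}_{\pfr}^{\sigma}(\bbg_1)$ are \emph{not} independent here, and your heuristic that ``the orbit structure is controlled by the places of $k$ above $p$'' is precisely what fails: the Galois orbits on the geometric simple factors of $\bbg_1$ encode the $\bbq$-simple decomposition of $\bbg_1$, which is entirely unrelated to the splitting behaviour of $p$ in $k$. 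What your argument \emph{does} correctly establish is the closed immersion $\bbg_1\hookrightarrow R_{k(\pfr)/\bbq}(\bbg_{1,\pfr})$ together with the surjection ${\rm Pr}_{\pfr}\colon(\bbg_1)_{k(\pfr)}\twoheadrightarrow\bbg_{1,\pfr}$; this already guarantees that each $k(\pfr)$-simple factor of $\bbg_{1,\pfr}$ is hit by some $\bbq$-simple factor of $\bbg_1$, and that is all the later application in Lemma~\ref{l:ProjectionSimpleFactors} actually needs.
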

\begin{proof}
For any $\pfr\in \ccal$, by assumption, the closure $\Gamma_{\pfr}$ of $\Gamma$ in $\bbg(k_{\pfr})$ is a compact group. 
Since $\Gamma_{\pfr}$ is a $p$-adic analytic compact group, it has a finite index uniformly powerful pro-$p$ subgroup. Since $\bbg$ is connected, any finite index subgroup of $\Gamma$ is still Zariski-dense in $\bbg$. So without loss of generality, we can and will assume that $\Gamma_{\pfr}$ is a uniformly powerful pro-$p$ group, and the logarithmic map induces 
a bijection between $\Gamma_{\pfr}$ and a Lie $\bbz_p$-subalgebra $\gfr_{\pfr}$ of $\Lie(\bbg)(k_{\pfr})$.
Since $\Gamma$ is Zariski-dense in $\bbg$, the $k_{\pfr}$-span of $\gfr_{\pfr}$ is the entire $\Lie(\bbg)(k_{\pfr})$. 
And so $\gfr_{\pfr}\otimes_{\bbz_p} \bbq_p$ is a semisimple Lie algebra. Hence the Zariski-closure $\overline{\bbg}_{1,\pfr}$ of $\Gamma_{\pfr}$ in $R_{k_{\pfr}/\bbq_p}(\bbg\times_k k_{\pfr})$ is a semisimple $\bbq_p$-group and $\Gamma_{\pfr}$ is an open compact subgroup of $\overline{\bbg}_{1,\pfr}(\bbq_p)$. 

 Let $\gal(k/\bbq)$ be the Galois group, and $\gal(k/\bbq;\pfr):=\{\sigma\in\gal(k/\bbq)|\h \sigma(\pfr)=\pfr\}$ be the decomposition group. Let $k(\pfr)$ be the subfield of fixed points of $\gal(k/\bbq;\pfr)$. It is well-known that the restriction map induces an isomorphism between the Galois group $\gal(k_{\pfr}/\bbq_p)$ and the decomposition group $\gal(k/\bbq;\pfr)$. In particular, the natural embedding of $k$ in $k_{\pfr}$ sends $k(\pfr)$ to $\bbq_p$. And so we have a (natural) $\bbq_p$-isomorphism 
\be\label{e:ReorderingRestrictionOfScalars}
R_{k_{\pfr}/\bbq_p}(\bbg\times_k k_{\pfr})\simeq R_{k/k(\pfr)}(\bbg)\times_{k(\pfr)} \bbq_p.
\ee
There is a natural projection ${\rm Pr}_{\pfr}$ from $R_{k/\bbq}(\bbg)$ to $R_{k/k(\pfr)}(\bbg)$ that is defined over $k(\pfr)$. Let $\bbg_{1,\pfr}:={\rm Pr}_{\pfr}(\bbg_1)$. By (\ref{e:ReorderingRestrictionOfScalars}), we get the following natural $\bbq_p$-isomorphism
\be\label{e:pAdicAlgebraicStructure}
\overline{\bbg}_{1,\pfr}\simeq \bbg_{1,\pfr}\times_{k(\pfr)} \bbq_p,
\ee
which completes the proof. 
\end{proof}

An important application of strong approximation and the above discussion is the following:
\begin{lem}\label{l:ClosureOfZariskiDenseSubgroup}
Let $\Omega\subseteq \GL_{n_0}(k)$ be a finite symmetric set. Suppose the Zariski-closure $\bbg$ of the group $\Gamma$ generated by $\Omega$ is a Zariski-connected, simply-connected, semisimple group. Let 
\[
\ccal(\Gamma):=\{\pfr\in V_f(k)|\h \Gamma \text{ is a bounded subgroup of } \bbg(k_{\pfr})\}.
\]
Suppose $\Lambda$ is a finitely generated subgroup of $\Gamma$ which is Zariski-dense in $\Gamma$ considered as a subgroup of $R_{k/\bbq}(\bbg)(\bbq)$. Then for any $\pfr\in \ccal(\Gamma)$ we have that 
$\Lambda_{\pfr}$ is a finite-index subgroup of $\Gamma_{\pfr}$, where $\Gamma_{\pfr}$ (resp. $\Lambda_{\pfr}$) is the closure of $\Gamma$ (resp. $\Lambda$) in $\bbg(k_{\pfr})$. Furthermore for almost all $\pfr\in \ccal(\Gamma)$ we have 
$\Gamma_{\pfr}=\Lambda_{\pfr}$. 
\end{lem}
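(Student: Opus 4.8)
The plan is to reduce everything to strong approximation applied simultaneously to $\Gamma$ and $\Lambda$ inside $R_{k/\bbq}(\bbg)$, using that both are Zariski-dense in the same simply-connected semisimple $\bbq$-group $\bbg_1$. First I would replace $\bbg$ by $\bbg_1$ and work over $\bbq$: by the discussion preceding Lemma~\ref{l:StructurePAdicClosure}, for a fixed $\pfr \in \ccal(\Gamma)$ lying over the rational prime $p$, the closure $\Gamma_{\pfr}$ is, up to the $\bbq_p$-isomorphism \eqref{e:pAdicAlgebraicStructure}, the image of the closure of $\Gamma$ in $\bbg_1(\bbq_p)$ under the projection ${\rm Pr}_{\pfr}$, and similarly for $\Lambda$. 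So it suffices to prove the analogous statement for the closures of $\Gamma$ and $\Lambda$ in $\bbg_1(\bbq_p)$, i.e. to show these closures are commensurable for every $p$ and equal for almost every $p$.

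Next I would invoke Nori's strong approximation (\cite[Theorem 5.4]{Nor}) for $\Lambda$. Since $\Lambda$ is finitely generated and Zariski-dense in the simply-connected semisimple group $\bbg_1$, there is a finite set $S_0'$ of rational primes (containing the primes where $\Lambda$ is unbounded, together with finitely many bad primes) such that the closure of $\Lambda$ in $\prod_{p\notin S_0'}\bbg_1(\bbz_p)$ is open, and for $p\notin S_0'$ this closure projects onto a hyperspecial parahoric $K_p^{\Lambda}$ of $\bbg_1(\bbq_p)$; the same holds for $\Gamma$ with its own finite set $S_0$. For $p\notin S_0 \cup S_0'$ the closures of $\Gamma$ and $\Lambda$ are both the (unique, since $\bbg_1$ is simply connected) hyperspecial maximal compact subgroup $\bbg_1(\bbz_p)$ in the chosen model, hence equal; this yields the ``furthermore'' clause, since all but finitely many $\pfr\in\ccal(\Gamma)$ lie over primes outside $S_0\cup S_0'$. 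For the finitely many remaining $p$ (equivalently the finitely many remaining $\pfr$), I would argue that the closure of $\Lambda$ is still open in the closure of $\Gamma$: both are open compact subgroups of $\bbg_1(\bbq_p)$ — the latter because $\pfr\in\ccal(\Gamma)$ forces $\Gamma$ bounded and Zariski-dense, so its closure is open compact by the argument in Lemma~\ref{l:StructurePAdicClosure}; the former because openness of the closure of $\Lambda$ in $\prod_{p\notin S_0'}\bbg_1(\bbz_p)$ forces, by projecting to the $p$-factor, the closure of $\Lambda$ in $\bbg_1(\bbz_p)$ (hence in $\bbg_1(\bbq_p)$) to be open. An open subgroup of a compact group has finite index, so $\Lambda_{\pfr}$ has finite index in $\Gamma_{\pfr}$. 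Transporting back through ${\rm Pr}_{\pfr}$ and \eqref{e:pAdicAlgebraicStructure}, which are continuous with finite kernel and cokernel on the relevant compact groups, preserves the finite-index (resp.\ equality for a.e.\ $\pfr$) conclusion.

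The main obstacle is the bookkeeping at the finitely many ``exceptional'' primes: one must be careful that a prime $p$ can lie in $S_0'$ (a bad prime for $\Lambda$) while still $\pfr\in\ccal(\Gamma)$, so that $\Gamma_{\pfr}$ is genuinely compact but need not be a hyperspecial parahoric — there one cannot appeal to uniqueness and must instead use the soft openness argument above. A secondary subtlety is that $\ccal(\Lambda)\supseteq\ccal(\Gamma)$ is automatic (a subgroup of a bounded group is bounded), which is what lets one run Nori for $\Lambda$ at all the places in question; the reverse containment is false in general, but is not needed here since the statement only concerns $\pfr\in\ccal(\Gamma)$. With those two points handled, the rest is a routine application of the structure theory already recorded in Lemmas~\ref{l:StructurePAdicClosure} and the strong-approximation package listed in Section~\ref{ss:StrongApproximation}.
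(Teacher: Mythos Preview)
Your approach is essentially the paper's: pass to $\bbg_1 = R_{k/\bbq}(\bbg)$, apply Nori's strong approximation to $\Lambda$, deduce that $\widehat{\Lambda}$ is open (hence of finite index) in $\widehat{\Gamma}$ inside the adelic product, and treat the finitely many leftover places separately. The ``furthermore'' clause is fine, though the parenthetical about a \emph{unique} hyperspecial is wrong (hyperspecials are not unique, even for simply connected groups); what you actually use, and what suffices, is that for almost all $p$ both closures equal the specific group $\bbg_1(\bbz_p)$ in the fixed integral model.

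There is, however, a real gap in your treatment of the exceptional primes. You justify openness of $\Lambda_{\pfr}$ by ``projecting to the $p$-factor'' from the open subgroup of $\prod_{p\notin S_0'}\bbg_1(\bbz_p)$; but this projection is only available when $p\notin S_0'$. For a prime $p\in S_0'$ with some $\pfr\mid p$ still in $\ccal(\Gamma)$ (exactly the case you flag as the ``main obstacle''), the product over $p\notin S_0'$ tells you nothing about $\Lambda_{\pfr}$. The paper's fix---which you already invoke for $\Gamma_{\pfr}$ but not for $\Lambda_{\pfr}$---is to argue directly: $\Lambda_{\pfr}$ is a closed subgroup of the $p$-adic analytic group $\bbg_{1,\pfr}(\bbq_p)$, hence is itself $p$-adic analytic, and since $\Lambda$ is Zariski-dense in $\bbg_1$ its Zariski closure in $R_{k_{\pfr}/\bbq_p}(\bbg)$ is all of $\bbg_{1,\pfr}\times_{k(\pfr)}\bbq_p$, so $\Lambda_{\pfr}$ has full dimension and is therefore open. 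Once you replace the projection argument by this analytic one at the bad primes (or, more simply, drop the separate set $S_0'$ altogether and run strong approximation for $\Lambda$ over the same $\overline{S}_0$ used for $\Gamma$, as the paper does), the proof goes through.
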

\begin{proof}
Since $\Gamma$ is finitely generated, $S:=V_f(k)\setminus \ccal(\Gamma)$ is a finite set. Passing to a finite-index subgroup of $\Gamma$, if needed, we can assume that $\Gamma\subseteq \GL_{n_0}(\ocal_k(S))$. Let $\wt{S}:=\{p \in V_f(\bbq)|\h \pfr|p \text{ for some } \pfr\in S\}$. Then $\Gamma\subseteq R_{\ocal_k/\bbz}(\GL_{n_0})(\bbz_{\wt{S}})$. By strong approximation, the closure $\widehat{\Lambda}$ of $\Lambda$ in 
\[
\prod_{p\in V_f(\bbq)\setminus \wt{S}} \bbg_1(\bbz_p)
\]
is an open subgroup. Hence $\widehat{\Lambda}$ is a finite-index subgroup of $\widehat{\Gamma}$, which proves that for almost all $\pfr$ we have $\Gamma_{\pfr}=\Lambda_{\pfr}$. 

For a given $\pfr$, we know that $\Lambda_{\pfr}$ is an open subgroup of a $p$-adic analytic group. And so it is open in the $\bbq_p$-points of the Zariski-closure of $R_{k_{\pfr}/\bbq_p}(\bbg)(\bbq_p)$. Since $\Lambda$ is Zariski-dense in $\bbg_1$ (when considered as a subgroup of $R_{k/\bbq}(\bbg)(\bbq)$), its Zariski-closure as a subgroup of $R_{k_{\pfr}/\bbq_p}(\bbg)(\bbq_p)$ is ${\rm Pr}_{\pfr}(\bbg_1)\times_{k(\pfr)} \bbq_p$ (by the argument in the proof of Lemma~\ref{l:StructurePAdicClosure}). This implies that $\Lambda_{\pfr}$ is an open subgroup of $\Gamma_{\pfr}$. 
\end{proof}

\subsection{Ping-pong players and the main result of~\cite{SGV}.} One of the essential ingredients of this work is the following case of~\cite[Corollary 6]{SGV}.
\begin{thm}\label{t:ExpanderSquareFree}
Let $k$ be a number field, $S$ be a finite subset of $V_f(k)$, and $\Omega$ be a finite symmetric subset of $\GL_{n_0}(\ocal_k(S))$, where $\ocal_k(S)$ is the ring of $S$-integers of $k$. Let $\Gamma=\langle \Omega\rangle$ and $\bbg$ be its Zariski-closure in $(\GL_{n_0})_k$. Suppose $\bbg$ is a semisimple group. Then $\{\Cay(\pi_{\pfr}(\Gamma),\pi_{\pfr}(\Omega))\}_{\pfr\in V_f(k)\setminus S}$ is a family of expanders.
\end{thm}

As in~\cite{SGV}, first we change the probability law of the random walk. The following can be deduced as a byproduct of the proof of \cite[Proposition 6]{SGV}.
\begin{prop}\label{p:EscapeSubgroups}.
In the setting of Theorem~\ref{t:main}, let $\bbg_1$ be the Zariski-closure of $\Gamma$ in $R_{k/\bbq}(\bbg)$. Then there are a finite subset $\overline{\Omega'}$ of $\Gamma$ and $\delta_0>0$ and $l_0$ (which depend on $\Omega$) such that $\overline{\Omega'}$ freely generates a Zariski-dense subgroup of $\bbg_1$ and
\be\label{e:EscapeSubgroups}
\pcal_{\Omega'}^{(l)}(\bbh)\le e^{-\delta_0 l}
\ee
for any proper subgroup $\bbh$ of $\bbg_1$ and $l\ge l_0$, where $\Omega'=\overline{\Omega'}\cup\overline{\Omega'}^{-1}$. 

Furthermore if $\bbg_1^{(i)}$ are $\bbq$-almost simple factors of $\bbg_1$, then we can also assume that ${\rm pr}_i(\Ad(\overline{\Omega'}))$ freely generates a Zariski-dense subgroup of $\Ad(\bbg_1^{(i)})$, where ${\rm pr}_i:\Ad(\bbg)\rightarrow \Ad(\bbg_1^{(i)})$ are the projection maps. In particular, $\ker({\rm pr}_i\circ\Ad)\cap \langle \overline{\Omega'}\rangle=\{1\}$.
\end{prop}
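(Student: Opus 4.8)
The plan is to combine three classical facts: (i) a finitely generated Zariski-dense subgroup of a semisimple group contains a free subgroup that is still Zariski-dense in every simple factor simultaneously (a Tits-alternative plus a genericity argument), (ii) Zariski-dense subgroups of semisimple groups avoid proper algebraic subgroups "exponentially fast" under the random walk, which is exactly the content of the proof of \cite[Proposition 6]{SGV}, and (iii) replacing a symmetric generating set $\Omega$ by a set $\Omega'$ coming from long words in $\Omega$ changes the spectral-gap problem only cosmetically (the two random walks are comparable). So the real work is in producing $\overline{\Omega'}$ with the right freeness and Zariski-density properties; the escape estimate then comes for free from \cite{SGV}.

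First I would recall that $\Gamma$, viewed inside $R_{k/\bbq}(\bbg)(\bbq)$, is Zariski-dense in the semisimple $\bbq$-group $\bbg_1 \simeq \prod_i \bbg_1^{(i)}$ (product of the $\bbq$-almost simple factors), and that $\Ad\circ {\rm pr}_i(\Gamma)$ is Zariski-dense in the adjoint group $\Ad(\bbg_1^{(i)})$ for each $i$. By the Tits alternative, $\Gamma$ contains a nonabelian free subgroup; by the standard "generic pair generates a Zariski-dense free group" argument (Breuillard--Gelander, or the Baire-category/strong-approximation argument used in~\cite{SGV} itself), one can choose finitely many elements $\overline{\Omega'} = \{w_1,\dots,w_r\}$ — each $w_j$ a word in $\Omega$ — such that $\langle \overline{\Omega'}\rangle$ is free of rank $r$ and its image under each ${\rm pr}_i\circ\Ad$ is Zariski-dense in $\Ad(\bbg_1^{(i)})$. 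Zariski-density in each adjoint factor simultaneously forces $\langle \overline{\Omega'}\rangle$ to be Zariski-dense in all of $\bbg_1$ (since a Zariski-closed subgroup surjecting onto each adjoint factor, with the factors being the almost simple pieces, must be everything up to center, and freeness rules out being trapped in a proper subgroup by a Goursat-type argument). The final clause $\ker({\rm pr}_i\circ\Ad)\cap\langle\overline{\Omega'}\rangle = \{1\}$ is then immediate: this kernel is a finite central subgroup, but $\langle\overline{\Omega'}\rangle$ is free, hence torsion-free.

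With $\overline{\Omega'}$ fixed, the escape estimate \eqref{e:EscapeSubgroups} is proved exactly as in \cite[Proposition 6]{SGV}: for a Zariski-dense (in particular finitely generated, and lying in $\GL_{n_1}(\ocal_k[1/a_0])$ for suitable $a_0$) free subgroup, one shows that for \emph{any} proper algebraic subgroup $\bbh\subsetneq\bbg_1$, the $\pcal_{\Omega'}$-random walk lands in $\bbh(\bbq)$ with probability decaying like $e^{-\delta_0 l}$. The point is that the proper algebraic subgroups of $\bbg_1$ of bounded complexity form finitely many families, one reduces to a single $\bbh$ of maximal dimension in each family, and then a combination of strong approximation (reducing modulo a well-chosen prime where $\bbh$ stays proper, using Lemma~\ref{l:ClosureOfZariskiDenseSubgroup} and Nori) with the spectral gap in a single congruence quotient — equivalently Theorem~\ref{t:ExpanderSquareFree} / \cite[Corollary 6]{SGV} applied to the action on $\bbg_1/\bbh$ — yields the uniform exponential decay. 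The constants $\delta_0, l_0$ depend only on $\Omega$ (through the complexity of $\bbg_1$ and the word length of the $w_j$'s).

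The main obstacle is the simultaneous Zariski-density requirement in \emph{every} adjoint factor together with freeness: a single generic word has Zariski-dense image in one factor but a careless choice could collapse in another, and one must also guarantee that the whole tuple $\overline{\Omega'}$ generates freely. This is handled by the Baire-category argument — the set of tuples that are both free and Zariski-dense in each factor is a dense $G_\delta$ (or, in the arithmetic/profinite setting, one uses strong approximation to exhibit an explicit such tuple), so a suitable choice exists; one then only needs to note that each $w_j$ can be taken to be a word in the original $\Omega$ so that the comparison of random walks $\pcal_\Omega$ versus $\pcal_{\Omega'}$ is legitimate. Everything else is a black-box citation of~\cite{SGV}.
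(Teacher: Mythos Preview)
There is a genuine gap. You decouple the construction of $\overline{\Omega'}$ from the escape estimate, but in the paper (and in \cite{SGV}) these two things are inseparable. The set $\overline{\Omega'}$ is not obtained by a Tits-alternative/Baire-category argument; it is constructed as a set of \emph{simultaneous ping-pong players} on the projective spaces of finitely many irreducible subrepresentations of $\wedge\Lie(\bbg_1)$ (via \cite[Lemma 19, Proposition 21]{SGV}). It is exactly this ping-pong structure that \cite[Proposition 20]{SGV} converts into the uniform counting bound
\[
|B_l(\overline{\Omega'})\cap \bbh|<|B_l(\overline{\Omega'})|^{1-c}
\]
valid for \emph{every} proper Zariski-connected $\bbh$ with a single $c>0$; Kesten and Cauchy--Schwarz then turn this into \eqref{e:EscapeSubgroups}. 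A generic free pair produced by Baire category is Zariski-dense but carries no such dynamical information, so you cannot simply ``cite \cite[Proposition 6]{SGV}'' for it: that proposition is about the specific ping-pong generators it constructs, not about arbitrary free Zariski-dense subsets.

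Your fallback route through Theorem~\ref{t:ExpanderSquareFree} (reduce modulo a prime where $\bbh$ stays proper and use the spectral gap) also does not deliver what is claimed, because it is not uniform in $\bbh$. That argument is exactly Proposition~\ref{p:EscapeSubvariety}, and the bound one gets there is $\pcal_{\Omega'}^{(l)}(\bbh)\ll p_0(\bbh)\,e^{-\delta_0 l}$, with a prefactor controlled by the smallest unramified prime for $\bbh$. Over the family of all proper algebraic subgroups this prefactor is unbounded (already for the infinite family of maximal tori), so the ``finitely many families, pick one of maximal dimension'' heuristic does not collapse the dependence. The whole point of the ping-pong construction is that it replaces this arithmetic dependence on $\bbh$ by a purely dynamical one that is uniform.
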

\begin{proof}
This is essentially proved in~\cite[Proposition 17, Proposition 20, Proposition 7]{SGV} (for more general groups). Let us quickly give the outline of the argument for the semisimple case. 

By \cite[Proposition 17]{SGV}, there are finitely many non-trivial irreducible representations $\rho_i$ of $\bbg_1$ defined over local fields $F_i$ such that 
\begin{enumerate}
\item $\rho_i(\Gamma)\subseteq \GL(V_{\rho_i})(F_i)$ is unbounded for any $i$, and
\item for any proper infinite algebraic subgroup $\bbh$ of $\bbg_1$ for some $i$ there is an element of the projective space $[w]\in {\bf P}(V_{\rho_i})$ such that $\rho_i(\bbh)([w])=[w]$.
\end{enumerate}
 We add all the the irreducible subrepresentations of $\wedge \Lie(\bbg)$ that factor through $\bbg_1^{(j)}$ for some $j$. For any such subrepresentation $\rho_i$ we can find a local field such that $\rho_i(\Gamma)$ is unbounded as these representations are defined over number fields and $\rho_i(\Gamma)$ is Zariski-dense in $\rho_i(\bbg_1)$.  
 
 By \cite[Proposition 20, Proposition 21]{SGV}, there is a finite subset $\overline{\Omega'}\subseteq \Gamma$ such that
 \begin{enumerate}
 \item For any $i$, $\rho_i(\overline{\Omega'})$ freely generates a subgroup of $\rho_i(\Gamma)$.
 \item There are $l_0, c>0$ such that for any $i$, $[w]\in {\bf P}(V_{\rho_i})(F_i)$, and integer $l\ge l_0$ we have
\[
|\{\gamma\in B_l(\overline{\Omega'})|\h \rho_i(\gamma)([w])=[w]\}|<|B_l(\overline{\Omega'})|^{1-c}, 
\]
where $B_l(\overline{\Omega'})$ is the set of reduced words over $\overline{\Omega'}$ of length at most $l$.
 \end{enumerate}
 
By the above geometric description of proper algebraic subgroups, we have that 
\[
|B_l(\overline{\Omega'})\cap \bbh|<|B_l(\overline{\Omega'})|^{1-c},
\]
for any proper algebraic subgroup $\bbh$ and any $l\ge l_0$. Now using a result of Kesten~\cite[Theorem 3]{Kes} and Cauchy-Schwarz as in the proof of \cite[Proposition 6]{SGV}, one can finish the proof.

On the other hand, since for any $j$ there is an $i$ such that $\rho_i$ is a subrepresentation of ${\rm pr}_j\circ \Ad$, we have that ${\rm pr}_j(\Ad(\overline{\Omega}'))$ freely generates a subgroup of $\Ad(\bbg_1^{(j)})$. 
\end{proof}

\begin{lem}\label{l:p-adicClosureZariskiDenseSubgroup}
Let $\Omega$ be a finite symmetric subset of $\GL_{n_0}(k)$ and $\Gamma=\langle \Omega\rangle$. Assume the Zariski-closure $\bbg$ of $\Gamma$ is a connected, simply-connected, semisimple $k$-group. Suppose $\Omega'\subseteq \Gamma$ is a finite symmetric set which generates a Zariski-dense subgroup $\Lambda$ of $\Gamma$ viewed as a subgroup of $R_{k/\bbq}(\bbg)(\bbq)$. Let 
\[
\ccal(\Gamma):=\{\pfr\in V_f(k)|\h \Gamma \text{ is a bounded subgroup of } \bbg(k_{\pfr})\}.
\]

Then if $\sup\{\lambda(\pcal_{\Omega'};\Lambda_{\pfr})|\h \pfr\in \ccal(\Gamma)\}<1$, then $\sup\{\lambda(\pcal_{\Omega};\Gamma_{\pfr})|\h \pfr\in \ccal(\Gamma)\}<1$.
\end{lem}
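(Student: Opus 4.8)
The goal is to transfer a uniform spectral gap from the random walk driven by $\pcal_{\Omega'}$ on the closures $\Lambda_{\pfr}$ to the random walk driven by $\pcal_{\Omega}$ on the larger closures $\Gamma_{\pfr}$. The two difficulties are (i) $\Omega'$ need not be a subset of $\Omega$ — it is only a subset of the abstract group $\Gamma=\langle\Omega\rangle$ — and (ii) $\Lambda_{\pfr}$ is only a finite-index subgroup of $\Gamma_{\pfr}$, not all of it (this is exactly the content of Lemma~\ref{l:ClosureOfZariskiDenseSubgroup}). Both are standard obstacles in this circle of ideas, so the argument is a bookkeeping argument combining three ingredients: each generator in $\Omega'$ is a bounded-length word in $\Omega$; passing to a finite-index subgroup preserves spectral gap (Lemma~\ref{l:FiniteIndexSubgroup}/Corollary~\ref{c:FiniteIndexSubgroup}, in its $p$-adic/profinite incarnation); and the index $[\Gamma_{\pfr}:\Lambda_{\pfr}]$ is bounded uniformly in $\pfr$.

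First I would fix $L$ so that every element of $\Omega'$ can be written as a word of length at most $L$ in $\Omega$. Then a standard comparison of random walks shows that $\pcal_{\Omega'}$ is "dominated" by $\pcal_{\Omega}^{(L)}$ in the sense that there is $c=c(L,|\Omega|)>0$ with $\pcal_{\Omega}^{(L)}(\gamma)\ge c\,\pcal_{\Omega'}(\gamma)$ for every $\gamma\in\Omega'$; passing this through the operator $T_{\mu}$ on $L^2(\Gamma_{\pfr})^{\circ}$ and using that $T_{\pcal_{\Omega}}$ is a self-adjoint contraction, one gets the standard inequality $\lambda(\pcal_{\Omega};\Gamma_{\pfr})\le 1-c'(1-\lambda(\pcal_{\Omega'};\Gamma_{\pfr}))$ for some $c'>0$ depending only on $L$ and $|\Omega|$, $|\Omega'|$ (this is the usual "replacing generators by short words" lemma; compare the argument in the proof of Lemma~\ref{l:FiniteIndexSubgroup}). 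Hence it suffices to bound $\sup_{\pfr}\lambda(\pcal_{\Omega'};\Gamma_{\pfr})$ away from $1$.

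Next I would control the gap on $\Gamma_{\pfr}$ in terms of the gap on the finite-index subgroup $\Lambda_{\pfr}$. By Lemma~\ref{l:ClosureOfZariskiDenseSubgroup}, $\Lambda_{\pfr}$ is of finite index in $\Gamma_{\pfr}$ for every $\pfr\in\ccal(\Gamma)$, and equals $\Gamma_{\pfr}$ for all but finitely many $\pfr$; moreover, examining the proof (the closures $\wh\Lambda\subseteq\wh\Gamma$ inside $\prod_p\bbg_1(\bbz_p)$ differ by a fixed open subgroup), the index $[\Gamma_{\pfr}:\Lambda_{\pfr}]$ is bounded by a constant $D$ independent of $\pfr$. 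Now the same Frobenius-reciprocity/induced-representation argument used to prove Lemma~\ref{l:FiniteIndexSubgroup} applies verbatim with $\Gamma_{\pfr}$ in place of $\Gamma$ and $\Lambda_{\pfr}$ in place of $\Lambda$: a non-trivial subrepresentation of $L^2(\Gamma_{\pfr})^{\circ}$, restricted to $\Lambda_{\pfr}$, contains a copy of a non-trivial subrepresentation of $L^2(\Lambda_{\pfr})^{\circ}$ (after removing the at-most-$D$-dimensional space of $\Lambda_{\pfr}$-invariants, which contributes a trivial $\Gamma_{\pfr}$-subrepresentation that is absent from $L^2(\Gamma_{\pfr})^{\circ}$). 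Combined with the word-length comparison above, this yields $\lambda(\pcal_{\Omega};\Gamma_{\pfr})\le 1-\eta(1-\lambda(\pcal_{\Omega'};\Lambda_{\pfr}))$ for some $\eta>0$ depending only on $D$, $L$, and the sizes of the generating sets — all uniform in $\pfr$. For the finitely many exceptional $\pfr$ where $\Lambda_{\pfr}\subsetneq\Gamma_{\pfr}$ may require a larger index, there are only finitely many of them, and for each one $\lambda(\pcal_{\Omega};\Gamma_{\pfr})<1$ because $\Lambda$ is dense in $\Lambda_{\pfr}$ which is open of finite index in the compact group $\Gamma_{\pfr}$ and $\Gamma$ is dense in $\Gamma_{\pfr}$; taking the max with $\lambda_0$ from the hypothesis leaves the supremum $<1$.

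**The main obstacle.** The only genuinely non-formal point is the uniform bound on the index $[\Gamma_{\pfr}:\Lambda_{\pfr}]$; everything else is the by-now-routine machinery of replacing a generating set by another and of inducing representations from a finite-index subgroup. The index bound follows because, after passing to the common setup of Lemma~\ref{l:StructurePAdicClosure}, both $\wh\Gamma$ and $\wh\Lambda$ are open compact subgroups of $\prod_{p\notin\overline S_0}\bbg_1(\bbz_p)$ with $\wh\Lambda\subseteq\wh\Gamma$, and $\wh\Lambda$ contains a fixed open subgroup (namely a congruence subgroup coming from strong approximation applied to $\Lambda$); projecting to the $\pfr$-factor gives $[\Gamma_{\pfr}:\Lambda_{\pfr}]\le[\wh\Gamma:\wh\Lambda]<\infty$ independently of $\pfr$. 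Thus the reduction is complete: $\sup_{\pfr\in\ccal(\Gamma)}\lambda(\pcal_{\Omega};\Gamma_{\pfr})<1$.
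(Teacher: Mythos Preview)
Your overall strategy --- word-length comparison to pass from $\Omega$ to $\Omega'$, then a finite-index transfer from $\Gamma_{\pfr}$ to $\Lambda_{\pfr}$, with the finiteness of exceptional $\pfr$ coming from Lemma~\ref{l:ClosureOfZariskiDenseSubgroup} --- is the right one, and matches the paper in spirit. But the finite-index step, as you have written it, contains a genuine error.

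You claim that the $\Lambda_{\pfr}$-invariants in $L^2(\Gamma_{\pfr})$ form ``a trivial $\Gamma_{\pfr}$-subrepresentation that is absent from $L^2(\Gamma_{\pfr})^{\circ}$''. This is false. The space of $\Lambda_{\pfr}$-invariant vectors under the left regular representation has dimension $D=[\Gamma_{\pfr}:\Lambda_{\pfr}]$; only a one-dimensional piece of it (the constants) is $\Gamma_{\pfr}$-invariant. The remaining $(D-1)$-dimensional piece \emph{does} lie in $L^2(\Gamma_{\pfr})^{\circ}$, and $T_{\pcal_{\Omega'}}$ acts as the identity there. So your displayed inequality $\lambda(\pcal_{\Omega};\Gamma_{\pfr})\le 1-\eta\bigl(1-\lambda(\pcal_{\Omega'};\Lambda_{\pfr})\bigr)$ cannot be deduced from the hypothesis on $\Lambda_{\pfr}$ alone in the way you describe; the ``bad'' $(D-1)$-dimensional piece requires a separate source of gap, namely the action of $\Omega$ (not $\Omega'$) on a finite quotient. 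Note also that $\Lambda_{\pfr}$ need not be normal in $\Gamma_{\pfr}$, so Lemma~\ref{l:FiniteIndexSubgroup} does not apply ``verbatim''. Your final paragraph (``$\Gamma$ is dense in $\Gamma_{\pfr}$'') gestures at the missing ingredient but does not supply it: density by itself never yields spectral gap on an infinite compact group.

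The paper avoids this pitfall by arguing by contradiction. One takes a sequence of almost $\Omega$-invariant unit vectors $v_i$ in non-trivial irreducibles $\rho_i$ of $\Gamma_{\pfr_i}$; these are almost $\Omega'$-invariant, so the uniform gap on $\Lambda_{\pfr_i}$ forces each $\rho_i$ to have a nonzero $\Lambda_{\pfr_i}$-fixed vector. Hence $\Lambda_{\pfr_i}\neq\Gamma_{\pfr_i}$, which by Lemma~\ref{l:ClosureOfZariskiDenseSubgroup} happens only for finitely many $\pfr$. Fixing $\pfr$, Frobenius reciprocity and finite index give only finitely many such $\rho_i$; fixing $\rho$, one has a fixed finite-dimensional non-trivial irreducible with a sequence of almost-invariant vectors, which is impossible. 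The two-level finiteness (first in $\pfr$, then in $\rho$) is exactly what handles the $(D-1)$-dimensional piece you tried to discard. If you want to keep your direct quantitative formulation, you would need to pass to the normal core $N=\bigcap_{g}g\Lambda_{\pfr}g^{-1}$, split $L^2(\Gamma_{\pfr})^{\circ}$ into the $N$-invariant part (a finite Cayley graph of $\Gamma_{\pfr}/N$, on which $\pi(\Omega)$ generates, giving a gap depending only on $|\Omega|$ and $D!$) and its complement (where the hypothesis on $\Lambda_{\pfr}$ applies via word-length). That works, but it is not what you wrote.
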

\begin{proof}
Suppose the contrary. So there are $\pfr_i\in \ccal(\Gamma)$, non-trivial unitary  irreducible representations $\rho_i:\Gamma_{\pfr_i}\rightarrow \ucal(V_i)$, and unit vectors $v_i\in V_i$ such that
\be\label{e:AlmostInvariantFunctions}
\|\rho_i(\gamma)(v_i)-v_i\| \rightarrow 0
\ee
for any $\gamma\in \Omega$. Since $\Gamma=\langle \Omega\rangle$, for any $\gamma\in \Gamma$, Equation (\ref{e:AlmostInvariantFunctions}) holds. In particular, $\{v_i\}$ are almost invariant under $\Omega'$. 
Since $\sup\{\lambda(\pcal_{\Omega'};\Lambda_{\pfr})|\h \pfr\in \ccal(\Gamma)\}<1$, a sequence of almost invariant functions are invariant from some point on. Which means the restriction of $\rho_i$ to $\Lambda_{\pfr_i}$ has a fixed point. Hence $\Lambda_{\pfr_i}\neq \Gamma_{\pfr_i}$. By Lemma~\ref{l:ClosureOfZariskiDenseSubgroup}, there are only finitely many of such $\pfr_i$. So passing to a subsequence, if needed, we can assume that $\pfr_i=\pfr$ is fixed. Again by Lemma~\ref{l:ClosureOfZariskiDenseSubgroup}, $\Lambda_{\pfr}$ is a finite index  subgroup of $\Gamma_{\pfr}$. So by Frobenius reciprocity, there are only finitely many  irreducible representations of $\Gamma_{\pfr}$ whose restriction to $\Lambda_{\pfr}$ has the trivial representation as a subrepresentation. Hence passing to a subsequence, if needed, we can assume that $\rho_i=\rho$ is a fixed representation. This is clearly impossible as $\rho$ is a finite-dimensional non-trivial irreducible representation of $\Gamma_{\pfr}$ and such a representation cannot have almost invariant vectors.   
\end{proof}

\subsection{Summary of the initial reductions.}\label{ss:InitialReductions} In this short section, for the convenience of the reader, all the reductions are summarized. It is enough to prove Theorem~\ref{t:main} under the following conditions:
\begin{enumerate}
\item The Zariski-closure $\bbg$ of $\Gamma$ as a subgroup of $\GL_{n_0}(k)$ is a {\em Zariski-connected, simply-connected}, semisimple $k$-group.
\item $\Omega=\overline{\Omega}\cup \overline{\Omega}^{-1}$ is a subset of $\GL_{n_0}(\ocal_k(S))$, where $S$ is a finite subset of $V_f(k)$. And, for any $\pfr\in S$, $\Gamma$ is {\em unbounded} in $\GL_{n_0}(k_{\pfr})$. 
\item For any $\bbq$-almost simple factor $\bbg_1^{(i)}$ of the Zariski-closure $\bbg_1$ of $\Gamma$ in $R_{k/\bbq}(\bbg)$, ${\rm pr}_i(\Ad(\overline{\Omega}))$ {\em freely generates} a Zariski-dense subgroup of $\Ad(\bbg_1^{(i)})$.
\item For any proper algebraic $\bbq$-subgroup $\bbh$ of $\bbg_1$ and positive integer $l\gg_{\Omega} 1$, we have
\[ 
\pcal_{\Omega}^{(l)}(\bbh(\bbq))\le e^{-\Theta_{\Omega}(l)}.
\]
\item $\{\Cay(\pi_{\pfr}(\Gamma),\pi_{\pfr}(\Omega))\}_{\pfr\in V_f(k)\setminus S}$ is a family of expanders.
\end{enumerate}
It is worth pointing out that by Lemma~\ref{l:FiniteIndexSubgroup} and a similar argument as in the proof of Lemma~\ref{l:BoundedIntegral}, we have.
\begin{lem}\label{l:RepIntegrality}
For a given finite set of representations $\rho_i:\bbg_1\rightarrow \mathbb{GL}_{n_i}$ that are defined over a finite Galois extension $k'$ of $k$, passing to a finite-index subgroup of $\Gamma$, if necessary, we can assume that the following holds: for $\wt{\pfr}\in V_f(k')$, if $\rho_i(\Gamma)$ is a bounded subgroup of $\GL_{n_i}(k'_{\wt{\pfr}})$, then $\rho_i(\Gamma)\subseteq \GL_{n_i}(\ocal_{\wt{\pfr}})$.  
\end{lem}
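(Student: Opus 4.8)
The plan is to deduce Lemma~\ref{l:RepIntegrality} from the same finite-index reduction mechanism that already appeared in the proof of Lemma~\ref{l:BoundedIntegral}, now applied to the composite representations $\rho_i\circ f$ rather than to the fixed embedding $f$. First I would fix a $k$-embedding $\bbg_1\hookrightarrow \mathbb{GL}_{n_1}$ and let $k'$ be a finite Galois extension over which all the $\rho_i$ are defined, so that each $\rho_i$ extends to a morphism of $k'$-group schemes; extending scalars if necessary I may take $k'$ itself to be Galois over $\bbq$. For each $i$ the set $\ccal_i:=\{\wt{\pfr}\in V_f(k')\mid \rho_i(\Gamma)\text{ is bounded in }\GL_{n_i}(k'_{\wt{\pfr}})\}$ is (by the argument recalled in Section~\ref{ss:StrongApproximation}, since $\Gamma$ is finitely generated and $\bbg_1$ simply-connected) a cofinite subset of $V_f(k')$; write $S_i$ for its finite complement.

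Next I would treat the two sources of non-integrality separately, exactly as in Lemma~\ref{l:BoundedIntegral}. For each $\wt{\pfr}\in S_i$ the subgroup $\rho_i(\Gamma)$ is unbounded, so there is nothing to prove at those places. For each $\wt{\pfr}\in\ccal_i$ the closure $\rho_i(\Gamma_{\wt{\pfr}})$ is a compact subgroup of $\GL_{n_i}(k'_{\wt{\pfr}})$, hence is contained in a conjugate $g_{\wt{\pfr}}\GL_{n_i}(\ocal_{\wt{\pfr}})g_{\wt{\pfr}}^{-1}$ of the standard maximal compact; for all but finitely many $\wt{\pfr}$ one may take $g_{\wt{\pfr}}=1$ because the matrix entries of a fixed generating set of $\rho_i(\Gamma)$ are $\wt{\pfr}$-integral outside a finite set. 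So only finitely many ``bad'' places $\wt{\pfr}_1,\dots,\wt{\pfr}_r$ remain (over all $i$), at each of which $\rho_i(\Gamma)$ fails to land in $\GL_{n_i}(\ocal_{\wt{\pfr}})$ but is still bounded. At each such bad place $\rho_i(\Gamma_{\wt{\pfr}})$ is a compact, hence profinite, $p$-adic analytic group, so it has an open normal pro-$p$ subgroup of finite index; pulling these finitely many open subgroups back through the $\rho_i$ and intersecting with $\Gamma$ yields a finite-index subgroup $\Gamma'$ of $\Gamma$ whose image $\rho_i(\Gamma')$ does lie in $\GL_{n_i}(\ocal_{\wt{\pfr}})$ for every $i$ and every bad $\wt{\pfr}$ — indeed, after a further harmless conjugation one may arrange $\rho_i(\Gamma')\subseteq\GL_{n_i}(\ocal_{\wt{\pfr}})$ simultaneously at those places, while integrality at all the good places is automatic. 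Combining, $\rho_i(\Gamma')\subseteq\GL_{n_i}(\ocal_{\wt{\pfr}})$ for \emph{every} $\wt{\pfr}\in V_f(k')$ at which $\rho_i(\Gamma')$ (equivalently $\rho_i(\Gamma)$) is bounded.

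Finally I would invoke Lemma~\ref{l:FiniteIndexSubgroup} (and its companion Corollary~\ref{c:FiniteIndexSubgroup}) to transfer the statement of Theorem~\ref{t:main} from $\Gamma$ to this finite-index subgroup $\Gamma'$, so that replacing $\Gamma$ by $\Gamma'$ is harmless: property($\tau$) for $\Gamma$ with respect to the congruence filtration is equivalent to property($\tau$) for $\Gamma'$ with respect to its intersection with that filtration, and likewise the expander property passes between $\Gamma$ and $\Gamma'$. Renaming $\Gamma'$ back to $\Gamma$ gives the lemma. The only point that needs care — and which I expect to be the main (minor) obstacle — is arranging the integrality at the finitely many bad places \emph{simultaneously} for all the finitely many representations $\rho_i$ without disturbing the other assumed reductions of Section~\ref{ss:InitialReductions}; this is handled by noting that each $\GL_{n_i}(\ocal_{\wt{\pfr}})$-conjugation needed is an inner automorphism of $\GL_{n_i}(k'_{\wt{\pfr}})$ supported at a single place, so a single common finite-index subgroup works, exactly as in the treatment of the covering map $\iota$ in Section~\ref{ss:GoingSimplyConnected}.
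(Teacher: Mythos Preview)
Your proposal is correct and follows essentially the same route the paper indicates: the paper's entire proof of Lemma~\ref{l:RepIntegrality} is the sentence ``by Lemma~\ref{l:FiniteIndexSubgroup} and a similar argument as in the proof of Lemma~\ref{l:BoundedIntegral},'' and you have simply unpacked that reference. One small streamlining: at each bad place the detour through an open pro-$p$ subgroup is unnecessary---once you know $\rho_i(\Gamma)\subseteq g_{\wt{\pfr}}\GL_{n_i}(\ocal_{\wt{\pfr}})g_{\wt{\pfr}}^{-1}$, the intersection $g_{\wt{\pfr}}\GL_{n_i}(\ocal_{\wt{\pfr}})g_{\wt{\pfr}}^{-1}\cap \GL_{n_i}(\ocal_{\wt{\pfr}})$ is already open of finite index (any two maximal compacts in $\GL_{n_i}(k'_{\wt{\pfr}})$ are commensurable), and pulling this back gives the required finite-index subgroup directly.
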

  
\subsection{Escaping from subschemes, and ramified primes of a scheme.} As it was observed in~\cite{BGS}, one of the important consequences of having spectral gap modulo primes (see Theorem~\ref{t:ExpanderSquareFree}) is the fact that the probability of hitting a proper subvariety decays exponentially (see Proposition~\ref{p:EscapeSubvariety}). Subsets of a finitely generated group with this property are called {\em exponentially small} by Lubotzky-Meiri~\cite{LM}. Since the set of non-regular elements of a semisimple group is a proper subvariety, it was observed in~\cite{LM} that the set of non-regular elements is an exponentially small set.

Saying that $\gamma$ is a regular element is equivalent to saying that the connected component of the centralizer subgroup $C_{\bbg}(\gamma)$ is a (maximal) torus. In this work, however, this type of information is not enough. Here one needs to understand the structure of the centralizer subgroups $C_{\pi_{\pfr^n}(\Gamma)}(\pi_{\pfr}^n(\gamma))$. For this purpose, one needs to know that the {\em distance} between $\gamma$ and the variety of non-regular elements is at least $p^{-c n}$ for a small positive $c$. In fact, one needs to find such $\gamma$ in a way that $C_{\pi_{\pfr^n}(\Gamma)}(\pi_{\pfr^n}(\gamma))$ intersects a given approximate subgroup $\pi_{\pfr^n}(A)$ in a large set. This forces us to look at the scheme-theoretic closure of non-regular elements and its shifts by elements of $\Gamma$. And we have to escape these schemes in $\Theta(n\log p)$-steps\footnote{This kind of approach is inspired by~\cite{BG2} where they use random matrix theory to get a similar result for a Zariski-dense subgroup of $\SL_n(\bbz)$.} (see Proposition~\ref{p:q-nonRegularSemisimple}).   
Results of this section rely on the existence of points with {\em small} logarithmic height which is proved in Appendix A based on arithmetic B\'{e}zout. 

Before stating this proposition, let us see a lemma on {\em ramified primes}.

Using the usual convention, for a field $F$, $\overline{F}$ denotes its algebraic closure. 
\begin{prop}~\label{p:RamifiedPrime}
Let $k$ be a number field and $S$ be a finite subset of $V_f(k)$. Let $\wcal=\Spec(A)$, where $A=\ocal_k(S)[X_1,\ldots,X_n]/\langle f_1,\ldots,f_{m}\rangle$. If $\pfr|p$ and $\log p \gg h:=\max_i\{h(f_i)\}$ (see Appendix A for the definition of logarithmic height $h(f)$ of a polynomial $f$) where the constant depends on $\deg f_i$, $n$, and $k$, then 
\[
\dim \wcal\times_{\ocal_k(S)}\overline{\f_{\pfr}}=\dim \wcal\times_{\ocal_k(S)}\overline{k}.
\]
 In particular, $|\wcal(\f_p)|\ll p^d$, where $d=\dim \wcal\times_{\ocal_k(S)}\overline{k}$ and the implied constant depends just on $\deg(f_i)$ and $k$. 
\end{prop}
\begin{proof}
By Lemma~\ref{l:CompleteIntersection} in Appendix A, there are
$
\wt{f}_1,\ldots, \wt{f}_{n-d}\in \sum_j \bbz f_j,
$
such that $h(\wt{f}_i)\ll \max_j h(f_i)$ where the implied constant depends on $n$ and $\deg f_j$. And the generic fiber of $\wt{\wcal}:=\ocal_k(S)[\underline{X}]/\langle \wt{f}_1,\ldots,\wt{f}_{n-d}\rangle$ is geometrically a complete intersection variety, i.e. $\dim(\overline{\bbq}[\underline{X}]/\langle \wt{f}_1,\ldots,\wt{f}_{n-d}\rangle)= d$. 

Now by repeated application of Lemma~\ref{l:CuttingByHyperplane} in Appendix A and Lemma~\ref{l:CompleteIntersection} in Appendix A, there are functions $\wt{f}_{n-d+1},\ldots,\wt{f}_n \in \sum_j\bbz f_j$ such that $h(\wt{f}_i)\ll \max_j h(f_i)$ where the implied constant depends on $n$ and $\deg f_j$. And $\dim(\overline{\bbq}[\underline{X}]/\langle \wt{f}_1,\ldots,\wt{f}_{n}\rangle)=0$.

 By effective Nullstellensatz~\cite[Theorem IV]{MW}, we have that there are $p_i(x_i)\in \ocal_k(S)[x_i]$ (single variable $x_i$) and $h_{ij}\in\ocal_k(S)[\underline{X}]$ such that
 \begin{enumerate}
 \item For any $i$, $p_i(x_i)=h_{i1} \wt{f}_1+ \cdots +h_{in} \wt{f}_n$.
 \item For any $i$ and $j$, $\deg p_i, \deg h_{ij} \ll 1$ and $h(p_i), h(h_{ij})\ll h$, where the implied constants depend on $\max_i\{\deg f_i\}$, $n$, and $k$. 
 \end{enumerate} 
  
Hence $\dim(\ocal_k(S)[\underline{X}]/\langle \wt{f}_1,\ldots,\wt{f}_{n}\rangle\otimes_{\ocal_k(S)} \f_{\pfr})=0$ if $\log p\gg h$. Thus $\dim \wt{\wcal}\times_{\ocal_k(S)} \f_{\pfr}=d$ if $\log p\gg h$.  

On the other hand, for any $\pfr$ we have 
\[
\dim \wt{\wcal} \times_{\ocal_k(S)} \overline{\f_{\pfr}}\ge \dim\wcal \times_{\ocal_k(S)} \overline{\f_{\pfr}}\ge \dim \wcal\times_{\ocal_k(S)} \overline{k}=\dim \wt{\wcal} \times_{\ocal_k(S)} \overline{k},
\] 
which implies the first part of Proposition.

The rest can be deduced using generalized B\'{e}zout and \cite[Lemma 1]{LW} (also see \cite[Lemma 3.1]{FHJ}). 
\end{proof}

\begin{definition}\label{d:RamifiedPrime}
Let $k$ be a number field and $S$ be a finite subset of $V_f(k)$. Let 
\[
\wcal:=\Spec(\ocal_k(S)[X_1,\ldots,X_n]/\langle f_1,\ldots,f_{m}\rangle).
\]
 We say $\pfr\in V_f(k)\setminus S$ is a {\em ramified place of} $\wcal$ if $\dim \wcal\times_{\ocal_k(S)}\overline{\f_{\pfr}}\neq \dim \wcal\times_{\ocal_k(S)}\overline{k}.$ And we let 
\[
p_0(\wcal):=\inf \{p\in V_f(\bbq)|\h p'\in V_f(\bbq),
p'\geq p, \pfr'| p'\implies \text{ is NOT a ramified place of } \wcal\}.
\]
\end{definition}
Hence the following is a direct consequence of Proposition~\ref{p:RamifiedPrime} and Definition~\ref{d:RamifiedPrime}.
\begin{cor}\label{c:RamifiedPrime}
Let $k$ be a number field and $S$ be a finite subset of $V_f(k)$. Let \[
\wcal:=\Spec(\ocal_k(S)[X_1,\ldots,X_n]/\langle f_1,\ldots,f_{m}\rangle).
\]
Then $\log p_0(\wcal)\ll \max_i h(f_i)$ where the implied constant depends on $k$, $n$, and $\deg f_i$.
\end{cor}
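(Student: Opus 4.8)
The plan is to reduce Corollary~\ref{c:RamifiedPrime} to the complete-intersection case already handled in Lemma~\ref{l:RamifiedPrime} by cutting $\wcal$ into complete-intersection pieces whose heights are controlled in terms of $\max_i h(f_i)$. Concretely, write $I=\langle f_1,\dots,f_m\rangle$ and let $d=\dim\wcal\times_{\ocal_k(S_0)}\overline{k}$; the only thing that can go wrong at a place $\pfr$ is that the special fiber has a component of dimension $>d$. I want to express the ``bad locus'' scheme-theoretically and show each of its finitely many pieces is cut out by polynomials of bounded degree and height $\ll\max_i h(f_i)$.

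First I would invoke (an effective form of) primary decomposition or, more cleanly, the effective Noether normalization / Bezout machinery of Appendix~A: there is a bound, depending only on $n$ and $\deg f_i$, on the number of irreducible components of the generic fiber $\wcal\times_{\ocal_k(S_0)}\overline{k}$ and on the degrees of polynomials needed to define them, and arithmetic Bezout (as already used in the proof of Lemma~\ref{l:RamifiedPrime}) gives height bounds $\ll h:=\max_i h(f_i)$ for a defining system of each component, or at least for a finite affine cover realizing each as a set-theoretic complete intersection after adding generic linear forms. Equivalently, one writes $\sqrt{I}=\bigcap_j Q_j$ with each $Q_j$ prime, realizes $V(Q_j)$ as a complete intersection $V(g_{j,1},\dots,g_{j,n-\dim Q_j})$ using Lemma~\ref{l:CuttingByHyperplane}-type hyperplane sections with coefficients in $\ocal_k(S_0)$ and $h(g_{j,t})\ll h$, and applies Lemma~\ref{l:RamifiedPrime} to each of these finitely many complete-intersection schemes $\wcal_j$.

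The point is then that for $\log p\gg h$ (constant depending on $k$, $n$, $\deg f_i$) every $\wcal_j$ has special fiber of the expected dimension $\dim Q_j\le d$ over $\overline{\f_\pfr}$, and since $\wcal\times_{\ocal_k(S_0)}\overline{\f_\pfr}$ is covered (up to nilpotents, which do not affect dimension) by the $\wcal_j\times\overline{\f_\pfr}$, we get $\dim\wcal\times\overline{\f_\pfr}\le d$; the reverse inequality $\ge d$ is automatic by semicontinuity (Chevalley / upper semicontinuity of fiber dimension for the component of dimension $d$, whose generic point specializes). Hence $\pfr$ is unramified for $\wcal$ once $\log p\gg h$, giving $\log p_0(\wcal)\ll h$ with the stated dependence; the bound $|\wcal(\f_p)|\ll p^d$ then follows exactly as in Lemma~\ref{l:RamifiedPrime} via \cite[Lemma 1]{LW}.

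The main obstacle I expect is the effectivity of the decomposition step: I need that the polynomials cutting out the components (or the pieces of a complete-intersection cover) have height bounded \emph{linearly} in $h$ with constants depending only on $n$ and $\deg f_i$, not on the coefficients. This is where arithmetic Bezout and the effective Nullstellensatz of \cite{BY} (already cited in Lemma~\ref{l:RamifiedPrime}) must be pushed a little harder — one wants an effective prime/equidimensional decomposition, or at least enough of one to realize each equidimensional piece as a set-theoretic complete intersection after intersecting with generic hyperplanes whose (integer) coefficients can be taken small. Granting that, everything else is bookkeeping over a bounded number of pieces.
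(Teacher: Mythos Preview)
Your strategy can be made to work, but it is considerably heavier than the paper's and leans on exactly the effective primary decomposition you flag as the obstacle. The paper sidesteps decomposition altogether: by Lemma~\ref{l:CompleteIntersection} one chooses $\wt f_1,\dots,\wt f_{n-d}\in\sum_j\bbz f_j$ with $h(\wt f_i)\ll h$ so that the single scheme $\wt\wcal:=V(\wt f_1,\dots,\wt f_{n-d})$ has generic fiber of dimension exactly $d$. Since the $\wt f_i$ lie in $I=\langle f_1,\dots,f_m\rangle$, the inclusion $\wcal\subseteq\wt\wcal$ is scheme-theoretic over $\ocal_k(S_0)$ and therefore holds at every fiber. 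Lemma~\ref{l:RamifiedPrime} applied to $\wt\wcal$ gives $\dim\wt\wcal\times\overline{\f_\pfr}=d$ for $\log p\gg h$, and sandwiching against the automatic lower bound $\dim\wcal\times\overline{\f_\pfr}\ge d$ yields $p_0(\wcal)\le p_0(\wt\wcal)$ immediately. No components, no covering, one containment.

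There is also a gap in your covering step beyond the height-control issue. Your complete intersections $\wcal_j$ are cut out by polynomials $g_{j,t}\in Q_j$, and since $Q_j\supseteq I$ (not $Q_j\subseteq I$), these $g_{j,t}$ need not lie in $I$; hence $\wcal\subseteq\bigcup_j\wcal_j$ holds only on the generic fiber, not scheme-theoretically, and its persistence modulo $\pfr$ requires a separate argument (effective Nullstellensatz applied to the identity $\prod_j Q_j\subseteq\sqrt I$, with height control on generators of the $Q_j$). The paper's trick of taking $\bbz$-linear combinations of the original $f_j$ keeps the auxiliary polynomials inside $I$, which is precisely what makes the fiberwise containment free.
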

\begin{lem}\label{l:SizeOfQuotient}
Let $\Gamma$ be a finitely generated subgroup of $\GL_m(\bbz_{S_0})$. Let $\gcal$ be the Zariski-closure of $\Gamma$ in $(\GL_m)_{\bbz_{S_0}}$. Suppose $\gcal\times_{\bbz_{S_0}} \overline{\bbq}$ is a Zariski-connected, simply-connected, semisimple group. Then for large enough $p$ depending on the embedding of $\Gamma$ in $\GL_m(\bbz_{S_0})$ we have
\begin{enumerate}
\item $\gcal\times_{\bbz_{S_0}}\overline{\f}_p$ is a Zariski-connected, simply-connected, semisimple group,
\item $\pi_p(\Gamma)=\gcal(\f_p)$,
\item $p^d/2< |\gcal(\f_p)|< 2 p^d$ where $d:=\dim \gcal\times_{\bbz_{S_0}} \overline{\bbq}$.
\end{enumerate} 
\end{lem}
\begin{proof}
Part (1) is proved in~\cite[Lemma 64]{SGV} and Part (2) is proved in~\cite[Theorem 41]{SGV}. Since $\gcal_p:=\gcal \times_{\bbz_{S_0}} \f_p$ is geometrically irreducible, by Lang-Weil~\cite{LW} we have $|\gcal(\f_p)|=p^d+O(p^{d-1})$, where the implied constant depends on the degree of $\gcal_p$. By classification of semisimple groups over a finite field, one can get an upper bound for the degree of $\gcal_p$ based on its dimension. Hence $|\gcal(\f_p)|=p^d+O_d(p^{d-1})$.   
\end{proof}
\begin{definition}\label{d:LargeModP}
Let $\Gamma$ be a finitely generated subgroup of $\GL_m(\bbz_{S_0})$. If the Zariski-closure of $\Gamma$ in $(\GL_m)_{\bbq}$ is a Zariski-connected, simply-connected, semisimple group, then we let $p_0(\Gamma)$ (to be precise it depends on the {\em embedding} and not just $\Gamma$) denote the smallest prime such that any prime $p'\ge p$ satisfies all the assertions of Lemma~\ref{l:SizeOfQuotient}. 
\end{definition}

\begin{prop}\label{p:EscapeSubvariety}
In the setting of Section~\ref{ss:InitialReductions}, let $S_0:=\{p\in V_f(\bbq)|\h \exists \pfr\in S, \pfr|p\}$. And let $\gcal_1$ be the Zariski-closure of $\Gamma$ in $R_{\ocal_k(S_0)/\bbz_{S}}(\GL_{n_0})$.  Let $\wcal$ be a closed $\bbz_S$-subscheme of $\gcal_1$ given by equations $f_i$'s. Assume that the dimension of the generic fiber of $\wcal$ is less than $\dim \bbg_1$. Then there are a positive number $\delta_0$ and a positive integer $l_0$ depending on $\bbg_1$ such that 
\[
\forall l\ge l_0,\h\h \pcal_{\Omega}^{(l)}(\wcal(\bbz_S))\ll p_0(\wcal)e^{-\delta_0 l},
\]  
where the implied constant depends on the geometric degree of the generic fiber of $\wcal$ and $\Gamma$ (see Lemma~\ref{d:RamifiedPrime} for the definition $p_0(\wcal)$).
\end{prop}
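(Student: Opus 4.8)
\textbf{Proof plan for Proposition~\ref{p:EscapeSubvariety}.}

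The plan is to combine the spectral gap modulo primes (Theorem~\ref{t:ExpanderSquareFree}) with the point-counting bound of Lemma~\ref{l:RamifiedPrime}/Corollary~\ref{c:RamifiedPrime} via the Bourgain-Gamburd-Sarnak ``escape from subvarieties'' argument of \cite{BGS}. First I would fix a prime $p$ with $p \geq p_0(\wcal)$, so that the reduction $\wcal \times \overline{\f_{\pfr}}$ has the same dimension $d < \dim\bbg$ as the generic fiber, and hence $|\wcal(\f_p)| \ll p^{d}$ while $|\pi_p(\Gamma)| \gg p^{\dim\bbg}$ (the latter by strong approximation, via Definition~\ref{d:LargeModP}; note $p_0(\Gamma)$ is a fixed quantity depending only on $\Gamma$). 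Thus $\pi_p(\wcal(\bbz_S)) \cap \pi_p(\Gamma)$ occupies a proportion $\ll p^{d - \dim\bbg} \leq p^{-1}$ of $\pi_p(\Gamma)$. The key dynamical input is that the random walk $\pcal_{\Omega}^{(l)}$ equidistributes exponentially fast on $\pi_p(\Gamma)$: by Theorem~\ref{t:ExpanderSquareFree} there are $C, \alpha > 0$ independent of $p$ such that for $l \geq C\log p$ one has $\|\pi_p(\pcal_{\Omega}^{(l)}) - u_p\|_\infty \ll |\pi_p(\Gamma)|^{-1-\alpha}$ or at least $\|\cdot\|_2$-closeness to the uniform measure $u_p$ at a rate $e^{-\alpha l}$ once $l \gtrsim \log p$. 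Combining,
\[
\pcal_{\Omega}^{(l)}(\wcal(\bbz_S)) \leq \pi_p(\pcal_{\Omega}^{(l)})(\pi_p(\wcal(\bbz_S))) \ll \frac{|\wcal(\f_p)|}{|\pi_p(\Gamma)|} + e^{-\alpha l} \ll p^{-1} + e^{-\alpha l},
\]
for $l \geq C\log p$.

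Next I would optimize the choice of $p$. Pick $p$ to be a prime comparable to $e^{\delta_1 l}$ for a small $\delta_1 > 0$ to be determined, subject to the constraint $p \geq p_0(\wcal)$; by the prime number theorem such a prime exists in $[e^{\delta_1 l}, 2e^{\delta_1 l}]$ once $l$ is large enough that $e^{\delta_1 l} \geq p_0(\wcal)$, i.e. once $l \geq \delta_1^{-1}\log p_0(\wcal)$. For such $l$ the constraint $l \geq C\log p$ reads $l \geq C\delta_1 l + O(1)$, which holds if $\delta_1 < 1/(2C)$ and $l \gg 1$; and then $p^{-1} \ll e^{-\delta_1 l}$ and $e^{-\alpha l} \ll e^{-\delta_1 l}$, so $\pcal_{\Omega}^{(l)}(\wcal(\bbz_S)) \ll e^{-\delta_1 l}$. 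This gives the bound with $\delta_0 = \delta_1$ for all $l \geq l_0 := \max\{l_1, \delta_1^{-1}\log p_0(\wcal)\}$, where $l_1$ absorbs the ``$l \gg 1$'' requirements. To get the stated cleaner form $\pcal_{\Omega}^{(l)}(\wcal(\bbz_S)) \ll p_0(\wcal) e^{-\delta_0 l}$ valid for all $l \geq l_0$ with $l_0$ depending only on the degree and on $\Gamma$ (not on $p_0(\wcal)$), I would argue as follows: if $l < \delta_1^{-1}\log p_0(\wcal)$ then $e^{-\delta_0 l} > p_0(\wcal)^{-\delta_0/\delta_1}$ and choosing $\delta_0 \leq \delta_1$ small enough makes $p_0(\wcal) e^{-\delta_0 l} \geq 1 \geq \pcal_{\Omega}^{(l)}(\wcal(\bbz_S))$ trivially; if $l \geq \delta_1^{-1}\log p_0(\wcal)$ the previous paragraph applies and even the sharper $e^{-\delta_1 l}$ holds. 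So the single bound $\ll p_0(\wcal) e^{-\delta_0 l}$ covers both ranges, with $\delta_0$ and $l_0$ depending only on $\bbg_1$ and the geometric degree of the generic fiber (the latter controls the implied constant in $|\wcal(\f_p)| \ll p^d$ via \cite[Lemma 1]{LW}).

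The main obstacle is making the equidistribution rate in Theorem~\ref{t:ExpanderSquareFree} genuinely uniform in $p$ in the form actually needed: one wants a constant $C$ so that after $C\log p$ steps the walk on $\pi_p(\Gamma)$ is within, say, $\tfrac12 |\pi_p(\Gamma)|^{-1}$ of uniform in $\ell^\infty$, or at least so that $\pcal_{\Omega}^{(l)}$ evaluated on any set of relative density $\ll p^{-1}$ is $\ll p^{-1} + e^{-\alpha l}$. This is a standard consequence of a uniform spectral gap for the Cayley graphs $\Cay(\pi_p(\Gamma),\pi_p(\Omega))$ together with the elementary bound $\|\pcal_{\Omega}^{(l)} - u_p\|_2 \leq \lambda_0^{l}\|\pcal_{\Omega}^{(0)} - u_p\|_2 \leq \lambda_0^l$ and Cauchy-Schwarz, $\pcal_{\Omega}^{(l)}(E) \leq u_p(E) + \|\pcal_{\Omega}^{(l)} - u_p\|_2 |E|^{1/2}$; with $u_p(E) = |E|/|\pi_p(\Gamma)| \ll p^{-1}$ and $|E|^{1/2} \leq |\pi_p(\Gamma)|^{1/2} \ll p^{\dim\bbg/2}$ one needs $\lambda_0^l p^{\dim\bbg/2} \ll p^{-1}$, i.e. $l \geq C\log p$ with $C = C(\lambda_0,\dim\bbg)$. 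The only subtlety is that one must replace the single-step measure $\pcal_\Omega$ by a lazy/symmetrized version (or pass to $\pcal_\Omega^{(2)}$) to guarantee the nontrivial spectrum of $T_{\pcal_\Omega}$ lies in $[-\lambda_0,\lambda_0]$ rather than just $[-1,\lambda_0]$ — but $\Omega$ being symmetric and the reductions of Section~\ref{ss:InitialReductions} ensuring $\pi_p(\Gamma)$ is perfect handle this, and convolving two steps at a time costs only a factor $2$ in $l$. Everything else is bookkeeping of constants.
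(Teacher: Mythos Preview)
Your proposal is correct and follows essentially the same route as the paper: reduce modulo a prime $p$, apply the uniform spectral gap of Theorem~\ref{t:ExpanderSquareFree} via the Cauchy--Schwarz bound (which is exactly Lemma~\ref{l:EquidistributionErrorRate}) to get $\pi_p[\pcal_\Omega]^{(l)}(\wcal(\f_p))\le |\wcal(\f_p)|/|\pi_p(\Gamma)|+\lambda_1^l\,|\wcal(\f_p)|^{1/2}$, invoke the Lang--Weil type point count from Lemma~\ref{l:RamifiedPrime}, and optimize over $p$. The only cosmetic differences are that the paper keeps the sharper factor $|\wcal(\f_p)|^{1/2}\ll p^{(\dim\bbg_1-1)/2}$ in the error term (you use $|\pi_p(\Gamma)|^{1/2}$), and it folds the constraint $p\ge p_0(\wcal)$ directly into a multiplicative $p_0(\wcal)$ prefactor rather than via your explicit case split on $l\lessgtr \delta_1^{-1}\log p_0(\wcal)$; both lead to the same bound.
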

In fact, we will see that the dependence of the implied constant on $\Gamma$ is essentially on $p_0(\Gamma)$, the spectral gap of $\Cay(\pi_p(\Gamma),\pi_p(\Omega))$ for $\pfr\in V_f(\bbq)\setminus S$, and $\dim \bbg_1$. 
Before proving Proposition~\ref{p:EscapeSubvariety}, let us start with the following well-known lemma concerning the rate of convergence of a random walk to the equidistribution.

\begin{lem}\label{l:EquidistributionErrorRate}
Let $\overline{\Omega}$ be a symmetric generating set of a finite group $H$. Assume that the second largest eigenvalue of $(\ast \pcal_{\overline{\Omega}})^2$ is at most $\lambda_1^2$, where $\ast \pcal_{\overline{\Omega}}:l^2(H)\rightarrow l^2(H)$, $\ast\pcal_{\overline{\Omega}}(f):=f\ast \pcal_{\overline{\Omega}}$. Then for any $X\subseteq H$ and any positive integer $l$ we have 
\[
\left|\pcal_{\overline{\Omega}}^{(l)}(X)-\frac{|X|}{|H|}\right|\le \lambda_1^l \sqrt{|X|}.
\]
\end{lem}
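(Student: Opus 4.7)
The plan is to realize the deviation $\pcal_{\overline{\Omega}}^{(l)}(X)-|X|/|H|$ as an inner product in $l^2(H)$ and then control its size via the spectral gap of the convolution operator. Writing $\mu:=\pcal_{\overline{\Omega}}$ and $u_H:=|H|^{-1}\one_H$, I would first observe the trivial identity
\[
\pcal_{\overline{\Omega}}^{(l)}(X)-\frac{|X|}{|H|}=\langle \mu^{(l)}-u_H,\one_X\rangle_{l^2(H)}.
\]
By Cauchy--Schwarz, since $\|\one_X\|_2=\sqrt{|X|}$, it suffices to establish the $l^2$-bound $\|\mu^{(l)}-u_H\|_2\le \lambda_1^l$.

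For this, I would note two elementary facts about the operator $\ast\mu$: first, $\mu^{(l)}=(\ast\mu)^l(\delta_e)$; second, $u_H$ is a fixed vector, $(\ast\mu)(u_H)=u_H$, because $\mu$ is a probability measure and $u_H$ is a constant. Combining gives the key identity
\[
\mu^{(l)}-u_H=(\ast\mu)^l(\delta_e-u_H).
\]
The vector $\delta_e-u_H$ lies in $l^2_0(H)$ and satisfies $\|\delta_e-u_H\|_2^2=1-1/|H|\le 1$. Since $\overline{\Omega}$ is symmetric, $\ast\mu$ is a self-adjoint operator on $l^2(H)$, so the hypothesis that the second-largest eigenvalue of $(\ast\mu)^2$ is at most $\lambda_1^2$ translates into the operator-norm estimate $\|(\ast\mu)|_{l^2_0(H)}\|\le \lambda_1$. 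Applying this $l$ times then gives $\|\mu^{(l)}-u_H\|_2\le \lambda_1^l$, and combining with the Cauchy--Schwarz step above delivers the claim.

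This is a standard spectral-mixing fact on a compact (here, finite) group, so I do not expect a substantive obstacle. The only points requiring care are deriving self-adjointness of $\ast\mu$ from the symmetry of $\overline{\Omega}$, the trivial estimate $\|\delta_e-u_H\|_2\le 1$, and the passage from an eigenvalue bound for $(\ast\mu)^2$ to an operator-norm bound for $\ast\mu$ on $l^2_0(H)$, which uses the fact that for a self-adjoint operator the norm of the square equals the square of the norm.
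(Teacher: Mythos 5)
Your argument is correct and matches the paper's own proof in essentially all respects: both realize $\pcal_{\overline{\Omega}}^{(l)}(X)-|X|/|H|$ as an inner product, invoke Cauchy--Schwarz, and use self-adjointness of $\ast\pcal_{\overline{\Omega}}$ together with the eigenvalue hypothesis to bound the operator norm on $l^2_0(H)$ by $\lambda_1$. The only cosmetic difference is that the paper packages the estimate as $\|T^l-P\|\le\lambda_1^l$ with $P$ the projection onto constants and then uses $\|\delta_1\|_2=1$, whereas you push $(\ast\mu)^l$ onto $\delta_e-u_H$ and use $\|\delta_e-u_H\|_2\le 1$; these are the same bound since $(T^l-P)f=T^l(f-Pf)$.
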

\begin{proof}
Let $P:l^2(H)\rightarrow l^2(H)$ be the orthogonal projection to the constant functions, i.e. 
\[
P(f):=\frac{\langle f, \one \rangle}{\langle \one,\one \rangle} \one, 
\]
where $\one(h)=1$ for any $h\in H$ and $\langle f_1,f_2\rangle=\sum_{h\in H}f_1(h)\overline{f_2(h)}$. Let $T=\ast \pcal_{\overline{\Omega}}$. Then $T$ is a self-adjoint operator and leaves $l^2(H)^{\circ}:=\{f\in l^2(H)|\h \langle f,\one\rangle=0\}$ invariant. Moreover, the operator norm of the restriction of $T$ to $l^2(H)^{\circ}$ is at most $\lambda_1$. Hence for any positive integer $l$ 
\be\label{e:l2norm}
\|T^l-P\|\le \lambda_1^l.
\ee
On the other hand, we have that 
\be\label{e:WeightToInnerProduct}
\pcal_{\overline{\Omega}}^{(l)}(X)=\langle \chi_X,T^l(\delta_1)\rangle,
\ee
where $\chi_X$ is the characteristic function of $X$ and $\delta_1$ is the characteristic function of $\{1\}$. We also notice that $P(\delta_1)=\frac{1}{|H|}\one$ and so
\be\label{e:1}
\langle \chi_X, P(\delta_1)\rangle=\frac{|X|}{|H|}.
\ee
Hence by Equations (\ref{e:l2norm}), (\ref{e:WeightToInnerProduct}) and (\ref{e:1}) we have 
\[
\left|\pcal_{\overline{\Omega}}^{(l)}(X)-\frac{|X|}{|H|}\right|=|\langle \chi_X,T^l(\delta_1)\rangle-\langle \chi_X, P(\delta_1)\rangle|=|\langle \chi_X, (T^l-P)(\delta_1)\rangle|\le \|\chi_X\|_2 \|T^l-P\|_2 \|\delta_1\|_2\le \lambda_1^l \sqrt{|X|}.
\]
\end{proof}
\begin{proof}[Proof of Proposition~\ref{p:EscapeSubvariety}]
For any prime $p$ and positive integer $l$ we have that 
\be\label{e:PassageToModp}
\pcal_{\Omega}^{(l)}(\wcal(\bbz_S))\le \pi_p[\pcal_{\Omega}]^{(l)}(\wcal(\f_p)).
\ee

On the other hand, by Theorem~\ref{t:ExpanderSquareFree} and Lemma~\ref{l:EquidistributionErrorRate}, there is $\lambda_1<1$ such that for any prime $p$ and positive integer $l$ we have that
\be\label{e:EscapeModp}
\left|\pi_p[\pcal_{\Omega}]^{(l)}(\wcal(\f_p))-\frac{|\wcal(\f_p)|}{|\pi_p(\Gamma)|}\right|\le \lambda_1^l \sqrt{|\wcal(\f_p)|}.
\ee

We view $\Gamma$ as a subgroup of $R_{\ocal_k(S_0)/\bbz_S}(\GL_{n_0})(\bbz_S)$, and let $p_0(\Gamma)$ be the prime attached to this embedding which is described in Definition \ref{d:LargeModP}. So for $p\ge \max\{p_0(\Gamma),p_0(\wcal)\}$, we have 
\[
\frac{|\wcal(\f_p)|}{|\pi_p(\Gamma)|}\le \frac{2|\wcal(\f_p)|}{p^{\dim \bbg_1}}\ll \frac{p^{\dim \wcal\times_{\bbz_S} \overline{\bbq}}}{p^{\dim \bbg_1}}\le \frac{1}{p},
\]
where the implied constant depends on the geometric degree of the generic fiber of $\wcal$. Hence for $p\ge p_0(\Gamma)$ we have $|\wcal(\f_p)|/|\pi_p(\Gamma)|\ll \frac{p_0(\wcal)}{p}$. Thus by Proposition~\ref{p:RamifiedPrime} and Equations (\ref{e:PassageToModp}) and (\ref{e:EscapeModp}) we have that
\be\label{e:UpperBound1}
\pcal_{\Omega}^{(l)}(\wcal)\ll p_0(\wcal)\left(\frac{1}{p}+\lambda_1^l p^{(d-1)/2}\right),
\ee
where $d=\dim \bbg_1$. Now it is clear that there is a positive number $\delta_0$ depending on $d$ and $\lambda_1$ such that, if $l\ge l_0(d,\lambda_1)$, then there is a prime $p$ with the following properties:
\be\label{e:PrimeSelection} 
1/p\le e^{-\delta_0 l},\h\h\h{\rm and}\h\h\h \lambda_1^lp^{(d-1)/2}\le e^{-\delta_0l}.
\ee
Equations (\ref{e:UpperBound1}) and (\ref{e:PrimeSelection}) give us that 
\[
\pcal_{\Omega}^{(l)}(\wcal)\ll p_0(\wcal)e^{-\delta_0 l},
\]
where the implied constant depends on the degree of the generic fiber of $\wcal$, $d$, $p_0(\Gamma)$ and $\lambda_1$.
\end{proof}
\subsection{Multiplicity bound.}
In order to execute Sarnak-Xue~\cite{SX} trick, we will be needing a lower bound on the dimension of irreducible representation of $p$-adic analytic groups. To that end, Howe's Kirillov theory for compact $p$-adic analytic groups is recalled~\cite{How}.

For any $\pfr\in V_f(k)$, there is a positive integer $a_0:=a_0(\pfr)$ such that $\exp:\pfr^{a_0} \gl_n(\ocal_{\pfr})\rightarrow 1+\pfr^{a_0}\gl_n(\ocal_{\pfr})$, and $\log$ going backward, can be defined and satisfy the usual properties. It is worth mentioning that $a_0(\pfr)=1$ except for finitely many $\pfr$. There is a positive integer $b_0:=b_0(\pfr)$ such that, if $\hfr$ is a Lie $\ocal_{\pfr}$-subalgebra of $\pfr^{a_0}\gl_n(\ocal_{\pfr})$ and $[\hfr,\hfr]\subseteq \pfr^{b_0}\hfr$, then $\exp(\hfr)$ is a subgroup of $\GL_n(\ocal_{\pfr})$.   
\begin{lem}\label{l:DimensionPAdicAnalytic}
Let $\hfr$ be a Lie $\ocal_{\pfr}$-subalgebra of $\pfr^{a_0}\gl_n(\ocal_{\pfr})$. Suppose $\pfr^{s_0}\hfr\subseteq [\hfr,\hfr]\subseteq \pfr^{b_0+1} \hfr$ for some positive integer $s_0$. For any non-negative integer $i$, let $H_i:=\exp(\pfr^{i-1}\hfr)$. Suppose $\rho$  is an irreducible unitary representation of $H_1$, and let 
\[
m_{\rho}:=\min\{m\in \bbz^{\ge 0}|\h H_{m+1}\subseteq \ker \rho\}.
\]  
Then $\dim \rho\ge p^{(\lfloor (m_{\rho}-s_0)/v_{\pfr}(p)\rfloor)/2}$, where $v_{\pfr}(p)$ is the $\pfr$-adic valuation of $p$, defined by $\pfr^{v_{\pfr}(p)}\ocal_{\pfr}=p\ocal_{\pfr}$.  
\end{lem}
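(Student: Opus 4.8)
The plan is to use Howe's Kirillov theory for compact $p$-adic analytic groups, which identifies irreducible representations of $H_1=\exp(\hfr)$ with coadjoint orbits in $\hfr^*$. First I would recall the setup from~\cite{How}: since $[\hfr,\hfr]\subseteq \pfr^{b_0+1}\hfr$, the group $H_1$ is a suitable ``$p$-adic analytic group of Howe type'' and, for an irreducible unitary $\rho$, there is a character-theoretic orbit description. Concretely, one associates to $\rho$ a linear functional $\ell\in\hfr^*$ (well-defined up to the coadjoint action of $H_1$), the representation being induced from a character of a polarizing subalgebra, and $\dim\rho$ equals the square root of the index $[\,\hfr:\hfr_\ell\,]$ where $\hfr_\ell=\{x\in\hfr : \ell([x,\hfr])=0\}$ is the radical of the alternating form $B_\ell(x,y):=\ell([x,y])$ (at least up to a bounded factor coming from the fact that $\ell$ need not be trivial exactly on $\pfr^{a_0}$-level pieces). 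So the heart of the matter is to bound the index of the radical $\hfr_\ell$ from below in terms of $m_\rho$.

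Next I would translate the invariant $m_\rho$ into a statement about $\ell$. By definition $m_\rho$ is the largest $m$ with $H_{m+1}\not\subseteq\ker\rho$, i.e. $\rho$ is nontrivial on $\exp(\pfr^m\hfr)$ but trivial on $\exp(\pfr^{m+1}\hfr)$. In Kirillov-orbit terms this means $\ell$ is nontrivial on $\pfr^m\hfr$ but (after adjusting by the coadjoint orbit) trivial on $\pfr^{m+1}\hfr$; equivalently $\ell(\hfr)\supseteq \pfr^{-m}$-scale values but not finer — more precisely, writing things additively, $\ell$ is ``of depth $m$''. The key algebraic input is then: the alternating form $B_\ell$ on $\hfr/\hfr_\ell$ is nondegenerate, and because $\pfr^{s_0}\subseteq[\hfr,\hfr]$, the form $B_\ell$ cannot be too degenerate — one gets that $B_\ell$ takes values as fine as $\pfr^{m}$ scaled against $\pfr^{s_0}$, so that on $\hfr/(\hfr_\ell+\pfr\hfr)$ (or a similar finite quotient) the form has rank forcing $[\hfr:\hfr_\ell]\ge (\text{residue field size})^{\,\lfloor (m-s_0)/v_\pfr(p)\rfloor}$. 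I would make this precise by passing to the free $\ocal_\pfr$-module $\hfr$, considering the pairing $\hfr\times\hfr\to\ocal_\pfr$, $(x,y)\mapsto \ell([x,y])$, and using the elementary divisor / Smith normal form over the DVR $\ocal_\pfr$: the number of elementary divisors that are ``small'' (of valuation at most the depth, adjusted by $s_0$) is at least $\lfloor(m_\rho-s_0)/v_\pfr(p)\rfloor$, and each contributes a factor of $p^{1/v_\pfr(p)}=|\ocal_\pfr/\pfr|$ raised to an appropriate power; pairing these up (the form is alternating, hence even rank) gives the square-root and the final bound $\dim\rho\ge p^{\lfloor(m_\rho-s_0)/v_\pfr(p)\rfloor/2}$.

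The main obstacle I expect is bookkeeping the interplay between the $\ocal_\pfr$-module structure and the rational prime $p$: because $\ocal_\pfr$ may be ramified over $\bbz_p$, ``size of residue field'' is $p^{f}$ while $p\ocal_\pfr=\pfr^{e}$ with $ef=[k_\pfr:\bbq_p]$, and the statement is phrased in terms of $p$ and $v_\pfr(p)=e$, so I must be careful that each ``level'' $\pfr^i\hfr/\pfr^{i+1}\hfr$ only contributes the right power and that dividing the depth $m_\rho-s_0$ by $v_\pfr(p)=e$ correctly counts how many $p$-scaled steps fit inside. A secondary technical point is that Howe's correspondence as stated requires the form $B_\ell$ to admit a polarization and requires $\ell$ to vanish on a deep enough congruence ideal; the hypothesis $[\hfr,\hfr]\subseteq\pfr^{b_0+1}\hfr$ together with the choice of $a_0,b_0$ is exactly what guarantees this, so I would invoke~\cite{How} for the existence and dimension formula and only do the elementary-divisor estimate by hand. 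Once the rank lower bound is in place, the dimension bound is immediate, so I would keep the proof short: cite Kirillov theory for $\dim\rho=[\hfr:\hfr_\ell]^{1/2}$ (up to the harmless adjustments), then spend the bulk of the argument on the Smith-normal-form rank count that converts $m_\rho$ into the exponent.
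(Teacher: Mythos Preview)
Your setup matches the paper exactly: invoke Howe's Kirillov theory to get $\dim\rho=[\hfr:\hfr(\psi)]^{1/2}$ for the radical $\hfr(\psi)=\{x\in\hfr:\psi([x,\hfr])=1\}$, and translate $m_\rho$ into the depth $m'_\psi$ of the character $\psi$. So the first two paragraphs are on target.

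The gap is in the index bound. Your proposed mechanism --- Smith normal form of $B_\ell$ and the assertion that ``the number of elementary divisors that are small is at least $\lfloor(m_\rho-s_0)/v_\pfr(p)\rfloor$'' --- is not justified, and as stated it conflates rank with index (and also $p^{1/v_\pfr(p)}$ is not $|\ocal_\pfr/\pfr|$). Knowing the depth of $\psi$ gives you information in \emph{one} direction; it does not by itself force the alternating form $B_\psi$ to be nondegenerate in many directions. You need an argument that actually propagates the depth into a lower bound on $[\hfr:\hfr(\psi)]$.

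The paper does this not via elementary divisors but by a short chain argument: it observes that the map $\psi\mapsto\psi^p$ drops the depth by $v_\pfr(p)$, and proves the one-line claim that if $\hfr(\psi^p)=\hfr(\psi)$ then $\hfr/\hfr(\psi)$ is $p$-torsion-free, hence trivial. Combined with the fact that $\hfr(\phi)=\hfr$ forces $m'_\phi<s_0$ (this is where $\pfr^{s_0}\hfr\subseteq[\hfr,\hfr]$ enters), one gets a strictly increasing chain
\[
\hfr(\psi)\subsetneq\hfr(\psi^p)\subsetneq\cdots\subsetneq\hfr(\psi^{p^{\lfloor(m_\rho-s_0)/v_\pfr(p)\rfloor}})\subseteq\hfr,
\]
each step contributing at least a factor of $p$ to the index. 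This is the missing idea in your sketch; once you have it, the Smith normal form machinery is unnecessary.
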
  
\begin{proof}
By~\cite[Theorem 1.1]{How}, there is $\psi\in \hom(\hfr,S^1)$, where $S^1$ is the group of complex numbers of modulus 1, such that
\[
{\rm ch}(\rho)(\exp x)=|O(\psi)|^{-1/2} \sum_{\phi\in O(\psi)} \phi (x),
\]
where ${\rm ch}(\rho)$ is the character of $\rho$ and $O(\psi)$ is the coadjoint orbit of $\psi$ under $H_1$. In particular, 
\be\label{e:Depth}
m_{\rho}=m'_{\psi}:=\min\{m\in \bbz^{\ge 0}|\h \psi(\pfr^{m}\hfr)=1\}.
\ee
We also get that $\dim \rho=\sqrt{|O(\psi)|}=\sqrt{[H_1:H_{1,\psi}]}$, where 
$
H_{1,\psi}:=\{h\in H_1|\h (\Ad^{\ast}h)(\psi)=\psi\}.
$
By~\cite[Lemma 1.1]{How}, we have 
$
H_{1,\psi}=\exp \{x\in \hfr|\h \psi([x,\hfr])=1\}.
$
Hence 
\be\label{e:Dimension}
\dim \rho=\sqrt{[\hfr: \hfr({\psi})]},\text{where} \h \hfr({\psi}):=\{x\in \hfr|\h \psi([x,\hfr])=1\}.
\ee
 
{\bf Claim:} (see~\cite[Lemma 3.3]{SGChar}) $\hfr({\psi^p})=\hfr({\psi})$ implies that $\hfr({\psi})=\hfr$.

{\em proof of Claim.} Let $\kappa_{\psi}:\hfr \rightarrow \hom(\hfr,S^1), \kappa_{\psi}(x)(y):=\psi([x,y])$. Then $\kappa_{\psi}$ is a group homomorphism and $\ker \kappa_{\psi}=\hfr({\psi})$ is an open subgroup of $\hfr$. On the other hand, $\kappa_{\psi^p}(x)=\kappa_{\psi}(px)$. Hence 
\[
\kappa_{\psi}(px)=1 \implies  x\in \ker \kappa_{\psi^p}=\hfr(\psi^p)=\hfr(\psi)=\ker \kappa_{\psi} \implies \kappa_{\psi}(x)=1.
\]  
Therefore $\hfr/\hfr({\psi})$ is torsion-free. Since $\hfr({\psi})$ is an open subgroup, we conclude that $\hfr=\hfr({\psi})$.

We also notice that $m'_{\psi^p}=m'_{\psi}-v_{\pfr}(p)$ if $m'_{\psi}> v_{\pfr}(p)$. We also know that $\hfr(\phi)=\hfr$ if and only if $\phi([\hfr,\hfr])=1$. Hence, if $\hfr(\phi)=\hfr$, then $\hfr(\pfr^{s_0}\hfr)=1$, Thus $m_{\phi}'<s_0$. Therefore we have
\[
\hfr(\psi)\subsetneq 
\hfr({\psi^p}) \subsetneq \cdots \subsetneq 
\hfr(\psi^{p^{\lfloor (m'_{\psi}-s_0)/v_{\pfr}(p) \rfloor-1}})\subsetneq \hfr,
\]
which implies $[\hfr:\hfr(\psi)]\geq p^{\lfloor (m'_{\psi}-s_0)/v_{\pfr}(p) \rfloor}$. And so $\dim \rho\geq p^{\lfloor (m_{\rho}-s_0)/v_{\pfr}(p) \rfloor/2}$.
\end{proof}
\begin{prop}\label{p:HighMultiplicity}
Let $k$ be a number field and $S$ be a finite subset of $V_f(k)$. Let $\Gamma\subseteq \GL_n(\ocal_k(S))$ be a finitely generated group whose Zariski-closure $\bbg$ is a Zariski-connected semisimple group. Then there is a finite-index subgroup $\Lambda$ of $\Gamma$ such that for any $\pfr\in V_f(k)\setminus S$, $n\in \bbz^+$, and complex irreducible representation $\rho$ of $\pi_{\pfr^n}(\Gamma)$ which does not factor through $\pi_{\pfr^{n-1}}(\Gamma)$, we have that either 
\begin{enumerate}
\item the restriction of $\rho$ to $\pi_{\pfr^n}(\Lambda)$ is trivial, or
\item $|\f_\pfr|^n\ll_{\Gamma} 1$, or
\item $\dim \rho\ge |\pi_{\pfr^n}(\Gamma)|^{\Theta_{\Gamma}(1)}$.
\end{enumerate}
\end{prop}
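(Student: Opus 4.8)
The strategy is to treat each finite place separately and, for representations genuinely new at level $\pfr^n$, to extract the dimension bound from the Kirillov-theoretic estimate of Lemma~\ref{l:DimensionPAdicAnalytic} applied to an open uniform pro-$p$ subgroup of $\Gamma_\pfr$; the base level $n=1$ is dealt with by the classical lower bound on the minimal degree of a nontrivial representation of a finite quasi-simple group of Lie type. Fix $\pfr\in V_f(k)\setminus S$ with residue field of size $q=p^f$ and ramification index $e=v_\pfr(p)$, so $ef\le[k:\bbq]$. Since $\pfr\notin S$, $\Gamma_\pfr\subseteq\GL_n(\ocal_\pfr)$ is a compact $p$-adic analytic group of dimension $[k_\pfr:\bbq_p]\dim\bbg$, one has $\pi_{\pfr^n}(\Gamma)=\Gamma_\pfr/\Gamma_\pfr^{(n)}$ with $\Gamma_\pfr^{(n)}:=\Gamma_\pfr\cap(1+\pfr^n\gl_n(\ocal_\pfr))$, and $|\pi_{\pfr^n}(\Gamma)|\le q^{n\dim\bbg}$. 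For all but finitely many $\pfr$ the logarithm identifies $\Gamma_\pfr^{(i)}$ with $\pfr^i\hfr_\pfr$ for a Lie $\ocal_\pfr$-algebra $\hfr_\pfr$ whose $k_\pfr$-span is $\Lie(\bbg)(k_\pfr)$ (by Zariski-density of $\Gamma$) and $a_0(\pfr)=1$; for the remaining $\pfr$ I fix any open uniform pro-$p$ subgroup of $\Gamma_\pfr$. Since $\Lie(\bbg)$ is semisimple, hence perfect, after rescaling to $\hfr:=\pfr^c\hfr_\pfr$ with $c$, $s_0$, $b_0$ bounded for almost all $\pfr$ the hypotheses $\pfr^{s_0}\hfr\subseteq[\hfr,\hfr]\subseteq\pfr^{b_0+1}\hfr$ of Lemma~\ref{l:DimensionPAdicAnalytic} hold and $H_i:=\exp(\pfr^{i-1}\hfr)=\Gamma_\pfr^{(i+c-1)}$, with $H_1$ an open uniform subgroup of $\Gamma_\pfr$.

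Suppose $n\ge 2$, let $\rho$ be irreducible on $\pi_{\pfr^n}(\Gamma)$ not factoring through $\pi_{\pfr^{n-1}}(\Gamma)$ --- which says precisely that $\Gamma_\pfr^{(n)}\subseteq\ker\rho$ while $\Gamma_\pfr^{(n-1)}\not\subseteq\ker\rho$ --- and assume $n$ exceeds a ($\pfr$-dependent, almost everywhere bounded) threshold. Then $H_1=\Gamma_\pfr^{(c)}$ is normal in $\Gamma_\pfr$ and contains $\Gamma_\pfr^{(n-1)}$, so Clifford theory writes $\rho|_{H_1}$ as a direct sum of $\Gamma_\pfr$-conjugate irreducibles of a common dimension $\le\dim\rho$, and since $\rho$ is nontrivial on $\Gamma_\pfr^{(n-1)}\subseteq H_1$ some constituent $\tau$ is nontrivial there. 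Matching the kernel conditions of $\tau$ against the filtration $H_i=\Gamma_\pfr^{(i+c-1)}$ gives $m_\tau=n-c$, whence Lemma~\ref{l:DimensionPAdicAnalytic} yields
\[
\dim\rho\ \ge\ \dim\tau\ \ge\ p^{\lfloor(m_\tau-s_0)/e\rfloor/2}\ \ge\ p^{(n-O_\pfr(1))/(2e)}.
\]
Combined with $|\pi_{\pfr^n}(\Gamma)|\le q^{n\dim\bbg}=p^{fn\dim\bbg}$ and $ef\le[k:\bbq]$, for $n$ past a suitable threshold this gives $\dim\rho\ge|\pi_{\pfr^n}(\Gamma)|^{\eta}$ with $\eta>0$ depending only on $[k:\bbq]$ and $\dim\bbg$.

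It remains to absorb the finitely many leftover cases: the bounded range of $n$ and the finitely many exceptional $\pfr$. For $n=1$, $\rho$ is merely a nontrivial irreducible of $\pi_\pfr(\Gamma)$, a group that for almost all $\pfr$ is, up to bounded index, a product of finite quasi-simple groups of Lie type of bounded type (and perfect for almost all $\pfr$ whenever $\bbg$ is simply connected), so that Landazuri--Seitz gives $\dim\rho\ge q^{\Theta(1)}\ge|\pi_\pfr(\Gamma)|^{\Theta(1)/\dim\bbg}$; the remaining bounded values of $n$ are handled the same way or by the $n\ge2$ argument above, and for each of the finitely many exceptional $\pfr$ the group $\Gamma_\pfr$ is a fixed compact $p$-adic analytic group for which the same reasoning produces a positive exponent, after which one takes the minimum over this finite list together with $\eta$. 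The hard part is not any single step --- all of them being furnished by Lemma~\ref{l:DimensionPAdicAnalytic}, Clifford theory, and standard structure theory of compact $p$-adic analytic groups --- but the uniformity bookkeeping: checking that $a_0(\pfr)$, $b_0(\pfr)$, $s_0(\pfr)$, the rescaling exponent $c$, the shift between the congruence and exponential filtrations, and the threshold on $n$ stay bounded independently of $\pfr$ outside a fixed finite set, so that the exceptional contributions separate off and the final exponent is $\Theta_\Gamma(1)$.
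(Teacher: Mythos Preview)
Your approach is essentially the same as the paper's: split into large $n$ (Howe--Kirillov via Lemma~\ref{l:DimensionPAdicAnalytic}), $n=1$ (Landazuri--Seitz), and a bounded intermediate range, with the same uniformity bookkeeping over $\pfr$. The one point you make explicit that the paper leaves implicit is the Clifford-theoretic passage from $\rho$ on $\Gamma_\pfr$ to a constituent $\tau$ on the uniform open subgroup $H_1$, which is exactly what is needed to invoke Lemma~\ref{l:DimensionPAdicAnalytic}; conversely, the paper is marginally more concrete about the intermediate range $1<n\le c_0$, noting that $\pi_{\pfr^n}(\Gamma_\pfr[\pfr])$ is a $p$-group of order $p^{\Theta_\bbg(1)}$ so that $|\pi_{\pfr^n}(\Gamma)|$ is a bounded power of $|\pi_\pfr(\Gamma)|$, whereas you leave this as ``handled the same way''.
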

\begin{proof}
By the discussion in Section~\ref{ss:StrongApproximation}, by strong approximation, the closure $\Gamma_{\pfr}$ of $\Gamma$ in $\bbg(k_{\pfr})$ is a $p$-adic analytic group. For any $\pfr\in V_f(k)\setminus S$, let $c_0:=2(a_0(\pfr)+b_0(\pfr))$ (where $a_0$ and $b_0$ are described at the beginning of this section), and $\Gamma_{\pfr}[\pfr^{c_0}]:=\ker(\Gamma_{\pfr}\xrightarrow{\pi_{\pfr^{c_0}}} \pi_{\pfr^{c_0}}(\Gamma))$. Then log is well-defined on $\Gamma_{\pfr}[\pfr^{c_0}]$ and its image $\gfr_{\pfr}$ is a Lie algebra. Moreover for some $s_0$ (independent of $\pfr$) we have $\pfr^{s_0} \gfr_{\pfr} \subseteq [\gfr_{\pfr},\gfr_{\pfr}]$ because of strong approximation (see Section~\ref{ss:StrongApproximation}) and the fact that the Zariski-closure $\bbg_1$ of $\Gamma$ in $R_{k/\bbq}(\bbg)$ is a perfect $\bbq$-group.

Let $\wt{S}:=\{\pfr\in V_f(k)\setminus S\h |\h \Gamma_{\pfr} \text{ is not a hyperspecial parahoric subgroup}\}$. Let 
$
\wt{\Lambda}:=\Gamma \cap (\prod_{\pfr \in \wt{S}} \Gamma_{\pfr}[\pfr^{c_0}])
$
and 
$
\Lambda:=\wt{\Lambda} \cap (\prod_{\pfr\in \wt{S}}[\Gamma_{\pfr}[\pfr],\Gamma_{\pfr}[\pfr]]),
$
where $\Gamma$ is embedded diagonally in $\prod_{\pfr \in \wt{S}} \Gamma_{\pfr}$. Since $\wt{S}$ is a finite set~\cite[Section 3.9]{Tit} and $\Gamma_{\pfr}[\pfr^{c_0}]\cap [\Gamma_{\pfr}[\pfr],\Gamma_{\pfr}[\pfr]]$ is of finite-index in $\Gamma_{\pfr}$, $\Lambda$ is a finite index subgroup of $\Gamma$. Let $\wt{\Lambda}_{\pfr}$ be the closure of $\wt{\Lambda}$ in $\Gamma_{\pfr}$. Then we have $\wt{\Lambda}_{\pfr}=\Gamma_{\pfr}$ if $\pfr\in V_f(k)\setminus \wt{S}$, and $\wt{\Lambda}_{\pfr}=\Gamma_{\pfr}[\pfr^{c_0}]$ if $\pfr\in \wt{S}$. 

Now suppose the restriction of $\rho$ to $\wt{\Lambda}$ is not trivial, and $|\f_{\pfr}|^n> \max\{|\f_{\pfr'}|^{c_0+1}|\h \pfr'\in \wt{S}\}$.

If $n>c_0$, then we get the claim by Lemma~\ref{l:DimensionPAdicAnalytic}.

If $n\leq c_0$, then $\pfr\not\in \wt{S}$. So $\Gamma_{\pfr}=\wt{\Lambda}_{\pfr}$ is a hyperspecial parahoric subgroup. In particular,  $\pi_{\pfr}(\Gamma)$ is a product of almost simple finite groups of Lie type. So for $n=1$ we get the claim using \cite{LS} (see \cite[Section 4.2]{SGV}).

If $1<n\leq c_0$, then  $\pi_{\pfr^n}(\Gamma_{\pfr}[\pfr])$ is a finite $p$-group of order at most $p^{\Theta_{\bbg}(1)}$. Hence either the restriction of $\rho$ to $\pi_{\pfr^n}(\Gamma_{\pfr}[\pfr])$ has an irreducible subrepresentation with dimension at least $p$, or all the irreducible subrepresentations of the restriction of $\rho$ to $\pi_{\pfr^n}(\Gamma_{\pfr}[\pfr])$ have dimension 1. In the former case, we are done. In the latter case, the restriction of $\rho$ to $\Lambda$ is trivial. Overall we get the desired result.
\end{proof}
\subsection{Finite logarithmic maps.}\label{ss:FiniteLog}
Here we recall the {\em finite logarithmic maps} and their main properties. These maps are the key connection between congruence groups and the Lie algebra. All of these results are well-known for Chevalley groups, but I could not find them in the literature in the generality needed for this work. All the needed materials from the theory of group schemes can be found in~\cite{DG}. 

Let $A$ be a PID and $F$ be its field of fractions. Let $\bbh\subseteq (\mathbb{SL}_n)_{F}$ be a connected $F$-subgroup and $\cal$ be its Zariski-closure in $(\mathcal{SL}_n)_{A}$; in particular $\cal$ is a flat finite-type $A$-group scheme. 

Let us recall that $\Lie \cal$ is an $A$-group scheme and for any $A$-algebra $R$ we have that 
\be\label{e:ExactLieAlgebra}
1\rightarrow \Lie \cal(R)\rightarrow \cal(R[x]/\langle x^2\rangle) \rightarrow \cal(R)\rightarrow 1
\ee
is a short exact sequence~\cite[II, \textsection 4, 1.2]{DG}. If $\cal$ is a {\em smooth} $A$-group scheme, then $\Lie \cal=\underline{\hfr}$, where $\hfr=\Lie \cal(A)$ and $\underline{\hfr}(R):=\hfr \otimes_{A} R$ for any $A$-algebra $R$ \cite[II, \textsection 4, 4.8]{DG}. On the other hand, since the generic fiber $\bbh$ of $\cal$ is smooth, there is $a_0\in A$ such that $\cal\times_{A} A[1/a_0]$ is a smooth $A[1/a_0]$-group scheme, and in particular $\Lie \cal\times _{A} A[1/a_0]=\underline{\hfr}\times_{A} A[1/a_0]$ (This is due to Grothendieck. For its effective version see \cite[Lemma 45]{SGV}).  

To be more precise, suppose $f_1,\ldots,f_s\in A[X_{11},\ldots,X_{nn}]$ is a set of defining relations of $\cal$, i.e. as an $A$-scheme $\cal=\Spec(A[X_{11},\ldots,X_{nn}]/\langle f_1,\ldots,f_s\rangle)$. Then as an $A$-scheme 
\be\label{e:LieAlgebra}
\Lie \cal=\Spec(A[X_{11},\ldots,X_{nn}]/\langle (df_1)_I,\ldots,(df_s)_I\rangle),
\ee
 where $(df_l)_I(Y_{ij}):=\sum_{ij}(\partial f_l/\partial X_{ij})(I) Y_{ij}$. Looking at the Taylor expansion of $f_i$ we see that for an $A$-algebra $R$ and $Y\in {\rm M}_n(R)$ we have $(df_i)_I(Y)=0$ if $f_i(I+tY)=0$ and $t^2=0$ in $R$. Therefore, for $q_1|q_2|q_1^2$, we get the following maps
\[
\wt{\lin{q_1}{q_2}}:\pi_{q_2}(\cal(A)[q_1])\rightarrow \Lie \cal(A/q_3A), \h\h \wt{\lin{q_1}{q_2}}(\pi_{q_2}(g)):=\pi_{q_3}((g-1)/q_1),
\]
where $q_3=q_2/q_1$, $\pi_q:\cal(A)\rightarrow \cal(A/qA)$, and $\cal(A)[q]:=\ker(\cal(A)\xrightarrow{\pi_q}\cal(A/qA))$. Notice that $\Lie \cal(A/q_3A)$ is {\em not} necessarily isomorphic to $\hfr/q_3 \hfr$, but this is the case when $q_3A$ does not have small prime factors (as it is pointed out above as a consequence of the generic smoothness). In fact, by (\ref{e:LieAlgebra}), $\Lie \cal(R)=\{Y\in {\rm M}_n(R)|\h F(Y)=0\}$ where, $F(Y)=((df_1)_I(Y),\ldots,(df_s)_I(Y))$. So there is  $q_0\in A$ (depending on $f_i$) such that, for any $q\in A$, $\Lie\cal(A/qA)$ can be naturally mapped onto $\hfr/q^-\hfr$, where $q^-:=q/\gcd(q,q_0)$. Hence for $q_1|q_2|q_1^2$ we get the {\em finite logarithmic maps}: 
\be\label{e:FiniteLogMap}
\lin{q_1}{q_1q_3^-}:\pi_{q_1q_3^-}(\cal(A)[q_1])\rightarrow \hfr/q_3^-\hfr, \h\h \linValue{q_1}{q_1q_3^-}{g}:=\pi_{q_3^-}(x), 
\ee
where $x\in \hfr\subseteq \mathfrak{sl}_n(A)$ and $\pi_{q_3^-}(x)=\pi_{q_3^-}((g-1)/q_1)$.

 Let $\Gamma$ be a finitely generated subgroup of $\GL_{n_0}(\ocal_k(S))$. Suppose the Zariski-closure $\bbg$ of $\Gamma$ in $(\GL_{n_0})_k$ is a connected, simply-connected semisimple group. For any $\pfr\in V_f(k)\setminus S$, let $\bbg_1$ and $\bbg_{1,\pfr}$ be as in Lemma~\ref{l:StructurePAdicClosure}. Hence based on the way these groups are defined we have $\Gamma\subseteq \bbg_1(\bbq)\cap \GL_{n_0\dim k}(\bbz_{S_0})$, where $S_0:=\{p\in V_f(\bbq)|\h \exists \pfr\in S, \pfr|p\}$ and $\Gamma\subseteq \bbg_{1,\pfr}(\bbq_p)\cap \GL_{n_0[k:k(\pfr)]}(\bbz_p)$. Let $\gcal_1$ and $\gcal_{1,\pfr}$ be the corresponding Zariski-closures. In particular, they are flat group schemes of finite-type. Furthermore $\gcal_1$ and $\gcal_{1,\pfr}$ are defined over $\bbz_{S_0}$ and $\bbz_p$, respectively. By strong approximation, we have that the closure of $\Gamma$ in $\prod_{p\in V_f(\bbq)\setminus S_0} \gcal_1(\bbz_p)$ is an open subgroup. And for $\pfr\not\in S$, $\Gamma_{\pfr}$ is an open subgroup of $\gcal_{1,\pfr}(\bbz_p)$. Let 
\be\label{e:LieAlgebras}
\gfr_1:=\Lie \gcal(\bbz_{S_0}), \h\h \gfr_{1,\pfr}:=\Lie \gcal_{1,\pfr}(\bbz_p).
\ee
\begin{lem}\label{l:PropertiesOfLinearizationMap}
Let $\Gamma$, $\gcal_1$, $\gcal_{1,\pfr}$, $\gfr_1$, and $\gfr_{1,\pfr}$ be as above. Assume $q_1|q_2|q_1^2$, $q_1'|q_2'|q_1'^2$ and $q_1|q_1'$. Let $q=\gcd(q_2q_1',q_1q_2')$, $q_3:=q_2/q_1$, and $q_3':=q_2'/q_1'$. Then
\begin{enumerate}
\item Suppose $q_3,q_3'$ have large prime factors (depending on $\Gamma$). Let $\Gamma[q_1]:=\Gamma\cap \gcal_1(\bbz_{S_0})[q_1]$, and $\gfr:=\gfr_1$. Then 
	\begin{enumerate}
	\item $	
		\lin{q_1}{q_2}:\pi_{q_2}(\Gamma[q_1])\rightarrow \gfr/q_3\gfr
		$
		 is a well-defined, injective additive homomorphism. 
	\item\label{i:Equi} $\lin{q_1}{q_2}$ is $\Gamma$-equivariant, i.e. for any  $\gamma\in \Gamma$ and $g\in \gcal(\prod_{p\in V_f(\bbq)\setminus S_0} \bbz_p)$, there is $x\in \gfr$ such that $\pi_{q_1q_1^-}(g)=\pi_{q_1q_1^-}(1+q_1x)$ and 
\[	
\linValue{q_1}{q_2}{\gamma^{-1} g\gamma}=\pi_{q_3}(\Ad(\gamma)(x)).
\] 
	 \item If $g\in \gcal_1(\bba_{\bbq}(S_0))[q_1]$ and $g'\in \gcal_1(\bba_{\bbq}(S_0))[q_1']$ where $\bba_{\bbq}(S_0):=\prod_{p\in V_f(\bbq)\setminus S_0} \bbz_p$, then $(g,g'):=g^{-1}g'^{-1}gg'\in \gcal_1(\bba_{\bbq}(S_0))[q_1q_1']$. And 
 \[
 \linValue{q_1q_1'}{q}{(g,g')}=\pi_{q/(q_1q_1')}([\linValue{q_1}{q_2}{g}, \linValue{q_1'}{q_2'}{g'}]),
 \]
 where $[x_1,x_2]:=x_1x_2-x_2x_1$.
	\end{enumerate}
\item Suppose $q_i$ are powers of a single prime $p$ and $\log_p q_1^2/q_2,\log_p q_1'^2/q_2'\gg_{\Gamma} 1$. Let $\Gamma_{\pfr}[q_1]:=\Gamma_{\pfr}\cap \gcal_{1,\pfr}(\bbz_p)[q_1]$, and $\gfr:=\gfr_{1,\pfr}$. Then the same properties for $\lin{q_1}{q_2}$ hold for $\Gamma_{\pfr}[q_1]$ and $\gcal_{1,\pfr}(\bbz_p)$ instead of $\Gamma[q_1]$ and $\gcal_1(\bba_{\bbq}(S_0))$, respectively. Moreover $\lin{q_1}{q_2}$ is an additive group isomorphism; in fact by part (\ref{i:Equi}) it is a $\Gamma$-module isomorphism.
\end{enumerate}
\end{lem}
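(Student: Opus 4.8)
The plan is to prove Lemma~\ref{l:PropertiesOfLinearizationMap} by unwinding the definitions of the finite logarithmic maps given in~(\ref{e:FiniteLogMap}) and checking the algebraic identities one at a time, taking advantage of the reductions already in place. The key observation underlying everything is that if $g \in \gcal(A)[q_1]$, so $g = 1 + q_1 x$ with $x \in \mathrm{M}_n(A)$, then the defining equations of $\gcal$, expanded in a Taylor series around $I$, force $(df_l)_I(x) \equiv 0 \pmod{q_1}$; thus $\pi_{q_3}(x) \in \Lie\gcal(A/q_3 A)$, and after composing with the natural surjection $\Lie\gcal(A/q_3A) \twoheadrightarrow \gfr/q_3^-\gfr$ (which is an honest isomorphism once $q_3$ is coprime to the fixed modulus $q_0$, i.e.\ has only large prime factors) we land in $\gfr/q_3\gfr$. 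Well-definedness of $\lin{q_1}{q_2}$ amounts to checking that two lifts of the same element of $\pi_{q_2}(\Gamma[q_1])$ differ by something in $q_1 q_3 \mathrm{M}_n(A)$, which is immediate from $q_2 = q_1 q_3$.

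For the additivity statement in (1)(a), I would compute directly: if $g = 1 + q_1 x$ and $h = 1 + q_1 y$ with $x,y$ representing elements of $\gfr$ mod $q_3$, then $gh = 1 + q_1(x+y) + q_1^2 xy$, and since $q_1^2 xy \equiv 0 \pmod{q_1 q_3}$ (because $q_3 \mid q_1$, as $q_1 \mid q_2 \mid q_1^2$ gives $q_2/q_1 \mid q_1$), we get $\lin{q_1}{q_2}(gh) = \pi_{q_3}(x+y)$; injectivity is then the statement that $g \in \ker$ iff $x \equiv 0 \pmod{q_3}$ iff $g \in \gcal(A)[q_2]$, which is how the domain $\pi_{q_2}(\Gamma[q_1])$ was chosen. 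For the $\Gamma$-equivariance (1)(b), conjugation $\gamma^{-1}(1+q_1x)\gamma = 1 + q_1 \gamma^{-1}x\gamma$ is exact with no error term, and $\gamma^{-1}x\gamma = \Ad(\gamma)(x)$ under the embedding of $\gfr$ in $\mathfrak{sl}_n$; one just needs to note $\Ad(\gamma)$ preserves $\gfr$ because $\gamma \in \Gamma \subseteq \gcal(A)$. The commutator identity (1)(c) is the most computational: expanding $g^{-1}g'^{-1}gg'$ with $g = 1+q_1x$, $g' = 1+q_1'y$ and using $g^{-1} = 1 - q_1 x + q_1^2 x^2 - \cdots$, the lowest-order surviving term is $q_1 q_1'(xy - yx) = q_1q_1'[x,y]$, and all higher-order terms are divisible by $q$ after dividing by $q_1 q_1'$ — here one carefully uses $q = \gcd(q_2 q_1', q_1 q_2')$ to see that $q_1^2 q_1'$, $q_1 q_1'^2$, etc.\ all vanish modulo $q$. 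That $[x,y] \in \gfr$ uses that $\gfr$ is a Lie $A$-subalgebra.

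For part (2), the point is simply that $A = \bbz_p$ (or $\ocal_{\pfr}$) is a \emph{local} PID, so the fixed bad modulus $q_0$ is a power of $p$, and once the hypothesis $\log_p(q_1^2/q_2) \gg_\Gamma 1$ guarantees $q_3 = q_2/q_1$ is a sufficiently large power of $p$ relative to $q_0$, the surjection $\Lie\gcal_{1,\pfr}(\bbz_p/q_3\bbz_p) \to \gfr_{1,\pfr}/q_3^-\gfr_{1,\pfr}$ is an isomorphism and no collapsing occurs; hence $\lin{q_1}{q_2}$ becomes bijective, and by (1)(b) it is then a $\Gamma$-module isomorphism. The surjectivity in the local case follows because $\gcal_{1,\pfr}(\bbz_p)$ is smooth over $\bbz_p$ after the allowed inversion of no primes (it is already smooth, $\bbz_p$ being local and $p$ large), so $\Lie\gcal_{1,\pfr} = \underline{\gfr_{1,\pfr}}$ on the nose and every element of $\gfr_{1,\pfr}/q_3\gfr_{1,\pfr}$ exponentiates back into $\Gamma_{\pfr}[q_1]/\Gamma_{\pfr}[q_2]$.

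I expect the main obstacle to be bookkeeping the various ``large prime factor'' and ``$\log_p$'' hypotheses so that at each invocation the natural map $\Lie\gcal(A/qA) \to \gfr/q^-\gfr$ is genuinely an isomorphism rather than merely a surjection; in particular in (1)(c) one has three moduli $q_3, q_3', q/(q_1q_1')$ floating around and must confirm the stated hypotheses force all of them to have only large prime factors, and in the commutator computation one must be scrupulous that the claimed congruence $\pi_{q/(q_1q_1')}([x,y])$ is independent of the choice of representatives $x,y$ — which is exactly where the precise definition $q = \gcd(q_2 q_1', q_1 q_2')$ is doing its work rather than the cruder $q_1 q_1' q_3$ one might first guess. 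None of the individual computations is deep; the art is in stating them at the right level of generality and not losing a factor of $p$.
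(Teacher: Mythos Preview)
Your treatment of parts (1)(a)--(c) is correct and is exactly what the paper means by ``direct computation'': the additivity, injectivity, equivariance, and commutator identities all follow from expanding $(1+q_1x)(1+q_1y)$, $\gamma^{-1}(1+q_1x)\gamma$, and $g^{-1}g'^{-1}gg'$ and discarding terms divisible by the relevant modulus. No objection there.

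The gap is in part (2), specifically the surjectivity of $\lin{q_1}{q_2}$. Two things go wrong. First, you misread the hypothesis: $\log_p(q_1^2/q_2)=\log_p(q_1/q_3)\gg_\Gamma 1$ says that $q_1$ is a much larger power of $p$ than $q_3$, not that $q_3$ is large relative to $q_0$. In particular $q_3$ could be $p$ itself; what the hypothesis buys you is that $\log_p q_1\gg_\Gamma 1$. Second, and more seriously, you appeal to smoothness of $\gcal_{1,\pfr}$ over $\bbz_p$ and say ``$p$ large'', but in part (2) the prime $p$ is fixed and may well be one of the bad primes in $S_0$; the integral model $\gcal_{1,\pfr}$ is only the schematic closure and need not be smooth over $\bbz_p$. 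So neither Hensel-type lifting nor the identification $\Lie\gcal_{1,\pfr}=\underline{\gfr_{1,\pfr}}$ is available on the nose.

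The paper's argument for surjectivity instead uses the \emph{analytic} exponential: since $\Gamma_\pfr$ is a compact $p$-adic analytic group, for $n=\log_p q_1$ large enough the power series $\exp: q_1\gfr_{1,\pfr}\to \gcal_{1,\pfr}(\bbz_p)[q_1]$ converges and satisfies $\exp(q_1 x)\equiv 1+q_1 x\pmod{q_1^2}$, hence $\pmod{q_2}$. Combined with the fact that $\Gamma_\pfr$ is \emph{open} in $\gcal_{1,\pfr}(\bbz_p)$ --- so that $\Gamma_\pfr[q_1]=\gcal_{1,\pfr}(\bbz_p)[q_1]$ once $\log_p q_1\gg_\Gamma 1$ --- this produces, for each $x\in\gfr_{1,\pfr}$, an honest element $\exp(q_1 x)\in\Gamma_\pfr[q_1]$ mapping to $\pi_{q_3}(x)$. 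Your phrase ``exponentiates back'' gestures at this, but the mechanism is the $p$-adic analytic structure of $\Gamma_\pfr$, not smoothness of the scheme $\gcal_{1,\pfr}$, and the openness of $\Gamma_\pfr$ is an essential ingredient you did not mention.
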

\begin{proof}
Since $\bbz_{S_0}$ and $\bbz_p$ are PIDs, the above discussion shows  why $\lin{q_1}{q_2}$ is well-defined. And from the definition of $\lin{q_1}{q_2}$ it is an embedding. Now one can show the rest of the claims by direct computation except the last part: $\lin{q_1}{q_2}$ is onto if $q_i$ are powers of a single prime $p$ and $\log_p q_1^2/q_2, q_1'^2/q_2'\gg_{\Gamma} 1$.  

Since $\Gamma_{\pfr}$ is an open subgroup of $\gcal_{1,\pfr}(\bbz_p)$, we have $\Gamma_{\pfr}[p^n]=\gcal_{1,\pfr}(\bbz_p)[p^n]$ for $n\gg_{\Gamma} 1$. On the other hand, the exponential map $\exp: p^n\gfr_{1,\pfr}(\bbz_p)\rightarrow \gcal_{1,\pfr}(\bbz_p)[p^n]$ and the logarithmic map $\log:\gcal_{1,\pfr}(\bbz_p)[p^n]\rightarrow p^n\gfr_{1,\pfr}(\bbz_p)$ are well-defined analytic functions and inverse of each other for $n\gg 1$; in particular $\|\exp(x)-I\|_p=\|x\|_p$ and $\|\log g\|_p=\|g-I\|_p$. Therefore for any $x\in \gfr_{1,\pfr}(\bbz_p)$ and large enough $n$ (depending on $\Gamma$) we have that 
\[
\pi_{p^{2n}}(1+p^n x)=\pi_{p^{2n}}(\exp(p^n x))\in \pi_{p^{2n}}(\gcal_{1,\pfr}(\bbz_p)[p^n])=\pi_{p^{2n}}(\Gamma_{\pfr}[p^n]).
\]
 And so $\lin{q_1}{q_2}$ is onto if $\log_p q_1\gg_{\Gamma}1$.

\end{proof}
\section{Expansion, approximate subgroup, and bounded generation.}
\subsection{Statements and notation.}\label{ss:StatementApproximateSubgroupBoundedGeneration} In this section, three properties will be introduced, and we will explore the connections between them. The first property is about the level-$Q$ approximate subgroups, the second property is about the bounded generation of a large congruence subgroup, and the third property is about the bounded generation of a {\em thick top slice}.   

In this section, let $\Omega$ be a symmetric subset of $\GL_{n_0}(\ocal(S))$ given as in Section~\ref{ss:InitialReductions}, and let $\gcal_1$ and $\gcal_{1,\pfr}$ be as in Lemma~\ref{l:PropertiesOfLinearizationMap}: 

Let $S_0:=\{p\in V_f(\bbq)|\h \exists \pfr\in S, \pfr|p\}$, and view $\Gamma$ as a subgroup of $R_{\ocal_k(S)/\bbz_{S_0}}(\GL_{n_0})(\bbz_{S_0})\subseteq \GL_{n_0\dim k}(\bbz_{S_0})$. Let $\bbg_1$ and  $\gcal_1$ be the Zariski-closure of $\Gamma$ in $(\GL_{n_0\dim k})_{\bbq}$ and $(\GL_{n_0\dim k})_{\bbz_{S_0}}$, respectively. For any $\pfr\in V_f(k)\setminus S$, we consider $\Gamma$ as a subgroup of $R_{\ocal_{\pfr}/\bbz_p}(\GL_{n_0})(\bbz_p)\subseteq \GL_{n_0[k:k(\pfr)]}$, where $k(\pfr)$ is as in Lemma~\ref{l:StructurePAdicClosure}. Let $\bbg_{1,\pfr}$ and $\gcal_{1,\pfr}$ be the Zariski-closure of $\Gamma$ in $(\GL_{n_0[k:k(\pfr)]})_{k(\pfr)}$  and $(\GL_{n_0[k:k(\pfr)]})_{\bbz_p}$.  Let $\gfr_1:=\Lie \gcal(\bbz_{S_0}), \h\h \gfr_{1,\pfr}:=\Lie \gcal_{1,\pfr}(\bbz_p).$

In this section, 
$\pi_{p^n}$ is either the residue map from $\gcal_1(\bbz_p)$ to $\gcal_1(\bbz/p^n \bbz)$ if $p\in V_f(\bbq)\setminus S_0$, or the residue map from $\gcal_{1,\pfr}(\bbz_p)$ to $\gcal_{1,\pfr}(\bbz/p^n\bbz)$ if $p\in S_0$ and $\pfr| p$ for some $\pfr\in V_f(k)\setminus S$. And accordingly we define $\Gamma[p^n]$ to be either $\gcal_1(\bbz_p)[p^n]\cap \Gamma$ if $p\in V_f(\bbq)\setminus S_0$, or $\gcal_{1,\pfr}(\bbz_p)[p^n]\cap \Gamma$ if $p\in S_0$ and $\pfr|p$ where $\pfr\in V_f(k)\setminus S$.

Before formulating the precise statement, let us introduce a notation for convenience: for a finite symmetric subset $A$ of $\Gamma$, a positive integer $l$, a prime power $Q=p^n$, and a positive number $\delta$, let $\Pfr_Q(\delta,A,l)$ be the following statement
\be\label{e:P(delta)}
/(\pcal_{\Omega}^{(l)}(A)>Q^{-\delta})\h\wedge\h (l>\frac{1}{\delta} \log Q)\h \wedge\h (|\pi_Q(A\cdot A\cdot A)|\le |\pi_Q(A)|^{1+\delta}).
\ee
\begin{thm}[Approximate subgroups]\label{ReductionTripleProduct}
In the above setting, for any $\vare>0$ there is $\delta>0$ such that 
\begin{center}
$\Pfr_Q(\delta,A,l)$ implies that $|\pi_Q(A)|\ge |\pi_Q(\Gamma)|^{1-\varepsilon}$
\end{center}
if $Q=p^n$ and $Q^{\vare^{\Theta_{\bbg}(1)}}\gg_{\Omega} 1$. 
\end{thm}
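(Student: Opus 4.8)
The plan is to run the Bourgain--Gamburd machine at the prime-power level, combining the $l^2$-flattening lemma (Lemma~\ref{l:BSG}) with the escape-from-subvarieties estimate (Proposition~\ref{p:EscapeSubvariety}) and the structure of centralizers modulo $\pfr^n$, and to argue by a bootstrapping/induction on scales. Concretely, assume $\Pfr_Q(\delta,A,l)$ holds but $|\pi_Q(A)|<|\pi_Q(\Gamma)|^{1-\vare}$; I want to derive a contradiction for $\delta$ small enough in terms of $\vare$ and $Q$ large enough in terms of $\vare$. The measure $\mu:=\pi_Q(\pcal_\Omega^{(l)})$ on $H:=\pi_Q(\Gamma)$ satisfies $\mu(\pi_Q(A))>Q^{-\delta}$ and $|\pi_Q(A\cdot A\cdot A)|\le|\pi_Q(A)|^{1+\delta}$, so $\pi_Q(A)$ is a genuine $Q^{O(\delta)}$-approximate subgroup carrying a noticeable amount of the walk. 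The goal is to show such an approximate subgroup of intermediate size cannot exist.

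First I would set up the dichotomy: either $\|\mu\ast\mu\|_2$ is much smaller than $\|\mu\|_2$ (flattening), in which case the walk has already spread and $\mu^{(2)}(\text{anything of size}\le|\pi_Q(A)|^{1+\delta})$ is too small to be consistent with $\Pfr_Q$ via Lemma~\ref{l:BGRemark}; or $\|\mu\ast\mu\|_2\ge K^{-1}\|\mu\|_2$ with $K=Q^{O(\delta)}$, and then Lemma~\ref{l:BSG} hands us a set $A'$ with $|A'|\approx\|\mu\|_2^{-2}$, tripling $K^{O(1)}$, and $\widetilde\mu\ast\mu\ge (K^{O(1)}|A'|)^{-1}$ on $A'$. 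Since the walk escapes every proper subvariety exponentially (Proposition~\ref{p:EscapeSubvariety}), most of the mass of $\widetilde\mu\ast\mu$ — hence a large chunk of $A'$ — lies on \emph{regular} elements at $\pfr$-adic distance $\gg Q^{-c}$ from the non-regular locus; here one uses the scheme-theoretic ramified-prime bounds (Corollary~\ref{c:RamifiedPrime}, Definition~\ref{d:LargeModP}) to ensure $p_0$ of the relevant schemes does not hurt us once $Q$ is polynomially large in $1/\vare$. Pick such a regular $\gamma\in A'$; the centralizer $C_{\pi_Q(\Gamma)}(\pi_Q(\gamma))$ is then well-controlled — essentially the $\pi_Q$-reduction of a maximal torus — via the finite logarithmic maps of Section~\ref{ss:FiniteLog} (Lemma~\ref{l:PropertiesOfLinearizationMap}), which linearize conjugation into the adjoint action on $\gfr/\pfr^j\gfr$.

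The heart of the argument is then a \emph{product-growth / sum-product} step transported to the Lie algebra via $\lin{q_1}{q_2}$: the set $A'\cdot A'^{-1}$ (or a suitable commutator set built from $A'$) maps, through the finite log map, to a subset of $\gfr/\pfr^j\gfr$ that is approximately closed under addition \emph{and} approximately invariant under $\Ad$ of the regular element $\gamma$; combined with the free-generation and non-concentration-in-subgroups input (items (3),(4) of Section~\ref{ss:InitialReductions}, and Proposition~\ref{p:EscapeSubgroups}) and a Larsen--Pink-type / orbit-counting bound for $\Ad$-orbits in $\gfr/\pfr^j\gfr$, this forces the image to fill a full $\Ad$-orbit-generated submodule, hence to have size $\gg|\pi_Q(\Gamma)|^{\Theta_{\bbg}(\vare)}$, contradicting $|\pi_Q(A)|\le|\pi_Q(\Gamma)|^{1-\vare}$ once the numerology is balanced. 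One iterates this: each pass enlarges the effective support by a factor $Q^{\vare^{\Theta(1)}}$, and after $O(1/\vare)$ passes one exceeds $|\pi_Q(\Gamma)|^{1-\vare}$, which is the contradiction. Throughout, the exponential escape from subvarieties (Proposition~\ref{p:EscapeSubvariety}) and the multiplicity lower bound (Proposition~\ref{p:HighMultiplicity}) are what let us convert ``large at one scale'' into ``large at the next'' without the constants degenerating.

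I expect the main obstacle to be the regular-element selection together with the centralizer control at \emph{prime-power} level: over $\f_p$ this is classical, but modulo $\pfr^n$ one genuinely needs the distance-to-the-non-regular-locus to be $\ge p^{-cn}$ with $c$ small and uniform, so that after reducing mod $\pfr^{(1-c)n}$ the centralizer is still a torus reduction and the finite log map of Lemma~\ref{l:PropertiesOfLinearizationMap} applies on a thick enough slice. Making the dependence of $c$ and of all the implied constants depend only on $\bbg$ (not on $p$ or $n$), and simultaneously keeping the ramified-prime thresholds $p_0(\wcal)$ below $Q^{\vare^{\Theta(1)}}$, is the delicate bookkeeping; the sum-product/flattening step itself, while technically heavy, is a fairly mechanical adaptation of \cite{BG3} once the algebraic inputs are in place.
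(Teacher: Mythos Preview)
Your proposal has the right ingredients but the organization is muddled, and one step is genuinely misplaced.

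First, the misplaced step: you set up a BSG/$l^2$-flattening dichotomy for $\mu=\pi_Q(\pcal_\Omega^{(l)})$. But the hypothesis $\Pfr_Q(\delta,A,l)$ \emph{already} hands you an approximate subgroup: it includes $|\pi_Q(A\cdot A\cdot A)|\le|\pi_Q(A)|^{1+\delta}$. Invoking Lemma~\ref{l:BSG} here to produce another approximate subgroup $A'$ is circular. In the paper's architecture, BSG is used in the \emph{other} direction --- to deduce the spectral gap (Theorem~\ref{t:main}) from Theorem~\ref{ReductionTripleProduct}, not to prove Theorem~\ref{ReductionTripleProduct} itself.

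Second, the paper's actual proof of Theorem~\ref{ReductionTripleProduct} is a two-line reduction to Theorem~\ref{ReductionBoundedGeneration} (Bounded generation): if $\Pfr_Q(\delta,A,l)$ holds then $\prod_C\pi_Q(A)\supseteq\pi_Q(\Gamma[q])$ for some $q\le Q^{\vare'}$, hence $|\pi_Q(\prod_C A)|\ge|\pi_Q(\Gamma)|^{1-\Theta(\vare')}$; Ruzsa then forces $|\pi_Q(A)|\ge|\pi_Q(\Gamma)|^{1-\vare}$. All the hard work --- regular-element selection with quantitative distance to the non-regular locus, Helfgott's large-centralizer trick, finite log maps, the sum-product step in the Lie algebra --- goes into proving Theorem~\ref{ReductionBoundedGeneration}, which in turn is reduced to Proposition~\ref{p:ThickTopSlice} (Thick top slice) plus a commutator-based propagation (Lemma~\ref{l:OneStepPropagation} and its corollaries).

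Your paragraph on regular elements, centralizers, and the Lie-algebra transfer describes exactly the content of Proposition~\ref{p:ThickTopSlice}, so you have located the heart of the matter correctly. What is missing is the organizing principle: rather than a vague ``each pass enlarges the support by $Q^{\vare^{\Theta(1)}}$'', the paper first produces a single \emph{thick top slice} $\pi_{q_2}(\Gamma[q_1])\subseteq\prod_C\pi_{q_2}(A)$ with $q_1\le Q^{\vare_1}$ and $q_2/q_1\ge Q^{\vare_2}$, and then propagates this slice downward by iterated commutators (exploiting perfectness of $\gfr$) until it fills $\pi_Q(\Gamma[q])$ for $q\le Q^{\vare}$. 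Your iteration sketch does not make this two-stage structure (thick slice, then propagation) visible, and without it the bookkeeping you worry about in your last paragraph cannot be closed.
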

Throughout this article for a subset $X$ of a group we let 
\[\textstyle
\prod_C X:=\{x_1\cdot \cdots\cdot x_C|\h x_i\in X\}.
\]
Theorem~\ref{ReductionTripleProduct} is proved using the following:
\begin{thm}[Bounded generation]\label{ReductionBoundedGeneration}
In the above setting, for any $0<\vare\ll_{\Omega} 1$, $0<\delta\ll_{\Omega,\vare} 1$, and positive integer $C\gg_{\Omega,\vare}1$ the following holds for a finite symmetric subset $A$ of $\Gamma$ which contains $1$:
\begin{center}
$\pcal_{\Omega}^{(l)}(A)>Q^{-\delta}$ for some $l>\log Q/\delta$ implies that $\pi_Q(\Gamma[q])\subseteq \prod_C\pi_Q(A)$ for some $q|Q$ such that $q< Q^{\varepsilon}$
\end{center} 
if $Q=p^n$ and $Q^{\vare^{\Theta_{\bbg}(1)}}\gg_{\Omega} 1$.
\end{thm}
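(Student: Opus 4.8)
The plan is to run a Bourgain--Gamburd style argument combining the $l^2$-flattening lemma (Lemma~\ref{l:BSG}), the escape-from-subvarieties input (Proposition~\ref{p:EscapeSubvariety}), and the multiplicity/dimension bound (Proposition~\ref{p:HighMultiplicity}), with the finite logarithmic maps of Section~\ref{ss:FiniteLog} serving as the bridge from the group to the Lie algebra. First I would set $\mu=\pcal_\Omega^{(l)}$ and use the hypothesis $\pcal_\Omega^{(l)}(A)>Q^{-\delta}$ together with $l>\log Q/\delta$ to produce, after a few convolution steps and an application of Lemma~\ref{l:BGRemark}, a measure concentrated on $A\cdot A$ that is far from flat in the $l^2$ sense at level $Q$. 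Then either the random walk is already nearly equidistributed on $\pi_Q(\Gamma)$ (in which case $\pi_Q(A)$ is essentially all of $\pi_Q(\Gamma)$ and there is nothing to prove), or Lemma~\ref{l:BSG} applied to the pushforward on $\pi_Q(\Gamma)$ yields an approximate subgroup $\wt A\subseteq\pi_Q(\Gamma)$ with $|\wt A\cdot\wt A\cdot\wt A|\le Q^{O(\delta)}|\wt A|$ and of size $\approx\|\pi_Q(\mu)\|_2^{-2}$. The goal is then to show that such an approximate subgroup, once it is not too small, must contain a full congruence subgroup $\pi_Q(\Gamma[q])$ with $q<Q^\varepsilon$, and that a bounded number of translates of $A$ already reach everything in $\pi_Q(\Gamma[q])$.

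The heart of the argument is a \emph{scale-by-scale} analysis. Writing $Q=p^n$, I would stratify $\pi_Q(\Gamma)$ by the congruence filtration $\pi_Q(\Gamma[p^i])$, $0\le i\le n$, and view the coset tree of these quotients as a rooted regular tree so that Corollary~\ref{c:Regularization} applies: passing to a large regular subset $B\subseteq A$ costs only a polynomial-in-$Q^\varepsilon$ loss and gives control of the ``profile'' of $A$ along the filtration. At the level just below the bottom, $\pi_{p^{i+1}}(\Gamma[p^i])$ is abelian and the finite logarithmic map $\lin{p^i}{p^{i+1}}$ identifies a thick slice of $A$ there with a large subset of the Lie algebra quotient $\gfr/p\gfr$ (Lemma~\ref{l:PropertiesOfLinearizationMap}(2), which in the prime-power case is an isomorphism). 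Here the tripling bound on $A$ together with the $\Gamma$-equivariance in Lemma~\ref{l:PropertiesOfLinearizationMap}(\ref{i:Equi}) and the commutator formula force the image in $\gfr/p\gfr$ to be an $\Ad(\Gamma)$-almost-invariant subset of small doubling; since $\gfr\otimes\bbq_p$ is semisimple and $\Ad(\Gamma)$ is Zariski-dense (condition (3) of Section~\ref{ss:InitialReductions}), a sum-product / orbit argument shows this forces the slice to be essentially all of $\gfr/p\gfr$, i.e. $\pi_{p^{i+1}}(A)$ contains $\pi_{p^{i+1}}(\Gamma[p^i])$ up to bounded index. Bootstrapping this up the filtration — using that commutators of elements of $A\cdot A$ again lie in $A^{O(1)}$ and that $[\gfr,\gfr]\supseteq p^{s_0}\gfr$ — propagates fullness from level $i+1$ to all higher levels, yielding $\pi_Q(\Gamma[q])\subseteq\prod_C\pi_Q(A)$ with $q=p^{i_0}$ where $i_0$ is the first level at which $A$ is ``thick enough'', and the regularization guarantees $i_0<\varepsilon n$, i.e. $q<Q^\varepsilon$. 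The bounded number of factors $C$ comes from counting how many multiplications by $A$ are needed in the sum-product step and the bootstrapping, all of which depend only on $\Omega$ and $\varepsilon$.

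The main obstacle I expect is the escape step feeding into the sum-product argument: one needs that the relevant slice of $A$ is not trapped in a proper subalgebra or a coset of one, and that requires knowing the random walk escapes the scheme-theoretic closures of such subvarieties (and their $\Gamma$-translates) in $O(n\log p)$ steps, uniformly in $p$ — this is exactly the content of Proposition~\ref{p:EscapeSubvariety}, but applying it needs the ramification bound $\log p_0(\wcal)\ll\max_i h(f_i)$ from Corollary~\ref{c:RamifiedPrime} to be reconciled with the condition $Q^{\vare^{\Theta_\bbg(1)}}\gg_\Omega 1$; that is, one must verify the heights of the defining polynomials of all the relevant subschemes are bounded in terms of $\Omega$ alone so that for $Q$ large the ``bad primes'' never interfere. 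The secondary difficulty is keeping the parameter $\delta$ genuinely independent of everything except $\Omega$ and $\varepsilon$ through the iterated applications of Lemma~\ref{l:BSG} and the regularization, since each step degrades $\delta$ by a bounded factor and one must ensure the number of steps is itself bounded — this is handled by noting that the bootstrapping up the filtration, while it has $n$ levels, reduces to a single uniform implication ``thick at one level $\Rightarrow$ thick at the next'' plus a fixed sum-product estimate, so only $O_{\Omega,\varepsilon}(1)$ many \emph{essentially different} steps occur.
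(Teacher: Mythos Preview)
Your overall architecture --- pass to an approximate subgroup, find a ``thick slice'' at some level of the congruence filtration, then propagate via commutators --- matches the paper's. The propagation step you describe is essentially correct and corresponds to Lemma~\ref{l:OneStepPropagation} and Corollaries~\ref{c:OneStepPropagation}--\ref{c:CompletionPropagation}. But the middle step, producing the initial thick slice, is where you have a genuine gap.

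You write that after regularizing $A$ along the filtration, ``the tripling bound on $A$ together with the $\Gamma$-equivariance \ldots\ and the commutator formula force the image in $\gfr/p\gfr$ to be an $\Ad(\Gamma)$-almost-invariant subset of small doubling; \ldots\ a sum-product / orbit argument shows this forces the slice to be essentially all of $\gfr/p\gfr$.'' This is not how it goes, and the sentence hides almost all of the work. Projecting an approximate subgroup of $\pi_Q(\Gamma)$ to a single layer $\pi_{p^{i+1}}(\Gamma[p^i])\simeq\gfr/p\gfr$ does \emph{not} give you a set of small doubling in the Lie algebra: the tripling bound is in the group, and a single layer is not a quotient group of $\pi_Q(\Gamma)$. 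Nor is there a direct ``sum-product in $\gfr/p\gfr$'' that turns a large $\Ad(\Gamma)$-almost-invariant set into all of $\gfr/p\gfr$. What the paper actually does (Proposition~\ref{p:ThickTopSlice} and all of Section~4) is: (i) use Proposition~\ref{p:q-nonRegularSemisimple} and Helfgott's trick (Lemma~\ref{l:HelfgottLargeCentralizer}) to find a $|q|$-regular semisimple element $x$ with $|C_{\pi_q(\Gamma)}(\pi_q(x))\cap\pi_q(A\cdot A)|$ large; (ii) pass via Lemma~\ref{l:q-RegularSemisimpleCentralizer} to a large subset $\bfr$ of the Lie algebra of the torus $\bbt=C_{\bbh}(x)^\circ$; (iii) now, and only now, apply a genuine sum-product theorem over a local field (in the \emph{abelian} torus coordinates, where multiplication really is addition of weights) to get a full arithmetic progression; (iv) conjugate by small elements (Proposition~\ref{p:SpanSmallElements}) to span all of $\hfr_i(\bbz_p)$ at some thick level. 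The regularization (Corollary~\ref{c:Regularization}) is used not on $A$ along the whole filtration, but on the centralizer set inside the torus, to guarantee the hypothesis of the sum-product input. None of this torus/centralizer machinery appears in your outline, and without it there is no mechanism that converts ``approximate subgroup at level $Q$'' into ``full congruence layer at level $q_1$''.

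A smaller point: your opening paragraph invokes Proposition~\ref{p:HighMultiplicity} (the multiplicity bound), but that plays no role in the proof of Theorem~\ref{ReductionBoundedGeneration}; it is used only in Section~\ref{ss:ReductionApproximateSubgroup} to deduce spectral gap from the approximate-subgroup theorem.
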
  
Theorem~\ref{ReductionBoundedGeneration} is proved based on a propagation process and the following proposition.
\begin{prop}[Thick top slice]\label{p:ThickTopSlice}
In the above setting, for any $0<\varepsilon_2\ll_{\Omega} \varepsilon_1\ll_{\Omega} 1$, there are $0<\delta$ and a positive integer $C=C_{\Omega}(\varepsilon_1,\vare_2)$ with the following property:

\begin{center}
$\Pfr_Q(\delta,A,l)$ implies that there are $X\subseteq \prod_C A$ and $q_1|q_2|Q$ such that
\begin{enumerate}
\item $q_1\le Q^{\varepsilon_1}$ and $Q^{\varepsilon_2}\le q_2/q_1$,
\item $\pi_{q_2}(X)=\pi_{q_2}(\Gamma[q_1])$.
\end{enumerate}
\end{center}
if $Q=p^n$ and $n\vare_2\gg_{\Omega} 1$.
\end{prop}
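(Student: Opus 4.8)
The plan is to locate the required slice in the first $\sim\varepsilon_1 n$ levels of the congruence tree, exploiting that the random walk already equidistributes modulo a small power of $p$. By hypothesis $n\varepsilon_2\gg_\Omega 1$, so $n$, and hence $m_1:=\lceil\varepsilon_1 n/2\rceil$, is large. Since $l>\frac1\delta\log Q$ and $\pcal_\Omega^{(l)}(A)>Q^{-\delta}$, the push-forward $\pi_{p^{m_1}}[\pcal_\Omega]^{(l)}$ assigns $\pi_{p^{m_1}}(A)$ mass $>Q^{-\delta}$. I would combine this with a spectral gap at level $p^{m_1}$ that is uniform in $p$ — from Theorem~\ref{t:ExpanderSquareFree} if $m_1=1$, and otherwise by an induction on $n$, so that Theorem~\ref{ReductionTripleProduct} at the levels $<n$, fed into the $l^2$-flattening of Lemma~\ref{l:BSG}, yields equidistribution modulo $p^{m_1}$ — and with Lemma~\ref{l:EquidistributionErrorRate} (whose error term is negligible because $l$ is large), to conclude $|\pi_{p^{m_1}}(A)|\ge\tfrac12 Q^{-\delta}|\pi_{p^{m_1}}(\Gamma)|\ge|\pi_{p^{m_1}}(\Gamma)|^{1-\varepsilon_1/8}$, the last step holding once $\delta\ll_{\Omega,\varepsilon_1}1$ since $Q^{-\delta}=p^{-n\delta}$ while $|\pi_{p^{m_1}}(\Gamma)|^{-\varepsilon_1/8}=p^{-\Theta(\varepsilon_1^2 n)}$. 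A pigeonhole over the residues modulo $p$, followed by a translation, then produces a set $Y_0\subseteq\pi_{p^{m_1}}((A\cdot A)\cap\Gamma[p])$ with $|Y_0|\ge|\pi_{p^{m_1}}(\Gamma[p])|^{1-\varepsilon_1/4}$.

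Next I would regularize. Identifying the level-$j$ vertices of a rooted $k$-regular tree ($k=|\Gamma[p^i]/\Gamma[p^{i+1}]|$, a fixed power of $p$) with the elements of $\Gamma[p]/\Gamma[p^{j+1}]$, Corollary~\ref{c:Regularization} applied to $Y_0$ produces $B'\subseteq Y_0$ and an integer $m'\le m_1(1-c_0)$, with $c_0=c_0(\bbg)>0$, such that $\pi_{p^{m'}}(B')$ is a single coset and $|\pi_{p^l}(B')|\gg_{c_0}k^{(l-m')c_0}$ for $m'\le l\le m_1$. Left-translating $B'$ by a lift in $A\cdot A$ of one of its points gives $Y\subseteq\pi_{p^{m_1}}(\textstyle\prod_4 A)\cap\pi_{p^{m_1}}(\Gamma[p^{m'}])$ with $|\pi_{p^l}(Y)|\gg k^{(l-m')c_0}$ throughout. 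With $w:=\lfloor(m_1-m')/2\rfloor$ and $q_1':=p^{m'}\mid q_2':=p^{m'+w}\mid q_1'^2$, Lemma~\ref{l:PropertiesOfLinearizationMap} converts $Y$ into $Z:=\lin{q_1'}{q_2'}(\pi_{q_2'}(Y))\subseteq\gfr/p^w\gfr$, where $\gfr$ is the Lie $\bbz_p$-algebra of that lemma, with $|\pi_{p^j}(Z)|\gg k^{jc_0}$ for $0\le j\le w$; moreover, by the equivariance and commutator formulas of Lemma~\ref{l:PropertiesOfLinearizationMap}, the $\Ad(a)$-translates of $Z$ with $a\in A$ and the iterated Lie brackets of elements of $Z$ are all realized, inside $\gfr/p^w\gfr$, by $\pi_{p^{m_1}}(\prod_C A)$ with $C$ controlled by the number of operations.

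The crux is to fill an entire sublattice. I would invoke the escape estimates of the previous subsection — Proposition~\ref{p:EscapeSubvariety}, and its refinement for the non-regular locus and its $\Gamma$-translates, Proposition~\ref{p:q-nonRegularSemisimple} — to pin down, inside some $\prod_{C_0}A$, a regular semisimple $\gamma_0$ that stays regular modulo $p^w$, splitting $\gfr$ (integrally, up to bounded denominators) into the Cartan direction and the root lines for $\gamma_0$, together with a bounded tuple of elements of $\prod_{C_0}A$ whose $\Ad$-images spread these root lines over all of $\gfr\otimes\bbq_p$; the relevant inputs are that $\gfr\otimes\bbq_p$ is semisimple, hence perfect, that $Z$ is non-degenerate along every $\bbq$-almost-simple factor (by the first paragraph, strong approximation, and hypothesis (3) of Section~\ref{ss:InitialReductions}), and that these degeneracies are cut out by proper subschemes, hence escaped. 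Then $O_{\varepsilon_1,\varepsilon_2}(1)$ conjugations, sums, and Lie brackets recover each root line, hence the Cartan via brackets of opposite root lines, hence a sublattice $p^a\gfr/p^{w-a'}\gfr$ with $a,a'$ bounded; exponentiating back (Lemma~\ref{l:PropertiesOfLinearizationMap}, via the uniform pro-$p$ structure of $\Gamma[p^{m'}]$) gives $X\subseteq\prod_C A$, $C=C_\Omega(\varepsilon_1,\varepsilon_2)$, with $\pi_{q_2}(X)=\pi_{q_2}(\Gamma[q_1])$ for $q_1:=p^{m'+a}\mid q_2:=p^{m'+w-a'}\mid Q$. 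Finally $q_1\le p^{m_1+a}\le Q^{\varepsilon_1}$ (as $m_1$ is large, $a<m_1$), while $q_2/q_1=p^{w-a-a'}\ge Q^{\varepsilon_2}$ once $\varepsilon_2\ll_\Omega\varepsilon_1 c_0$, since $w\gg\varepsilon_1 c_0 n$.

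The step I expect to be the main obstacle is exactly this last one: turning the purely combinatorial fact that $Z$ spreads with a fixed positive rate at every intermediate level into the algebraic assertion that a \emph{bounded} number of conjugations, sums, and Lie brackets fills a whole sublattice of $\gfr$, with the bound independent of the prime power $Q=p^n$. The largeness at the low level, the tree regularization, and the bookkeeping with the finite logarithmic maps are comparatively routine; the genuine content is that the escape-from-subschemes input of Propositions~\ref{p:EscapeSubvariety} and \ref{p:q-nonRegularSemisimple}, the perfectness of $\gfr$, and the free-generation hypothesis (3) of Section~\ref{ss:InitialReductions} must be combined to force the spreading set not to be trapped in any proper $\Ad$-invariant submodule and to make the number of Lie operations uniform in $p$ and $n$.
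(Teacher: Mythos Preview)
Your first step is circular. You propose to use spectral gap at level $p^{m_1}$, appealing to Theorem~\ref{ReductionTripleProduct} at levels $<n$ via an induction on $n$; but Theorem~\ref{ReductionTripleProduct} is deduced \emph{from} Proposition~\ref{p:ThickTopSlice}, so you would need to set up a simultaneous induction on the entire chain of implications and verify that the constants $\delta,C,\lambda_0$ do not degrade---a nontrivial bootstrapping you do not carry out. The paper avoids this entirely: instead of pulling largeness of $\pi_{p^{m_1}}(A)$ from spectral gap, it uses Helfgott's conjugacy-class pigeonhole (Proposition~\ref{p:LargeNumberOfConjugacyClasses} and Lemma~\ref{l:HelfgottLargeCentralizer}) together with the escape estimate Proposition~\ref{p:q-nonRegularSemisimple} to produce a $|q|$-regular semisimple element $x\in\prod_{O(1)}A$ whose centralizer in $\pi_{q_2}(A\cdot A)$ is already of size $Q^{\Theta(\varepsilon_1)}$. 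Regularization is then applied to \emph{that} centralizer, so after the finite log map the resulting set $\bfr$ lands inside the Lie algebra of the maximal torus $\bbt=C_{\bbh}(x)^\circ$, not merely somewhere in $\gfr$.

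This localisation to a torus is exactly what makes the ``fill a sublattice'' step---which you rightly flag as the main obstacle---tractable, and your proposed toolkit (escape, perfectness, free generation) is not enough to resolve it. A set $Z\subseteq\gfr/p^w\gfr$ that spreads at every level can be arithmetically ``fractal'' and need not yield a full sublattice under boundedly many sums and brackets; what is needed is a sum--product theorem. In the paper, because $\bfr\subseteq\tfr$, applying the roots gives a subset $X_i\subseteq\ocal^{|\Phi_i|}$ spreading at every scale, and a $p$-adic sum--product result (the reference \cite{SG:SumProduct} in Section~\ref{ss:ThickTopLevel}) then produces a genuine thick arithmetic progression $p^{m_1}\bbz\cdot\mathbf{d}$ inside $\sum_{O(1)}\prod_{O(1)}X_i$. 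Only after this does one spread to all of $\gfr$ via $\Ad$-conjugation by the small-height basis of Proposition~\ref{p:SpanSmallElements} (Lemmas~\ref{l:ThickSegment}--\ref{l:Leveling}). Your sketch omits both the centraliser trick that puts the set into a torus and the sum--product input that turns spreading into a segment; without these two ingredients the argument does not close.
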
  

\subsection{Theorem~\ref{ReductionTripleProduct} (Approximate subgroup) implies Theorem~\ref{t:main} (Spectral gap).}\label{ss:ReductionApproximateSubgroup} We can assume that $\Omega$ satisfies the extra assumptions listed in Section~\ref{ss:InitialReductions}. By the discussion in Section~\ref{ss:StrongApproximation}, we know that $\Gamma_{\pfr}$ is a quotient of the closure $\Gamma_p$ of $\Gamma$ in $\gcal_1(\bbz_p)$ if $p\in V_f(\bbq)\setminus S_0$. Hence $\lambda_1(\pcal_{\Omega};\Gamma_{\pfr})\le \lambda_1(\pcal_{\Omega};\Gamma_p)$. For $\pfr\in V_f(k)\setminus S$ which divides $p\in S_0$, we notice that $\Gamma_{\pfr}[q]:=\gcal_{1,\pfr}(\bbz_p)[q]\cap \Gamma_{\pfr}$ is a neighborhood basis for the identity. So altogether it is enough to prove:
\[
\sup\{\lambda_1(\pcal_{\pi_Q(\Omega)}; \pi_Q(\Gamma))|\h Q=p^n; \exists \pfr\in V_f(k)\setminus S, \pfr|p\}<1, 
\]
where $\pi_Q$ is the residue map defined at the beginning of Section~\ref{ss:StatementApproximateSubgroupBoundedGeneration}.  
 
Let $\pcal=\pcal_{\Omega}$ and let $T_Q:l^2(\pi_Q(\Gamma))\rightarrow l^2(\pi_Q(\Gamma))$, $T_Q(f):=\pi_Q[\pcal]*f$. Let $\lambda_1(Q)$ be the second largest eigenvalue. We have to prove that there is a uniform upper bound for $\lambda_1(Q)$. Since $l^2(\pi_Q(\Gamma))$ is a completely reducible $\pi_Q(\Gamma)$-space, there is a unit eigenfunction $f\in l^2(\pi_Q(\Gamma))$ such that $T_Q(f)=\lambda_1(Q) f$ and the $\pi_Q(\Gamma)$-module $V$ generated by $f$ is irreducible. Changing $Q$ to one of its divisors, if necessary, we can and will assume that the action of $\pi_Q(\Gamma)$ on $V$ does not factor through $\pi_{q}(\Gamma)$ for any proper divisor $q$ of $Q$. Now by Proposition~\ref{p:HighMultiplicity} there is $\varepsilon_0>0$ such that $\dim V\geq |\pi_Q(\Gamma)|^{\varepsilon_0}$. So the multiplicity of $\lambda_1(Q)$ is at least $|\pi_q(\Gamma)|^{\varepsilon_0}$. Hence 
\be
|\pi_Q(\Gamma)|^{\varepsilon_0} \lambda_1(Q)^{2l}\le \Tr(T_Q^{2l})=|\pi_Q(\Gamma)|\h \|\pi_Q(\pcal^{(l)})\|_2^2
\ee
So it is enough to prove that $\|\pi_Q(\pcal^{(l)})\|_2\le |\pi_Q(\Gamma)|^{-\frac{1}{2}+\frac{\varepsilon_0}{4}}$ for some $l\ll \log Q$ where the implied constant just depends on $\Omega$.
\begin{lem}\label{l:flattening}
Let $\Omega$ and $\vare_0$ be as above. Then there is $\delta>0$ with the following property: 

let $Q=p^n$, where some $\pfr\in V_f(k)\setminus S$ divides $p$, and $l_0\gg_{\Omega} \log Q$.
\begin{enumerate}
\item $\|\pi_Q[\pcal^{(l_0)}]\|_2\le |\pi_Q(\Gamma)|^{-\delta}$,
\item If $l\ge \frac{1}{\delta}\log Q$ and $\|\pi_Q[\pcal^{(l)}]\|_2\ge |\pi_Q(\Gamma)|^{-\frac{1}{2}+\frac{\varepsilon_0}{4}}$, then $\|\pi_Q[\pcal^{(2l)}]\|_2\le \|\pi_Q[\pcal^{(l)}]\|_2^{1+\delta}$.
\end{enumerate}   
\end{lem}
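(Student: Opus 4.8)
The plan is to prove Lemma~\ref{l:flattening} in the standard Bourgain--Gamburd mold, combining the $\ell^2$-flattening machinery (Lemma~\ref{l:BSG}) with the approximate-subgroup dichotomy (Theorem~\ref{ReductionTripleProduct}) and the escape estimates (Proposition~\ref{p:EscapeSubvariety}, Proposition~\ref{p:EscapeSubgroups}). For part (1), I would first use Proposition~\ref{p:EscapeSubvariety} together with Theorem~\ref{t:ExpanderSquareFree} to show that after $l_0 \gg \log Q$ steps the measure $\pi_Q[\pcal^{(l_0)}]$ is already fairly spread out on $\pi_Q(\Gamma)$: more precisely, the probability of landing in any coset of a large congruence subgroup, or in any proper subvariety, is exponentially small, so by a union bound $\|\pi_Q[\pcal^{(l_0)}]\|_\infty \le |\pi_Q(\Gamma)|^{-\delta'}$ for some $\delta'>0$; combined with $\|\mu\|_2^2 \le \|\mu\|_\infty$ this gives part (1). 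The key point is that $l_0 \gg \log Q$ is enough iterations for the random walk to equidistribute at the first congruence level and beyond, which is exactly what the expander property from \cite{SGV} buys us.

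For part (2), this is the heart of the argument and the analogue of \cite[Lemma 15]{Var} / the flattening step in \cite{BG1,BG3}. Suppose toward a contradiction that $\|\pi_Q[\pcal^{(2l)}]\|_2 > \|\pi_Q[\pcal^{(l)}]\|_2^{1+\delta}$, i.e. setting $\mu := \pi_Q[\pcal^{(l)}]$ we have $\|\mu * \mu\|_2 > \|\mu\|_2^{1+\delta} \ge \frac{1}{K}\|\mu\|_2$ with $K = \|\mu\|_2^{-\delta}$. Since $\|\mu\|_2 \ge |\pi_Q(\Gamma)|^{-1/2+\vare_0/4}$, we have $K \le |\pi_Q(\Gamma)|^{\delta(1/2-\vare_0/4)} \le Q^{O(\delta)}$. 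Now apply Lemma~\ref{l:BSG} to get a set $A \subseteq \pi_Q(\Gamma)$ with $|A\cdot A\cdot A| \le K^R|A|$, with $|A|$ comparable to $\|\mu\|_2^{-2}$, and with $(\wt{\mu}*\mu)(h) \gtrsim \frac{1}{K^R|A|}$ on $A$. The multiplicativity estimate $(\wt\mu * \mu)(h) = \mu^{(2l)}(h) $ together with Lemma~\ref{l:BGRemark}-type reasoning lets me lift $A$ (or a large piece of its preimage) back to a symmetric subset $\tilde A$ of $\Gamma$ with $\pcal_\Omega^{(2l)}(\tilde A) > Q^{-\delta'}$ and $|\pi_Q(\tilde A \cdot \tilde A \cdot \tilde A)| \le |\pi_Q(\tilde A)|^{1+\delta''}$, where $\delta', \delta''$ are controlled by $\delta$ and $R$. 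Taking $l' = l$ or $2l$ so that $l' > \frac{1}{\delta''}\log Q$, this is precisely the hypothesis $\Pfr_Q(\delta'', \tilde A, l')$.

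Then Theorem~\ref{ReductionTripleProduct} forces $|\pi_Q(\tilde A)| \ge |\pi_Q(\Gamma)|^{1-\vare}$ for whatever small $\vare$ we arranged by choosing $\delta$ small. But $|\pi_Q(\tilde A)| = |A|$ is comparable to $\|\mu\|_2^{-2} \le |\pi_Q(\Gamma)|^{1 - \vare_0/2}$, and choosing $\vare < \vare_0/2$ gives a contradiction once $Q$ is large enough (which is where the hypothesis $Q^{\vare^{\Theta_{\bbg}(1)}} \gg_\Omega 1$ enters, feeding into Theorem~\ref{ReductionTripleProduct}). Hence no such $\delta$-increment is possible, proving part (2). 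I expect the main obstacle to be the bookkeeping in the lift from $\pi_Q(\Gamma)$ back to $\Gamma$: one must produce the set $\tilde A \subseteq \Gamma$ with simultaneously a lower bound on $\pcal_\Omega^{(2l)}(\tilde A)$ and the tripling bound downstairs, and this requires using the escape-from-subgroups Proposition~\ref{p:EscapeSubgroups} (to discard the exponentially small part of the walk that lives in a proper algebraic subgroup, which could otherwise spoil the approximate-subgroup hypothesis) and carefully tracking how the constants $R$ in Lemma~\ref{l:BSG} propagate through. A secondary subtlety is ensuring the walk length inequality $l' > \frac{1}{\delta''}\log Q$ survives the doubling $l \mapsto 2l$, which just needs $l \gg \log Q$ from the outset, consistent with the hypothesis $l \ge \frac{1}{\delta}\log Q$.
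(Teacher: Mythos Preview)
Your approach to part~(2) is essentially the paper's: apply Lemma~\ref{l:BSG} to $\mu=\pi_Q[\pcal^{(l)}]$, extract an approximate subgroup $\overline{A}\subseteq\pi_Q(\Gamma)$, lift it to $A=\pi_Q^{-1}(\overline{A})\cap\supp(\pcal^{(2l)})\subseteq\Gamma$ (this is the entire ``bookkeeping in the lift''---no escape-from-subgroups is needed, and your invocation of Proposition~\ref{p:EscapeSubgroups} is superfluous), verify $\Pfr_Q(\delta',A,2l)$, and contradict Theorem~\ref{ReductionTripleProduct} via the size constraint $|\pi_Q(A)|\le|\pi_Q(\Gamma)|^{1-\vare_0/4}$ coming from $\|\mu\|_2\ge|\pi_Q(\Gamma)|^{-1/2+\vare_0/4}$. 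The paper phrases this as a sequential contradiction with $\delta\to 0$ and checks $Q_\delta\to\infty$, but your fixed-$\delta$ version is equivalent.

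Your argument for part~(1), however, is off. Proposition~\ref{p:EscapeSubvariety} and Theorem~\ref{t:ExpanderSquareFree} control the mass on proper subvarieties and the spectral gap modulo~$p$, respectively; neither yields an $\ell^\infty$ bound on $\pi_Q[\pcal^{(l_0)}]$ for $Q=p^n$ with $n$ large, since a single fibre $\pi_Q^{-1}(\bar g)$ is not a subvariety and the mod-$p$ gap says nothing about concentration at level $p^n$. The paper instead uses Kesten's bound directly: by the reductions in Section~\ref{ss:InitialReductions}, $\overline{\Omega}$ (after projecting via ${\rm pr}_i\circ\Ad$) freely generates, so $\pcal^{(2l)}(1)\le\lambda^{2l}$ for some $\lambda<1$. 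For $l_0\asymp c\log Q$ with $c$ small enough that $\pi_Q$ is injective on $B_{2l_0}$, one has $\|\pi_Q[\pcal^{(l_0)}]\|_2^2=\pi_Q[\pcal^{(2l_0)}](1)=\pcal^{(2l_0)}(1)\le\lambda^{2l_0}\le|\pi_Q(\Gamma)|^{-\delta_1}$. This is both simpler and actually correct.
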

Repeated use of Lemma~\ref{l:flattening}, in finitely many steps (the number of steps is independent of $Q$), would give us $l\ll \log Q$ such that $\|\pi_Q[\pcal^{(l)}]\|_2\le |\pi_Q(\Gamma)|^{-\frac{1}{2}+\frac{\varepsilon_0}{4}}$ as we desired. So in order to complete this step, it is enough to prove Lemma~\ref{l:flattening}.
\begin{proof}[Proof of Lemma~\ref{l:flattening}]
First we notice that, since ${\rm Cay}(\Gamma,\Omega)$ is a regular tree and $\pi_Q$ induces a bijection on $\prod_{\Theta_{\Omega}(\log Q)}\Omega$, by Kesten's bound, there is $l_0\gg_{\Omega} \log Q$ and $\delta_1>0$ such that $\|\pi_Q[\pcal^{(l)}]\|_2\le |\pi_Q(\Gamma)|^{-\delta_1}$ for $l\ge l_0$.

Now to get the second part, we proceed by contradiction. So assume for small enough $\delta>0$ (specified later), we have  $\|\mu\|_2^2\ge |\pi_{Q_{\delta}}(\Gamma)|^{-1+\vare_0/2}$ and  $\|\mu*\mu\|_2\ge \|\mu\|_2^{1+\delta}$,
where $\mu=\pi_{Q_{\delta}}[\pcal^{(l_{\delta})}]$ for some $l_{\delta}\ge \frac{1}{\delta}\log Q_{\delta}$. Thus by Lemma~\ref{l:BSG} there is a subset $\overline{A}_{\delta}$ of $\pi_{Q_\delta}(\Gamma)$ with the following properties:
\begin{enumerate}
	\item $\|\mu\|_2^{-2+R\delta}\le |\overline{A}_{\delta}|\le \|\mu\|_2^{-2-R\delta}$,
	\item $|\prod_3 \overline{A}_{\delta}|\le |\overline{A}_{\delta}|\cdot \|\mu\|_2^{-R\delta}$,
	\item For any $\overline{h}\in \overline{A}_{\delta}$, $(\mu*\mu)(\overline{h})\ge \frac{\|\mu\|_2^{R\delta}}{|\overline{A}_{\delta}|}$.
\end{enumerate}
These conditions imply that there is $\delta':=\delta'(\delta)$ with the following properties:
\begin{enumerate}
	\item $\lim_{\delta\rightarrow 0^+}\delta'(\delta)=0$ and $\delta'(\delta)\ge \delta$,
	\item $|\prod_3 \overline{A}_{\delta}|\le |\overline{A}_{\delta}|^{1+\delta'}$,
	\item $\mu^{(2)}(\overline{A}_{\delta})\ge Q_\delta^{-\delta'}$.
\end{enumerate}
Now let $A_{\delta}=\pi_{Q_\delta}^{-1}(\overline{A}_{\delta})\cap \supp(\pcal^{(2l_{\delta})})$. By the definition, $\pcal^{(2l_{\delta})}(A_{\delta})=\pi_{Q_\delta}[\pcal^{(2l_{\delta})}](\overline{A}_{\delta})$ and $\pi_{Q_\delta}(A_{\delta})=\overline{A}_{\delta}$ (equality holds as $\pi_Q[\pcal^{(2l_{\delta})}](\overline{h})>0$ for any $\overline{h}\in \overline{A}_{\delta}$); moreover since $\pcal$ and $\overline{A}_{\delta}$ are symmetric, $A_{\delta}$ is also symmetric. Hence for $0<\delta\ll 1$, we end up with a symmetric finite subset $A_{\delta}$ of $\Gamma$ with the following properties:
\begin{enumerate}
	\item $\pcal^{(2l_{\delta})}(A_{\delta})\ge Q_{\delta}^{-\delta'}$ and $2l_{\delta}>\frac{1}{\delta'}\log Q_{\delta}$, 
	\item $|\pi_{Q_{\delta}}(A_{\delta}\cdot A_{\delta}\cdot A_{\delta})|\le |\pi_{Q_{\delta}}(A_{\delta})|^{1+\delta'}$,
	\item $|\pi_{Q_{\delta}}(A_{\delta})|\le |\pi_{Q_{\delta}}(\Gamma)|^{1-\vare_0/4}$.
\end{enumerate} 
Since $l_{\delta}>\log Q_{\delta}/\delta$, we have $\lim_{\delta\to 0} l_{\delta}=\infty$. Now we claim that $Q_{\delta}\rightarrow \infty$ as $\delta\rightarrow 0$. If not, for infinitely many $\delta$ we have $Q_{\delta}=Q$, and $\pi_Q(A_{\delta})=\overline{A}$ is independent of $\delta$. And so
\[
\lim_{\delta\rightarrow 0} \pi_Q[\pcal]^{(l_{\delta})}(\pi_Q(A_{\delta}))=\frac{|\overline{A}|}{|\pi_Q(\Gamma)|}<\frac{1}{|\pi_Q(\Gamma)|^{\vare_0/4}}.
\]
On the other hand, we have
\[
\pi_Q[\pcal]^{(l_{\delta})}(\pi_Q(A_{\delta}))\ge \pcal^{(l_{\delta})}(A_{\delta})> Q_{\delta}^{-\delta}=Q^{-\delta}.
\]
Hence as $\delta\rightarrow 0$, we get $|\pi_Q(\Gamma)|^{-\vare_0/4}>1$, which is a contradiction. Since $Q_{\delta}$ gets arbitrarily large, Theorem~\ref{ReductionTripleProduct} gives us the desired contradiction.
\end{proof}

\subsection{Theorem~\ref{ReductionBoundedGeneration} (Bounded generation) implies Theorem~\ref{ReductionTripleProduct} (Approximate subgroup).}

By the contrary assumption, there is $\varepsilon_0>0$ such that for any $\delta>0$ there are a finite symmetric subset $A_{\delta}$, a positive integer $l_{\delta}$ and ${Q_{\delta}}\in \{p^n|\h \exists \pfr\in V_f(k)\setminus S, \pfr|p\}$ such that $\Pfr_{Q_{\delta}}(\delta,A_{\delta},l_{\delta})$ holds and at the same time $|\pi_{Q_{\delta}}(A_{\delta})|< |\pi_{Q_{\delta}}(\Gamma)|^{1-\varepsilon_0}$. 

By a similar argument as in the end of Section~\ref{ss:ReductionApproximateSubgroup}, we have $Q_{\delta}\rightarrow \infty$ as $\delta\rightarrow 0$. 

By Theorem~\ref{ReductionBoundedGeneration}, for any $0<\vare'\ll 1$ there is $C=C_{\Omega}(\vare')$ such that for any $0<\delta\ll_{\vare',\Omega} 1$ 
\be\label{e:CongruenceSubgroup}
\pi_{Q_{\delta}}(\Gamma[q_{\delta}])\subseteq \textstyle\prod_C\pi_{Q_{\delta}}(A_{\delta}),
\ee
for some $q_{\delta}|Q_{\delta}$ such that $q_{\delta}<Q_{\delta}^{\vare'}$. Hence
\be\label{e:LowerBound}
|\pi_{Q_{\delta}}(\textstyle \prod_C A_{\delta})|\ge |\pi_{Q_{\delta}}(\Gamma)|^{1-\Theta_{\Omega}(\vare')}, 
\ee
for small enough $\delta$ (depending on $\vare'$) as $\lim_{\delta\to 0}Q_{\delta}=\infty$. On the other hand, since $|\pi_{Q_{\delta}}(\prod_3A_{\delta})|\le |\pi_{Q_{\delta}}(A_{\delta})|^{1+\delta}$ by the Ruzsa inequality (see~\cite{Hel1}), we have
\be\label{e:UpperBound}
|\pi_{Q_{\delta}}(\textstyle\prod_C A_{\delta})|\le |\pi_{Q_{\delta}}(A_{\delta})|^{1+(C-2)\delta}\le |\pi_{Q_{\delta}}(\Gamma)|^{(1-\vare_0)(1+(C-2)\delta)}.
\ee
By (\ref{e:LowerBound}) and (\ref{e:UpperBound}), for any $\vare'$ and small enough $\delta$ (in particular it can approach zero), we have
\[
1-\Theta_{\Omega}(\vare')\le (1-\vare_0)(1+(C(\vare')-2)\delta),
\] 
which is a contradiction.

\subsection{Proposition~\ref{p:ThickTopSlice} (Thick top slice) implies Theorem~\ref{ReductionBoundedGeneration} (Bounded generation).}
First we discuss that changing $A$ to $\prod_{O_{\Omega}(1)}A$, if needed, we can and will assume $\Pfr(\delta,A,l)$ holds. Next we deal with a {\em given large} $N$. Then we finish the proof using Proposition~\ref{p:ThickTopSlice} and a propagation process based on taking commutators (a similar approach is used in~\cite{BG2,BG3}).

\begin{lem}\label{l:BoundedGenerationOfApproximateSubgroup}
Let $\Omega$ and $Q$ be as in Proposition~\ref{ReductionBoundedGeneration}. For any $0<\delta_0\ll_{\Omega} 1$ and a finite symmetric subset $A$ of $\Gamma$ which contains $1$ the following holds
\begin{center}
$\pcal^{(l)}(A)>Q^{-\delta_0}$ for some positive integer $l>\frac{1}{\delta_0}\log Q$ implies $\Pfr(\delta_0,\prod_{O_{\Omega}(1)}A,l)$ holds.
\end{center}
\end{lem}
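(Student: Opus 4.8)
The statement to prove is Lemma~\ref{l:BoundedGenerationOfApproximateSubgroup}: starting from the single hypothesis $\pcal^{(l)}(A) > Q^{-\delta_0}$ with $l > \log Q / \delta_0$, upgrade $A$ to $\prod_{O_\Omega(1)} A$ so that all three clauses of $\Pfr_Q(\delta_0, \cdot, l)$ hold. The first two clauses---that the measure of the set is $> Q^{-\delta_0}$ and that $l > \frac{1}{\delta_0}\log Q$---are essentially inherited for free once we note that $A \subseteq A\cdot A$ (since $1 \in A$) and that $\pcal^{(l)}$ is a probability measure, so only the approximate-subgroup clause $|\pi_Q(B\cdot B\cdot B)| \le |\pi_Q(B)|^{1+\delta_0}$ needs work, where $B = \prod_{O_\Omega(1)} A$. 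So the real content is: given a set with large $\pcal^{(l)}$-measure, find a bounded-power ``thickening'' that is a tripling-controlled approximate subgroup modulo $Q$.

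\textbf{Key steps.} First I would apply the $l^2$-flattening dichotomy (Lemma~\ref{l:BSG}) together with Lemma~\ref{l:BGRemark} to the pushed-forward measure $\mu := \pi_Q[\pcal^{(l)}]$. The point of Lemma~\ref{l:BGRemark} is that a lower bound $\pcal^{(l)}(A) > Q^{-\delta_0}$ forces $\mu^{(2l_0)}(\pi_Q(A)\cdot \pi_Q(A))$ to be bounded below, hence $\|\mu^{(l_0)}\|_2$ is not too small (concretely, the collision probability of a suitable convolution power is bounded below by the square of the measure). Now either $\|\mu * \mu\|_2$ is already comparable to $\|\mu\|_2$ up to a factor $K = Q^{O(\delta_0)}$---in which case Lemma~\ref{l:BSG} hands us a set $\bar A'$ of near-optimal size with $|\bar A' \bar A' \bar A'| \le K^R |\bar A'|$ and with $\widetilde{\mu}*\mu$ bounded below on it---or else $\|\mu*\mu\|_2$ has genuinely dropped, and iterating (finitely many times, a number of steps independent of $Q$ because $\|\mu\|_2 \ge |\pi_Q(\Gamma)|^{-1/2}$ and each step drops it by a definite power) we reach the first alternative. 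Then, as in the proof of Lemma~\ref{l:flattening} in Section~\ref{ss:ReductionApproximateSubgroup}, I would pull $\bar A'$ back to $\Gamma$: set $A' := \pi_Q^{-1}(\bar A') \cap \supp(\pcal^{(2l')})$ for the appropriate power $l'$, which is symmetric, satisfies $\pcal^{(2l')}(A') = \mu^{(2)}(\bar A') \ge Q^{-\delta'}$ with $\delta' = \delta'(\delta_0) \to 0$, and has $\pi_Q(A') = \bar A'$ and hence $|\pi_Q(A' A' A')| \le |\pi_Q(A')|^{1+\delta'}$.

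\textbf{The remaining point and the main obstacle.} What is produced above is a new set $A'$, not a bounded power of the original $A$---so the lemma as literally stated requires $B = \prod_{O_\Omega(1)} A$, and I need to relate $A'$ to $A$. Here I would use the Ruzsa covering lemma (the non-commutative form, as invoked via \cite{Hel1, Tao}): since $\pcal^{(l)}(A) > Q^{-\delta_0}$ and $A$ already has the tripling bound $|\pi_Q(A A A)| \le |\pi_Q(A)|^{1+\delta_0}$ as part of what we're trying to establish for it---actually the cleanest route is to observe that $A'$, being supported on $\supp(\pcal^{(2l')})$ with $\pi_Q(A') = \bar A'$ of size $\ge \|\mu\|_2^{-2+O(\delta_0)} \ge$ a large fraction of $|\pi_Q(A)|$ (up to $Q^{O(\delta_0)}$), must be coverable by $Q^{O(\delta_0)} = O_\Omega(1)$-many... no: the genuinely clean statement is that $A'$ and $A$ both lie in $\supp(\pcal^{(O(l))})$, their $\pi_Q$-images have comparable size, and a standard Ruzsa/covering argument shows $\pi_Q(A') \subseteq \pi_Q(\prod_{O_\Omega(1)} A)$, whence replacing $A$ by $\prod_{O_\Omega(1)} A$ absorbs $A'$ and the tripling bound for $A'$ transfers (with a slightly worse $\delta$, then rename). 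The main obstacle is exactly this bookkeeping: ensuring the number of factors and the loss in $\delta$ are absorbed into $O_\Omega(1)$ and $\delta_0$ respectively, uniformly in $Q$, which hinges on the iteration in the flattening dichotomy terminating in a bounded number of steps---guaranteed by Kesten's bound giving $\|\mu\|_2 \le |\pi_Q(\Gamma)|^{-\delta_1}$ at the start (so there's only bounded room to drop) and by $\|\mu\|_2 \ge |\pi_Q(\Gamma)|^{-1/2}$ as a floor. I do not expect any serious geometric or algebraic difficulty here; it is a soft combinatorial packaging step, and the delicate part is purely quantitative.
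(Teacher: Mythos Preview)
Your approach via the $l^2$-flattening/BSG machinery is substantially more elaborate than what the paper does, and it has a real gap at the end. The BSG output $\bar A'$ (and its pullback $A'$) satisfies only that $A' \subseteq \supp(\pcal^{(2l')})$, i.e.\ $A'$ lies in a word ball of radius $2l'$ in $\Gamma$. This has no relation whatsoever to $\prod_C A$ for bounded $C$: the set $A$ is an \emph{arbitrary} symmetric subset of $\Gamma$ with $\pcal^{(l)}(A) > Q^{-\delta_0}$, and nothing forces elements of $A'$ to be products of boundedly many elements of $A$. Your proposed Ruzsa-covering patch does not repair this: Ruzsa covering requires a multiplicative relation like $|XA| \le K|X|$ as input, and you have no such relation between $A'$ and $A$. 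So the final step---``$\pi_Q(A') \subseteq \pi_Q(\prod_{O_\Omega(1)} A)$''---is simply unsupported.

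The paper's proof is much more direct and avoids this issue by never leaving the tower of product sets $\prod_j A$. First one shows $|\pi_Q(A\cdot A)| \ge Q^{\Theta_\Omega(1)}$: by Lemma~\ref{l:BGRemark}, $\pcal^{(\lceil c\log Q\rceil)}(A\cdot A) \ge Q^{-2\delta_0}$, and then Kesten's bound (together with injectivity of $\pi_Q$ on a small word ball) gives the size estimate. Now one simply runs a pigeonhole on the sequence $j \mapsto |\pi_Q(\prod_j A)|$: this sequence starts at $\ge Q^{\Theta_\Omega(1)}$, is bounded above by $|\pi_Q(\Gamma)| \le Q^{O_\Omega(1)}$, and is nondecreasing since $1 \in A$. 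Hence for some $C' = C'(\Omega,\delta_0)$ one must have $|\pi_Q(\prod_{3C'} A)| \le |\pi_Q(\prod_{C'} A)|^{1+\delta_0}$, which is exactly the tripling clause for $B = \prod_{C'} A$. The first two clauses of $\Pfr_Q$ follow because $A \subseteq \prod_{C'} A$. No BSG, no covering lemma, and the set in question is by construction a bounded product of copies of $A$.
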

\begin{proof}
For small enough $c:=c(\Omega)$, $\pi_Q$ induces an injection from $B_{\lceil c \log Q\rceil}$ into $\pi_Q(\Gamma)$. In particular, $|\pi_Q(A\cdot A)|\ge |A\cdot A\cap B_{\lceil c \log Q\rceil}|$. By Lemma~\ref{l:BGRemark}, we have $\pcal^{({\lceil c \log Q\rceil})}(A\cdot A)\ge Q^{-2\delta_0}$ as ${\lceil c \log Q\rceil}/2< \frac{1}{\delta_0}\log Q< l$. Hence by Kesten's bound we have
\[
Q^{-2\delta_0}\le \pcal^{({\lceil c \log Q\rceil})}(A\cdot A)\le |A\cdot A\cap B_{\lceil c \log Q\rceil}| Q^{-\Theta_{\Omega}(1)}.
\]
Therefore $|\pi_Q(A\cdot A)|\ge Q^{\Theta_{\Omega}(1)-2\delta_0}=Q^{\Theta_{\Omega}(1)}$ for $0<\delta_0\ll_{\Omega}1$. Hence for some $C':=C'(\Omega)$ we have $|\pi_Q(\prod_{C'} A)\cdot \pi_Q(\prod_{C'} A)|\le \pi_Q(\prod_{C'} A)|^{1+\delta_0}$. Since $1\in A$, $\prod_{C'}A\supseteq A$. And so $\pcal^{(l)}(\prod_{C'}A)\ge \pcal^{(l)}(A)>Q^{-\delta_0}$, which implies that $\Pfr(\delta_0,\prod_{C'}A,l)$ holds.
\end{proof}
  
\begin{lem}\label{l:BoundedPower}
Let $\Omega$ be as in Section~\ref{ss:InitialReductions}. Then for any positive integer $N$ there are $\delta>0$ and a positive integer $C$ which depends on $\Omega$ and $N$ such that 
\begin{center}
$\Pfr_Q(\delta,A,l)$ implies that $\pi_Q(\Gamma)=\prod_C \pi_Q(A)$ 
\end{center}
if $Q=p^N$ and $p\gg_{\Omega} 1$. 
\end{lem}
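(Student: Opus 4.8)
The plan is to reduce the statement to its mod-$p$ shadow, where the enormous length $l$ and expansion modulo primes do all the work, and then lift the conclusion through the $N-1$ congruence layers one at a time; since $N$ is fixed, this only costs a bounded amount.

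\emph{Step 1 (the mod-$p$ statement).} Write $\bar\pi\colon\pi_Q(\Gamma)\to\pi_p(\Gamma)$ for the reduction. Because $l>\tfrac1\delta\log Q=\tfrac N\delta\log p$ dwarfs $\log p$, Theorem~\ref{t:ExpanderSquareFree} together with Lemma~\ref{l:EquidistributionErrorRate} puts $\pi_p[\pcal^{(l)}]$ within $p^{-\Theta_\Omega(N/\delta)}$ of uniform on $\pi_p(\Gamma)$, so $|\pi_p(A)|\ge\tfrac12 p^{-N\delta}|\pi_p(\Gamma)|$ once $p\gg_\Omega 1$. By strong approximation (Section~\ref{ss:StrongApproximation}) $\pi_p(\Gamma)$ is a direct product of boundedly many finite simple groups of Lie type, hence (Landazuri--Seitz~\cite{LS}, cf. the case $n=1$ of Proposition~\ref{p:HighMultiplicity}) every non-trivial complex irreducible representation of $\pi_p(\Gamma)$ has dimension at least $p^{c_0}$ for some $c_0=c_0(\bbg)>0$. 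I would fix $\delta=\delta(\Omega,N)$ small enough that $6N\delta<c_0$; then $|\pi_p(A)|^3>|\pi_p(\Gamma)|^3/p^{c_0}$, and Gowers' quasirandomness estimate gives $\pi_p(A\cdot A\cdot A)=\pi_p(\Gamma)$, i.e. $\bar\pi(\pi_Q(\prod_3A))=\pi_p(\Gamma)$.

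\emph{Step 2 (lifting through the layers).} Since $\pi_Q(\prod_3A)$ meets every coset of $\ker\bar\pi=\pi_Q(\Gamma[p])$, it is enough to find $C'=C'(\Omega,N)$ with $\pi_Q(\Gamma[p])\subseteq\pi_Q(\prod_{C'}A)$; then $\pi_Q(\prod_{C'+3}A)=\pi_Q(\Gamma)$. For $p\gg_\Omega 1$ the finite logarithmic maps of Lemma~\ref{l:PropertiesOfLinearizationMap} identify each successive quotient $\pi_Q(\Gamma[p^m])/\pi_Q(\Gamma[p^{m+1}])$, $1\le m\le N-1$, with $\gfr/p\gfr$, which for $p$ large is a direct sum of the irreducible adjoint $\pi_p(\Gamma)$-modules of the almost-simple factors, and carry commutators to Lie brackets; recall also $[\gfr,\gfr]=\gfr$. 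I would seed the lifting with Proposition~\ref{p:ThickTopSlice}, whose hypothesis is precisely $\Pfr_Q(\delta,A,l)$ and which, after choosing $\varepsilon_1,\varepsilon_2$ suitably for $Q=p^N$, realizes an entire congruence subgroup $\pi_{q_2}(\Gamma[q_1])$ (with $q_1\mid q_2\mid Q$) inside a bounded product $\prod_C A$; then I would propagate this realized congruence subgroup upward by repeatedly taking commutators with the transversal of $\pi_Q(\Gamma[p])$ produced in Step~1, invoking the $\Gamma$-equivariance of part~(\ref{i:Equi}) of Lemma~\ref{l:PropertiesOfLinearizationMap}, the perfectness of $\gfr$, and the bounded commutator width of congruence subgroups of a semisimple $\bbz_p$-group scheme for $p$ large. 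With at most $N-1$ layers this terminates after $O_{\Omega,N}(1)$ group operations, which fixes $C'$.

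The hard part is Step~2. One cannot fill an $\bbf_p$-vector-space layer $\cong\gfr/p\gfr$ by bounded products from $A$ by taking $\bbf_p$-linear combinations of a spanning set, since a coefficient of size $\sim p$ would cost $\sim p$ additions. The point is therefore to first arrange a subset of the layer whose proportion is bounded below independently of $p$ (and which spans the module), after which a Kneser--Cauchy--Davenport estimate fills the layer in $O_{\dim\bbg}(1)$ further additions. This is exactly why Proposition~\ref{p:ThickTopSlice} --- which returns a \emph{full} congruence subgroup, not merely a large subset --- is the natural seed, and why the commutator identities of Lemma~\ref{l:PropertiesOfLinearizationMap}, together with the structure of the congruence filtration of a semisimple group, are what let one climb from one layer to the next with a bounded number of operations.
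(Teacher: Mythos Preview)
Your Step~1 is correct and coincides with the paper's opening move: expansion modulo primes plus Gowers quasirandomness gives $\pi_p(\prod_3 A)=\pi_p(\Gamma)$.

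Step~2, however, has a genuine gap. You invoke Proposition~\ref{p:ThickTopSlice} to seed the lifting, but that proposition carries the side hypothesis $n\varepsilon_2\gg_\Omega 1$ (where $Q=p^n$), in addition to $0<\varepsilon_2\ll_\Omega\varepsilon_1\ll_\Omega 1$. Here $n=N$ is \emph{fixed}, so the two constraints $\varepsilon_2\ll_\Omega 1$ and $N\varepsilon_2\gg_\Omega 1$ are incompatible unless $N$ itself is already large depending on $\Omega$. In the architecture of the paper, Lemma~\ref{l:BoundedPower} and Proposition~\ref{p:ThickTopSlice} cover complementary regimes: the proof of Theorem~\ref{ReductionBoundedGeneration} explicitly splits according to whether $c_2\varepsilon n$ is large enough to run Proposition~\ref{p:ThickTopSlice}, and when it is not (i.e.\ $n$ bounded, $p$ large) it defers to Lemma~\ref{l:BoundedPower}. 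So using Proposition~\ref{p:ThickTopSlice} to prove Lemma~\ref{l:BoundedPower} is exactly backwards.

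What the paper does instead for the lift $p\to p^2$: from $\pi_p(\prod_3 A)=\pi_p(\Gamma)$ it takes a set-theoretic section $\psi:\gcal_1(\bbz/p\bbz)\to\prod_3 A$ of $\pi_p$, sets $\psi_p:=\pi_{p^2}\circ\psi$, and proves (using Weisfeiler's strong approximation~\cite{Wei} to rule out the existence of a genuine splitting) that for every simple summand $\gfr_j$ of $\underline\gfr_1(\bbz/p\bbz)$ the set ${\rm Im}(\psi_p)\cdot{\rm Im}(\psi_p)\cdot{\rm Im}(\psi_p)^{-1}\cap\underline\gfr_1(\bbz/p\bbz)$ projects nontrivially to $\gfr_j$. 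Thus $\pi_{p^2}(\prod_9 A)\cap\underline\gfr_1(\bbz/p\bbz)$ generates the whole layer as a $\gcal_1(\bbz/p\bbz)$-module, and \cite[Corollary~31]{SGV} upgrades this to $\pi_{p^2}(\prod_{O_{\dim\bbg}(1)}A)=\pi_{p^2}(\Gamma)$. The remaining layers $p^2\to p^N$ then follow by the commutator identities of Lemma~\ref{l:PropertiesOfLinearizationMap} together with $[\underline\gfr_1(\bbz/p\bbz),\underline\gfr_1(\bbz/p\bbz)]=\underline\gfr_1(\bbz/p\bbz)$, much as you outlined. The missing ingredient in your argument is a seed for the first layer that does not rely on Proposition~\ref{p:ThickTopSlice}.
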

\begin{proof}
Let's recall that $\gcal_1$ is the Zariski-closure of $\Gamma$ in $(\GL_{n_0\dim k})_{\bbz_{S_0}}$  (see Lemma~\ref{l:PropertiesOfLinearizationMap}). Based on the discussion in Section~\ref{ss:StrongApproximation}, for $p\gg_{\Omega} 1$, we have that $\Gamma$ is dense in $\gcal_1(\bbz_p)$. By Theorem~\ref{t:ExpanderSquareFree}, Lemma~\ref{l:EquidistributionErrorRate}, and $l>\frac{N}{\delta}\log p$, we have that
\[
\left|\pi_p[\pcal]^{(l)}(\pi_p(A))-\frac{|\pi_p(A)|}{|\pi_p(\Gamma)|}\right|\le \frac{1}{|\pi_p(\Gamma)|}
\]
for small enough $\delta$. And so we have 
\[
p^{-N\delta}=Q^{-\delta}\le \pi_p[\pcal]^{(l)}(\pi_p(A))\le \frac{|\pi_p(A)|+1}{|\pi_p(\Gamma)|},
\]
which implies that $|\pi_p(A)|\ge |\pi_p(\Gamma)|^{1-\Theta_{\Omega,N}(\delta)}$. On the other hand, for $p\gg_{\Omega} 1$, $\pi_p(\Gamma)=\gcal_1(\bbz/p\bbz)$ is a product of almost simple groups and so it is a quasi-random group (see \cite[Corollary 14]{SGV} and \cite{LS}). Hence by a result of Gowers~\cite{Gow} (see \cite{NP}), for small enough $\delta$, we have 
\be\label{e:Modp}
\pi_p(A\cdot A\cdot A)=\pi_p(\Gamma).
\ee
If $p$ is large enough, $\Gamma$ is dense in $\gcal_1(\bbz_p)$, $\gcal_1(\bbz_p)$ is a hyperspecial parahoric subgroup~\cite[3.8, 3.9.1]{Tit},
\be\label{e:ExactSequence}
1\rightarrow \Lie(\gcal_1)(\bbz/p\bbz) \rightarrow \gcal_1(\bbz/p^2\bbz) \xrightarrow{\pi_p} \gcal_1(\bbz/p\bbz) \rightarrow 1
\ee
is a short exact sequence, and $\Lie \gcal_1(\bbz/p\bbz)=\underline{\gfr}_1(\bbz/p\bbz)$. Moreover $\underline{\gfr}_1(\bbz/p\bbz)=\oplus_j \gfr_j$ where $\gfr_j$'s are simple Lie algebras and simple $\gcal_1(\bbz/p\bbz)$-modules under the adjoint representation. The last assertion is a consequence of the fact $\gcal_1(\bbz_p)$ is a hyperspecial parahoric subgroup.

By (\ref{e:Modp}), there is a section $\psi:\gcal_1(\bbz/p\bbz)\rightarrow \prod_3 A\subseteq \Gamma$ of $\pi_p:\Gamma\rightarrow \gcal_1(\bbz/p\bbz)$. Let $\psi_p:=\pi_{p^2}\circ \psi:\gcal_1(\bbz/p\bbz)\rightarrow \gcal_1(\bbz/p^2\bbz)$.

{\bf Claim.} For any $j$, the projection of ${\rm Im}(\psi_p)\cdot {\rm Im}(\psi_p) \cdot {\rm Im}(\psi_p)^{-1}\cap \underline{\gfr}_1(\bbz/p\bbz)$ to $\gfr_j$ is non-trivial. 

{\em Proof of Claim.} For any $x,y\in \gcal_1(\bbz/p\bbz)$, $\psi_p(x)\psi_p(y)\psi_p(xy)^{-1}\in \ker \pi_p=\oplus_j \gfr_j$. Now suppose to the contrary that the projection of ${\rm Im}(\psi_p)\cdot {\rm Im}(\psi_p) \cdot {\rm Im}(\psi_p)^{-1}\cap \underline{\gfr}_1(\bbz/p\bbz)$ to $\gfr_{j_0}$ is zero. Let's consider
\[
\phi_p: \gcal_1(\bbz/p\bbz)\rightarrow \gcal_1(\bbz/p^2\bbz)/(\oplus_{j\neq j_0} \gfr_j), \phi_p(x):=\psi_p(x) (\oplus_{j\neq j_0} \gfr_j).
\]
By the contrary assumption, $\phi_p$ is a group homomorphism. And so it is a group embedding. On the other hand, by~\cite[Theorem 7.2]{Wei}, $\langle {\rm Im}(\psi_p) \rangle=\pi_{p^2}(\langle {\rm Im}(\psi)\rangle)=\gcal_1(\bbz/p^2\bbz)$ if $p$ is large enough. Since $\phi_p$ is a group homomorphism, ${\rm Im}(\phi_p)={\rm Pr}(\langle {\rm Im}(\psi_p) \rangle)$, where ${\rm Pr}:\gcal_1(\bbz/p^2\bbz)\rightarrow \gcal_1(\bbz/p^2\bbz)/(\oplus_{j\neq j_0} \gfr_j)$. Therefore $\phi_p$ is onto, which contradicts the fact that $|\gcal_1(\bbz/p\bbz)|<|\gcal_1(\bbz/p^2\bbz)/(\oplus_{j\neq j_0} \gfr_j)|$.  

The above claim implies that $\pi_{p^2}(\prod_9 A)\cap \underline{\gfr}_1(\bbz/p\bbz)$ generates $\underline{\gfr}_1(\bbz/p\bbz)$ as a $\gcal_1(\bbz/p\bbz)$-module. Hence by \cite[Corollary 31]{SGV} we have that 
\be\label{e:ModP2}
\textstyle \pi_{p^2}(\prod_{O_{\dim \bbg}(1)} A)=\pi_{p^2}(\Gamma),
\ee
where $\bbg$ is the generic fiber of $\gcal_1$. 

For large enough $p$ and any positive integer $i$, $\gcal_1(\bbz_p)[p^i]/\gcal_1(\bbz_p)[p^{i+1}]\simeq \underline{\gfr}_1(\bbz/p\bbz)$ and $[\underline{\gfr}_1(\bbz/p\bbz),\underline{\gfr}_1(\bbz/p\bbz)]=\underline{\gfr}_1(\bbz/p\bbz)$. Hence by Lemma~\ref{l:PropertiesOfLinearizationMap} and Equation (\ref{e:ModP2}), we get the desired result. 
\end{proof}

To understand the propagation process, the following definition is helpful. 
\begin{definition}\label{d:ThickLayer}
Let $\Omega$, $A$ and $\vare$ be as above. For positive real numbers $L$ (level) and $T$ (thickness), we say that $\bcal(L,T):=\bcal_{Q,\vare}(L,T)$ (bounded generation at the level $L$ and of thickness $T$) holds if for some divisors $q$ and $q'$ of $Q$ we have $q\le L$, $LT\le qq'$ and
\[
\pi_{qq'}(\Gamma[q])\subseteq \pi_{qq'}(\textstyle\prod_{O(1)} A),
\]
where the implied constant depends only on $\Omega$ and $\varepsilon$.
\end{definition}
To understand Definition~\ref{d:ThickLayer} better, it is helpful to think about the rooted tree $\tcal(p,n)$ attached to $\pi_{p^n}(\Gamma)$: 


\begin{itemize}
\item Vertices of the $m^{\rm th}$-layer of $\tcal(p,n)$ are elements of $\pi_{p^m}(\Gamma)$.
\item  For any $m$ and $\gamma\in \Gamma$, the ``parent" vertex of $\pi_{p^m}(\gamma)$ is $\pi_{p^{m-1}}(\gamma)$. 
\end{itemize}

Any subset $X$ of $\Gamma$ gives us a subtree $\tcal(X;p,n)$ of $\tcal(p,n)$ whose $m^{\rm th}$-layer vertices are elements of $\pi_{p^m}(X)$. 

The combinatorial interpretation of $\bcal(L,T)$ is that all the ``descendants" of $\pi_{p^m}(1)$ for at least $k$ ``generations" are in $\tcal(\prod_{O_{\Omega,\vare}(1)}A);p,n)$, where $m\le \log_p L$ ({\em level}) and $m+k\ge \log_p L+\log_p T$ (which gives us a {\em thickness}) (see Figure~\ref{fig:BLT} for a schematic picture). 

We present a propagation process which implies that if $\bcal(Q^{\varepsilon_1},Q^{\varepsilon_2})$ holds, then $\bcal(Q^{c\varepsilon_1},Q^{1-c\vare_1})$ holds for a constant $c$ which depends on $\varepsilon_1/\vare_2$ (see Figure~\ref{fig:ThickLayerToBG}). To get such a process, we prove the following lemma.

\begin{figure}[h]
\centering
\begin{minipage}[scale=.3]{.3\textwidth}
  \centering
  \includegraphics{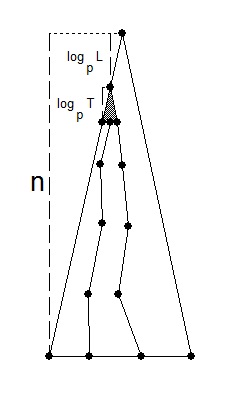}
  \vspace{-20pt}
  \caption{$\bcal(L,T)$}
  \label{fig:BLT}
\end{minipage}%
\begin{minipage}{.7\textwidth}
  \centering
  \includegraphics[scale=.75]{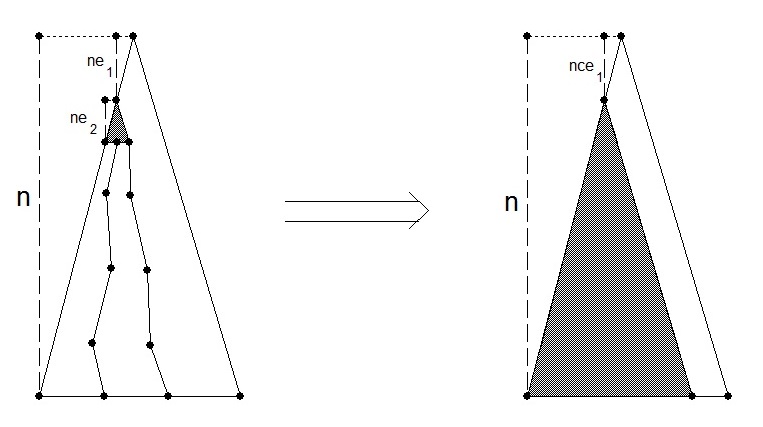}
  \caption{Having a thick top layer implies bounded generation.}
  \label{fig:ThickLayerToBG}
\end{minipage}
\end{figure}

\begin{lem}\label{l:OneStepPropagation}
There is a constant $q_0$ which depends on $\Omega$ with the following property: if $\bcal(L,T)$ and $\bcal(L',q_0p)$ hold, $L,L'\ge T$ and $\log_p T\gg_{\Omega} 1$, then $\bcal(q_0LL',T/q_0)$ holds. 
\end{lem}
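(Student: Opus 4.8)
The plan is to transport both hypotheses to the Lie algebra via the finite logarithmic maps of Lemma~\ref{l:PropertiesOfLinearizationMap}, manufacture the new layer by commutating the thick slice against (shifted copies of) the thin one, and transport back. Throughout, recall $Q = p^N$: write the divisors witnessing $\bcal(L,T)$ as $q = p^{a}$, $q' = p^{a'}$ (so $a \le \log_p L$ and $a + a' \ge \log_p LT$), and those witnessing $\bcal(L', q_0 p)$ as $r = p^{b}$, $r' = p^{b'}$. Since $\{\Gamma[p^{m}]\}_m$ is a descending chain and $rr' \ge L' q_0 p$, we may first slide the thin layer to a level $\wt{r} = p^{\wt{b}}$ as large as $\approx L'$ while retaining thickness $\ge q_0 p$; so without loss of generality $b \approx \log_p L'$ and $r' = q_0 p$. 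By Lemma~\ref{l:PropertiesOfLinearizationMap}(2) (we are in the $\pfr$-adic-closure situation, all moduli being powers of $p$ with $\pfr \mid p$), $\lin{q}{qq'}$ is a $\Gamma$-module isomorphism $\pi_{qq'}(\Gamma[q]) \xrightarrow{\sim} \gfr/q'\gfr$ with $\gfr = \gfr_{1,\pfr}$, and likewise for $r,r'$; hence $\bcal(L,T)$ says precisely that for every $x \in \gfr$ the element $\exp(p^{a} x)$ reduced modulo $p^{a+a'}$ lies in $\pi_{qq'}(\prod_{O_\Omega(1)} A)$, and similarly for the thin layer. Since $\Gamma[q p^{i}] \subseteq \Gamma[q]$, the same holds after replacing $q$ by $q p^{i}$ for every $0 \le i < a'$.

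The core step is commutation. Fix $0 \le i \le a' - O_\Omega(1)$ and choose $g \in \Gamma[q p^{i}]$, $g' \in \Gamma[r]$ inside $\prod_{O_\Omega(1)} A$ whose linearizations are prescribed $x \in \gfr/p^{a'-i}\gfr$, $y \in \gfr/p^{b'}\gfr$. Then $(g,g') = g^{-1} g'^{-1} g g' \in \Gamma[q p^{i} r] \cap \prod_{O_\Omega(1)} A$, and by the commutator compatibility of the linearization maps (Lemma~\ref{l:PropertiesOfLinearizationMap}) its linearization at level $q p^{i} r$, reduced modulo $p^{\min(a'-i,\, b')}$, equals $[x,y]$; additivity of the linearization map then lets a bounded product of such commutators realize any finite sum of brackets. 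Because $\gfr$ is a semisimple, hence perfect, Lie algebra over $\bbz_p$ (after inverting a bounded denominator absorbed into $q_0$), $[\gfr,\gfr] = \gfr$, so these sums exhaust $\gfr/p^{\min(a'-i,\, b')}\gfr$; by Lemma~\ref{l:PropertiesOfLinearizationMap}(2) this means the whole congruence layer $\Gamma[q p^{i} r]$ modulo $p^{\,a + i + b + \min(a'-i,\, b')}$ sits inside the reduction of $\prod_{O_\Omega(1)} A$ -- a genuine thick slab whose leading scale is $q r p^{i}$.

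The third step glues the slabs $i = 0, 1, \dots, a' - O_\Omega(1)$ into one thick slice of $\Gamma[q r]$. Slab $i$ fills every scale $q r p^{i}, \dots, q r p^{\,i + \min(a'-i, b') - 1}$, so consecutive slabs overlap and together they cover all scales from $q r$ up to $q r p^{\,a' - O_\Omega(1)}$; since each slab \emph{surjects} onto $\gfr/p\gfr$ at each of its scales, one can determine the linearization of a target element of $\Gamma[q r]$ scale by scale -- at each scale correcting for the contribution of the already-fixed coarser scales, which is again possible by the isomorphism of Lemma~\ref{l:PropertiesOfLinearizationMap}(2) and perfectness -- using only a bounded number of factors. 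Undoing $\lin{qr}{\cdot}$, this yields $\pi_{q r p^{\,a' - O_\Omega(1)}}(\Gamma[q r]) \subseteq \pi_{q r p^{\,a' - O_\Omega(1)}}(\prod_{O_\Omega(1)} A)$. It remains to read off the parameters: the new level $q r = p^{a + b} \le L L' \le q_0 L L'$, while $q r \cdot p^{\,a' - O_\Omega(1)} = p^{\,a + a' + b - O_\Omega(1)} \ge (L T)(L'/q_0) = L L' T / q_0$ once $q_0$ is large enough to swallow the accumulated constants -- the convergence ranges of $\log$ and $\exp$ on $\gcal_{1,\pfr}(\bbz_p)$, the Baker--Campbell--Hausdorff corrections, the bounded denominator in $[\gfr,\gfr] = \gfr$, the overlap slack in the gluing, and the rounding of $L, L', T$ to powers of $p$. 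This is exactly where $\log_p T \gg_\Omega 1$ and $L, L' \ge T$ are needed, and it gives $\bcal(q_0 L L', T/q_0)$.

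I expect Step~3 to be the main obstacle. The point is that each slab is surjective only at its own leading scales, and products of elements taken from different slabs interact through the higher commutator terms; so the realization of an arbitrary element of $\Gamma[q r]$ must proceed by an induction on the scale in which, at each stage, one uses Lemma~\ref{l:PropertiesOfLinearizationMap}(2) and the perfectness of $\gfr$ to absorb the correction coming from the coarser scales already fixed -- and the real work is verifying that the number of group-element factors (hence the implied constant in $\prod_{O_\Omega(1)} A$) stays bounded independently of $N$ throughout this induction, while keeping every modulus that occurs a divisor of $Q$.
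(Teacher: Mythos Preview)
Your core idea---transport both hypotheses to the Lie algebra, commute, transport back---is exactly right, and matches the paper's approach. But your execution introduces a serious difficulty that the paper avoids entirely.

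The gap is in Step~3. You take commutators $(g,g')$ with $g\in\Gamma[qp^i]$, $g'\in\Gamma[r]$, and record the linearization of the commutator only modulo $p^{\min(a'-i,\,b')}$, where $b'\approx\log_p(q_0p)=O_\Omega(1)$. This forces you to manufacture $\approx a'/b'\approx a'\approx\log_p T$ thin slabs and glue them. But then the scale-by-scale correction procedure you outline uses $O_\Omega(1)$ factors of $A$ \emph{per scale}, for a total of $O_\Omega(\log_p T)$ factors---which is not $O_{\Omega,\vare}(1)$, since in the applications $T$ can be as large as $Q^{\Theta(\vare)}$, so $\log_p T$ grows with $N$. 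Your own worry at the end is exactly on point, and I do not see how your scheme bounds the product length.

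The paper sidesteps this completely with a single commutation. The point you are missing is that in the commutator formula of Lemma~\ref{l:PropertiesOfLinearizationMap}, the output thickness is $\gcd(q_3,q_3')$, and you are free to \emph{choose} $q_2'$ (hence $q_3'$) as large as $q_3$ when applying the formula to the thin-layer elements $\gamma_i$. You then do not know $\lin{q'}{q'q_3}(\pi_{q'q_3}(\gamma_i))$ exactly---only that it equals $\pi_{q_3}(y_i)$ for some $y_i\equiv x_i\pmod{q_0p}$---but that is enough: since the $x_i$ form a $\bbz_p$-basis of $\gfr$, the matrix expressing the $y_i$ in that basis is $\equiv I\pmod p$, hence invertible over $\bbz_p$, so the $y_i$ also span $\gfr$. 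One commutation of the $\gamma_i$ against the full thick layer $Y$ (which surjects onto $\gfr/q_3\gfr$) then produces, at level $q_1q'$ and thickness $q_3$, all of $\sum_i\ad(y_i)(\gfr)=[\gfr,\gfr]\supseteq q_0\gfr$. Dividing out $q_0$ gives thickness $q_3/q_0$ at level $q_0q_1q'$, i.e.\ $\bcal(q_0LL',T/q_0)$, with a product length that is genuinely $O_{\Omega,\vare}(1)$. So: drop the slab-gluing; instead lift the thin-layer linearizations to the full thickness $q_3$ and use the basis-perturbation argument to recover $[\gfr,\gfr]$ in one shot.
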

\begin{cor}\label{c:OneStepPropagation}
Let $c$ be a fixed positive number, and $q_0$ as in Lemma~\ref{l:OneStepPropagation}. Then, if $L,L'\geq T\ge Q^{c\vare}$, $c\vare\log_p Q\gg_{\Omega} 1$, and $\bcal(L,T)$ and $\bcal(L',T')$ hold, then $\bcal(q_0LL',TT'/pq_0^2)$ holds.
\end{cor}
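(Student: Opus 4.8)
\emph{Proof idea.} The two hypotheses describe two ``vertical windows'' in the $p$-adic filtration of $\Gamma$: $\bcal(L,T)$ gives divisors $q_1\mid q_1'$ of $Q$ with $q_1\le L$, $q_1q_1'\ge LT$ and $\pi_{q_1q_1'}(\Gamma[q_1])\subseteq\pi_{q_1q_1'}(\prod_{O(1)}A)$, and likewise $\bcal(L',T')$ gives $q_2\le L'$, $q_2q_2'\ge L'T'$ with $\pi_{q_2q_2'}(\Gamma[q_2])\subseteq\pi_{q_2q_2'}(\prod_{O(1)}A)$; the goal is to produce a single window of thickness $\ge TT'/(pq_0^2)$ whose bottom lies below $q_0LL'$. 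The plan is to carry this out in the same two moves as the propagation behind Lemma~\ref{l:OneStepPropagation}: a \emph{commutator move} that multiplies the two bottoms, and a \emph{stacking move} that adds the two heights. Throughout I will use two elementary flexibilities: any window may be shifted up, since $\Gamma[q']\subseteq\Gamma[q_i]$ whenever $q_i\mid q'$; and two vertically adjacent windows can be glued, since
\[
\pi_{m_1}(\Gamma[q])\subseteq\pi_{m_1}(\textstyle\prod_{O(1)}A)\ \text{ and }\ \pi_{m_2}(\Gamma[m_1])\subseteq\pi_{m_2}(\textstyle\prod_{O(1)}A)\ \Longrightarrow\ \pi_{m_2}(\Gamma[q])\subseteq\pi_{m_2}(\textstyle\prod_{O(1)}A)
\]
whenever $m_1\mid m_2$, at the cost of doubling the bounded product-length.

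For the commutator move I would peel a thin slice of thickness $q_0p$ off the bottom of the $\bcal(L',T')$-window --- legitimate since its available thickness $\ge T'\ge Q^{c\vare}$ dwarfs $q_0p$ --- obtaining $\bcal(L',q_0p)$, and then feed $\bcal(L,T)$ and $\bcal(L',q_0p)$ into Lemma~\ref{l:OneStepPropagation}. Its side conditions (both levels exceed the current thickness, and the logarithm of the thickness is $\gg_{\Omega}1$) hold because $L,L'\ge T\ge Q^{c\vare}$ and $c\vare\log_pQ=c\vare n\gg_{\Omega}1$, and the output is a window whose bottom is a divisor of $q_1q_2\le LL'$ and whose level may be reindexed anywhere up to $q_0LL'$. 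For the stacking move I would then take the remaining thick part of the $\bcal(L',T')$-window, shift its bottom up to the top of the window just produced (checking the divisibility $q_2\mid(\text{top})$ and the size bound $(\text{top})\le(\text{modulus of the }\bcal(L',T')\text{-window})$), and glue via the principle above. Tracking the resulting modulus and dividing by the bottom gives thickness $\gtrsim TT'/(pq_0^2)$ once one absorbs the bounded powers of $p$ and $q_0$ lost in rounding moduli and divisors to powers of $p$; since the conclusion is symmetric under $(L,T)\leftrightarrow(L',T')$ one may always interchange the roles so that the sizes line up.

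The routine part is the arithmetic of the $\bcal(\cdot,\cdot)$ predicate together with the absorption of the $O_{\Omega}(1)$ product-length constants via Ruzsa's inequality. The genuine obstacle --- and essentially the only place the quantitative hypotheses enter --- is showing that the two moves fit inside $[1,Q]$ simultaneously: one must choose, from the windows handed over by the hypotheses, shifts and witnessing divisors so that every divisibility relation holds, so that the level produced by the commutator move does not overshoot $q_0LL'$ (which is why the thin slice there must be taken exactly at level $L'$ rather than higher up), and so that the window stacked in the second move really does have its bottom below the top of the first. That these constraints are mutually compatible follows from $L,L'\ge T\ge Q^{c\vare}$ and $c\vare n\gg_{\Omega}1$, which leave enough room in the filtration; once this bookkeeping is discharged, $\bcal(q_0LL',TT'/(pq_0^2))$ follows from the two displayed inclusions.
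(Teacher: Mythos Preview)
Your commutator move is fine and matches the paper: peel off a thin slice $\bcal(L',q_0p)$ from $\bcal(L',T')$ and feed it with $\bcal(L,T)$ into Lemma~\ref{l:OneStepPropagation} to get $\bcal(q_0LL',T/q_0)$. The gap is in the stacking move. After the commutator step the output window sits at level $\sim q_0LL'$ with top $\sim LL'T$, whereas the $\bcal(L',T')$-window you propose to glue on top lives between level $\le L'$ and modulus $\ge L'T'$. Your own parenthetical check ``$(\text{top})\le(\text{modulus of the }\bcal(L',T')\text{-window})$'' is exactly the obstruction: it reads $LL'T\le q_2q_2'$, and since all you know is $q_2q_2'\ge L'T'$, you would need $LT\le T'$. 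No such hypothesis is available --- indeed in the paper's own application (proof of Theorem~\ref{ReductionBoundedGeneration}) the corollary is invoked with $T'=T$ and $L=Q^{\Theta(\vare_1)}\gg 1$, so $LT\le T'$ fails outright. Interchanging $(L,T)\leftrightarrow(L',T')$ does not help: the commutator move always multiplies the level by the \emph{other} level, so the output lands at $\sim LL'$ regardless, far above either original window.

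What the paper does instead is apply the commutator move not once but once for \emph{every} thin slice of the $\bcal(L',T')$-window. From $\bcal(L',T')$ it extracts $\bcal\bigl(L'(T/(q_0p))^i,\,q_0p\bigr)$ for $0\le i<\lfloor\log T'/\log(T/(q_0p))\rfloor$; feeding each into Lemma~\ref{l:OneStepPropagation} together with the fixed $\bcal(L,T)$ yields $\bcal\bigl(q_0LL'(T/(q_0p))^i,\,T/q_0\bigr)$. Now the outputs themselves are at consecutive levels differing by the factor $T/(q_0p)\le T/q_0$, so your gluing principle applies to \emph{these} windows and stacks them into a single window of level $q_0LL'$ and thickness $\sim TT'/(pq_0^2)$. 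The number of iterations is $<1/(c\vare)=O_{\Omega,\vare}(1)$ thanks to $T\ge Q^{c\vare}$, so the bounded product-length in the definition of $\bcal$ survives. The missing idea, then, is that every slice of $\bcal(L',T')$ must be pushed through the commutator step before stacking; you cannot push one slice through and hope to attach the raw remainder afterwards.
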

\begin{proof}
Since $\bcal(L',T')$ holds, $\bcal(L'(T/(q_0p))^i,q_0p)$ holds for any $0\le i <\lfloor \log T'/\log(T/q_0p) \rfloor <1/(c\vare)$. Using Lemma~\ref{l:OneStepPropagation} for $\bcal(L,T)$ and $\bcal(L'(T/(q_0p))^i,q_0p)$, we conclude that $\bcal(q_0LL'(T/(q_0p))^i, T/q_0)$ holds for any $0\le i <1/(c\vare)$. This implies that $\bcal(q_0LL',TT'/pq_0^2)$ holds.  
\end{proof}
\begin{cor}\label{c:CompletionPropagation}
Let $c$ be a fixed positive number, and $q_0$ as in Lemma~\ref{l:OneStepPropagation}. Then, if $L'/q_0p\ge L\ge Q^{c\varepsilon}$, $c\vare\log_p Q\gg_{\Omega} 1$, and $T\ge q_0^2p$ and $\bcal(L,T)$ and $\bcal(L',q_0L)$ hold, then $\bcal(L',Q/L')$ holds.
\end{cor}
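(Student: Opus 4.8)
The plan is to iterate the one–step propagation (Corollary~\ref{c:OneStepPropagation}), using the given $\bcal(L,T)$ as a fixed \emph{amplifier} that is re-levelled at each stage: starting from $\bcal(L',q_0L)$ I build a chain of bounded–generation statements of length $O_\Omega(1/(c\vare))$ whose upper modulus grows by at least a fixed power of $Q$ at every step, I stop once that modulus exceeds $Q$, and then I glue the chain back down to level $L'$. Throughout I use two trivial observations about the predicate $\bcal$: it is monotone in the thickness (decreasing $t$ keeps the same witnessing divisors $q,q'$), and it re-levels upward, i.e.\ $\bcal(\ell_0,t_0)$ implies $\bcal(\ell,\ell_0t_0/\ell)$ for every $\ell\ge \ell_0$ (again the same $q,q'$ work, and the product of the two new parameters is still $\ell_0t_0$).

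First I would normalise the amplifier: since $L\ge Q^{c\vare}\gg_\Omega q_0^2p$, I may assume $q_0^2p\le T\le L$. Set $\bcal(L_0,T_0):=\bcal(L',q_0L)$. Given $\bcal(L_j,T_j)$ with $L\le T_j\le L_j$ and $T_j\ge Q^{c\vare}$, I re-level the amplifier to $\bcal(T_j,LT/T_j)$ (legitimate because $T_j\ge L$) and feed the pair $\bcal(L_j,T_j),\ \bcal(T_j,LT/T_j)$ into Corollary~\ref{c:OneStepPropagation}; this is allowed because both levels $L_j,T_j$ are $\ge T_j$, because $T_j\ge Q^{c\vare}$, and because $c\vare\log_pQ\gg_\Omega 1$. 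The output is
\[
\bcal(L_{j+1},T_{j+1}):=\bcal\!\left(q_0L_jT_j,\ \tfrac{LT}{pq_0^2}\right).
\]
The crucial bookkeeping is that the thickness \emph{stabilises}: $T_{j+1}=LT/(pq_0^2)\ge L\ge Q^{c\vare}$ for all $j\ge 1$ (using $T\ge q_0^2p$, then $L\ge Q^{c\vare}$), so the inductive hypotheses persist, while the upper modulus $L_jT_j$ is multiplied by $LT/(pq_0)\ge q_0Q^{c\vare}$ at each step. Hence after $k=O_\Omega(1/(c\vare))$ steps one has $L_kT_k\ge Q$; the hypothesis $c\vare\log_pQ\gg_\Omega 1$ makes $Q^{c\vare}$ large enough that this bounded number of applications of Corollary~\ref{c:OneStepPropagation}, and the rounding of all parameters to powers of $p$, cost only constants depending on $\Omega$ and $\vare$.

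It then remains to collapse the chain $\bcal(L_0,T_0),\dots,\bcal(L_k,T_k)$ to a single statement at level $L'$. Consecutive members have upper modulus $L_jT_j$ and next level $L_{j+1}=q_0L_jT_j$, differing only by the fixed factor $q_0$, i.e.\ by a bounded number of steps in the $p$-adic filtration. The elementary concatenation ``$\pi_M(\Gamma[a])\subseteq\pi_M(\prod_{O(1)}A)$ together with $\bcal$ known on $[L_{j+1},L_{j+1}T_{j+1}]$ yields $\pi_{M'}(\Gamma[a])\subseteq\pi_{M'}(\prod_{O(1)}A)$'' works once the bottom level of the higher slice is at most the upper modulus $M$ of the lower one; the residual short gap $[M,q_0M]$ is bridged at the cost of $O_\Omega(1)$ extra copies of $A$, using the $\Gamma$-equivariance of the finite logarithmic map and the perfectness of $\underline{\gfr}_1(\bbz/p\bbz)$ under the bracket (Lemma~\ref{l:PropertiesOfLinearizationMap}), exactly as in the gluing step inside the proof of Corollary~\ref{c:OneStepPropagation}. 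Concatenating the whole chain and using $L_kT_k\ge Q$ gives $\pi_Q(\Gamma[q])\subseteq\pi_Q(\prod_{O(1)}A)$ for some $q\mid Q$ with $q\le L_0=L'$, which is precisely $\bcal(L',Q/L')$.

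The main obstacle is this middle bookkeeping rather than any isolated deep step: one must check that after each re-levelling the hypotheses of Corollary~\ref{c:OneStepPropagation} are genuinely met along the entire (bounded) chain — in particular that the thicknesses never drop below $Q^{c\vare}$ and the levels never drop below the thicknesses — and that the number of copies of $A$ accumulated over the $O(1/(c\vare))$ amplification steps and the subsequent concatenations stays bounded in terms of $\Omega$ and $\vare$. This is where the normalisation $T\le L$ and each of the hypotheses $L\ge Q^{c\vare}$, $T\ge q_0^2p$, $L'\ge q_0pL$, and $c\vare\log_pQ\gg_\Omega 1$ are used.
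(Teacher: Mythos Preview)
Your strategy---iterate Corollary~\ref{c:OneStepPropagation} against the fixed amplifier $\bcal(L,T)$, then concatenate---is exactly the paper's. The paper's one-line proof shows by induction that $\bcal((q_0L)^iL',\,q_0L)$ holds for all $i\ll_{\Omega,\vare}1$: feeding $\bcal(L,T)$ and $\bcal((q_0L)^{i-1}L',q_0L)$ into Corollary~\ref{c:OneStepPropagation} yields $\bcal((q_0L)^iL',\,LT/(pq_0))$, and since $T\ge q_0^2p$ one truncates the thickness back to $q_0L$. With this constant thickness the upper modulus at step $i$ is $(q_0L)^{i+1}L'$, which \emph{equals} the nominal level at step $i+1$; hence the level-witness at step $i+1$ is automatically $\le$ the top-modulus-witness at step $i$, and the final ``this easily implies the claim'' is the gap-free telescoping you yourself describe for the case $q^{(j+1)}\mid M_j$.

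Your variant keeps the larger thickness $T_{j+1}=LT/(pq_0^2)$, so the next level $L_{j+1}=q_0L_jT_j$ overshoots the previous upper modulus by $q_0$, and you then invoke a bridging argument. That argument is not ``exactly as in the gluing step inside the proof of Corollary~\ref{c:OneStepPropagation}'': there the consecutive slices \emph{overlap} (next level $\le$ previous top by a factor $1/p$), so no bridging ever occurs in that proof. Your appeal to perfectness of $\underline{\gfr}_1(\bbz/p\bbz)$ to cross a width-$q_0$ gap could in principle be made rigorous, but it would amount to re-deriving Lemma~\ref{l:OneStepPropagation}; the cleaner fix is to truncate the thickness to $q_0L$ as the paper does and make the concatenation genuinely trivial. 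On the positive side, your re-levelling of the amplifier to $\bcal(T_j,LT/T_j)$ makes the hypothesis $T_j\ge Q^{c\vare}$ of Corollary~\ref{c:OneStepPropagation} transparently hold, whereas the paper's direct use of $\bcal(L,T)$ as the first input tacitly needs $L\ge T$ and $T\ge Q^{c\vare}$, which are not listed among the hypotheses of Corollary~\ref{c:CompletionPropagation} (though both are satisfied at the one place it is invoked).
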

\begin{proof}
By induction on $i$ and Corollary~\ref{c:OneStepPropagation} we can show that $\bcal((q_0L)^iL',q_0L)$ for any $i\ll_{\Omega,\vare}1$. This easily implies the claim.  
\end{proof}
Now we have enough to prove Theorem~\ref{ReductionBoundedGeneration}.
\begin{proof}[Proof of Theorem~\ref{ReductionBoundedGeneration}]
Let $c_1$ and $c_2$ be two positive constants such that for any $0<\vare<1$, $\vare_1:=c_1\vare$ and $\vare_2:=c_2\vare$ satisfy the required inequalities of Proposition~\ref{p:ThickTopSlice}.

By assumption, $Q^{\vare^{\Theta(1)}}\gg_{\Omega} 1$. So by choosing the power of $\vare$ larger than $2$, we can assume that $n\vare^2 \log p\ge  C_1$ where $C_1$ is a large enough number depending on $\Omega$ to be specified later. Let's assume that $\vare\le c_2^2$ and $C_1=(1/c_2)^2$. We will determine how small $c_2$ should be. Hence $(c_2 \vare n)(c_2\log p)\ge 1$. Let $C_2:=C_2(\Omega)$ be the maximum of the implied constants in Proposition~\ref{p:ThickTopSlice}, Corollary~\ref{c:OneStepPropagation}, Corollary~\ref{c:CompletionPropagation}, and $d_0:=2v_p(q_0)+1$, where $q_0$ is the constant given in Lemma~\ref{l:OneStepPropagation}. And let $C_3:=C_3(\Omega)$ be the implied constant in Lemma~\ref{l:BoundedPower}.

If $c_2\vare n<2C_2$, then $\log p\ge\frac{1}{2c_2C_2}$ and $n<2C_2/c_2$. Let $c_2= 1/(2C_2\log C_3)$. So, if $c_2\vare n<2C_2$, then $p\ge C_3$. And so by Lemma~\ref{l:BoundedPower} we are done.

Hence we can and will assume that $c_2\vare n\ge 2C_2$. Hence by Proposition~\ref{p:ThickTopSlice} there are $0<\delta$, $C:=C(c_1\vare,c_2\vare)$, and $q_1,q_2|Q$ such that 
\begin{enumerate}
\item $q_1\le Q^{\varepsilon_1}$ and $Q^{\varepsilon_2}\le q_2/q_1$,
\item $\pi_{q_2}(\Gamma[q_1])\subseteq \pi_{q_2}(\prod_C A)$.
\end{enumerate}

 We also notice that we can assume $q_1|q_2|q_1^2$ (by increasing $q_1$ or decreasing $q_2$ if 
necessary). So we can assume that $\bcal(Q^{c_1\varepsilon},Q^{c_2\varepsilon})$ holds.  Let $a_0:=nc_1\vare$, $b_0:=nc_2\vare$, and 
\[
a_m:=2 a_{m-1}+d_0,\h b_m:=2 b_{m-1}-d_0,
\]
for $m\leq \log_2 \lceil 2c_1/c_2 \rceil$ (recall that $d_0:=2v_p(q_0)+1$, where $q_0$ is the constant given in Lemma~\ref{l:OneStepPropagation}). Applying Corollary~\ref{c:OneStepPropagation} (for $L=L'=p^{a_i}$ and $T=T'=p^{b_i}$), inductively we get that $\bcal(p^{a_m},p^{b_m})$ holds for $m\leq \log_2 \lceil 2c_1/c_2 \rceil$. Notice that 
\be\label{e:ab}
a_m=(a_0+d_0)2^m-d_0, b_m=(b_0-d_0)2^m+d_0.
\ee 
 In particular, since $c_2n\vare\ge C_2$, we have that $b_m\ge b_0\ge 2C_2$, and therefore we {\em are} allowed to apply Corollary~\ref{c:OneStepPropagation}. Moreover by Equation(\ref{e:ab}) we have $b_{m_0}> a_0+d_0$ for $m_0:= \log_2 \lceil 2c_1/c_2 \rceil$.
 
Hence $\bcal(Q^{c\varepsilon},q_0Q^{c_1\varepsilon})$ 
holds for some constant $c=c(c_1,c_2)$. Thus, by Corollary~\ref{c:CompletionPropagation}, $\bcal(Q^{c\varepsilon},Q^{1-c\varepsilon})$ holds as $\bcal(Q^{c_1\varepsilon},Q^{c_2\varepsilon})$ and $\bcal(Q^{c\varepsilon},q_0Q^{c_1\varepsilon})$ hold.
\end{proof}

So it is enough to prove Lemma \ref{l:OneStepPropagation}. As in the previous related works (see \cite{BG2, BG3, BV, Din, GS}), we use the connection between congruence subgroups and the Lie algebra. 

\begin{proof}[Proof of Lemma~\ref{l:OneStepPropagation}]
Finite logarithmic maps are key in this proof (see Lemma~\ref{l:PropertiesOfLinearizationMap}). The definition of these maps rely on fixing an embedding of $\Gamma$ into either $\GL_{n_0\dim k}(\bbz_{S_0})$ or $\GL_{n_0\dim k(\pfr)}(\bbz_p)$. Now let $\gcal_1$ and $\gcal_{1,\pfr}$ be as in Lemma~\ref{l:PropertiesOfLinearizationMap}, and $\gfr_1:=\Lie(\gcal_1)(\bbz_{S_0})$ and $\gfr_{1,\pfr}:=\Lie(\gcal_{1,\pfr})(\bbz_p)$. 

Since we treat all the primes at the same time, let
\[
\gfr:=\begin{cases}
		\underline{\gfr}_1(\bbz_p)& \mbox{ if } p\in V_f(\bbq)\setminus S_0,\\
		\underline{\gfr}_{1,\pfr}(\bbz_p) & \mbox{ if } \exists \pfr\in V_f(k)\setminus S, \pfr|p.
	 \end{cases}
\]
We also note that $\gfr\subseteq \gl_n(\bbz_p)$ where $n$ is either $n_0\dim k$ or $n_0\dim k(\pfr)$. Since $\bbz_{S_0}$ and $\bbz_p$ are PID, there are positive integers $m_1,\ldots, m_d$ with the following properties: for any prime $p$ which is divisible by some $\pfr\in V_f(k)\setminus S$ we have that $\gl_n(\bbz_p)$ has a $\bbz_p$-basis $\{\tilde{x}_1,\ldots,\tilde{x}_{n^2}\}$ such that $\{x_i=m_{j_i} \tilde{x}_i\}_{i=1}^{{\rm rank}_{\bbz_p} \gfr}$ forms a $\bbz_p$-basis of $\gfr$. 

Since $\gfr$ is perfect and $\gfr_1$ is a $\bbz_{S_0}$-Lie algebra, there is a positive integer $q_0$ such that $q_0 \gfr\subseteq [\gfr,\gfr]$. We also assume that $q_0$ is a common multiple of $m_i$'s. 

We claim that $q_0$ has the desired property. By strong approximation, we can choose the implied constant in $\log_p L'\ge \log_p T\gg 1$ large enough so that, for some $q'\le L'$, $\lin{q'}{q'q_0p}$ is  an isomorphism. Since $\bcal(L',q_0p)$ holds, there are $\gamma_i\in \prod_{O_{\Omega,\vare}(1)} A$ such that $\linValue{q'}{q'q_0p}{\gamma_i}=\pi_{q_0p}(x_i)$ for any $i$. On the other hand, since $\bcal(L,T)$ holds, there are $q_1\le L$, $q_2\ge LT$ and a subset $Y$ of $\prod_{O_{\Omega,\varepsilon}(1)}A$ such that $\linValue{q_1}{q_2}{Y}=\pi_{q_3}(\gfr)$, where $q_3=q_2/q_1$ (since $\log_p T\gg_{\Omega} 1$, we can talk about $\linValue{q_1}{q_2}$). Thus by Lemma~\ref{l:PropertiesOfLinearizationMap}, parts (2-a) and (2-c), we have that 
\begin{align*}
\linValue{q_1q'}{q_2q'}{(\textstyle\prod_{O_{\Omega,\varepsilon}(1)} A)\cap \Gamma[q_1q']}&\supseteq \sum_i \linValue{q_1q'}{q_2q'}{(\gamma_i,Y)}\\
&\supseteq \sum_i [\linValue{q'}{q'q_3}{\gamma_i},\linValue{q_1}{q_2}{Y}].
\end{align*}
So there are $y_i$'s in $\gfr$ such that $\pi_{q_0p}(y_i)=\pi_{q_0p}(x_i)$ for any $i$ and 
\be\label{e:y_is}
\pi_{q_3}(\sum_i \ad(y_i)(\gfr))\subseteq \linValue{q_1q'}{q_2q'}{(\textstyle\prod_{O_{\Omega,\varepsilon}(1)} A)\cap \Gamma[q_1q']}.
\ee
Since $y_i\in \gfr$, there are $m_{ij}\in \bbz_p$ such that $y_i=\sum_{j=1}^{{\rm rank}_{\bbz_p} \gfr} m_{ij}x_j=\sum_{j=1}^{{\rm rank}_{\bbz_p} \gfr}m_{ij}m_j\tilde{x}_j$. Now as $\pi_{q_0p}(y_i)=\pi_{q_0p}(x_i)$, we have that $m_{ij}m_j=\delta_{ij}m_j \pmod{q_0p}$ for any $i$ and $j$, where $\delta_{ij}$ is one if $i=j$ and zero otherwise. Therefore $[m_{ij}]=I \pmod{p}$ which implies that $[m_{ij}]\in \GL_d(\bbz_p)$ and so $\pi_{q_3}(\sum_i\bbz_p y_i)=\pi_{q_3}(\gfr)$. Hence by Equation (\ref{e:y_is}) we have that $q_0\pi_{q_3}(\gfr)\subseteq \pi_{q_3}([\gfr,\gfr])\subseteq \linValue{q_1q'}{q_2q'}{(\textstyle\prod_{O_{\Omega,\varepsilon}(1)} A)\cap \Gamma[q_1q']}.$ This means that
\[
\pi_{q_3/p^{v_p(q_0)}}(\gfr)\subseteq \linValue{q_1q'p^{v_p(q_0)}}{q_2q'}{(\textstyle\prod_{O_{\Omega,\varepsilon}(1)} A)\cap \Gamma[q_1q'p^{v_p(q_0)}]},
\]
which implies that $\bcal(LL'q_0,T/q_0)$ holds.
\end{proof}

\section{Proof of Proposition~\ref{p:ThickTopSlice} (Thick top slice).}
\subsection{Finding a basis consisting of vectors with small height.}\label{ss:SpanSmallElements}
In this section, we prove the following proposition.
\begin{prop}\label{p:SpanSmallElements}
Let $\Omega$ and $\Gamma$ be as in Section~\ref{ss:InitialReductions}.
Let $\bbg_1$ be as in Section~\ref{ss:StrongApproximation}. Let $k'$ be a number field, and $S'\subseteq V_f(k')$ be a finite subset. Let $\rho:\bbg_1\rightarrow \mathbb{GL}(\bbv)$ be a $k'$-representation such that $\rho(\Gamma)\subseteq \GL_d(\ocal_{k'}(S'))$. 

Then for any $0<\delta\ll_{\rho,\Omega} \vare\ll_{\rho,\Omega} 1$ the following holds: 

If $Q$ is a positive integer, $A\subseteq \Gamma$ such that $\pcal_{\Omega}^{(l)}(A)>Q^{-\delta}$ for some positive integer $l>\frac{1}{\delta}\log Q$, then there is $\{a_i\}_{i=1}^{O_{\Omega}(1)}\subseteq \prod_{O_{\rho, \Omega}(1)} A$ such that 
\begin{enumerate}
\item for any $i$, $\|\rho(a_i)\|_{S'}\le Q^{\Theta_{\Omega,\rho}(\vare)}$, and 
\item  
$
\prod_{\pfr\in V_f(k')\setminus S'} [\ocal_{\pfr}[\rho(\Gamma)]:\sum_j \ocal_{\pfr}\rho(a_i)]\le Q^{\vare}.
$
where $\ocal_{\pfr}[\rho(\Gamma)]$ is the $\ocal_{\pfr}$-span of $\rho(\Gamma)$,
\end{enumerate}
where $\ocal_{\pfr}$ is the ring of integers of $k'_{\pfr}$ and  $\|v\|_{S'}:=\max_{\pfr\in V_{\infty}(k')\cup S'}\|v\|_{\pfr}$ (see Appendix, Section~\ref{ss:Norms} for the definition of $\|v\|_{\pfr}$ and the set $V_{\infty}(k')$ of all Archimedean places of $k'$). 
\end{prop}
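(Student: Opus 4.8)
The plan is to pass from the long random walk to short words, build a $k'$-basis of the relevant span greedily using escape from subvarieties, and then bound the index by the product formula; a final rescaling of $\vare$ sharpens the exponent in item (2).

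\textbf{Reduction to short words.} Let $V_\rho\subseteq \mathrm{End}(\bbv)$ be the $k'$-span of $\rho(\bbg_1)$; by Zariski density of $\rho(\Gamma)$ this is also the $k'$-span of $\rho(\Gamma)$, and $N:=\dim_{k'}V_\rho\le d^2$. We may assume $1\in A=A^{-1}$. Fix a constant $c=c(\Omega,\rho)>0$ to be chosen, put $l_0:=\lceil c\vare\log Q\rceil$, and note $l>\log Q/\delta>l_0$ once $\delta\ll\vare$. By Lemma~\ref{l:BGRemark}, $\pcal_{\rho,\Omega}^{(2l_0)}(A\cdot A)\ge \pcal_{\rho,\Omega}^{(l)}(A)^2>Q^{-2\delta}$. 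Set $A':=(A\cdot A)\cap\supp(\pcal_{\rho,\Omega}^{(2l_0)})$; then $A'\subseteq \prod_2 A$ is symmetric, $1\in A'$, $\pcal_{\rho,\Omega}^{(2l_0)}(A')>Q^{-2\delta}$, and every $a\in A'$ is a word of length $\le 2l_0$ in $\Omega$, whence $\|\rho(a)\|_{S'}\le C^{2l_0}=Q^{\Theta_{\Omega,\rho}(\vare)}$ for a fixed $C=C(\Omega,\rho)$. So any elements we select from $A'$ will already meet item (1).

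\textbf{Greedy construction of a basis.} Choose $a_1,\dots,a_N\in A'$ one at a time keeping $\rho(a_1),\dots,\rho(a_j)$ $k'$-linearly independent. Given $a_1,\dots,a_j$ with $U_j:=\mathrm{span}_{k'}\{\rho(a_i)\}_{i\le j}\subsetneq V_\rho$, let $\wcal_j\subseteq\gcal_1$ be the scheme-theoretic closure of $\{g\in\bbg_1:\rho(g)\in U_j\}$. Since $\bbg_1$ is irreducible and $\rho(\bbg_1)$ spans $V_\rho\not\subseteq U_j$, the generic fiber of $\wcal_j$ is a proper subvariety of $\bbg_1$ of degree $\ll_{\bbg_1,\rho}1$, cut out by the equations of $\bbg_1$ together with linear forms in the entries of $\rho(g)$ whose coefficients are $(j{+}1)\times(j{+}1)$ minors of the $\rho(a_i)$, hence of logarithmic height $\ll_{\Omega,\rho}\vare\log Q$. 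By Corollary~\ref{c:RamifiedPrime}, $\log p_0(\wcal_j)\ll_{\Omega,\rho}\vare\log Q$, so Proposition~\ref{p:EscapeSubvariety} gives $\pcal_{\rho,\Omega}^{(2l_0)}(\wcal_j(\bbz_S))\ll_{\Omega,\rho} Q^{\Theta_{\Omega,\rho}(\vare)}e^{-2\delta_0 c\vare\log Q}$ (valid since $\vare\log Q\gg_{\Omega,\rho}1$). Taking $c$ large enough depending on $\Omega,\rho$ makes the right-hand side $<Q^{-2\vare}<Q^{-2\delta}<\pcal_{\rho,\Omega}^{(2l_0)}(A')$, so $A'\not\subseteq\wcal_j$ and some $a_{j+1}\in A'$ has $\rho(a_{j+1})\notin U_j$. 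After $N$ steps $\rho(a_1),\dots,\rho(a_N)$ is a $k'$-basis of $V_\rho$, each $a_i\in A'\subseteq\prod_2 A$; this establishes (1).

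\textbf{Index bound and rescaling.} For (2), put $L:=\ocal_{k'}(S')[\rho(\Gamma)]$ and $M:=\sum_i\ocal_{k'}(S')\rho(a_i)\subseteq L$, so that the displayed product equals $|L/M|$. Fix once and for all $b_1,\dots,b_N\in\rho(\Gamma)$ forming a $k'$-basis of $V_\rho$; then $[L:\sum_i\ocal_{k'}(S')b_i]$ is a fixed ideal of norm $C_0(\Omega,\rho,k')$, and writing $C\in\GL_N(k')$ for the change-of-basis matrix from $(b_i)$ to $(\rho(a_i))$, a covolume computation together with the product formula gives $|L/M|\le C_0\prod_{v\in V_\infty(k')\cup S'}|\det C|_v\le C_0\,e^{O_{\Omega,\rho}(h(\det C))}$. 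The entries of $C$ are fixed linear combinations of entries of the $\rho(a_i)$, so $h(\det C)\ll_{\Omega,\rho}\vare\log Q$ and $|L/M|\le Q^{\Theta_{\Omega,\rho}(\vare)}$. Thus the proposition holds with $Q^{\Theta_{\Omega,\rho}(\vare)}$ in place of $Q^{\vare}$ in (2); running this weaker version with $\vare$ replaced by $\vare/C$ for a suitable constant $C=C(\Omega,\rho)$ upgrades (2) to $Q^{\vare}$ exactly, leaves (1) of the shape $Q^{\Theta_{\Omega,\rho}(\vare)}$, and preserves the hypothesis $\delta\ll_{\Omega,\rho}\vare$.

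\textbf{Main obstacle.} The delicate point is the greedy step: one must verify that the auxiliary subvarieties $\wcal_j$, whose equations genuinely depend on the previously chosen $a_i$, have degree bounded independently of everything and defining equations of height $O_{\Omega,\rho}(\vare\log Q)$ — this is precisely what lets Corollary~\ref{c:RamifiedPrime} and Proposition~\ref{p:EscapeSubvariety} combine into an escape estimate strong enough to beat the mass lower bound $Q^{-2\delta}$, uniformly over all admissible $A$, $Q$, $\delta$, $\vare$, and all intermediate choices.
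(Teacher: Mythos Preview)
Your proof is correct, and the index bound via the product formula is essentially the content of the paper's Lemma~\ref{l:IndexControl}; but the construction of the spanning family $\{a_i\}$ follows a different route from the paper. The paper first shows (Lemma~\ref{l:SmallElementsGenerateZariskiDense}) that $A\cdot A\cap B_{\lfloor\vare\log Q\rfloor}$ generates a Zariski-dense subgroup of $\bbg_1$, invoking only the \emph{uniform} escape from proper algebraic subgroups (Proposition~\ref{p:EscapeSubgroups}), which needs no height tracking whatsoever; then a purely algebraic fact (Lemma~\ref{l:ZariskiDenseGroupRing}: a symmetric set generating a Zariski-dense subgroup of $\GL_n$ spans the group ring after $n^2$-fold products) gives that $\prod_{(\dim\rho)^2}$ of this set already $k'$-spans $\rho(\Gamma)$. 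Your approach is more hands-on: you build the basis greedily and at each step escape from the specific linear subvariety $\wcal_j$, which forces you to control $p_0(\wcal_j)$ via Corollary~\ref{c:RamifiedPrime} and then apply the quantitative Proposition~\ref{p:EscapeSubvariety}. Your argument yields $a_i\in\prod_2 A$ rather than $\prod_{O(d^2)}(A\cdot A)$, but at the cost of the heavier arithmetic machinery (effective Nullstellensatz / ramified-prime bounds) that the paper's route here avoids entirely.

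One small point you should make explicit: since $\rho$ is only a $k'$-morphism and $U_j$ is a $k'$-subspace, the locus $\{g:\rho(g)\in U_j\}$ is a priori cut out by equations over $k'$, whereas Proposition~\ref{p:EscapeSubvariety} is stated for closed $\bbz_S$-subschemes of $\gcal_1$. Expanding each $k'$-equation in a fixed $\bbq$-basis of $k'$ produces a $\bbq$-subscheme of $\bbg_1$ with the same $\bbq$-points (hence the same intersection with $\Gamma$), and degree and height bounds of the same shape up to constants depending on $[k':\bbq]$; this deserves a sentence.
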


For the rest of this section, let $\Omega$ and $\Gamma$ be as in Section~\ref{ss:InitialReductions}, and let $\bbg_1, \bbg_{1,\pfr}$ be as in Section~\ref{ss:StrongApproximation}. 

We need to work with normalized norms on number fields in order to have the product formula. In the Appendix, the needed definition of the $\pfr$-norm on a number field $k'$ is recalled. 

\begin{lem}\label{l:IndexControl}
Let $v_1,\ldots,v_m\in \ocal_{k'}(S')^d\setminus \{0\}$. Then
\[
\prod_{\pfr\in V_f(k')\setminus S'} |(\ocal_{\pfr}^d\cap \sum_i k'_{\pfr} v_i)/ \sum_i \ocal_{\pfr} v_i| \ll_{d,[k':\bbq]} \prod_i \|v_i\|_{S'}^{(|S'|+[k':\bbq])[k':\bbq]}.
\]
\end{lem}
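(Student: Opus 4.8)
The plan is to localize the estimate. For each finite place $\pfr\notin S'$ the quantity $|(\ocal_\pfr^d\cap\sum_i k'_\pfr v_i)/\sum_i\ocal_\pfr v_i|$ is a purely local invariant: it is the index of the $\ocal_\pfr$-lattice spanned by $v_1,\dots,v_m$ inside its saturation in $\ocal_\pfr^d$. Writing $M=[\,v_1\mid\cdots\mid v_m\,]$ and $r:=\rk M$, the theory of the Smith normal form over the discrete valuation ring $\ocal_\pfr$ identifies this index with $|\Delta(M)|_\pfr^{-1}$, where $\Delta(M)$ denotes the $r$-th determinantal divisor of $M$ (a generator of the ideal of $\ocal_\pfr$ generated by the $r\times r$ minors of $M$, well-defined up to an $\ocal_\pfr$-unit). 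Fixing once and for all a nonzero $r\times r$ minor $\Delta_0$ of $M$ — which lies in $\ocal_{k'}(S')\setminus\{0\}$, since the entries of $M$ do — we have $\Delta(M)\mid\Delta_0$ in $\ocal_\pfr$, hence
\[
\big|(\ocal_\pfr^d\cap\textstyle\sum_i k'_\pfr v_i)/\textstyle\sum_i\ocal_\pfr v_i\big|\le |\Delta_0|_\pfr^{-1}
\qquad(\pfr\in V_f(k')\setminus S'),
\]
and in particular only finitely many factors of the product in question exceed $1$.

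Next I would apply the product formula to $\Delta_0\in k'^{\times}$ (with the normalized absolute values recalled in the Appendix), which gives
\[
\prod_{\pfr\in V_f(k')\setminus S'}|\Delta_0|_\pfr^{-1}=\prod_{\pfr\in S'}|\Delta_0|_\pfr\cdot\prod_{v\mid\infty}|\Delta_0|_v .
\]
It then remains to bound each of the finitely many factors on the right-hand side. As $\Delta_0$ is an $r\times r$ minor of $M$, it is an integer polynomial in the $v_i$ which is homogeneous of degree one in each $v_i$ and of total degree $r\le d$. Hence at a finite place $\pfr$ the ultrametric inequality yields $|\Delta_0|_\pfr\le\prod_i\|v_i\|_\pfr$, while at an archimedean place Hadamard's inequality (or simply expanding the determinant) yields $|\Delta_0|_v\ll_{d}\prod_i\|v_i\|_v$, with an implied constant depending only on $d$.

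Finally I would collect the estimates:
\[
\prod_{\pfr\in V_f(k')\setminus S'}\big|(\ocal_\pfr^d\cap\textstyle\sum_i k'_\pfr v_i)/\textstyle\sum_i\ocal_\pfr v_i\big|
\ll_{d,[k':\bbq]}\prod_i\Big(\prod_{\pfr\in S'\cup\{v\mid\infty\}}\|v_i\|_\pfr\Big),
\]
and then observe that the number of places appearing in the inner product is at most $|S'|+[k':\bbq]$, since there are at most $[k':\bbq]$ archimedean places of $k'$. By the definition of $\|\cdot\|_{S'}$ recalled in the Appendix, each normalized local norm $\|v_i\|_\pfr$ with $\pfr\in S'\cup\{v\mid\infty\}$ is bounded by $\|v_i\|_{S'}^{[k':\bbq]}$; multiplying these over the at most $|S'|+[k':\bbq]$ relevant places gives $\prod_{\pfr\in S'\cup\{v\mid\infty\}}\|v_i\|_\pfr\le\|v_i\|_{S'}^{(|S'|+[k':\bbq])[k':\bbq]}$, which is exactly the bound claimed. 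The only genuinely delicate point is bookkeeping — keeping the normalizations of the absolute values consistent so that the product formula applies, and so that the comparison between the normalized local norms and $\|\cdot\|_{S'}$ produces precisely the stated exponent; the structural input (Smith normal form identifying the local index with the determinantal divisor, and the reduction to a single nonzero minor via the product formula) is entirely standard.
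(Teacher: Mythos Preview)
Your argument is correct and follows the same blueprint as the paper's proof: identify each local index via elementary divisors (Smith normal form over $\ocal_\pfr$) and then invoke the product formula to transfer the estimate to the places in $S'\cup V_\infty(k')$, bounding those in terms of the $\|v_i\|$. The only cosmetic difference is that the paper first reduces to linearly independent $v_i$ and then projects onto $r$ coordinates to obtain a square matrix, using the full determinant $\det X$, whereas you work directly with a single nonzero $r\times r$ minor $\Delta_0$ of the rectangular matrix and the divisibility $\Delta(M)\mid\Delta_0$; this shortcut is slightly more direct but otherwise the two arguments coincide.
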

\begin{proof}
For an $\ocal_{k'}(S')$-submodule $M$ of $\ocal_{k'}(S')^d$, let 
\[
c_{S'}(M):=\prod_{\pfr\in V_f(k')\setminus S'} |(\ocal_{\pfr}^d \cap k'[M])/ \ocal_{\pfr}[M]|.
\]
So if $k'[M_1]=k'[M_2]$ and $M_1\subseteq M_2$, then $c_{S'}(M_1)\ge c_{S'}(M_2)$. Hence, since $\|v\|_{S'}\ge 1$ for any $v\in \ocal_{k'}(S')\setminus \{0\}$, without loss of generality we can and will assume that $v_1,\ldots, v_m$ are linearly independent over $k'$. Let $X$ be the $d$-by-$m$ matrix having $v_1,\ldots,v_m$ on its columns. So it is a full-rank matrix. Without loss of generality we can and will assume that the first $m$ rows are linearly independent. Let ${\rm Pr}$ be the projection onto the first $m$ components. So the restriction of ${\rm Pr}$ to $\sum_i k'v_i$ is injective, which implies that 
\[
|(\ocal_{\pfr}^d\cap \sum_i k'_{\pfr}v_i)/\sum_i \ocal_{\pfr} v_i|\le 
|
\ocal_{\pfr}^m/ \sum_i \ocal_{\pfr} {\rm Pr}(v_i)|,
\]
and so $c_{S'}(\sum_i \ocal_{k'}(S') v_i)\le c_{S'}(\sum_i \ocal_{k'}(S') {\rm Pr}(v_i))$. Hence, since $\|{\rm Pr}(v)\|_{S'}\le \|v\|_{S'}$ for any $v\in \ocal_{k'}(S')^d$, without loss of generality we can and will assume that $m=d$, i.e. $v_i$'s span $k'^d$. Thus $X\in \GL_d(k')\cap M_d(\ocal_{k'}(S'))$. Therefore, for any $\pfr\in V_f(k')\setminus S'$, there are matrices $K_1,K_2\in \GL_d(\ocal_{\pfr})$ and non-negative  integers $n_1,\ldots,n_d$ such that 
\[
X=K_1 {\rm diag}(\pfr^{n_1},\ldots,\pfr^{n_d}) K_2.
\]     
(Here we are abusing the notation and using $\pfr$ as a uniformizing element of $\ocal_{\pfr}$, as well.) Hence we have 
\begin{align}
\notag [\ocal_{\pfr}^d:X(\ocal_{\pfr}^d)]&=[K_1^{-1}(\ocal_{\pfr}^d):{\rm diag}(\pfr^{n_1},\ldots,\pfr^{n_d}) K_2(\ocal_{\pfr}^d)]\\
\notag &=[\ocal_{\pfr}^d:{\rm diag}(\pfr^{n_1},\ldots,\pfr^{n_d})(\ocal_{\pfr}^d)]\\
\label{e:Index} &=|\f_{\pfr}|^{\sum_i n_i}.
\end{align}
On the other hand, 
\begin{align}
\notag |\det(X)|_{\pfr}&=|\det(K_1 {\rm diag}(\pfr^{n_1},\ldots,\pfr^{n_d}) K_2)|_{\pfr}\\
\notag &=|\det({\rm diag}(\pfr^{n_1},\ldots,\pfr^{n_d}))|_{\pfr}\\
\label{e:NormDet} &=(|\f_{\pfr}|^{-\sum_i n_i})^{1/[k':\bbq]}. 
\end{align}
Therefore by Equations (\ref{e:Index}) and (\ref{e:NormDet}), we have
\[
c_{S'}(\sum_i \ocal_{k'}(S')v_i)=\prod_{\pfr\in V_f(k')\setminus S'} [\ocal_{\pfr}^d:X(\ocal_{\pfr}^d)]=\prod_{\pfr\in V_f(k')\setminus S'} |\det(X)|_{\pfr}^{-[k':\bbq]}.
\]
Thus by the product formula we have
\begin{align*}
c_{S'}(\sum_i \ocal_{k'}(S')v_i)&=\prod_{\pfr\in V_{\infty}(k')\cup S'} |\det(X)|_{\pfr}^{[k':\bbq]}\\
& \le (\prod_{\pfr\in V_{\infty}(k')} (d!) \prod_i \|v_i\|_{\pfr})^{[k':\bbq]}(\prod_{\pfr\in S'} \prod_i \|v_i\|_{\pfr})^{[k':\bbq]}\\
&\le (d!)^{[k':\bbq]^2} \prod_{i} \|v_i\|_{S'}^{(|S'|+[k':\bbq])[k':\bbq]}.
\end{align*}
\end{proof}


The next lemma shows that, if the probability of hitting $A$ in $l$-steps is at least $e^{-\delta l}$, then elements with {\em small} logarithmic height in $A.A$ generate a Zariski-dense subgroup of $\bbg_1$. 

\begin{lem}\label{l:SmallElementsGenerateZariskiDense}
Let $0<\delta\ll_{\Omega} \vare< 1$. Let $Q$ be a positive integer. If 
$\pcal_{\Omega}^{(l)}(A)>Q^{-\delta}$ for some positive integer $l>\frac{1}{\delta}\log Q$ and $Q^{\vare}\gg_{\Omega} 1$, then the group generated by $A\cdot A \cap B_{\lfloor \vare \log Q\rfloor}$ is Zariski-dense in $\bbg_1$, where $B_r$ is the ball of radius $r$ in $\Gamma$ with respect to the $\Omega$-word metric. 
\end{lem}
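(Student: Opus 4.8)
The plan is to argue by contradiction, using the exponential escape from proper algebraic $\bbq$-subgroups recorded as item~(4) of Section~\ref{ss:InitialReductions}. Write $r:=\lfloor\vare\log Q\rfloor$ and suppose $H:=\langle A\cdot A\cap B_r\rangle$ is not Zariski-dense in $\bbg_1$. Since $H$ is a subgroup of $\bbg_1(\bbq)$, its Zariski closure $\bbh$ is an algebraic subgroup of $\bbg_1$ defined over the perfect field $\bbq$; by our assumption $\bbh$ is proper, and $A\cdot A\cap B_r\subseteq\bbh(\bbq)$. So it suffices to produce an integer $m$ with $2\le m\le r$, $m\gg_\Omega 1$, and $\pcal_\Omega^{(m)}(A\cdot A)>Q^{-2\delta}$: since $\pcal_\Omega^{(m)}$ is supported on $B_m\subseteq B_r$, we would then have $\pcal_\Omega^{(m)}(\bbh(\bbq))\ge\pcal_\Omega^{(m)}(A\cdot A\cap B_r)=\pcal_\Omega^{(m)}(A\cdot A)>Q^{-2\delta}$, whereas item~(4) of Section~\ref{ss:InitialReductions} gives $\pcal_\Omega^{(m)}(\bbh(\bbq))\le e^{-\Theta_\Omega(m)}$. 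Combining these with $m\ge r-1>\vare\log Q-2$ and $Q^\vare\gg_\Omega 1$ yields $Q^{c_\Omega\vare-2\delta}<e^{O_\Omega(1)}$ for a constant $c_\Omega>0$ coming from item~(4); since $\delta\ll_\Omega\vare$ we may assume $2\delta\le c_\Omega\vare/2$, and then $Q^{\vare}<e^{O_\Omega(1)}$, contradicting $Q^\vare\gg_\Omega 1$.

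To produce $m$, first note that $Q^\vare\gg_\Omega 1$ forces $r\ge 2$, so $l_0:=\lfloor r/2\rfloor$ is a positive integer; moreover $l_0\le \vare\log Q/2<\log Q<\log Q/\delta<l$ because $\vare,\delta<1$. Now $\pcal_\Omega$ is a symmetric measure and $A$ is symmetric, so the argument proving Lemma~\ref{l:BGRemark} (applied to the measure $\pcal_\Omega$ and the integers $l>l_0$) gives
\[
\pcal_\Omega^{(2l_0)}(A\cdot A)\ \ge\ \pcal_\Omega^{(l)}(A)^2\ >\ Q^{-2\delta}.
\]
Since $2l_0\in\{r-1,r\}$, we have $2\le 2l_0\le r$ and $2l_0\ge r-1>\vare\log Q-2\gg_\Omega 1$, so $m:=2l_0$ has all the required properties, which completes the reduction above.

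The only delicate point is this trade: we are handed largeness of $\pcal_\Omega^{(l)}(A)$ only at the single large time $l>\log Q/\delta$, whereas to remain inside $B_r$ we must work at a time $\le r\approx\vare\log Q$, which is much smaller than $l$. Lemma~\ref{l:BGRemark} is precisely the device that converts the long walk into a short one of length $2l_0\le r$ at the cost of squaring the probability, and this is why the statement concerns the product set $A\cdot A$ rather than $A$ itself. (If $A$ is not assumed symmetric the same computation produces $\pcal_\Omega^{(2l_0)}(A^{-1}\cdot A)>Q^{-2\delta}$ and the conclusion holds verbatim with $A^{-1}\cdot A$ in place of $A\cdot A$.) Everything else is routine: that the Zariski closure of a subgroup of $\bbg_1(\bbq)$ is a $\bbq$-subgroup, that the support of $\pcal_\Omega^{(m)}$ lies in $B_m$, and the exponent bookkeeping that makes the saving $e^{-\Theta_\Omega(\vare\log Q)}$ from item~(4) beat the loss $Q^{-2\delta}$; none of these is a genuine obstacle once $\delta$ is taken small relative to $\vare$.
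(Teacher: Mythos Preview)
Your proof is correct and follows essentially the same approach as the paper's: both use Lemma~\ref{l:BGRemark} to convert the hypothesis $\pcal_\Omega^{(l)}(A)>Q^{-\delta}$ at the long time $l$ into $\pcal_\Omega^{(m)}(A\cdot A)>Q^{-2\delta}$ at a short time $m\approx\vare\log Q$, and then invoke the exponential escape from proper $\bbq$-subgroups (Proposition~\ref{p:EscapeSubgroups}, recorded as item~(4) in Section~\ref{ss:InitialReductions}) to conclude. Your version is simply more explicit in setting up the contradiction and tracking the constants.
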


\begin{proof}
If $\frac{1}{\delta}\log Q\ge \vare' \log Q$, then by Lemma~\ref{l:BGRemark} and $\pcal_{\Omega}^{(l)}(A)>Q^{-\delta}$ we have that
\be\label{e:SmallBallLargeMeasure}
\pcal_{\Omega}^{\lceil \vare' \log Q\rceil}(A.A)\ge e^{-2\delta \log Q}.
\ee
Now if $\delta_0:=\delta_0(\Omega)$ and $l_0:=l_0(\Omega)$ are as in Proposition~\ref{p:EscapeSubgroups}, $\delta\le \delta_0 \vare'$ and $\lceil \vare' \log Q\rceil\ge l_0$, then by Equation (\ref{e:SmallBallLargeMeasure}) and Proposition~\ref{p:EscapeSubgroups} we have that the group generated by 
\[
A\cdot A \cap B_{\lceil \vare' \log Q\rceil}
\]
is Zariski-dense in $\bbg_1$.
\end{proof}


\begin{lem}\label{l:ZariskiDenseGroupRing}
Let $F$ be a field, $G$ be a subgroup of $\GL_n(F)$, and $1\in A\subseteq G$ be a symmetric subset which generates a Zariski-dense subgroup of $G$. Then 
\[
\sum_{a\in \prod_{n^2} A}Fa=F[G],
\] 
where $F[G]$ is the $F$-span of $G$ in $M_n(F)$.
\end{lem}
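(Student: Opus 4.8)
The plan is to exhibit a basis of $F[G]$ (as an $F$-vector space inside $M_n(F)$) consisting of products of at most $n^2$ elements of $A$. Write $V:=F[G]\subseteq M_n(F)$, so $\dim_F V\le n^2$. The key point is a "ping-pong"/propagation argument on the increasing chain of subspaces $V_j:=\sum_{a\in \prod_j A}Fa$ for $j=0,1,2,\dots$. Since $1\in A$ we have $V_0\subseteq V_1\subseteq V_2\subseteq\cdots$, and each $V_j\subseteq V$. First I would observe that if $V_j=V_{j+1}$ for some $j$, then $V_j$ is stable under right multiplication by $A$, hence (being an $F$-span of elements of $G$ and $A$-stable, with $A$ symmetric and generating a Zariski-dense subgroup $\Lambda$ of $G$) it is stable under right multiplication by all of $\Lambda$; then the set $\{g\in G: V_jg\subseteq V_j\}$ is a Zariski-closed subset of $G$ (the condition $V_jg\subseteq V_j$ is polynomial in the entries of $g$) containing the Zariski-dense $\Lambda$, so it equals $G$, and hence $V_j=V_j\cdot G\supseteq \sum_{g\in G}Fg=F[G]=V$. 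Combined with $V_j\subseteq V$ this gives $V_j=V$.

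The second step is the dimension count: the chain $V_0\subsetneq V_1\subsetneq\cdots$ must strictly increase until it stabilizes, and once the dimensions stop strictly increasing they have stabilized at $V$ by the previous paragraph (a chain that fails to strictly increase at step $j$ satisfies $V_j=V_{j+1}$, so $V_j=V$ and the chain is constant from then on). Since $\dim_F V_0=1$ (it equals $F\cdot 1$) and $\dim_F V\le n^2$, the chain can strictly increase at most $n^2-1$ times; therefore $V_{n^2-1}=V$, and a fortiori $V_{n^2}=\sum_{a\in\prod_{n^2}A}Fa=F[G]$, which is exactly the claim. (One could even get $n^2-1$, but $n^2$ is all that is needed downstream.)

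The only genuinely substantive step is the Zariski-closedness argument in the first paragraph, and this is where I would be careful: I want $V_j$ to be right-$\Lambda$-stable, which follows from right-$A$-stability because $A$ generates $\Lambda$ as a group and $V_j$ is automatically stable under right multiplication by $A^{-1}=A$ (symmetry) once it is stable under $A$ and the map $x\mapsto xa$ is invertible on the finite-dimensional space $V_j$; then $\{g:V_jg\subseteq V_j\}$ is a submonoid of $G$ closed under inversion hence a subgroup, it is Zariski-closed, it contains $\Lambda$, hence it contains $\overline{\Lambda}^{\mathrm{Zar}}=G$. Everything else is bookkeeping. No appeal to earlier results in the paper is needed beyond the hypothesis that $A$ generates a Zariski-dense subgroup; in particular this lemma is the abstract input feeding Proposition~\ref{p:SpanSmallElements} via Lemma~\ref{l:SmallElementsGenerateZariskiDense}.
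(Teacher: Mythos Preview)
Your proof is correct and follows essentially the same approach as the paper: both use the dimension bound $\dim_F F[G]\le n^2$ together with Zariski-density to conclude that the chain of spans stabilizes at $F[G]$ after at most $n^2$ steps. The paper compresses this into two lines (``the $F$-algebra generated by $A$ equals $F[G]$; since $\dim_F F[G]\le n^2$, the $F$-span of $\prod_{n^2}A$ equals $F[G]$''), while you have spelled out explicitly the stabilization argument and the Zariski-closedness of the right stabilizer $\{g\in G: V_jg\subseteq V_j\}$, which is exactly the detail hidden in the paper's first sentence.
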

\begin{proof}
Since $A$ generates a Zariski-dense subgroup, the $F$-algebra generated by $A$ is equal to $F[G]$. As $\dim_FF[G]\le n^2$, the $F$-span of $\prod_{n^2}A$ is equal to $F[G]$.
\end{proof}

\begin{proof}[Proof of Proposition~\ref{p:SpanSmallElements}]
By Lemma~\ref{l:SmallElementsGenerateZariskiDense} and Lemma~\ref{l:ZariskiDenseGroupRing}, there are $\{\gamma_i\}_{i=1}^{O_{\dim \rho}(1)}\subseteq \prod_{\dim \rho^2} (A.A\cap B_{\lfloor \vare \log Q\rfloor})$ such that $k'[\rho(\Gamma)]=\sum_i k'\rho(\gamma_i)$. So in particular  $\log \|\rho(\gamma_i)\|_S \ll_{\rho,\Omega} \vare \log Q$, and
\[
M_d(\ocal_{\pfr})\cap k'_{\pfr}[\rho(\Gamma)]=M_d(\ocal_{\pfr})\cap \sum_i k'_{\pfr}\rho(\gamma_i)\supseteq \ocal_{\pfr}[\rho(\Gamma)],
\] 
for any $\pfr\in V_f(k')\setminus S$. Therefore we have
\begin{align*}
\log \prod_{\pfr\in V_f(k')\setminus S} [\ocal_{\pfr}[\rho(\Gamma)]:\sum_i \ocal_{\pfr} \rho(\gamma_i)]
=&\sum_{\pfr\in V_f(k')\setminus S}\log [(\ocal_{\pfr}[\rho(\Gamma)]:\sum_i \ocal_{\pfr} \rho(\gamma_i)]\\
&\le \sum_{\pfr\in V_f(k')\setminus S}\log [M_d(\ocal_{\pfr})\cap \sum_i k'_{\pfr}\rho(\gamma_i):\sum_i \ocal_{\pfr} \rho(\gamma_i)]\\
\text{(By Lemma~\ref{l:IndexControl})}\h\h\h &\ll_{\dim \rho,[k':\bbq],|S|} \sum_{i}\log \|\rho(\gamma_i)\|_S \\
(\text{for }Q^{\vare}\gg_{\Omega,\rho} 1)\h\h\h &\ll_{\Omega,\rho} \vare \log Q. 
\end{align*}
\end{proof}
\subsection{Large number of conjugacy classes.}\label{ss:LargeNumberOfConjugacyClasses} In this section, by the virtue of the proof of Burnside's theorem, we prove the following proposition.
\begin{prop}\label{p:LargeNumberOfConjugacyClasses}
Let $\Omega$, $\Gamma$ (we might pass to a subgroup of finite-index using Lemma~\ref{l:RepIntegrality}), $\bbg_1$, and $\bbg_{1,\pfr}$ be as before, and let $\pi_Q$ be as in the beginning of Section~\ref{ss:StatementApproximateSubgroupBoundedGeneration}. And suppose $0<\delta\ll_{\Omega}\vare\ll_{\Omega} 1$ and $1\ll_{\Omega} \vare \log_p Q$.  

If $\pcal_{\Omega}^{(l)}(A)>Q^{-\delta}$ for some positive integer $l>\frac{1}{\delta}\log Q$, then there is $\{a_i\}_{i=1}^{O_{\Omega}(1)}\subseteq \prod_{O_{\Omega}(1)}A$ such that for any positive integer 
$n\gg_{\Omega} \vare \log_p Q$ there is a positive integer $n'$ with the following properties:
\begin{enumerate}
	\item $n'\le n\le n'+\vare \log_p Q$,
	\item For any $X\subseteq \Gamma$, there is an $i$ such that
	\[
	|\pi_{q'}(X)|^{\Theta_{\Omega}(1)}\le |\conj(\pi_q(Xa_i))|,
	\]
	where $q:=p^n$, $q':=p^{n'}$, and $\conj(Y)=\{[y]|\h y\in Y\}$ where $[y]$ is the intersection of the conjugacy class of $y$ and $Y$.
\end{enumerate}
\end{prop}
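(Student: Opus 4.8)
The overall strategy mimics the proof of Burnside's theorem on matrix algebras: a semisimple group $\bbg_1$, faithfully represented, has the property that matrix coefficients of finitely many conjugates separate conjugacy classes, and one quantifies this over $\bbz/q\bbz$. First I would pick a faithful $\bbq$-representation $\rho:\bbg_1\to \mathbb{GL}(\bbv)$ (one may take the adjoint representation on $\Lie(\bbg_1)$ together with a translation representation, or simply the defining embedding coming from $\gcal_1$) and pass to a suitable finite-index subgroup via Lemma~\ref{l:RepIntegrality} so that $\rho(\Gamma)\subseteq\GL_d(\ocal_{k'}(S'))$ and the relevant Zariski-closure is already cut out integrally away from a controlled set of primes. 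The key point is that the character $\chi_\rho(g)=\Tr\rho(g)$, evaluated on the conjugates $g a_i g^{-1}$ for a well-chosen finite set $\{a_i\}$, will be enough to reconstruct $g$ up to conjugacy — more precisely, to reconstruct enough of $\rho(g)$ as a matrix, via the identity $\rho(g)=\sum_i c_i\,\rho(a_i)$ with $\{\rho(a_i)\}$ a basis of the group ring $k'[\rho(\Gamma)]$, together with the fact that the $c_i$ are determined by trace pairings $\Tr(\rho(g)\rho(a_j)^*)$, which are themselves values of $\chi_\rho$ on products.

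The second step is to produce the generating set $\{a_i\}$ with controlled height. This is exactly what Proposition~\ref{p:SpanSmallElements} supplies: applying it to the representation $\rho$ (and to $A$, which we may assume satisfies $\pcal_{\rho,\Omega}^{(l)}(A)>Q^{-\delta}$ since $\rho$ is injective on $\Gamma$ up to finite index), we get $\{a_i\}_{i=1}^{O_\Omega(1)}\subseteq\prod_{O_{\rho,\Omega}(1)}A$ with $\|\rho(a_i)\|_{S'}\le Q^{\Theta_{\Omega,\rho}(\vare)}$ and $\prod_{\pfr\notin S'}[\ocal_\pfr[\rho(\Gamma)]:\sum_j\ocal_\pfr\rho(a_j)]\le Q^{\vare}$. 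The height bound on $\rho(a_i)$ controls the denominators introduced when inverting the Gram matrix of the $\rho(a_i)$; the index bound controls the $p$-part of the "loss" incurred in passing from $\bbz_p$-spans to the full group ring. I would then choose $n'$ to be $n$ minus the $p$-adic valuation of this combined loss (which is $O(\vare\log_p Q)$ by the two displayed estimates), so that $n-n'\le\vare\log_p Q$ as required in (1).

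The third and main step is the separation estimate (2). Fix $X\subseteq\Gamma$ and suppose, for contradiction-style bookkeeping, that for every $i$ the conjugacy classes of $\pi_q(Xa_i)$ are few. For $x,x'\in X$, if $\pi_q(xa_i)$ and $\pi_q(x'a_i)$ are conjugate in $\pi_q(\Gamma)$ for all $i$, then $\chi_\rho(\pi_q(xa_i))=\chi_\rho(\pi_q(x'a_i))$ for all $i$, and more generally $\Tr$ of any word is preserved; running the Burnside-type reconstruction over $\bbz/q'\bbz$ (losing $p^{n-n'}$ in denominators and group-ring index) forces $\pi_{q'}(x)$ and $\pi_{q'}(x')$ to be conjugate. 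Hence the map $x\mapsto(\,[\pi_q(xa_1)],\dots,[\pi_q(xa_m)]\,)$ has fibers contained in single $\pi_{q'}$-conjugacy classes intersected with $X$, so $|\pi_{q'}(X)|\le(\text{number of }\pi_{q'}\text{-conjugacy classes}) \le \prod_i|\conj(\pi_q(Xa_i))|\cdot(\text{fiber size})$; taking the largest factor and absorbing the polynomially-bounded fiber overhead into the exponent $\Theta_\Omega(1)$ gives $|\pi_{q'}(X)|^{\Theta_\Omega(1)}\le|\conj(\pi_q(Xa_i))|$ for some $i$.

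The hard part will be making the Burnside reconstruction effective modulo $q'$ rather than over a field: over $\bbz/q\bbz$ one cannot freely invert, and the Gram matrix $G_{ij}=\Tr(\rho(a_i)\rho(a_j))$ (or a suitable non-degenerate pairing on $k'[\rho(\Gamma)]$) may be singular mod $p$. This is precisely why the two conclusions of Proposition~\ref{p:SpanSmallElements} are stated the way they are: the height bound bounds $v_p(\det G)$ by $O(\vare\log_p Q)$, and the index bound ensures the $\rho(a_i)$ still span the group ring after inverting only a $p^{O(\vare\log_p Q)}$ factor. So one works modulo $q'=q/p^{n-n'}$, where the relevant matrices become invertible, and checks that conjugacy of $\pi_q(xa_i)$ with $\pi_q(x'a_i)$ for all $i$ — which a priori only gives equality of traces of all words in the $\pi_q(xa_i)$ and $\pi_q(x'a_i)$ — propagates to conjugacy of $\pi_{q'}(x),\pi_{q'}(x')$ inside $\pi_{q'}(\Gamma)$ (using that $\rho$ is injective on $\pi_{q'}(\Gamma)$, again after the finite-index reduction, and that a semisimple element's conjugacy class is detected by its characteristic polynomial while the group structure handles the rest). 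I expect the bulk of the write-up to be this effective-Burnside / trace-separation lemma; the rest is assembling Proposition~\ref{p:SpanSmallElements} with routine index arithmetic.
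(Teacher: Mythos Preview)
Your overall architecture is right and matches the paper's: use Proposition~\ref{p:SpanSmallElements} to produce $\{a_i\}$ of small height whose images span the group ring up to a $Q^{\vare}$-index, then use a Burnside/trace argument to separate elements of $X$ modulo $q'$. But the heart of your third step is misstated in a way that breaks the counting.

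The trace-form argument does \emph{not} merely give that $\pi_{q'}(x)$ and $\pi_{q'}(x')$ are conjugate; it gives that they are \emph{equal}. Concretely (and this is what the paper does in Proposition~\ref{p:ElementsToConjugacyClasses}): for each absolutely irreducible $\rho_i$ the map $T_i:M_{n_i}(k')\to k'^{n_i^2}$, $T_i(y)=(\Tr(y\rho_i(a_j)))_j$, is an invertible linear map, because the trace pairing on a full matrix algebra is non-degenerate. If $\pi_q(xa_j)$ and $\pi_q(x'a_j)$ are conjugate for every $j$, then $\Tr(\rho_i(x)\rho_i(a_j))\equiv \Tr(\rho_i(x')\rho_i(a_j))\pmod{q}$, i.e.\ $T_i(\rho_i(x))\equiv T_i(\rho_i(x'))$; inverting $T_i$ at the cost of $|\det T_i|_\nu^{-1}\le Q^{\Theta_\Omega(\vare)}$ gives $\rho_i(x)\equiv\rho_i(x')\pmod{q'}$ as matrices, hence $\pi_{q'}(x)=\pi_{q'}(x')$. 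No appeal to characteristic polynomials or ``the group structure handling the rest'' is needed; equality of matrices is immediate from linear algebra. With equality in hand, $\Xi_q:x\mapsto([\pi_q(xa_i)])_i$ factors injectively through $\pi_{q'}(X)$, so $|\pi_{q'}(X)|\le\prod_i|\conj(\pi_q(Xa_i))|\le(\max_i|\conj(\pi_q(Xa_i))|)^{O_\Omega(1)}$, which is the desired bound.

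By contrast, if you only had conjugacy of $\pi_{q'}(x)$ and $\pi_{q'}(x')$, your displayed chain ``$|\pi_{q'}(X)|\le(\text{number of }\pi_{q'}\text{-conjugacy classes})\le\ldots$'' is backwards: the number of elements is at least, not at most, the number of conjugacy classes, and a single conjugacy class in $\pi_{q'}(\Gamma)$ can contain arbitrarily many points of $\pi_{q'}(X)$. So the counting genuinely fails without equality.

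Two smaller points. First, to make $T_i$ invertible you need $k'[\rho_i(\Gamma)]=M_{n_i}(k')$, which requires the $\rho_i$ to be absolutely irreducible; a single faithful $\rho$ (e.g.\ the defining embedding) need not have this property, so you should decompose into absolutely irreducible pieces over a finite extension $k'$ as the paper does. Second, recall that $\pi_Q$ is defined via $\gcal_{1,\pfr}$ when $p\in S_0$; you must arrange that some sub-family $\{\rho_i\}_{i\in I_\pfr}$ factors through $\bbg_{1,\pfr}$ and is faithful there, so that the final step $\rho_i(x)\equiv\rho_i(x')\Rightarrow\pi_{q'}(x)=\pi_{q'}(x')$ goes through for those primes as well.
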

One can deduce Proposition~\ref{p:LargeNumberOfConjugacyClasses} from  the following proposition.
\begin{prop}\label{p:ElementsToConjugacyClasses}
In the above setting, suppose $0<\delta\ll_{\Omega}\vare\ll_{\Omega} 1$ and $1\ll_{\Omega} \vare \log_p Q$. Suppose $p\in V_f(\bbq)$ and $\pfr|p$ for some $\pfr\in V_f(k)\setminus S$. 

If $\pcal_{\Omega}^{(l)}(A)>Q^{-\delta}$ for some positive integer $l>\log Q/\delta$, then there is $\{a_i\}_{i=1}^{O_{\Omega}(1)}\subseteq \prod_{O_{\Omega}(1)}A$ such that for any positive integer $n\gg_{\Omega} \vare \log_p Q$, there is a positive integer $n'$ with the following properties:
\begin{enumerate}
	\item $n'\le n\le n'+\vare \log_p Q$,
	\item For any $\gamma\in \Gamma$, let $\Xi_q(\gamma):=(\ccal(\pi_q(\gamma a_i)))_i$, where $q:=p^n$ and $\ccal(y)$ is the conjugacy class of $y$. If $\Xi_q(\gamma_1)=\Xi_q(\gamma_2)$, then $\pi_{q'}(\gamma_1)=\pi_{q'}(\gamma_2)$ where $q':=p^{n'}$.
\end{enumerate}
\end{prop}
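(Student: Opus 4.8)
The plan is to recover $\pi_{q'}(\gamma)$ from finitely many trace values $\Tr\bigl(\rho(\gamma)\rho(a_i)\bigr)$, where $\rho$ is a faithful matrix realization of $\gcal_{1,\pfr}$ and the $a_i\in\prod_{O_\Omega(1)}A$ are produced by Proposition~\ref{p:SpanSmallElements} so that $\{\rho(a_i)\}$ spans the $\bbz_p$-order $\bbz_p[\rho(\Gamma)]$ up to an index bounded by $Q^{\vare}$. Concretely, I would fix $\bbg_{1,\pfr}$ and $k(\pfr)$ as in Lemma~\ref{l:StructurePAdicClosure} and take $\rho\colon\bbg_1\to\GL_d$ over $k(\pfr)$ to be the composite of $\mathrm{Pr}_{\pfr}\colon\bbg_1\to\bbg_{1,\pfr}$ with the defining embedding of $\bbg_{1,\pfr}$ (coming from $\bbg\hookrightarrow\GL_{n_0}$ by restriction of scalars). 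Since $k(\pfr)$ is one of finitely many subfields of $k$ and $d\le n_0[k:\bbq]$, every constant that a priori depends on $\rho$ is $O_{\Omega}(1)$; and by construction $\rho$ realizes $\gcal_{1,\pfr}$ as a closed $\bbz_p$-subscheme of $\GL_d$, so that $\rho(g)\bmod p^j$ determines $\pi_{p^j}(g)$ for $g\in\gcal_{1,\pfr}(\bbz_p)$.

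Next I would normalize integrality: using Lemma~\ref{l:RepIntegrality} pass to a finite-index subgroup of $\Gamma$ and pick a finite $S'\subseteq V_f(k(\pfr))$ with $\rho(\Gamma)\subseteq\GL_d(\ocal_{k(\pfr)}(S'))$, and then enlarge $S'$ by the finitely many places dividing the discriminant of the order $\ocal_{k(\pfr)}(S')[\rho(\Gamma)]$ in the (semisimple) $k(\pfr)$-algebra it spans. Let $\qfr_0$ be the place of $k(\pfr)$ cut out by $k(\pfr)\hookrightarrow\bbq_p$, so $\ocal_{\qfr_0}=\bbz_p$. For the finitely many $p$ with $\qfr_0\in S'$ one handles the assertion by a crude bound; otherwise $\qfr_0\notin S'$ and $\Lambda:=\bbz_p[\rho(\Gamma)]\subseteq M_d(\bbz_p)$ is a $\bbz_p$-order on which the matrix trace form takes values in $\bbz_p$, with $\Lambda\subseteq\Lambda^{\vee}$ and $[\Lambda^{\vee}:\Lambda]=p^{c_0}$ for a discriminant exponent $c_0=O_\Omega(1)$ (and $c_0=0$ for all but finitely many $p$).

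With this in place I would apply Proposition~\ref{p:SpanSmallElements} to $\rho$, to the given $A$, $l$, $Q$ (with $\vare$ replaced by $\vare/2$): since $\pcal_{\Omega}^{(l)}(A)>Q^{-\delta}$ with $l>\log Q/\delta$, it yields $\{a_i\}_{i=1}^{O_\Omega(1)}\subseteq\prod_{O_\Omega(1)}A$ (the number of steps depends only on $\Omega$ and $\rho$, hence only on $\Omega$, and in particular not on $n$) such that the product over $\qfr\in V_f(k(\pfr))\setminus S'$ of the indices $[\ocal_{\qfr}[\rho(\Gamma)]:\sum_j\ocal_{\qfr}\rho(a_j)]$ is at most $Q^{\vare/2}$; in particular its $\qfr_0$-factor equals $p^{m}$ with $m\le\tfrac{\vare}{2}\log_p Q$, so $p^{m}\Lambda\subseteq L:=\sum_j\bbz_p\rho(a_j)\subseteq\Lambda$. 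Now for $n\gg_{\Omega}\vare\log_p Q$ set $n':=n-m-c_0$, $q:=p^n$, $q':=p^{n'}$. If $\gamma_1,\gamma_2\in\Gamma$ satisfy $\Xi_q(\gamma_1)=\Xi_q(\gamma_2)$, then for each $i$ the elements $\pi_q(\gamma_1a_i)$ and $\pi_q(\gamma_2a_i)$ are conjugate in $\pi_q(\Gamma)\subseteq\GL_d(\bbz/q)$, hence have equal trace, i.e. $\Tr\bigl(z\,\rho(a_i)\bigr)\in p^n\bbz_p$ for $z:=\rho(\gamma_1)-\rho(\gamma_2)\in\Lambda$. Thus $\Tr(zL)\subseteq p^n\bbz_p$, so $z\in p^nL^{\vee}$; and since $L\subseteq\Lambda\subseteq\Lambda^{\vee}\subseteq L^{\vee}$ with $[L^{\vee}:\Lambda^{\vee}]=[\Lambda:L]=p^{m}$ and $[\Lambda^{\vee}:\Lambda]=p^{c_0}$, one gets $p^{m+c_0}L^{\vee}\subseteq\Lambda$, whence $z\in p^{\,n-m-c_0}\Lambda=p^{n'}\Lambda$. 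Therefore $\rho(\gamma_1)\equiv\rho(\gamma_2)\pmod{p^{n'}}$, which by the choice of $\rho$ gives $\pi_{q'}(\gamma_1)=\pi_{q'}(\gamma_2)$. Finally $n-n'=m+c_0\le\tfrac{\vare}{2}\log_p Q+c_0\le\vare\log_p Q$ once $\vare\log_p Q\gg_\Omega 1$ (so that $c_0\le\tfrac{\vare}{2}\log_p Q$), and $n'\ge 1$ because $n\gg_\Omega\vare\log_p Q$, so (1) and (2) hold.

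The genuinely delicate point — the step I expect to be the main obstacle — is arranging that the ``discriminant loss'' $c_0$ (the index of $\bbz_p[\rho(\Gamma)]$ in its dual for the trace form) is bounded by an $\Omega$-constant \emph{uniformly in $\pfr$}, together with the bookkeeping over restriction of scalars that makes $\rho$, $d$, $k(\pfr)$, and $S'$ uniformly controlled. This uniformity follows because $\ocal_{k(\pfr)}[\rho(\Gamma)]$ is an order in a fixed semisimple algebra whose discriminant ideal is divisible by only finitely many primes, but it forces one to absorb those finitely many ramified primes into the $O_\Omega(1)$ and to use the hypothesis $1\ll_\Omega\vare\log_p Q$ to swallow $c_0$; once this is granted, everything else is $\bbz_p$-linear algebra with the trace pairing together with Proposition~\ref{p:SpanSmallElements}.
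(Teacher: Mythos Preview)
Your proof is correct and follows the same underlying idea as the paper's: use Proposition~\ref{p:SpanSmallElements} to produce finitely many $a_i\in\prod_{O_\Omega(1)}A$ whose images span the integral group ring up to small index, then exploit that conjugacy preserves traces so that the vector $(\Tr(\rho(\gamma)\rho(a_i)))_i$ determines $\rho(\gamma)$ modulo a controlled power of $p$.

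The execution differs in one interesting respect. The paper first decomposes $\rho$ into absolutely irreducible pieces $\rho_i$ over a Galois extension $k'$ of $k$, so that $k'[\rho_i(\Gamma)]=M_{n_i}(k')$; then the map $T_i(x)=(\Tr(x\rho_i(a_j)))_j$ is a square linear map on a full matrix algebra, and the loss is governed by $|\det T_i|_{\widetilde\pfr}$, which is bounded via the height estimate~(1) in Proposition~\ref{p:SpanSmallElements} and the product formula. You instead keep a single faithful (not absolutely irreducible) representation over $k(\pfr)$, work with the order $\Lambda=\bbz_p[\rho(\Gamma)]$ inside the semisimple algebra $\bbq_p[\rho(\Gamma)]$, and phrase the inversion as a dual-lattice argument: from $\Tr(zL)\subseteq p^n\bbz_p$ get $z\in p^nL^\vee$, then descend to $\Lambda$ using $[L^\vee:\Lambda^\vee]=[\Lambda:L]$ and the discriminant index $[\Lambda^\vee:\Lambda]=p^{c_0}$. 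This is a clean packaging that uses only the index bound~(2) of Proposition~\ref{p:SpanSmallElements} and avoids the extension field; the price is the uniform control of $c_0$, which you correctly identify as the delicate point and which the paper sidesteps entirely by passing to the full matrix algebra where the trace form has unit discriminant. Both routes give the same $n'=n-\Theta_\Omega(\vare\log_pQ)$.
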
 
\begin{proof}
Recall from Section~\ref{ss:InitialReductions} that $\Gamma\subseteq \GL_{n_0}(\ocal_k(S))$, and $S_0:=\{p\in V_f(\bbq)|\h \exists \pfr\in S, \pfr|p\}$. Moreover $\bbg_1$ is a $\bbq$-semisimple group, and $\bbg_{1,\pfr}$ is a factor of $\bbg_1$ that is defined over a subfield $k(\pfr)$ of $k$. So there are absolutely irreducible representations $\rho_i:\bbg\rightarrow \mathbb{GL}_{n_i}$ that are defined over a Galois extension $k'$ of $k$, and $\rho=\oplus_{i\in I} \rho_i$ is a faithful representation. Furthermore we can and will assume that for any $\pfr\in \{\pfr\in V_f(k)\setminus S|\h \exists p\in S_0, \pfr|p\}$ there is a subset $I_{\pfr}$ of $I$ such that $\rho_i$ factors through $\bbg_{1,\pfr}$ for any $i\in I_{\pfr}$ and $\oplus_{i\in I_{\pfr}}\rho_i$ induces a faithful representation of $\bbg_{1,\pfr}$. We consider $\rho_i$'s fixed and do not mention the dependence of constants to the choice of such homomorphisms. Let
\[
\ccal_i:=\{\wt{\pfr}\in V_f(k')|\h \rho_i(\Gamma) \text{ is a bounded subgroup of } \GL_{n_i}(k'_{\wt{\pfr}})\}.
\]
So we have that $\{\wt{\pfr}\in V_f(k')|\h \exists p\in V_f(\bbq)\setminus S_0, \wt{\pfr}|p\}\subseteq \ccal_i$ for any $i\in I$. And for any $\pfr\in V_f(k)\setminus S$ and $i\in I_{\pfr}$ we have $\{\wt{\pfr}\in V_f(k')|\h \wt{\pfr}|\pfr\}\subseteq \ccal_i$.

Let $S_i:=V_f(k')\setminus \ccal_i$. By Lemma~\ref{l:RepIntegrality} and Lemma~\ref{l:FiniteIndexSubgroup}, we can assume that 
\[
\rho_i(\Gamma)\subseteq \GL_{n_i}(\ocal_{k'}(S_i)).
\] 
Therefore by Proposition~\ref{p:SpanSmallElements}, there is $\{a_i\}_{i=1}^{O_{\Omega}(1)}\subseteq \prod_{O_{\Omega}(1)}A$ such that for any $i\in I$ and $j$
\be\label{e:SmallHeight}
\|\rho_i(a_j)\|_{S_i}\le Q^{\Theta_{\Omega}(\vare)},
\ee
and 
\be\label{e:SpanSmallElements}
\prod_{\wt{\pfr}\in \ccal_i}|\ocal_{\wt{\pfr}}[\rho_i(\Gamma)]/\sum_j \ocal_{\wt{\pfr}} \rho_i(a_j)|\le Q^{\vare}.
\ee

Since $\rho_i$ is absolutely irreducible, we have $k'[\rho_i(\Gamma)]=M_{n_i}(k')$. By (\ref{e:SpanSmallElements}), there is a subset $J_i$ of indexes such that $\{\rho_i(a_j)\}_{j\in J_i}$ is a $k'$-basis of $M_{n_i}(k')$. Let $T_i:M_{n_i}(k')\rightarrow k'^{n_i^2}$ be
\[
T_i(x):=(\Tr(x\rho_i(a_j)))_{j\in J_i}.
\] 
Since $f_i:M_{n_i}(k')\times M_{n_i}(k')\rightarrow k', f_i(x,y):=\Tr(xy)$ is a $k'$-bilinear non-degenerate map, $T_i$ is an invertible $k'$-linear map. Moreover the matrix representation of $T_i$ in the standard basis of $M_{n_i}(k')$ has entries in $\ocal_{k'}(S_i)$, and by (\ref{e:SmallHeight}), the $S_i$-norm of its entries are at most $Q^{\Theta_{\Omega}(\vare)}$. Hence by the product formula we have
\be\label{e:LargePAdicDet}
|\det(T_i)|_{\wt{\pfr}}\le 1, \text{ and } \prod_{\wt{\pfr}\in \ccal_i}|\det(T_i)|_{\wt{\pfr}}\ge Q^{-\Theta_{\Omega}(\vare)}.
\ee
Notice that (\ref{e:LargePAdicDet}) implies that for any $\wt{\pfr}\in\ccal_i$ we have
\be\label{e:UpperBoundInverseOperator}
\|T_i^{-1}\|_{\pfr}\le Q^{\Theta_{\Omega}(\vare)}.
\ee

For any $\pfr\in V_f(k)\setminus S$ and $\wt{\pfr}\in V_f(k')$ which divides $\pfr$ we have 
\be\label{e:AlmostIsometry}
\|g_1-g_2\|_{\wt{\pfr}} \ll \max_{i\in I_{\pfr}} \|\rho_i(g_1)-\rho_i(g_2)\|_{\wt{\pfr}}\ll \|g_1-g_2\|_{\wt{\pfr}},
\ee
for any $g_1,g_2\in \bbg_{1,\pfr}(\bbz_p)$, since $\oplus_{i\in I_{\pfr}} \rho_i$ induces an injective homomorphism on $\bbg_{1,\pfr}$. 

Suppose $\pfr\in V_f(k)\setminus S$ divides $p\in V_f(\bbq)$, $q=p^n$, and $\Xi_q(\gamma_1)=\Xi_q(\gamma_2)$. Then for any $j$ we have that $\|\gamma_1 a_j-g_j \gamma_2 a_j g_j^{-1}\|_{\pfr}\le |q|_{\pfr}$ for some $g_j\in \Gamma$. Therefore for any $i$ and $j$ and any $\wt{\pfr}\in V_f(k')$ that divides $\pfr$ we have
\begin{align}
\notag |\Tr(\rho_i(\gamma_1 a_j))-\Tr(\rho_i(\gamma_2 a_j))|_{\wt{\pfr}}&=
|\Tr(\rho_i(\gamma_1 a_j)-\rho_i(g_j \gamma_2 a_j g_j^{-1}))|_{\wt{\pfr}}
\\
\label{e:FirstEstimate} &\ll
\|\gamma_1 a_j-g_j \gamma_2a_j g_j^{-1}\|_{\wt{\pfr}}\le  |q|_{\wt{\pfr}}. 
\end{align}
So by (\ref{e:FirstEstimate}) for any $i$ we have 
\be\label{e:SingleRep1}
\|T_i(\rho_i(\gamma_1))-T_i(\rho_i(\gamma_2))\|_{\wt{\pfr}}\ll |q|_{\wt{\pfr}}.
\ee
Hence by (\ref{e:UpperBoundInverseOperator}) and (\ref{e:SingleRep1}) we have
\be\label{e:SingleRep2}
\|\rho_i(\gamma_1)-\rho_i(\gamma_2)\|_{\wt{\pfr}}\le Q^{\Theta_{\Omega}(\vare)} |q|_{\wt{\pfr}}.
\ee
Therefore by (\ref{e:AlmostIsometry}) we have
\[
\|\gamma_1-\gamma_2\|_{\wt{\pfr}}\le Q^{\Theta_{\Omega}(\vare)}|q|_{\wt{\pfr}},
\]
which implies that $\pi_{q'}(\gamma_1)=\pi_{q'}(\gamma_2)$ for some $q'|q$ and $q'\ge qQ^{-\Theta_{\Omega}(\vare)}$. 
\end{proof}


\subsection{Helfgott's trick to get large centralizer.}\label{ss:HelfgottLargeCentralizer}
In this section, we recall Helfgott's method of getting a {\em large} centralizer and prove the following lemma.
\begin{lem}\label{l:HelfgottLargeCentralizer}
Let $\Omega$ and $\Gamma$ be as in Section~\ref{ss:InitialReductions}, and let $\pi_{p^n}$ be as in Section~\ref{ss:StatementApproximateSubgroupBoundedGeneration}. Assume $0<\delta\ll_{\Omega}\vare\ll_{\Omega} 1$ and $1\ll_{\Omega} \vare N$ where $N$ is a positive integer. Suppose $p$ is divisible  by some $\pfr\in V_f(k)\setminus S$. 

Suppose $\Pfr_Q(\delta,A,l)$ holds, where $Q:=p^N$, and $\{a_i\}_{i=1}^{O_{\Omega}(1)}\subseteq \prod_{O_{\Omega}(1)} A$ is as in Proposition~\ref{p:LargeNumberOfConjugacyClasses}. Then for any $X\subseteq \prod_{O_{\Omega,\varepsilon}(1)} A$ and for any integer $N\vare \ll_{\Omega} n \le N$ there are $i$, $x\in Xa_i$, and a positive integer $n'$ such that
\begin{enumerate}
\item $n'\le n\le n'+\vare N$,
\item $|C_{\pi_q(\Gamma)}(\pi_q(x))\cap \pi_q(A\cdot A)|\geq |\pi_Q(A)|^{-\Theta_{\Omega,\vare}(\delta)}|\pi_{q'}(X)|^{\Theta_{\Omega}(1)},$ where $q:=p^n$ and $q':=p^{n'}$,
\end{enumerate} 
where $C_{\pi_q(\Gamma)}(\pi_q(x))$ is the centralizer of $\pi_q(x)$ in $\pi_q(\Gamma)$.
\end{lem}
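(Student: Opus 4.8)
The plan is to use Helfgott's pivoting argument: we have, by Proposition~\ref{p:LargeNumberOfConjugacyClasses}, a bounded family $\{a_i\}$ of products of $A$ with the property that for any $X\subseteq\Gamma$ some $a_i$ makes $\conj(\pi_q(Xa_i))$ large, namely $|\pi_{q'}(X)|^{\Theta_\Omega(1)}\le|\conj(\pi_q(Xa_i))|$ for a suitable $n'\in[n-\vare N,n]$. First I would fix such an $i$ and such an $n'$ for the given $X$, set $q=p^n$, $q'=p^{n'}$, and consider the set $Xa_i\subseteq\prod_{O_{\Omega,\vare}(1)}A$. The point is that $Xa_i$ meets many distinct conjugacy classes of $\pi_q(\Gamma)$, so by pigeonhole it cannot concentrate on few classes; we will use this to produce an element with a large centralizer intersected with an approximate subgroup.

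The key combinatorial step is the standard "few conjugacy classes or large centralizer" dichotomy. Partition $Xa_i$ by the conjugacy class of its image in $\pi_q(\Gamma)$; there are at least $|\pi_{q'}(X)|^{\Theta_\Omega(1)}$ classes represented. For $x\in Xa_i$, the number of elements of $\pi_q(\Gamma)$ conjugate to $\pi_q(x)$ is $[\pi_q(\Gamma):C_{\pi_q(\Gamma)}(\pi_q(x))]$. Consider the conjugation map $\pi_q(A\cdot A)\times (Xa_i)\to \pi_q(\Gamma)$, $(g,x)\mapsto g\pi_q(x)g^{-1}$. The size of the image is at most $|\pi_q(A\cdot A\cdot A\cdot A)|\le|\pi_Q(A)|^{1+O(\delta)}$ after projecting to level $Q$ — here we use $\Pfr_Q(\delta,A,l)$ and the Ruzsa/tripling bound to control $|\pi_Q(\prod_c A)|$ in terms of $|\pi_Q(A)|$. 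On the other hand, a counting/fibering argument shows that some fiber is large: if every $x\in Xa_i$ had small centralizer intersection $|C_{\pi_q(\Gamma)}(\pi_q(x))\cap\pi_q(A\cdot A)|$, then each conjugacy class of an element of $Xa_i$ would be "spread out" by $\pi_q(A\cdot A)$ into many images, and summing over the $\ge|\pi_{q'}(X)|^{\Theta(1)}$ classes forces the image to be larger than $|\pi_Q(A)|^{1+O(\delta)}$, a contradiction. Quantitatively: the total count $\sum_{x\in Xa_i}|\pi_q(A\cdot A)\cdot\pi_q(x)\cdot\pi_q(A\cdot A)|$-type quantity is at least (number of classes) $\times$ (conjugates reachable per class), and comparing with the trivial upper bound $|Xa_i|\cdot|\pi_Q(\prod_{O(1)}A)|$ yields an $x$ with $|C_{\pi_q(\Gamma)}(\pi_q(x))\cap\pi_q(A\cdot A)|\ge |\pi_Q(A)|^{-\Theta_{\Omega,\vare}(\delta)}|\pi_{q'}(X)|^{\Theta_\Omega(1)}$.

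More concretely, I would argue as follows. Let $Y:=Xa_i$, and for each $y\in Y$ let $c(y):=|C_{\pi_q(\Gamma)}(\pi_q(y))\cap\pi_q(A\cdot A)|$ and let $\kappa(y):=|\conj(\pi_q(Y))\cap \ccal(\pi_q(y))|$ count how many elements of $\pi_q(Y)$ lie in the conjugacy class of $\pi_q(y)$; note $\sum_{[y]}$ (over distinct classes) equals $|\conj(\pi_q(Y))|\ge|\pi_{q'}(X)|^{\Theta(1)}$. For a fixed class with representative $y$, the set $\{g\pi_q(y)g^{-1}\mid g\in\pi_q(A\cdot A)\}$ has size $[\pi_q(A\cdot A):\,C_{\pi_q(\Gamma)}(\pi_q(y))\cap\pi_q(A\cdot A)\cdot(\text{coset stuff})]$, so it has size at least $|\pi_q(A\cdot A)|/c(y)$ — but to make this clean I would instead bound $|\pi_q(A\cdot A)|\le c(y)\cdot|\{g\pi_q(y)g^{-1}:g\in\pi_q(A\cdot A)\}|$ by the orbit–stabilizer argument relative to the subset $\pi_q(A\cdot A)$ (this requires a small covering trick: cover $\pi_q(A\cdot A)$ by left cosets of the centralizer). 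All these conjugates lie in $\pi_q(\prod_{O_{\Omega,\vare}(1)}A)$, and distinct classes give disjoint conjugate-sets. Hence
\[
|\pi_Q(\textstyle\prod_{O_{\Omega,\vare}(1)}A)|\;\ge\;\sum_{[y]}\frac{|\pi_q(A\cdot A)|}{c(y)}\;\ge\;|\conj(\pi_q(Y))|\cdot\frac{|\pi_q(A\cdot A)|}{\max_y c(y)}.
\]
Using $|\pi_Q(\prod_{O(1)}A)|\le|\pi_Q(A)|^{1+O_\Omega(\delta)}$ (from $\Pfr_Q(\delta,A,l)$ via Ruzsa), $|\pi_q(A\cdot A)|\ge|\pi_Q(A)|^{1-O_\Omega(\delta)}$ (since $q\le Q$ and trivially $|\pi_q(A\cdot A)|\ge|\pi_q(A)|$, combined with an argument that $A$ is not much smaller mod $q$ than mod $Q$ — this is where $\Pfr_Q$ and the tripling hypothesis are again used, or one passes through $q'\le q$), and $|\conj(\pi_q(Y))|\ge|\pi_{q'}(X)|^{\Theta_\Omega(1)}$, rearranging gives
\[
\max_{y\in Y} c(y)\;\ge\;|\pi_Q(A)|^{-\Theta_{\Omega,\vare}(\delta)}\,|\pi_{q'}(X)|^{\Theta_\Omega(1)},
\]
and taking $x:=y$ achieving the maximum finishes the proof, with $n'$ as supplied by Proposition~\ref{p:LargeNumberOfConjugacyClasses}.

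The main obstacle I expect is the bookkeeping of the various moduli: Proposition~\ref{p:LargeNumberOfConjugacyClasses} produces $n'$ depending on $n$ (and on the choice of $X$ via the $a_i$), and one must keep $n'\ge n-\vare N$ while also ensuring the passage between "size mod $Q$" and "size mod $q$" (and mod $q'$) only costs $|\pi_Q(A)|^{O(\delta)}$ factors; this is where the hypothesis $\Pfr_Q(\delta,A,l)$ — tripling at level $Q$ — together with Ruzsa's inequality is essential, since without control of $|\pi_Q(\prod_{c}A)|$ the disjointness-of-conjugates count does not close. A secondary technical point is the orbit–stabilizer estimate for the \emph{subset} $\pi_q(A\cdot A)$ rather than a subgroup, which requires covering $\pi_q(A\cdot A)$ by at most $[\pi_q(A\cdot A)\cdot\pi_q(A\cdot A):\text{(centralizer)}]$ cosets and absorbing the loss into $A\cdot A\cdot A\cdot A$; this is routine but must be done carefully so that everything still sits inside $\prod_{O_{\Omega,\vare}(1)}A$.
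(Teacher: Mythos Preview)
Your overall strategy is exactly the one the paper uses: invoke Proposition~\ref{p:LargeNumberOfConjugacyClasses} to get many conjugacy classes in $\pi_q(Xa_i)$, then run the Helfgott orbit--stabilizer count to force a large centralizer. The paper packages the second step as Lemma~\ref{l:HelfgottLargeCentralizerLargeNumberOfConjugacy} and then simply combines the two.

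There is, however, a genuine gap in your bookkeeping. You write
\[
|\pi_q(A\cdot A)|\ \ge\ |\pi_Q(A)|^{1-O_{\Omega}(\delta)},
\]
and this is simply false in general: since the lemma allows $n$ as small as $\Theta_{\Omega}(\vare N)$, the modulus $q=p^n$ can be as small as $Q^{\Theta(\vare)}$, and then $|\pi_q(A)|$ (hence $|\pi_q(A\cdot A)|$) may be a tiny power of $|\pi_Q(A)|$. No amount of tripling control at level $Q$ forces $A$ to be ``not much smaller mod $q$ than mod $Q$''. Your displayed inequality also mixes levels, with $|\pi_Q(\prod_{O(1)}A)|$ on the left and $|\pi_q(A\cdot A)|$ on the right, which is precisely what creates the problem when you try to close the estimate.

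The fix is to stay entirely at level $q$ on both sides, which is what the paper does. Conjugate by $A$ (not $A\cdot A$), so that the stabilizer count reads
\[
|\{\pi_q(axa^{-1}):a\in A\}|\ \ge\ \frac{|\pi_q(A)|}{|C_{\pi_q(\Gamma)}(\pi_q(x))\cap\pi_q(A\cdot A)|},
\]
giving exactly $A\cdot A$ in the centralizer as stated. Summing over the (disjoint) classes and observing $A\,(Xa_i)\,A^{-1}\subseteq\prod_{O_{\Omega,\vare}(1)}A$ yields
\[
|\pi_q(\textstyle\prod_{O_{\Omega,\vare}(1)}A)|\ \ge\ |\pi_q(A)|\cdot|\conj(\pi_q(Xa_i))|\cdot\min_{x}\frac{1}{c(x)}.
\]
Now the missing ingredient is the one-line observation that tripling at level $Q$ descends to level $q$ with the right loss: from $|\pi_Q(\prod_3 A)|\le|\pi_Q(A)|^{1+\delta}$ one deduces
\[
|\pi_q(\textstyle\prod_3 A)|\ \le\ |\pi_Q(A)|^{\delta}\,|\pi_q(A)|
\]
(take a maximal fiber of $\pi_Q(A)\to\pi_q(A)$ and multiply it against lifts of $\pi_q(\prod_3 A)$, or argue directly via Ruzsa at level $Q$ after lifting), and then Ruzsa at level $q$ gives $|\pi_q(\prod_{O_{\Omega,\vare}(1)}A)|\le|\pi_Q(A)|^{\Theta_{\Omega,\vare}(\delta)}|\pi_q(A)|$. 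Dividing, the $|\pi_q(A)|$ cancels and you obtain $\max_x c(x)\ge|\pi_Q(A)|^{-\Theta_{\Omega,\vare}(\delta)}|\conj(\pi_q(Xa_i))|$, which together with Proposition~\ref{p:LargeNumberOfConjugacyClasses} is exactly the claim. Once you make this correction your argument and the paper's coincide.
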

One can deduce Lemma~\ref{l:HelfgottLargeCentralizer} from Proposition~\ref{p:LargeNumberOfConjugacyClasses} and the following lemma.
\begin{lem}\label{l:HelfgottLargeCentralizerLargeNumberOfConjugacy}
In the above setting. Assume $0<\delta\ll_{\Omega}\vare\ll_{\Omega} 1$ and $1\ll_{\Omega}\vare N$ where $N$ is a positive integer. Suppose $p$ is divisible  by some $\pfr\in V_f(k)\setminus S$. 

Suppose that $\Pfr_Q(\delta,A,l)$ holds, where $Q:=p^N$, and $\{a_i\}_{i=1}^{O_{\Omega}(1)}\subseteq \prod_{O_{\Omega}(1)} A$ is as in Proposition~\ref{p:LargeNumberOfConjugacyClasses}. Then for any $i$ and any $X\subseteq \prod_{O_{\Omega,\varepsilon}(1)} A$ and for any $q|Q$ there is $x_0\in X$ such that
\[
|C_{\pi_q(\Gamma)}(\pi_q(x_0a_i))\cap \pi_q(A\cdot A)|\geq |\pi_Q(A)|^{-\Theta_{\Omega,\vare}(\delta)} |\conj(\pi_q(Xa_i))|.
\]
\end{lem}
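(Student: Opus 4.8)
```latex
\begin{proof}[Proof sketch of Lemma~\ref{l:HelfgottLargeCentralizerLargeNumberOfConjugacy}]
The plan is to use the classical double-counting (pigeonhole) argument underlying Helfgott's ``large centralizer'' trick, combined with the tripling hypothesis in $\Pfr_Q(\delta,A,l)$. Fix $i$ and consider the map
\[
\sigma: Xa_i\times (A\cdot A) \longrightarrow \pi_q(\Gamma),\qquad (x,g)\mapsto g\h\pi_q(xa_i)\h g^{-1}.
\]
Two pairs $(x,g)$ and $(x',g')$ have the same image precisely when $\pi_q(xa_i)$ and $\pi_q(x'a_i)$ are conjugate and $g^{-1}g'$ centralizes $\pi_q(xa_i)$; in particular the fibre over a point in the conjugacy class of $\pi_q(x_0a_i)$ has size at most
\[
\bigl|\{x\in Xa_i:\ \pi_q(xa_i)\sim \pi_q(x_0a_i)\}\bigr|\cdot \max_{x_0}|C_{\pi_q(\Gamma)}(\pi_q(x_0a_i))\cap \pi_q(A\cdot A)|.
\]
Summing over conjugacy classes that meet $\pi_q(Xa_i)$, the image of $\sigma$ has size at most $|\conj(\pi_q(Xa_i))|\cdot|\pi_q(A\cdot A)|$ (after re-grouping), while its domain has size $|X|\cdot|A\cdot A|$. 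The point is that the image of $\sigma$ lands inside a bounded product set of $\pi_q(A)$: since $X\subseteq \prod_{O_{\Omega,\vare}(1)}A$, $a_i\in\prod_{O_\Omega(1)}A$, and $A$ is symmetric with $1\in A$, every $g(xa_i)g^{-1}$ lies in $\prod_{O_{\Omega,\vare}(1)}A$, whence by the Ruzsa/Pl\"unnecke inequality and the hypothesis $|\pi_Q(A\cdot A\cdot A)|\le|\pi_Q(A)|^{1+\delta}$ we get $|\mathrm{Im}(\sigma)|\le |\pi_Q(A)|^{1+\Theta_{\Omega,\vare}(\delta)}$.

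Now I combine the two bounds. The key lower bound on the domain is $|X\cdot A\cdot A|$; here I use the escape/weight hypotheses built into $\Pfr_Q(\delta,A,l)$ together with Lemma~\ref{l:BGRemark} to see that $A$, hence $Xa_i\cdot(A\cdot A)$, is ``large'' in $\pi_Q(\Gamma)$ in the sense that the product set has size at least $|\pi_Q(A)|^{1-\Theta_{\Omega,\vare}(\delta)}$. Concretely, from $\pcal_\Omega^{(l)}(A)>Q^{-\delta}$ with $l>\frac1\delta\log Q$ and Kesten's bound one deduces $|\pi_Q(A\cdot A)|\ge Q^{\Theta_\Omega(1)}$, and then $|\pi_Q(A)|$ and $|\pi_Q(A\cdot A)|$ differ only by a $|\pi_Q(A)|^{\Theta(\delta)}$ factor by tripling. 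Averaging the fibre-size inequality over the (nonempty) set of conjugacy classes, there exists $x_0\in X$ with
\[
\bigl|\{x\in Xa_i:\pi_q(xa_i)\sim\pi_q(x_0a_i)\}\bigr|\cdot |C_{\pi_q(\Gamma)}(\pi_q(x_0a_i))\cap \pi_q(A\cdot A)|\ \ge\ \frac{|X|\,|A\cdot A|}{|\mathrm{Im}(\sigma)|}\ \ge\ |\pi_Q(A)|^{-\Theta_{\Omega,\vare}(\delta)}.
\]
Since $\bigl|\{x\in Xa_i:\pi_q(xa_i)\sim \pi_q(x_0a_i)\}\bigr|\le |Xa_i|/|\conj(\pi_q(Xa_i))|\cdot(\text{something})$—more precisely, rearranging the double count so that the total over classes of (class-size in $Xa_i$)$\times$(centralizer bound) is what is estimated—one isolates a single $x_0$ for which the class of $\pi_q(x_0a_i)$ inside $Xa_i$ is no larger than the average $|Xa_i|/|\conj(\pi_q(Xa_i))|$, and simultaneously the centralizer is large; dividing gives exactly
\[
|C_{\pi_q(\Gamma)}(\pi_q(x_0a_i))\cap \pi_q(A\cdot A)|\ \ge\ |\pi_Q(A)|^{-\Theta_{\Omega,\vare}(\delta)}\,|\conj(\pi_q(Xa_i))|.
\]

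The main obstacle is bookkeeping the pigeonhole correctly: one must choose $x_0$ so that its conjugacy class inside $Xa_i$ is small \emph{and} its centralizer is large, and these two ``good'' events must be arranged to hold for the same $x_0$. The standard resolution (Helfgott's) is to double count over the set of pairs $\{(x,g): x\in Xa_i,\ g\in A\cdot A,\ \pi_q(xa_i)=\pi_q(g)\,\pi_q(x_0a_i)\,\pi_q(g)^{-1}\text{ for the chosen }x_0\}$ only after first fixing, by pigeonhole over conjugacy classes, a class $\mathcal C$ of $\pi_q(Xa_i)$ whose preimage in $Xa_i$ has near-maximal size; then the fibre over any point of $\mathcal C$ in the conjugation map from $\pi_q(A\cdot A)$ records a coset of the centralizer, and counting these fibres against $|\pi_q(A\cdot A)|$ and the tripling-controlled image size yields the centralizer bound. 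All the product-set estimates are uniform in $q\mid Q$ because they are applied at level $Q$ and pushed down via $\pi_q$; this is why the conclusion holds for every $q\mid Q$ with the stated dependence on $\Omega,\vare,\delta$.
\end{proof}
```
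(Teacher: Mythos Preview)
Your approach is the right one---this is exactly Helfgott's orbit--stabilizer trick---but the execution has a few slips that the paper's version avoids.

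First, a bookkeeping error: you conjugate by $g\in A\cdot A$. If $(y,g)$ and $(y,g')$ have the same image then $g^{-1}g'\in C(y)\cap(A\cdot A)^{-1}(A\cdot A)=C(y)\cap\pi_q(\prod_4 A)$, not $C(y)\cap\pi_q(A\cdot A)$; so your fibre bound is wrong as written, and the corrected version only yields a lower bound for $|C\cap\pi_q(\prod_4 A)|$, weaker than the stated lemma. The fix is to conjugate only by $a\in A$: then two conjugators $a,b\in A$ giving the same image differ by $b^{-1}a\in C(y)\cap A^{-1}A=C(y)\cap\pi_q(A\cdot A)$. (There is also a notational glitch: you let $x$ range over $Xa_i$ but then write $\pi_q(xa_i)$.) Your description of when two pairs collide is also off: $g\,y\,g^{-1}=g'\,y'\,g'^{-1}$ means $g^{-1}g'$ conjugates $y'$ to $y$, not that it centralizes $y$.

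Second, running the map over all of $Xa_i$ rather than over one representative per conjugacy class is what generates your ``two simultaneous pigeonholes'' worry. The paper sidesteps this entirely. Pick one representative $y$ per class of $\conj(\pi_q(Xa_i))$; the sets $\{a\,y\,a^{-1}:a\in\pi_q(A)\}$ are pairwise disjoint (they lie in distinct conjugacy classes), each has size at least $|\pi_q(A)|/|C(y)\cap\pi_q(A\cdot A)|$, and their union sits inside $\pi_q(A\cdot Xa_i\cdot A)\subseteq\pi_q(\prod_{O_{\Omega,\vare}(1)}A)$. Hence
\[
\bigl|\pi_q(\textstyle\prod_{O_{\Omega,\vare}(1)}A)\bigr|\ \ge\ |\pi_q(A)|\sum_{[y]}\frac{1}{|C(y)\cap\pi_q(A\cdot A)|},
\]
and a \emph{single} averaging over the $|\conj(\pi_q(Xa_i))|$ terms produces $x_0\in X$ with
\[
|C_{\pi_q(\Gamma)}(\pi_q(x_0a_i))\cap\pi_q(A\cdot A)|\ \ge\ \frac{|\pi_q(A)|}{|\pi_q(\prod_{O_{\Omega,\vare}(1)}A)|}\cdot|\conj(\pi_q(Xa_i))|.
\]
Finally, the tripling hypothesis in $\Pfr_Q(\delta,A,l)$, pushed down to level $q$ and combined with the Ruzsa inequality, gives $|\pi_q(\prod_{O_{\Omega,\vare}(1)}A)|\le|\pi_Q(A)|^{\Theta_{\Omega,\vare}(\delta)}|\pi_q(A)|$, which finishes. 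No lower bound on $|\pi_q(A)|$ from Kesten or Lemma~\ref{l:BGRemark} is needed here; that step belongs elsewhere.
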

\begin{proof}
Since $|\pi_Q(A)|^{1+\delta}\geq |\pi_Q(\prod_3 A)|$, for any $q|Q$ we have that 
\be\label{e:AlmostSubgroup}
|\pi_q(A)||\pi_Q(A)|^{\delta}\geq |\pi_q(\textstyle{\prod_3} A)|. 
\ee
Therefore by the Ruzsa inequality we have that $|\pi_q(\prod_{O_{\Omega,\vare}(1)}A)|\leq |\pi_Q(A)|^{\Theta_{\Omega,\vare}(\delta)}|\pi_q(A)|$. 

For any $X\subseteq \prod_{O_{\Omega,\vare}(1)}A$ and $i$ we have that
\begin{align}
|\pi_q(\textstyle{\prod_{O_{\Omega,\vare}(1)}}A)|&\geq |\pi_q(AXa_iA)|\\
&\ge\sum_{[\pi_q(x)]\in \conj(\pi_q(Xa_i))} |\{\pi_q(axa^{-1})|\h a\in A\}|\\
&\ge |\pi_q(A)|\cdot |\conj(\pi_q(Xa_i))|\cdot \bbe_{[\pi_q(x)]}\left(\frac{1}{|C_{\pi_q(\Gamma)}(\pi_q(x))\cap \pi_q(A\cdot A)|}\right).
\end{align}
So for some $x_0\in X$ we have that 
\be\label{e:LargeCentralizer}
|C_{\pi_q(\Gamma)}(\pi_q(x_0a_i))\cap \pi_q(A\cdot A)|\ge \frac{|\pi_q(A)|}{|\pi_q(\textstyle{\prod_{O_{\Omega,\vare}(1)}}A)|}\cdot |\conj(\pi_q(Xa_i))|.
\ee

\end{proof}

\subsection{Measuring the regularity.}\label{ss:q-regularSemisimple}
In this section, we recall what a {\em regular semisimple} element is, and we describe a way to measure {\em how much} regular semisimple an element is. Let $K$ be a local non-archimedean field, $|\cdot|$ its norm, $\ocal$ its ring of integers, and $\pfr$ a uniformizing element. Let $\bbh$ be a connected semisimple $K$-group, and let $r$ be the absolute rank of $\bbh$. Steinberg~\cite[Corollary 6.8]{Ste} proved that there is $h_s\in K[\bbh]$ such that 
\begin{enumerate}
\item $h_s(g)=F_g(1)$, where $F_g(x)(x-1)^r:=\det(\Ad(g)-xI)$,
\item $g\in \bbg(\overline{K})$ is {\em regular semisimple} if and only if $h_s(g)\not=0$.
\end{enumerate}
 
\begin{definition}
Let $\cal$ be an $\ocal$-subscheme of $(\mathcal{GL}_n)_{\ocal}$. Suppose the generic fiber $\bbh$ of $\cal$ is a connected semisimple $K$-group. Let $f_s\in \ocal[\cal]$ be
\[
f_s(g)=R\left((x-1)F_g(x),\frac{d}{dx}((x-1)F_g(x))\right),
\]
where $R(f_1,f_2)$ is the resultant of $f_1$ and $f_2$. For $\eta>0$, we say $g\in\cal(\ocal)$ is $\eta$-regular semisimple if $|f_s(g)|>\eta$. Equivalently $g\in \cal(\ocal)$ is $|q|$-regular semisimple if  and only if $\pi_q(f_s(g))\neq 0$.
\end{definition}

\begin{remark}
If $R$ is any integral domain and $\cal$ is an $R$-subscheme of $(\mathcal{GL}_n)_R$ with a connected semisimple generic fiber $\bbh$, then $f_s\in R[\cal]$ can be defined in the same way.  
\end{remark}

Before summarizing the basic properties of $\eta$-regular semisimple elements, an elementary lemma on certain diagonalizable elements of $\GL_n(\ocal)$ is proved. 

\begin{lem}\label{l:q-regularGL}
Suppose $a\in \GL_n(\ocal)$ is diagonalizable over $K$ and let $\lambda_i$ be its distinct eigenvalues (which are not necessarily of multiplicity one). Assume that $\prod_{i\neq j}(\lambda_i-\lambda_j)\in \pfr^m\ocal\setminus \pfr^{m+1}\ocal$. Then  
\[
\pfr^{m}\ocal^n\subseteq \bigoplus_{i}(V_{\lambda_i}\cap \ocal^n),
\]
where $V_{\lambda_i}:=\{\vbf \in K^n|\h a \vbf=\lambda_i \vbf\}$.
\end{lem}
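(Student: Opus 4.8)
The plan is to argue by writing the vector space $K^n$ as the direct sum $\bigoplus_i V_{\lambda_i}$ of eigenspaces, which is possible precisely because $a$ is diagonalizable over $K$, and then to control the ``spread'' of a lattice vector among the eigenspaces by means of the classical Lagrange interpolation / spectral-projector formula. Concretely, for each distinct eigenvalue $\lambda_i$ let $P_i = \prod_{j \neq i} \frac{a - \lambda_j I}{\lambda_i - \lambda_j}$ be the spectral projector onto $V_{\lambda_i}$ along $\bigoplus_{j\neq i} V_{\lambda_j}$; these satisfy $\sum_i P_i = I$ and $P_i(K^n) = V_{\lambda_i}$. The numerator $\prod_{j\neq i}(a - \lambda_j I)$ has entries in $\ocal$ once we know $a \in \GL_n(\ocal)$ and the $\lambda_j$ are integral over $\ocal$ (which they are, being eigenvalues of an element of $\GL_n(\ocal)$, hence roots of a monic polynomial in $\ocal[x]$ — here one may need to pass to an unramified or the algebraic closure, but the $\lambda_i - \lambda_j$ and the final estimate live over $\ocal$ since they are symmetric expressions; see the next paragraph). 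The denominator of $P_i$ is $\prod_{j\neq i}(\lambda_i - \lambda_j)$, which divides $\prod_{i\neq j}(\lambda_i - \lambda_j)$, an element of $\pfr^m \ocal \setminus \pfr^{m+1}\ocal$ by hypothesis.

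First I would make the integrality of the $\lambda_i$ and of the numerator matrices precise. Since $a\in \GL_n(\ocal)$, its characteristic polynomial is monic with coefficients in $\ocal$, so every $\lambda_i$ lies in the ring of integers of a finite extension of $K$; all the matrices $\prod_{j\neq i}(a-\lambda_j I)$ then have entries in that extension's valuation ring, and are fixed (as a collection, up to Galois permutation of the indices) by $\gal$. Next, for a given $\vbf \in \ocal^n$, write $\vbf = \sum_i P_i \vbf$ with $P_i \vbf \in V_{\lambda_i}$. I would then show $\pfr^m P_i \vbf \in \ocal^n$: indeed $\pfr^m P_i \vbf = \big(\pfr^m / \prod_{j\neq i}(\lambda_i - \lambda_j)\big) \cdot \big(\prod_{j\neq i}(a - \lambda_j I)\big)\vbf$, and the scalar $\pfr^m / \prod_{j\neq i}(\lambda_i-\lambda_j)$ is integral because $v_\pfr\big(\prod_{j\neq i}(\lambda_i-\lambda_j)\big) \le v_\pfr\big(\prod_{i'\neq j'}(\lambda_{i'}-\lambda_{j'})\big) = m$ (the single-$i$ product divides the full discriminant-type product), while $\big(\prod_{j\neq i}(a-\lambda_jI)\big)\vbf \in \ocal^n$ since $a, \lambda_j, \vbf$ are all integral. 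Moreover $\pfr^m P_i\vbf$ lies in $V_{\lambda_i}$ by construction, hence in $V_{\lambda_i}\cap \ocal^n$. Therefore $\pfr^m \vbf = \sum_i \pfr^m P_i\vbf \in \bigoplus_i (V_{\lambda_i}\cap\ocal^n)$, which is exactly the claim since $\vbf$ was an arbitrary element of $\ocal^n$ and hence $\pfr^m \vbf$ an arbitrary element of $\pfr^m\ocal^n$.

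The only genuine subtlety — the part I would be most careful about — is the descent from the splitting field of $a$ back to $\ocal$: a priori the eigenvalues and the projectors $P_i$ live over a ramified or unramified extension $\ocal'/\ocal$, and one must check that $v_\pfr$ is being used consistently (normalized so that $v_\pfr(\pfr)=1$ and $v_\pfr$ extends to $\ocal'$) and that the final inclusion, which is a statement about sublattices of $\ocal^n$, really does descend. This is handled by noting that the conclusion is a containment of $\ocal$-modules inside $\ocal^n$, that $\pfr^m P_i \vbf$ is shown to have entries in $\ocal' \cap K^n = \ocal$ once we verify the relevant valuations are non-negative, and that $V_{\lambda_i}\cap \ocal^n$ is genuinely an $\ocal$-submodule of $\ocal^n$. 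With the integrality bookkeeping in place the argument is a short direct computation; no deep input is needed beyond the existence of the spectral projectors and the divisibility $\prod_{j\neq i}(\lambda_i-\lambda_j) \mid \prod_{i'\neq j'}(\lambda_{i'}-\lambda_{j'})$.
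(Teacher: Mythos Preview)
Your proof is correct and is essentially the paper's argument in different packaging: the paper writes $\xbf=\sum_i \xbf_i$ with $\xbf_i\in V_{\lambda_i}$, observes $a^j\xbf=\sum_i \lambda_i^j \xbf_i\in\ocal^n$ for all $j$, and inverts the resulting Vandermonde system to conclude $\prod_{i\neq j}(\lambda_i-\lambda_j)\,\xbf_k\in\ocal^n$; your Lagrange projectors $P_i=\prod_{j\neq i}(a-\lambda_j I)/(\lambda_i-\lambda_j)$ are precisely the explicit inverse of that Vandermonde, so the two computations are the same. One simplification you should make: the hypothesis is that $a$ is diagonalizable \emph{over $K$}, so the $\lambda_i$ already lie in $K$, and being roots of the monic characteristic polynomial over $\ocal$ they lie in $\ocal$ itself --- your third paragraph about descent from a splitting field $\ocal'$ and Galois-invariance is therefore unnecessary, and the numerators $\prod_{j\neq i}(a-\lambda_j I)$ genuinely have entries in $\ocal$ without any further argument.
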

\begin{proof}
Since $a$ is diagonalizable, we have that $K^n=\bigoplus_i V_{\lambda_i}$. So for any $\xbf\in \ocal^n$ there are $\xbf_i\in V_{\lambda_i}$ such that $\xbf=\sum_i \xbf_i$. Therefore for any integer $j$ we have $a^j \xbf=\sum_i \lambda_i^j \xbf_i\in \ocal^n$. Since $a\in\GL_n(\ocal)$ and it is diagonalizable over $K$, all the eigenvalues $\lambda_i$ are in $\ocal$. Hence by the Vandermonde equality we have that $\prod_{i\neq j}(\lambda_i-\lambda_j) \xbf_k\in \ocal^n$ for any $k$, which implies that 
\[
\pfr^{m}\ocal^n\subseteq \bigoplus_{i}(V_{\lambda_i}\cap \ocal^n).
\]
\end{proof}

\begin{lem}\label{l:q-regularBasics}
In the above setting, let $g\in \cal(\ocal)$, $q\in \pfr \ocal$, and $\eta:=|q|$. Suppose $g$ is $\eta$-regular semisimple. Then
\begin{enumerate}
\item\label{item1} $\bbt:=C_{\bbg}(g)^{\circ}$ is a maximal $K$-torus of $\bbh$.
\item\label{item2} $g\in\bbt(K)$.
\item\label{item3} There is a Galois extension $K'$ of $K$ such that $[K':K]\le n!$, and $\Ad(\bbt)$ splits over $K'$. In particular,
\[
\hfr(K')=\tfr(K')\oplus\bigoplus_{\phi\in \Phi(\bbh,\bbt)}\hfr_{\phi}(K'),
\] 
where $\hfr:=\Lie(\cal)$, $\bbt=\Lie(\bbt)$, $\Phi(\bbh,\bbt)$ is the set of roots with respect to $\bbt$ and $\hfr_{\phi}$ are the corresponding root spaces. In this setting, we have 
\begin{align*}
h_s(g)&=\prod_{\phi\in\Phi(\bbh,\bbt)}(\phi(g)-1)\\
f_s(g)&=\pm h_s(g)^2\cdot\prod_{\phi_1\neq \phi_2\in\Phi(\bbh,\bbt)}(\phi_1(g)-\phi_2(g))^2\not\in q \ocal',
\end{align*}
where $\ocal'$ is the ring of integers of $K'$. 
\item\label{item4} $g'\in \cal(\ocal)$ is $|q|^{O_{n}(1)}$-regular semisimple if and only if for any distinct roots $\phi$ and $\phi'$
\[
|\phi(g')-1|\ge |q|^{O_n(1)},\h \text{and}\h\h\h |\phi(g')-\phi'(g')|\ge |q|^{O_n(1)},
\]
for suitable choice of the implied constants.
\item\label{item} We have 
\[
q\hfr(\ocal')\subseteq (\hfr(\ocal')\cap \tfr(K'))\oplus \bigoplus_{\phi\in\Phi(\bbh,\bbt)} (\hfr(\ocal')\cap \hfr_{\phi}(K')).
\]
And so, for $q'\in q^2\ocal$ and $x\in\hfr(\ocal')$, if $\Ad(g)(x)=x\pmod{q'}$, then 
\[
x\equiv h\pmod{q'/q^2},
\]
for some $h\in \hfr(\ocal')\cap\tfr(K')$.
\end{enumerate}
\end{lem}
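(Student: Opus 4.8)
\textbf{Proof plan for Lemma~\ref{l:q-regularBasics}.}

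The plan is to prove the five items in order, since each one feeds into the next. For item~(\ref{item1}): because $h$ is $\eta$-regular semisimple with $\eta = |q| < 1$, in particular $h_s(h) \neq 0$, so by Steinberg's characterization $h$ is a regular semisimple element of $\bbh(\overline K)$. By the standard structure theory of semisimple groups, the centralizer of a regular semisimple element has identity component a maximal torus, giving item~(\ref{item1}); item~(\ref{item2}) is then immediate since $g$ centralizes itself and lies in a connected solvable subgroup containing $g$, hence in $\bbt$ (here one uses that $g$ is semisimple, which follows from $\Ad(g)$ being diagonalizable, a consequence of regular semisimplicity). For item~(\ref{item3}), the torus $\bbt$ splits over the splitting field of its characteristic polynomial on $\Lie(\bbh)$, which has degree at most $\dim \bbh$ — but since $\bbh \subseteq (\mathcal{GL}_n)$, the eigenvalues of $\Ad(g)$ can be bounded by the eigenvalues of $g$ acting on $K^n$ and its duals/tensors, so we may take $[K':K] \le n!$; the root-space decomposition of $\hfr(K')$ is then the standard one. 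The formulas for $h_s(g)$ and $f_s(g)$ follow by a direct computation: on the torus, $\det(\Ad(g) - xI) = (x-1)^r \prod_{\phi}(x - \phi(g))$ where $r = \dim \bbt$ and $\phi$ ranges over $\Phi(\bbh,\bbt)$, so $F_g(1) = \prod_\phi(1 - \phi(g))$ up to sign, and the resultant $f_s(g) = R((x-1)F_g(x), \frac{d}{dx}((x-1)F_g(x)))$ is, up to sign and a unit, the discriminant of $(x-1)\prod_\phi(x-\phi(g))$, which factors as the product of squares of all pairwise differences of the roots $\{1\} \cup \{\phi(g)\}_\phi$; separating the differences involving $1$ (giving $h_s(g)^2$) from the rest gives the stated identity. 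The hypothesis $|f_s(h)| > |q|$ translates to $f_s(h) \notin q\ocal'$.

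For item~(\ref{item4}), the ``if'' direction is clear from the product formula in item~(\ref{item3}): if each $|\phi(g')-1|$ and each $|\phi_1(g')-\phi_2(g')|$ is at least $|q|^{c}$ for a suitable constant $c$, then $|f_s(g')|$ is bounded below by a fixed power of $|q|$, since the number of factors is bounded in terms of $n$. For the ``only if'' direction, one notes that if $g'$ is $|q|^{C}$-regular semisimple then by item~(\ref{item3}) applied to $g'$, the product $\prod_\phi(\phi(g')-1)^2 \cdot \prod_{\phi_1 \neq \phi_2}(\phi_1(g')-\phi_2(g'))^2$ has valuation bounded by $C \cdot v_\pfr(q)$; since each individual factor lies in $\ocal'$ (eigenvalues are integral), no single factor can have valuation larger than the total, so each $|\phi(g')-1|$ and $|\phi_1(g')-\phi_2(g')|$ is at least $|q|^{O_n(1)}$. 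The point is simply that a bounded product of elements of $\ocal'$, each of norm $\le 1$, being not-too-small forces each factor to be not-too-small.

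Item~(\ref{item}) is where the real content lies, and I expect this to be the main obstacle. The idea is to apply Lemma~\ref{l:q-regularGL} to the element $a = \Ad(g)$ acting on $\hfr(\ocal') \subseteq \gl_N(\ocal')$ for appropriate $N$: the distinct eigenvalues of $\Ad(g)$ on $\hfr \otimes K'$ are exactly $1$ (with the weight-zero space $\tfr(K')$) together with the values $\phi(g)$ for $\phi \in \Phi(\bbh,\bbt)$ — note distinct roots may give equal values $\phi(g)$, but item~(\ref{item4}) combined with $\eta$-regular semisimplicity of $h$ prevents this from being ``too degenerate,'' and in any case Lemma~\ref{l:q-regularGL} allows eigenvalues of higher multiplicity. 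The product of pairwise differences of these eigenvalues is, up to sign, exactly $h_s(g) \cdot \prod_{\phi_1 \neq \phi_2}(\phi_1(g)-\phi_2(g))$ with the appropriate combinatorial bookkeeping, whose norm is at least $|f_s(g)|^{1/2} \geq |q|^{1/2}$; so the relevant valuation $m$ in Lemma~\ref{l:q-regularGL} satisfies $\pfr^m \ocal' \supseteq q\ocal'$ (after possibly adjusting constants — one must be slightly careful that $f_s$ involves $h_s(g)^2$, so the square root is harmless, but the bound is really $|q|$ not $|q|^{1/2}$; since $f_s(g) \notin q\ocal'$ gives $v_\pfr(f_s(g)) < v_\pfr(q)$, we get $m \le v_\pfr(f_s(g))/2 < v_\pfr(q)$, hence $\pfr^m\ocal' \supseteq \pfr^{v_\pfr(q)}\ocal' = q\ocal'$). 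This gives the displayed containment $q\hfr(\ocal') \subseteq \bigoplus (\hfr(\ocal') \cap \hfr_\phi(K'))$. For the last assertion: write $x = \sum_\phi x_\phi$ with $x_\phi$ in the $\phi$-weight space (including $\phi = 0$), so that $qx_\phi \in \hfr(\ocal')$ by what we just proved. The condition $\Ad(g)(x) \equiv x \pmod{q'}$ reads $\sum_\phi (\phi(g) - 1) x_\phi \equiv 0 \pmod{q'}$, and since the weight spaces are in direct sum we get $(\phi(g)-1)x_\phi \equiv 0 \pmod{q'}$ for each $\phi$ (after clearing the denominator $q$: $(\phi(g)-1) \cdot qx_\phi \equiv 0 \pmod{q q'}$ inside $\hfr(\ocal')$, then project). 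For $\phi \neq 0$, item~(\ref{item4}) gives $|\phi(g) - 1| \geq |q|^{O_n(1)}$; but actually we need $|\phi(g)-1|$ bounded below by $|q|$, which follows since $h$ is genuinely $|q|$-regular semisimple so $|\phi(g)-1| \ge |h_s(g)| > |q|$ directly from the product formula. Hence $q x_\phi \equiv 0 \pmod{q'}$, i.e. $x_\phi \equiv 0 \pmod{q'/q}$, and then (accounting for the extra factor $q$ from clearing denominators) $x_\phi \equiv 0 \pmod{q'/q^2}$ for all $\phi \neq 0$. Setting $h := x_0 \in \hfr(\ocal') \cap \tfr(K')$ gives $x \equiv h \pmod{q'/q^2}$ as claimed. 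The delicate points throughout are keeping track of which power of $q$ is lost at each clearing-of-denominators step and making sure the ``$O_n(1)$'' constants in item~(\ref{item4}) are compatible with the sharper ``$|q|$'' bounds coming directly from $h$ being $|q|$-regular semisimple rather than merely $|q|^{O_n(1)}$-regular semisimple — this is why the hypothesis is stated with $\eta = |q|$ rather than a power of it.
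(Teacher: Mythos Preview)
Your proposal is correct and follows essentially the same route as the paper: Steinberg for items~(\ref{item1})--(\ref{item2}), direct resultant/discriminant computations for items~(\ref{item3})--(\ref{item4}), and Lemma~\ref{l:q-regularGL} applied to $\Ad(g)$ for the first containment of item~(\ref{item}), followed by a weight-space decomposition and a second application of that containment for the final congruence.

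One small point worth tightening: your ``project'' step in item~(\ref{item}) is exactly where the paper is more careful. Having $\sum_\phi (\phi(g)-1)\,qx_\phi \in qq'\hfr(\ocal')$ does not let you read off the $\phi$-component inside $qq'\hfr(\ocal')$ directly, because the integral lattice need not split along the weight spaces. The paper's fix (which you gesture at with ``accounting for the extra factor $q$'') is to write $qq'y$ with $y\in\hfr(\ocal')$, apply the first containment to $y$ to get $qy=\sum_\phi y_\phi$ with each $y_\phi$ integral, and then observe $qq'y = \sum_\phi q'y_\phi$, so the $\phi$-component is $q'y_\phi\in q'\hfr(\ocal')$. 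Matching components gives $(\phi(g)-1)\,qx_\phi = q'y_\phi$, and then dividing by $\phi(g)-1$ (using $|\phi(g)-1|>|q|$) and by $q$ gives $x_\phi\in (q'/q^2)\hfr(\ocal')$. Your bookkeeping lands in the right place, but making this second invocation of the containment explicit removes the ambiguity. Also, your inequality $m\le v_\pfr(f_s(g))/2$ is not quite the right comparison; what you actually need (and what holds) is $|\prod_{i\ne j}(\lambda_i-\lambda_j)|\ge |f_s(g)|>|q|$, since $f_s(g)$ carries extra square factors beyond the Vandermonde-type product.
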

\begin{proof}
Since $F_{g}(x)$ is a monic polynomial, $f_s(x)$ is, up to sign, the discriminant of $(x-1)F_{g}(x)$. Therefore, if $f_s(g)$ is non-zero, then $h_s(g)$ is non-zero. And so $g$ is a regular semisimple element. This implies Part~(\ref{item1}). Part (\ref{item2}) is a well-known consequence of Part (\ref{item1}) (see~\cite[Chapter 22.3]{Hum}). Parts (\ref{item3}) and (\ref{item4}) can easily be deduced from the definitions and the earlier argument. The first claim of Part (\ref{item}) is a direct consequence of Lemma~\ref{l:q-regularGL} and Part (\ref{item3}). For any $x\in\hfr(\ocal')$ there are $h'\in \hfr(\ocal')\cap \tfr(K')$ and $x_{\phi}\in \hfr(\ocal')\cap \gfr_{\phi}(K')$ such that 
$
qx=h'+\sum_{\phi\in\Phi(\bbg,\bbt)} x_{\phi}.
$
Hence 
\[
\Ad(g)(qx)-(qx)=\sum_{\phi\in\Phi(\bbh,\bbt)}(\phi(g)-1) x_{\phi}= qq'y,
\]
for some $y\in \hfr(\ocal')$. Again using the first claim of Part (\ref{item}) we know that there are $y_{\phi}\in \hfr(\ocal')\cap \hfr_{\phi}(K')$ such that $qy=\sum_{\phi} y_{\phi}$. Hence
for any $\phi\in\Phi(\bbh,\bbt)$ we have $(\phi(g)-1) x_{\phi}=q'y_{\phi}$. On other hand, by Part (\ref{item3}), we have that $|\phi(g)-1|>|q|$. Therefore $x_{\phi}\in (q'/q) \hfr(\ocal')$, which implies the second claim of Part (\ref{item}).
\end{proof}
\begin{lem}\label{l:q-RegularSemisimpleCentralizer}
Let $\Omega$ and $\Gamma$ be as in Section~\ref{ss:InitialReductions}, and $\gcal_1$, $\gcal_{1,\pfr}$, and $\pi_{p^n}$ be as in Section   \ref{ss:StatementApproximateSubgroupBoundedGeneration}. Let $\pfr\in V_f(k)\setminus S$ which divides $p\in V_f(\bbq)$. Let the triple of a group scheme $\cal$, a number field $k'$, and a norm $|\cdot|_\nu$ of $k'$  be either $(\gcal_1,\bbq,|\cdot|_p)$ if $p\not\in S_0$, or $(\gcal_{1,\pfr},k(\pfr),|\cdot|_{\pfr})$ if $p\in S_0$. 

Suppose $q, q_0, q_1, q_2$ are powers of $p$, $q_1|q_2$, $q_0q^2q_2|q_1^2$, and $\log_p q_0\gg_n 1$ and $\log_p (q_1^2/(q_0q^2 q_2))\gg_{\Gamma} 1$. If $\gamma\in \Gamma\subseteq \cal(\bbz_p)$ is $|q|_{\nu}$-regular semisimple, then  
\[
\lin{q_1}{q_2}(\pi_{q_2}(C_{\pi_{q_0q^2q_2}(\Gamma)}(\pi_{q_0q^2q_2}(\gamma)))\cap\pi_{q_2}(\Gamma[q_1]))\subseteq \pi_{q_2/q_1}(\Lie(\cal)(\bbz_p)\cap\tfr(k')),
\]
where $\tfr:=\Lie C_{\bbh}(\gamma)^{\circ}$ and $\bbh$ is the generic fiber of $\cal$.
\end{lem}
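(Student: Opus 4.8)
The plan is to transport the centralizer condition through the finite logarithmic map and then apply the structural Lemma~\ref{l:q-regularBasics}, part~(\ref{item}). First I would fix notation: write $\hfr:=\Lie(\cal)(\bbz_p)$ and let $\bbt:=C_{\bbh}(\gamma)^{\circ}$, so that by Lemma~\ref{l:q-regularBasics}(\ref{item1})--(\ref{item2}) $\bbt$ is a maximal torus of $\bbh$ containing $\gamma$, and $\tfr=\Lie(\bbt)$. Since $\gamma$ is $|q|_{\nu}$-regular semisimple, Lemma~\ref{l:q-regularBasics}(\ref{item3}) gives a Galois extension $K'$ of $k'_{\nu}$ of degree $\le n!$ over which $\Ad(\bbt)$ splits, with weight-space decomposition $\hfr(\ocal')=(\hfr(\ocal')\cap\tfr(K'))\oplus\bigoplus_{\phi}(\hfr(\ocal')\cap\hfr_{\phi}(K'))$ after multiplying by $q$; I will work over $\ocal'$, the ring of integers of $K'$, and descend at the end.

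Now take any $g\in C_{\pi_{q_0q^2q_2}(\Gamma)}(\pi_{q_0q^2q_2}(\gamma))\cap\pi_{q_2}(\Gamma[q_1])$; choose a lift $\tilde g\in\Gamma[q_1]$ with $\pi_{q_2}(\tilde g)=g$, and let $x\in\hfr$ be such that $\lin{q_1}{q_2}(g)=\pi_{q_2/q_1}(x)$, i.e. $\tilde g\equiv 1+q_1 x\pmod{q_2}$ (this is legitimate because $\log_p(q_1^2/(q_0q^2q_2))\gg_{\Gamma}1$, so we are in the regime where Lemma~\ref{l:PropertiesOfLinearizationMap}(2) applies and $\lin{q_1}{q_2}$ is a well-defined $\Gamma$-module isomorphism). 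The commutation $[\pi_{q_0q^2q_2}(\tilde g),\pi_{q_0q^2q_2}(\gamma)]=1$ means $\gamma^{-1}\tilde g\gamma\equiv\tilde g\pmod{q_0q^2q_2}$. By the $\Gamma$-equivariance of the finite logarithmic map (Lemma~\ref{l:PropertiesOfLinearizationMap}, part~(2) via part~(1)(\ref{i:Equi})), conjugation by $\gamma$ sends $x$ to $\Ad(\gamma)(x)$; comparing the two expressions at level $q_0 q^2 q_2$ and dividing by $q_1$ (recall $q_0q^2q_2\mid q_1^2$, so $q_0q^2q_2/q_1$ is an honest modulus), I get
\[
\Ad(\gamma)(x)\equiv x\pmod{q_0q^2q_2/q_1}.
\]
After enlarging coefficients to $\ocal'$ (which costs at most a bounded power of $p$, absorbed into the hypothesis $\log_p q_0\gg_n 1$), this is exactly the hypothesis of Lemma~\ref{l:q-regularBasics}(\ref{item}) with the roles of $q$ there played by $q$ here and $q'$ there by $q_2/q_1$-scale data; applying that part yields $x\equiv h\pmod{q_2/q_1}$ for some $h\in\hfr(\ocal')\cap\tfr(K')=\Lie(\cal)(\bbz_p)\otimes\ocal'\cap\tfr(K')$. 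Since $x\in\hfr\subseteq\gl_n(\bbz_p)$ lies in the base ring and the reduction is mod the base modulus, $\pi_{q_2/q_1}(x)$ already lies in $\pi_{q_2/q_1}(\Lie(\cal)(\bbz_p)\cap\tfr(k'))$, which is the claim.

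The main obstacle is bookkeeping the valuations: one must verify that all the divisibility hypotheses ($q_1\mid q_2$, $q_0q^2q_2\mid q_1^2$, $\log_p q_0\gg_n 1$, $\log_p(q_1^2/(q_0q^2q_2))\gg_{\Gamma}1$) are exactly what is needed so that (i) the linearization $\lin{q_1}{q_2}$ and its equivariance are valid, (ii) the division by $q_1$ of the congruence $\Ad(\gamma)(\tilde g)\equiv\tilde g$ produces the modulus $q_0q^2q_2/q_1$ rather than something smaller, (iii) the base change from $\bbz_p$ to $\ocal'$ loses only the bounded factor absorbed by $q_0$, and (iv) the ``$q^2$'' in $q_0q^2q_2$ is precisely the loss in Lemma~\ref{l:q-regularBasics}(\ref{item}) (there one passes from a congruence mod $q'$ to a conclusion mod $q'/q^2$). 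Everything else is a direct substitution into results already established: the equivariance of the finite log map, the weight-space splitting of regular semisimple elements, and the torus-centralizer identification. I would therefore organize the write-up as: (1) set up $\bbt$, $K'$, $\ocal'$; (2) lift $g$ and write $x$; (3) convert commutation into $\Ad(\gamma)(x)\equiv x$ at the appropriate level; (4) invoke Lemma~\ref{l:q-regularBasics}(\ref{item}); (5) descend to $\bbz_p$ and conclude.
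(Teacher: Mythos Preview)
Your overall strategy matches the paper's: linearize the centralizer condition via $\lin{q_1}{\bullet}$, convert commutation into $\Ad(\gamma)(x)\equiv x$ at level $q_0q^2q_2/q_1$, and feed this into Lemma~\ref{l:q-regularBasics}(\ref{item}). The bookkeeping you outline for items (i), (ii), (iv) is correct.

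The gap is step~(5), the descent from $K'$ (equivalently $\ocal'$) back to $k'_\nu=\bbq_p$. Lemma~\ref{l:q-regularBasics}(\ref{item}) only produces $h'\in\hfr(\ocal')\cap\tfr(K')$ with $x\equiv h'\pmod{q_0\,q_2/q_1}$; it does \emph{not} give an element of $\tfr(k')$. Your sentence ``since $x$ lies in the base ring \dots\ $\pi_{q_2/q_1}(x)$ already lies in $\pi_{q_2/q_1}(\Lie(\cal)(\bbz_p)\cap\tfr(k'))$'' is not a proof: that $x$ is $\gal(K'/k'_\nu)$-invariant does not force $h'$ to be, and the weight spaces $\hfr_\phi$ are typically not defined over $\bbq_p$, so there is no obvious $\bbq_p$-rational projection onto $\tfr$ compatible with the $\ocal'$-lattice. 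The paper closes this by Galois averaging: set $h:=[K':k'_\nu]^{-1}\sum_{\sigma}\sigma(h')$, observe that $\sigma(h')-h'=\sigma(h'-x)-(h'-x)\in q_0(q_2/q_1)\hfr(\ocal')$ because $x$ is fixed, and then use $\log_p q_0\gg_n 1$ (with $[K':k'_\nu]\le n!$) to absorb the denominator and land in $(q_2/q_1)\hfr(\ocal')$. Since $h$ is Galois-invariant and $\bbt$ is defined over $k'$, one gets $h\in\Lie(\cal)(\bbz_p)\cap\tfr(k')$ with $x\equiv h\pmod{q_2/q_1}$.

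Relatedly, your item~(iii) misidentifies the role of $q_0$: enlarging coefficients from $\bbz_p$ to $\ocal'$ costs nothing (Lemma~\ref{l:q-regularBasics}(\ref{item}) is already stated over $\ocal'$); the buffer $q_0$ is spent precisely on the averaging denominator in the descent. Once you insert the Galois-averaging paragraph, your write-up is complete and coincides with the paper's argument.
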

\begin{proof}
Suppose $\pi_{q_0q^2 q_2}(\gamma')\in C_{\pi_{q_0q^2 q_2}(\Gamma)}(\pi_{q_0q^2q_2}(\gamma))$ such that $\pi_{q_2}(\gamma')\in \pi_{q_2}(\Gamma[q_1])$. Therefore $\gamma'\in \Gamma[q_1]$. Let $q_3':=q_0q^2 q_2/q_1$ and $\hfr:=\Lie(\cal)(\bbz_p)$. 

Since $\log_p (q_1^2/(q_0q^2 q_2))\gg_{\Omega} 1$, by Part 2 of Lemma~\ref{l:PropertiesOfLinearizationMap} we have that 
\[
\lin{q_1}{q_0q^2 q_2}:\pi_{q_0q^2 q_2}(\Gamma[q_1])\rightarrow \hfr/q_3'\hfr
\]
 is a $\Gamma$-module isomorphism. Hence there is $x\in \hfr$ such that 
\[
 \pi_{q_3'}(x)=\lin{q_1}{q_0 q^2 q_2}(\pi_{q_0 q^2 q_2}(\gamma')),\hspace{1cm} \Ad(\gamma)(x)\equiv x \pmod{q_3'}.
\]
Thus by Part 5 of Lemma~\ref{l:q-regularBasics} there is a Galois extension $k''$ of $k'$ and $h'\in \Lie(\cal)(\ocal)\cap \Lie(\bbt)(k'')$, where $\bbt:=C_{\bbh}(\gamma)^{\circ}$ and  $\ocal$ is the ring of integers of a composite field of $k''$ and $\bbq_p$, such that
\be\label{e:GoingToLieAlgebra}
[k'':k']\ll_n 1, \text{ and } x\equiv h' \pmod{q_3'/q^2}, \text{ i.e. } x\equiv h \pmod{q_0q_2/q_1}.
\ee
(Let us recall that $k(\pfr)$ is essentially the intersection of the copy of $\bbq_p$ in $k_{\pfr}$ and $k$.) So there is $x'\in \Lie(\cal)(\ocal)$ such that 
\[
x=h'+q_0q_3 x',
\]
where $q_3:=q_2/q_1$. Since $x\in \Lie(\cal)(k')$, for any $\sigma\in \gal(k''/k')$ we have 
\[
h'-\sigma(h')=q_0q_3(\sigma(x')-x'). 
\]
Therefore we have 
\[
h'= \frac{1}{[k'':k']}\sum_{\sigma\in \gal(k''/k')} \sigma(h')+ \frac{q_0q_3}{[k'':k']} \sum_{\sigma\in \gal(k''/k')} (\sigma(x')-x').
\]
Since $\log_p q_0\gg_n 1$, we have 
\[
x-h\in q_3 \Lie(\cal)(\ocal),
\]
where $h:=\frac{1}{[k'':k']}\sum_{\sigma\in \gal(k''/k')} \sigma(h')$. In particular, $h$ is invariant under the Galois action. Since $\bbt$ is defined over $k'$, we have  $h\in \Lie(\bbt)(k')$. Altogether we have that
\[
x-h\in q_3 \hfr, \h\h h\in \Lie(\cal)(\bbz_p)\cap \Lie(\bbt)(k').
\]
\end{proof}

\subsection{Hitting shifts of regular semisimple elements.}\label{ss:LotsRegularSemisimpleModq}
The main result of this Section is Proposition~\ref{p:q-nonRegularSemisimple}. It is essentially proved, in a {\em quantitative way}, that after certain {\em number of steps} in the random walk there is only a {\em small chance} of hitting a translation by a {\em small} element of a not-{\em sufficiently regular} element. Of course the main issue is gaining control on the relation between the four parameters: the needed number of steps in the random walk; the size of translation; the size of regularity; and an upper bound on the considered probability.   

For the rest of this section, let $\Omega$ and $\Gamma$ be as in Section~\ref{ss:InitialReductions}, and $\gcal_1$, $\gcal_{1,\pfr}$, and $\pi_{p^n}$ be as in Section   \ref{ss:StatementApproximateSubgroupBoundedGeneration}. Let $\pfr\in V_f(k)\setminus S$ which divides $p\in V_f(\bbq)$. Let the quadruple $(\cal,k',\nu,S')$ of a group scheme $\cal$, a number field $k'$, a place $\nu\in V_f(k')$, and a finite subset $S'$ of $V_f(k')$ be either $(\gcal_1,\bbq,p,S_0)$ if $p\not\in S_0$, or $(\gcal_{1,\pfr},k(\pfr),\pfr,S)$ if $p\in S_0$. In particular, $\Gamma\subseteq \cal(\ocal_{k'}(S'))$.

\begin{prop}~\label{p:q-nonRegularSemisimple}
Suppose $0<\vare\ll_{\Omega} 1$, $q:=p^n$, $Q:=p^N$ where $n,N\in \bbz^+$ and $n\ge N\vare$. Let $a\in\Gamma$ such that $Q^{\vare}\ge \|a\|_{S'}$ and $l\gg_{\Omega} \vare \log Q$. Then
\[
\pcal_{\Omega}^{(l)}(\{\gamma\in \Gamma|\h \gamma a \text{ is not } |q|_{\nu}-\text{regular semisimple}\})\le Q^{-\Theta_{\Omega}(\vare)}.
\]
\end{prop}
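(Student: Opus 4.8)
By Steinberg's construction recalled in \S\ref{ss:q-regularSemisimple}, the regularity function $f_s\in\ocal_{k'}(S')[\cal]$ is intrinsic to $\bbg$, hence a polynomial of degree and logarithmic height bounded in terms of $\Omega$ alone, and $\gamma a$ fails to be $|q|_\nu$-regular semisimple precisely when $v_\nu(f_s(\gamma a))\ge n$. Since $n\ge N\vare$, the bad set is contained in $Y:=\{\gamma\in\Gamma:\ v_\nu(f_s(\gamma a))\ge m\}$ with $m:=\lceil N\vare\rceil$; note $m\log p\asymp\vare\log Q$, so the hypothesis $l\gg_\Omega\vare\log Q$ makes $l$ exceed an arbitrarily large multiple of $m\log p$. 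Thus it suffices to prove $\pcal_\Omega^{(l)}(Y)\le Q^{-\Theta_\Omega(\vare)}=p^{-\Theta_\Omega(m)}$.

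\textbf{The exact vanishing locus.} First dispose of $Y_\infty:=\{\gamma:\ f_s(\gamma a)=0\}$: this is the set of $\ocal_{k'}(S')$-points of the closed subscheme $\wcal_a\subseteq\gcal$ cut out by $g\mapsto f_s(ga)$. Translation is an automorphism and the non-regular-semisimple locus is a proper subvariety, so the generic fibre of $\wcal_a$ has dimension $<\dim\bbg$; and since $\|a\|_{S'}\le Q^{\vare}$ the defining equation has height $\ll_\Omega\vare\log Q$, whence $\log p_0(\wcal_a)\ll_\Omega\vare\log Q$ by Corollary~\ref{c:RamifiedPrime}. Proposition~\ref{p:EscapeSubvariety} now gives $\pcal_\Omega^{(l)}(Y_\infty)\ll p_0(\wcal_a)e^{-\delta_0 l}\le Q^{O_\Omega(\vare)}e^{-\delta_0 l}\le Q^{-\vare}$, using $l\gg_\Omega\vare\log Q$. (This is the only point at which the bound $\|a\|_{S'}\le Q^\vare$ is used.)

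\textbf{Propagating through the congruence filtration.} It remains to bound $\pcal_\Omega^{(l)}(Y\setminus Y_\infty)$. The key geometric input is that, by Lemma~\ref{l:q-regularBasics}, $f_s=\pm h_s^{2}\prod_{\phi_1\ne\phi_2}(\phi_1-\phi_2)^{2}$ vanishes to bounded \emph{even} order along the non-regular-semisimple locus $\ncal:=\{f_s=0\}$, a proper closed subscheme of $\cal$ of complexity bounded in terms of $\Omega$; consequently $v_\nu(f_s(\gamma a))\ge m$ confines $\gamma a$ to the $\nu$-adic $p^{-t}$-neighbourhood of $\ncal$, where $t:=\Theta_\Omega(m)\asymp\vare\log_p Q$. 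Reducing modulo $\nu^{t}$ and linearising along the congruence filtration via the finite logarithmic maps of Lemma~\ref{l:PropertiesOfLinearizationMap}, one proceeds scale by scale. At the coarsest scale, $\gamma a$ must land on $\ncal$ modulo $\nu$; since $\ncal$ has bounded complexity and the walk equidistributes modulo $\nu$ by Theorem~\ref{t:ExpanderSquareFree} and Lemma~\ref{l:EquidistributionErrorRate}, this has probability $\le O_\Omega(1)/p$. At each subsequent level the corresponding Lie-algebra digit of $\gamma a$ is forced into a proper affine subvariety of $\underline{\gfr}(\f_\nu)$ — the tangent directions to $\ncal$ — which, after excising the lower-dimensional locus where these differentials degenerate (treated by the same mechanism applied recursively to the bounded-complexity singular strata of $\ncal$), is a hyperplane, of density $p^{-1}$. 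Manufacturing, out of segments of the walk, enough randomness at each level (here one exploits $[\underline{\gfr},\underline{\gfr}]=\underline{\gfr}$ and the commutator identities of Lemma~\ref{l:PropertiesOfLinearizationMap}, exactly as in the proof of Lemma~\ref{l:OneStepPropagation}), each level is avoided with a further factor $\le O_\Omega(1)/p<1$, uniformly in $p$. Multiplying the $\Theta(t)$ gains yields $\pcal_\Omega^{(l)}(Y\setminus Y_\infty)\le\bigl(O_\Omega(1)/p\bigr)^{\Theta(t)}=p^{-\Theta_\Omega(t)}=Q^{-\Theta_\Omega(\vare)}$. Any surplus length of the walk, any auxiliary translations introduced by the commutator construction, and the finitely many small residue characteristics and finitely many places $\pfr\in S_0$ (where the per-level gain is still $<1$, only the required segment length grows with $p$) are absorbed into the implied constants, using the conjugation-invariance of $f_s$ to dump unused parts of the walk into the translation $a$.

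\textbf{Main obstacle.} The delicate part is the propagation step, and it is delicate for two reasons. First, one must guarantee that at each congruence level the linearised condition genuinely cuts out a \emph{proper} subvariety of $\underline{\gfr}(\f_\nu)$ of complexity bounded independently of $Q$ — equivalently, that the differentials of the equations of $\ncal$ are not too $\nu$-divisible at the running point; this is where semisimplicity of $\bbg$ and the explicit shape of $f_s$ are used, and it forces the recursion onto the strata of $\ncal$. Second, because only squarefree-modulus expansion (Theorem~\ref{t:ExpanderSquareFree}) is available at this stage, one cannot simply equidistribute a short segment of the walk inside a deep congruence subgroup; the needed randomness at level $j$ must be produced through products and commutators carrying fullness from one layer to the next, and the conditioning must be organised carefully so that the $\Theta(t)$ conditional gains of $\bigl(1+O_\Omega(1/p)\bigr)^{-1}$ actually compound. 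This is precisely why one is led to the scheme-theoretic closures of the non-regular locus and its $\Gamma$-shifts, and why $\Theta(t\log p)\asymp\Theta(\vare\log Q)$ steps of the walk are both necessary and sufficient.
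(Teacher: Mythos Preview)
Your treatment of $Y_\infty$ is fine and is essentially what the paper does after its convolution reduction. The problem is your third step, the ``propagation through the congruence filtration''. You never actually prove it; you describe a picture in which each level contributes an independent factor $O_\Omega(1)/p$ and appeal to commutator identities to ``manufacture randomness'' at deeper levels, but you do not produce any mechanism that turns the square-free expansion of Theorem~\ref{t:ExpanderSquareFree} into control on the \emph{random-walk probability} of landing in a codimension-one set modulo $p^j$ for $j>1$. The commutator/propagation machinery of Lemma~\ref{l:OneStepPropagation} is about building elements of $\prod_C A$ that fill out a congruence layer, not about bounding $\pcal_\Omega^{(l)}$-measures; conditioning on earlier digits and then asserting that later digits still avoid a proper subvariety with probability $1-O(1/p)$ is exactly the kind of level-by-level equidistribution you yourself note is unavailable. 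As written, the argument is circular or at best a heuristic.

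The paper avoids the whole congruence-filtration issue by a completely different, height-based idea that you are missing. After the convolution step $\pcal_\Omega^{(l)}(W_{a,q})\le\max_\gamma\pcal_\Omega^{(l_0)}(\gamma^{-1}W_{a,q})$ with $l_0\asymp\vare\log Q$, every element of $\gamma^{-1}W_{a,q}\cap\supp\pcal_\Omega^{(l_0)}$ has $S'$-height at most $Q^{O_\Omega(\vare)}$. The key Lemma~\ref{l:SmallLiftq-nonRegularSemisimpleProperVariety} then shows that the set $V_{q_1,q_2}(\gamma,a)=\{\gamma'\in\Gamma:\ \|\gamma'\|_{S'}\le q_2,\ \pi_{q_1}(f_s(\gamma\gamma'a))=0\}$ is contained in a genuine proper subvariety $V(f_{g,a})$ of $\bbh$ for some $g\in\bbh(\overline{k'})$ of logarithmic height $\ll_\Omega\log q_2$. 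The proof is arithmetic, not dynamical: if no such $g$ existed then by effective Nullstellensatz one writes $d=\sum_{\gamma'\in V_{q_1,q_2}}P_{\gamma'}(\underline X)f_s(\underline X\gamma'a)+\sum_iP_iQ_i$ with $h(d)\ll\log q_2$; evaluating at $\gamma$ forces $d\in q_1\ocal_{k'}(S')$, and the product formula then contradicts $q_2\le q_1^{O_\Omega(1)}$. Existence of such a small-height $g$ uses the arithmetic B\'ezout result of Appendix~A (Proposition~\ref{p:SmallSolution}). Once the bad set lies inside a proper subvariety of height $O_\Omega(\vare\log Q)$, Corollary~\ref{c:RamifiedPrime} and Proposition~\ref{p:EscapeSubvariety} finish the job exactly as in your $Y_\infty$ step. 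This Nullstellensatz/height trick is the missing idea in your proposal.
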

To simplify the presentation, let 
\[
W_{a,q}:=\{\gamma\in \Gamma|\h \gamma \text{ is not } |q|_{\nu}-\text{regular semisimple}\}.
\]
By the definition of $|q|_{\nu}$-regular semisimple elements, we know that 
\[
W_{a,q}:=\{\gamma\in \Gamma|\h \pi_q(f_s(\gamma a))=0\}.
\]
And, for $\gamma_1,\gamma_2\in \Gamma$ and $q_1,q_2$, powers of $p$, let 
\begin{align*}
V_{q_1,q_2}(\gamma_1,\gamma_2)&:=\{\gamma\in \Gamma|\h \gamma_1\gamma\gamma_2\in W_{1,q_1} \h{\rm and}\h \|\gamma\|_{S'}\le q_2\}\\
&=\{\gamma\in \Gamma|\h \pi_{q_1}(f_s(\gamma_1 \gamma \gamma_2))=0,\h \|\gamma\|_{S'}\le q_2\}.
\end{align*}

For $g\in\bbh(\overline{k'})$ and $\gamma_0\in \Gamma$, let $f_{g,\gamma_0}(g'):=f_s(gg'\gamma_0)$ and $V(f_{g,\gamma_0})\subseteq \bbg$ be the zeros of $f_{g,\gamma_0}$, where $\bbh$ is the generic fiber of $\cal$. The following is the key lemma in the proof of Proposition~\ref{p:q-nonRegularSemisimple}.
\begin{lem}\label{l:SmallLiftq-nonRegularSemisimpleProperVariety}
Let $\gamma_1,\gamma_2\in\Gamma$. Suppose $\|\gamma_2\|_{S'}\le q_2\le q_1^{O_{\Omega}(1)}$ and $q_2\gg_{\Omega} 1$. Then there is $g\in \bbh(\overline{k'})$ such that 
 $h(g)\ll_{\Omega} \log q_2$, where $h(g)$ is the logarithmic height of $g$, and
$V_{q_1,q_2}(\gamma_1,\gamma_2)\subseteq V(f_{g,\gamma_2})$.
\end{lem}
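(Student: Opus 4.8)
The strategy is to replace the ``set'' $V_{q_1,q_2}(\gamma_1,\gamma_2)$, which lives in the arithmetic group, by the zero set of a single polynomial over $\overline{k'}$ of controlled height. First I would note that, by definition, any $\gamma \in V_{q_1,q_2}(\gamma_1,\gamma_2)$ satisfies $\pi_{q_1}(f_s(\gamma_1\gamma\gamma_2))=0$ together with $\|\gamma\|_{S'}\le q_2$. The polynomial $f_s \in \ocal[\cal]$ is a fixed polynomial (a resultant, so of degree bounded in terms of $n$ only and with bounded logarithmic height), so $f_s(\gamma_1 X \gamma_2)$, viewed as a polynomial in the matrix entries of $X$, has degree $O_\Omega(1)$ and logarithmic height controlled by $h(\gamma_1)$ and $h(\gamma_2)$. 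The trouble is that $h(\gamma_1)$ is \emph{not} controlled — only $\gamma_2$ satisfies $\|\gamma_2\|_{S'}\le q_2$. So the heart of the argument is to show that the left multiplier $\gamma_1$ can be absorbed into a replacement element $g$ of small height, at the cost of passing to $\overline{k'}$.

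The key point is an archimedean/ultrametric pigeonhole (or a Diophantine approximation / parameter-counting) argument: the set $V_{q_1,q_2}(\gamma_1,\gamma_2)$ consists of elements $\gamma$ of word length $O_\Omega(\log q_2)$ (since $\|\gamma\|_{S'}\le q_2$ controls the word length up to the spectral-gap / Kesten bounds available from Theorem~\ref{t:ExpanderSquareFree}), hence of bounded height; and on this bounded-height set, the $q_1$-adic vanishing $\pi_{q_1}(f_s(\gamma_1\gamma\gamma_2))=0$ together with $q_2\le q_1^{O_\Omega(1)}$ forces $f_s(\gamma_1\gamma\gamma_2)$ to actually vanish for those $\gamma$ that interest us, provided the congruence is ``deep enough'' relative to the height; this is a standard ``small solutions are exact solutions'' phenomenon (in the spirit of Appendix~A). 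I would then seek $g\in\bbh(\overline{k'})$ with $g\equiv \gamma_1\pmod{q_1}$ and $h(g)\ll_\Omega \log q_2$: since the congruence subgroup $\cal(\bbz_p)[q_1]$ (or its analogue) contains $\exp$ of a lattice, one can adjust $\gamma_1$ by a $q_1$-divisible element of the group to land on a representative of small height — here using that the generic fiber $\bbh$ is defined over a number field of bounded degree and that $q_2 \le q_1^{O_\Omega(1)}$ makes the ``budget'' $\log q_1$ at least a fixed fraction of $\log q_2$. Once $g$ is chosen with $g\equiv\gamma_1 \pmod{q_1}$, for any $\gamma\in V_{q_1,q_2}(\gamma_1,\gamma_2)$ we get $f_s(g\gamma\gamma_2)\equiv f_s(\gamma_1\gamma\gamma_2)\equiv 0 \pmod{q_1}$; but $f_s(g\gamma\gamma_2)$ has height $\ll_\Omega \log q_2 \ll \log q_1$, so a $q_1$-adic valuation that large forces $f_s(g\gamma\gamma_2)=0$, i.e. $\gamma \in V(f_{g,\gamma_2})$.

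Putting this together: $V_{q_1,q_2}(\gamma_1,\gamma_2)\subseteq V(f_{g,\gamma_2})$, where $f_{g,\gamma_2}(X)=f_s(gX\gamma_2)$ has logarithmic height $h(g)+h(\gamma_2)+O_\Omega(1)\ll_\Omega \log q_2$, as required. I expect the main obstacle to be the construction of the small-height representative $g\equiv\gamma_1\pmod{q_1}$: one must exhibit \emph{some} algebraic point of $\bbh$ (over a field of bounded degree over $k'$) in the $q_1$-congruence class of $\gamma_1$ whose height is $O_\Omega(\log q_2)$, which is exactly where the hypothesis $q_2\le q_1^{O_\Omega(1)}$ is used, and which relies on the arithmetic-Bezout / small-solutions input of Appendix~A (of the flavour of Proposition~\ref{p:SmallSolution}) applied to the defining equations of the group scheme $\cal$ together with the congruence $X\equiv\gamma_1\pmod{q_1}$. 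The bound $q_2\gg_\Omega 1$ is the harmless hypothesis ensuring that logarithmic heights are comparable to $\log q_2$ rather than swamped by additive constants.
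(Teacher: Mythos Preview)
Your plan has a genuine gap at the step where you propose to find $g\in\bbh(\overline{k'})$ with $g\equiv\gamma_1\pmod{q_1}$ and $h(g)\ll_\Omega\log q_2$. You suggest applying Proposition~\ref{p:SmallSolution} ``to the defining equations of the group scheme $\cal$ together with the congruence $X\equiv\gamma_1\pmod{q_1}$,'' but Proposition~\ref{p:SmallSolution} finds small-height solutions to polynomial systems over a number field; a $\nu$-adic congruence is not such a system. If you encode it algebraically via $X=\gamma_1+q_1 Y$, the defining polynomials now carry $\gamma_1$ in their coefficients, hence have height $\gg h(\gamma_1)$, and the proposition would only yield $h(g)\ll h(\gamma_1)$, which is useless since $h(\gamma_1)$ is completely unbounded in this lemma. (There is also no reason a small-height $\overline{k'}$-point produced by Appendix~A should be $\nu$-integral, so the congruence is not even well-posed for it.) Finding a small-height representative of the $q_1$-congruence class of $\gamma_1$ amounts to equidistribution of $\Gamma$ in $\cal(\ocal_\nu/q_1)$ in $O(\log q_1)$ steps, which is essentially the spectral gap one is trying to prove.

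The paper's argument avoids $\gamma_1$ in the height estimates altogether. It sets
\[
Y:=\{g\in\bbh(\overline{k'})\;:\;f_s(g\gamma\gamma_2)=0\text{ for every }\gamma\in V_{q_1,q_2}(\gamma_1,\gamma_2)\},
\]
whose defining equations $f_s(X\gamma\gamma_2)=0$ involve only $\gamma,\gamma_2$ with $\|\gamma\|_{S'},\|\gamma_2\|_{S'}\le q_2$, hence have height $\ll_\Omega\log q_2$ and do \emph{not} involve $\gamma_1$. Nonemptiness of $Y$ is shown by contradiction via effective Nullstellensatz: if $Y=\varnothing$ one obtains a Bezout identity $d=\sum_\gamma P_\gamma(X)f_s(X\gamma\gamma_2)$ in $\ocal_{k'}[\cal]$ with $h(d)\ll_\Omega\log q_2$; evaluating at $X=\gamma_1$ (this is the \emph{only} role of $\gamma_1$) forces $d\in q_1\ocal_\nu$, and the product formula then gives $\log q_1\ll_\Omega\log q_2$, contradicting $q_2\le q_1^{O_\Omega(1)}$ for the appropriate implied constant. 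Once $Y\neq\varnothing$, Proposition~\ref{p:SmallSolution} supplies the desired small-height $g\in Y$. The decoupling is the whole point: $\gamma_1$ is used only as an evaluation witness for nonemptiness, never inside a height bound.
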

\begin{proof}
Let $Y:=\{g\in\bbh(\overline{k'})|\h f_s(g\gamma\gamma_2)=0\h\text{for any}\h\gamma\in V_{q_1,q_2}(\gamma_1,\gamma_2)\}$. We would like to prove that there is $g\in Y$ such that $h(g)\ll_{\Omega} \log q_2$. First we prove that $Y\not=\varnothing$. 

Let $Q_1(\underline{X}),\ldots,Q_m(\underline{X})\in \ocal_{k'}(S')[X_{11},\ldots,X_{nn}]$ be such that $\ocal_{k'}(S')[\cal]=\ocal_{k'}(S')[X_{11},\ldots,X_{nn}]/\langle Q_i\rangle$. If $Y=\varnothing$, then by the effective Nullstellensatz~\cite[Theorem IV]{MW} and the fact that the logarithmic height of $f_s(\underline{X}\gamma \gamma_2)$ is $O_{\dim \bbh}(\log q_2)$ we have that there are 
\[
P_{\gamma}(\underline{X}), P_1(\underline{X}),\ldots,P_m(\underline{X})\in \ocal_{k'}[X_{11},\ldots,X_{nn}] \text{ and } d \in \ocal_{k'}
\]
such that
\begin{enumerate}
	\item $d=\sum_{\gamma\in V_{q_1,q_2}(\gamma_1,\gamma_2)}P_{\gamma}(\underline{X})f_s(\underline{X}\gamma\gamma_2)+\sum_iP_i(\underline{X})Q_i(\underline{X})$,
	\item $\deg P_{\gamma},\deg P_i\ll O_{\Omega}(1)$,
	\item $h(d) \ll_{\dim \bbh} \log q_2$. 
\end{enumerate}
Therefore $d=\sum_{\gamma\in V_{q_1,q_2}(\gamma_1,\gamma_2)} P_{\gamma}(\gamma_1)f_s(\gamma_1\gamma\gamma_2)$ and so $d\in q_1 \ocal_{k'}(S')$. Therefore 
\[
\log |d|_{\nu}\le \log |q_1|_{\nu}\ll_{\deg k'} - \log q_1
\]
 as $\nu\not\in S'$. Thus we have 
\begin{align*}
\log q_2\gg_{\dim \bbh} h(d)&=\sum_{\nu'\in V_{\infty}(k')} \log^+ |d|_{\nu'}
\ge \sum_{\nu'\in V_{\infty}(k')} \log |d|_{\nu'}\\
\text{ (the product formula) }&=-\sum_{\nu'\in V_f(k')} \log |d|_{\nu'}\ge -\log |d|_{\nu} \gg_{\deg k'} \log q_1.
\end{align*}

This contradicts the assumption $q_2\le q_1^{O_{\Omega}(1)}$. And so $Y\neq \varnothing$.

Now by Proposition~\ref{p:SmallSolution} we know $Y$ has a point with {\em small} height, i.e. there is $g\in \bbh(\qbar)$ such that $V_{q_1,q_2}(\gamma_1,\gamma_2)\subseteq  V(f_{g,\gamma_2})$ and $H(g)\le q_2^{O_{\Omega}(1)}$.
\end{proof}

\begin{cor}\label{c:AuxqSingular}
Let $\gamma_1,\gamma_2\in \Gamma$. Suppose $\|\gamma_2\|_{S'}\le q_2\le q_1^{O_{\Omega}(1)}$ and $q_2\gg_{\Omega} 1$. Then 
\[
\pcal_{\Omega}^{(l)}(V_{q_1,q_2}(\gamma_1,\gamma_2))\ll q_2^{-O_{\Omega}(1)},
\]
where $l\gg_{\Omega} \log q_2$. 
\end{cor}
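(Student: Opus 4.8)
The plan is to deduce the corollary from Lemma~\ref{l:SmallLiftq-nonRegularSemisimpleProperVariety} together with the escape-from-subvarieties estimate of Proposition~\ref{p:EscapeSubvariety}, after a bounded-height descent that makes Corollary~\ref{c:RamifiedPrime} applicable.

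First I would apply Lemma~\ref{l:SmallLiftq-nonRegularSemisimpleProperVariety}: since $\|\gamma_2\|_{S'}\le q_2\le q_1^{O_{\Omega}(1)}$ and $q_2\gg_{\Omega}1$, there is $g\in\bbh(\overline{k'})$ with $h(g)\ll_{\Omega}\log q_2$, with $[k'(g):k']\ll_{\Omega}1$ (part of the conclusion of Proposition~\ref{p:SmallSolution}), and with $V_{q_1,q_2}(\gamma_1,\gamma_2)\subseteq V(f_{g,\gamma_2})$, where $f_{g,\gamma_2}(g')=f_s(gg'\gamma_2)$. Because the regular semisimple locus of the connected semisimple group $\bbh$ is a non-empty Zariski-open subset, $f_s$ is not identically zero on $\bbh$; as $g'\mapsto gg'\gamma_2$ is an automorphism of $\bbh$ as a variety, $f_{g,\gamma_2}$ is not identically zero either, so $V(f_{g,\gamma_2})$ is a proper closed subvariety of $\bbh$ of dimension $\dim\bbh-1$. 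Using Steinberg's explicit representative of $f_s$ (a polynomial in the matrix entries of bounded degree and height, built from $F_g(1)$ and a resultant) together with $\|\gamma_2\|_{S'}\le q_2$ and $h(g)\ll_{\Omega}\log q_2$, one sees that $f_{g,\gamma_2}$ is represented by a polynomial in the ambient coordinates of degree $\ll_{\Omega}1$ and logarithmic height $\ll_{\Omega}\log q_2$.

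Next I would descend to a closed $\bbz_S$-subscheme of $\gcal_1$. When $p\in S_0$ I first pull $V(f_{g,\gamma_2})\subseteq\bbg_{1,\pfr}$ back along the dominant projection ${\rm Pr}_{\pfr}:\bbg_1\to\bbg_{1,\pfr}$ (when $p\notin S_0$ this step is vacuous, since $\cal=\gcal_1$ and $\bbh=\bbg_1$); the pullback is a proper closed subvariety of $\bbg_1$ defined by a polynomial of degree $\ll_{\Omega}1$ and logarithmic height $\ll_{\Omega}\log q_2$. Since $V_{q_1,q_2}(\gamma_1,\gamma_2)\subseteq\Gamma\subseteq\gcal_1(\bbq)$ consists of $\bbq$-rational points, it also lies in the common zero locus of the finitely many $\gal(\overline{\bbq}/\bbq)$-conjugates of that polynomial; replacing it by the product of those conjugates and clearing denominators, I obtain a non-zero polynomial $F\in\bbz[\underline{X}]$ of degree $\ll_{\Omega}1$ and logarithmic height $\ll_{\Omega}\log q_2$ which vanishes on $V_{q_1,q_2}(\gamma_1,\gamma_2)$ and is not identically zero on the (geometrically connected) group $\bbg_1$. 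Let $\wcal$ be the scheme-theoretic closure in $\gcal_1$ of $\{F=0\}\cap\bbg_1$; then the generic fiber of $\wcal$ has dimension less than $\dim\bbg_1$, the equations defining $\wcal$ have logarithmic height $\ll_{\Omega}\log q_2$, and $V_{q_1,q_2}(\gamma_1,\gamma_2)\subseteq\wcal(\bbz_S)$. By Corollary~\ref{c:RamifiedPrime} this yields $\log p_0(\wcal)\ll_{\Omega}\log q_2$.

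Finally, Proposition~\ref{p:EscapeSubvariety} supplies $\delta_0>0$ and $l_0$ depending only on $\bbg_1$ such that, for $l\ge l_0$,
\[
\pcal_{\Omega}^{(l)}(V_{q_1,q_2}(\gamma_1,\gamma_2))\le\pcal_{\Omega}^{(l)}(\wcal(\bbz_S))\ll p_0(\wcal)\,e^{-\delta_0 l}\ll_{\Omega} q_2^{O_{\Omega}(1)}\,e^{-\delta_0 l}.
\]
Choosing the implied constant in the hypothesis $l\gg_{\Omega}\log q_2$ large enough (in terms of $\delta_0$ and of the exponent $O_{\Omega}(1)$ above, hence in terms of $\Omega$) forces $\delta_0 l\ge\big(O_{\Omega}(1)+1\big)\log q_2$, and using $q_2\gg_{\Omega}1$ to absorb the multiplicative constant gives $\pcal_{\Omega}^{(l)}(V_{q_1,q_2}(\gamma_1,\gamma_2))\ll q_2^{-O_{\Omega}(1)}$, as claimed. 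The substance has already been done in Lemma~\ref{l:SmallLiftq-nonRegularSemisimpleProperVariety} and Proposition~\ref{p:EscapeSubvariety}; the only point needing care is the passage from the $\overline{k'}$-point $g$ to an honest bounded-height $\bbz$-subscheme of $\gcal_1$ — in particular, verifying that taking Galois conjugates and pulling back along ${\rm Pr}_{\pfr}$ neither destroys the height bound nor produces the zero polynomial, the latter following from ${\rm Pr}_{\pfr}$ being dominant and $f_s$ being non-zero on each almost-simple factor of $\bbg_1$.
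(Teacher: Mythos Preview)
Your proposal is correct and follows the same route as the paper: apply Lemma~\ref{l:SmallLiftq-nonRegularSemisimpleProperVariety} to land in $V(f_{g,\gamma_2})$, bound $p_0$ of the resulting scheme via Corollary~\ref{c:RamifiedPrime}, and then invoke Proposition~\ref{p:EscapeSubvariety}. The paper's own proof is very terse and applies Proposition~\ref{p:EscapeSubvariety} directly to $V(f_{g,\gamma_2})$ without spelling out the descent to a $\bbz_S$-subscheme of $\gcal_1$; your extra paragraph (Galois-averaging the defining polynomial and, in the $p\in S_0$ case, pulling back along the surjection ${\rm Pr}_{\pfr}:\bbg_1\to\bbg_{1,\pfr}$) makes explicit exactly the step the paper leaves implicit, and your bound $\log p_0(\wcal)\ll_{\Omega}\log q_2$ is the correct reading of Corollary~\ref{c:RamifiedPrime}.
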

\begin{proof}
By Lemma~\ref{l:SmallLiftq-nonRegularSemisimpleProperVariety}, there is $g\in \bbh(\overline{k'})$ such that $V_{q_1,q_2}(\gamma_1,\gamma_2)\subseteq V(f_{g,\gamma_2})$ and $h(g)\ll_{\Omega} \log q_2$. By Corollary~\ref{c:RamifiedPrime} we have $p_0(V(f_{g,\gamma_2}))\ll_{\Omega} \log q_2$. Therefore by Proposition~\ref{p:EscapeSubvariety} we have 
\[
\pcal_{\Omega}^{(l)}(V_{q_1,q_2}(\gamma_1,\gamma_2))\le \pcal_{\Omega}^{(l)}(V(f_{g,\gamma_2}))\ll q_2^{-O_{\Omega}(1)}.
\] 
\end{proof}
\begin{proof}[Proof of Proposition~\ref{p:q-nonRegularSemisimple}] 
We have that 
\begin{align}
\pcal_{\Omega}^{(l)}(W_{a,q})&=\pi_q[\pcal_{\Omega}^{(l)}](\overline{W}_{a,q})=\sum_{\overline{\gamma}\in \pi_q(\Gamma)}(\pi_q[\pcal_{\Omega}]^{(l-l_0)}(\overline{\gamma}))(\pi_q[\pcal_{\Omega}]^{(l_0)}(\overline{\gamma}^{-1}\overline{W}_{a,q}))\\
& \le \max_{\overline{\gamma}\in \pi_q(\Gamma)}\pi_q[\pcal_{\Omega}]^{(l_0)}(\overline{\gamma}^{-1}\overline{W}_{a,q})
\end{align}
if $l_0\le l$. So it is enough to prove 
\be\label{e:q-regular}
\pcal_{\Omega}^{(l_0)}(\gamma^{-1}W_{a,q})\le Q^{-O_{\Omega}(\varepsilon)}
\ee
for $\vare \log Q\ll l_0\ll \vare \log Q$. 

Since $\gamma^{-1}W_{a,q}\cap \supp(\pcal^{(l_0)})\subseteq V_{q,\lfloor Q^{O_{\Omega}(\varepsilon)}\rfloor}(\gamma,a)$, Corollary~\ref{c:AuxqSingular} implies the desired result.
\end{proof}

\subsection{Finding a torus with lots of $p$-adically large elements in $A.A$.}\label{ss:LotsSimultanouslyAdDiagModq1}

For the rest of this section, let $\Omega$ and $\Gamma$ be as in Section~\ref{ss:InitialReductions}, and $\gcal_1$, $\gcal_{1,\pfr}$, and $\pi_{p^n}$ bez as in Section   \ref{ss:StatementApproximateSubgroupBoundedGeneration}. Let $\pfr\in V_f(k)\setminus S$ which divides $p\in V_f(\bbq)$. Let the quadruple  of a group scheme $\cal$, a number field $k'$, a place $\nu\in V_f(k')$, and a finite subset $S'$ of $V_f(k')$ be either $(\gcal_1,\bbq,p,S_0)$ if $p\not\in S_0$, or $(\gcal_{1,\pfr},k(\pfr),\pfr,S_{\pfr})$ where $S_{\pfr}$ is the set of restrictions of $S$ to $k(\pfr)$ if $p\in S_0$. Notice that in either case we have $k'_{\nu}=\bbq_p$ and $\Gamma\subseteq \cal(\ocal_{k'}(S'))$. Let $\bbh$ be the generic fiber of $\cal$. So $\Ad(\bbh)=\oplus_i \bbh_i$ where $\bbh_i$ is a $k'$-simple $k'$-group, and $\Lie(\bbh)=\oplus_i \Lie(\bbh_i)$. Let ${\rm pr}_i$ be the projection either from $\Ad(\bbh)$ onto $\bbh_i$, or from $\Lie(\bbh)$ onto $\Lie(\bbh_i)$. 

The main goal of this section is to show Proposition~\ref{p:LargeIntersectionMaximalTorus}. 

\begin{prop}\label{p:LargeIntersectionMaximalTorus}
In the above setting, suppose $0<\vare_2\ll_{\Omega} \vare_1\ll_{\Omega} 1$. Let $N$ be a positive integer such that $1\ll_{\Omega} N\vare_2$, and $Q=p^N$. Fix a simple factor $\bbh':=\bbh_{i_0}$ and let ${\rm pr}:={\rm pr}_{i_0}$. Then there are a positive real number $\delta$, positive integers $n_1=\Theta_{\Omega, i_0}(\vare_1N)$ (level) and $n_2=\Theta_{\Omega, i_0}(\vare_1N)$ (thickness) where the following holds:

If $\Pfr_Q(\delta,A,l)$ holds, then there are a maximal $k'$-torus $\bbt$ of $\bbh$ and $\bfr\subseteq \Lie(\cal)(\bbz_p)\cap \Lie(\bbt)(k')$ such that
\begin{enumerate}
\item $\pi_{p^{n_2}}({\rm pr}(\bfr))\subseteq \lin{p^{n_1}}{p^{n_1+n_2}}(\pi_{p^{n_1+n_2}}({\rm pr}(\prod_8 A))\cap \pi_{p^{n_1+n_2}}(\Gamma[p^{n_1}])),$
\item $Q^{\Theta_{\Omega}(\varepsilon_1)}\le |{\rm pr}(\bfr)|=|{\rm pr}(\pi_{p^{n_2}}(\bfr))|$,
\item For any $0\le m\le n_2,\h p^{\Theta_{\Omega}(m)}\le |{\rm pr}(\pi_{p^m}(\bfr))|$.
\end{enumerate}
 Moreover, let $n_3=\Theta_{\Omega, i_0}(\vare_2N)$ (auxiliary number to get a $p^{-n_3}$-regular semisimple element). Then there is a Galois extension $k''$ of $k'$ of degree at most $n!$ such that $\bbt$ splits over $k''$ and for any $\nu'\in V_f(k'')$ that divides $\nu$ there are $x_{\phi}\in \hfr(\ocal_{\nu'})\cap \hfr_{\phi}((k'')_{\nu'})$ such that
\begin{enumerate}
\item $\ocal_{\nu'}x_{\phi}=\hfr(\ocal_{\nu'})\cap \hfr_{\phi}(k''_{\nu'})$,
\item $p^{n_3}\hfr(\ocal_{\nu'})\subseteq (\hfr(\ocal_{\nu'})\cap \tfr(k''_{\nu'}))\oplus \bigoplus_{\phi\in\Phi(\bbh,\bbt)} \ocal_{\nu'} x_{\phi},$
\end{enumerate}
where $\ocal_{\nu'}$ is the ring of integers of $k''_{\nu'}$.
\end{prop}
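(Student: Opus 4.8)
The strategy is to combine the machinery developed in Sections~\ref{ss:SpanSmallElements}--\ref{ss:LotsRegularSemisimpleModq} in the following order. First I would apply Lemma~\ref{l:HelfgottLargeCentralizer} to the approximate subgroup $A$: taking $X$ to be a suitable power-product $\prod_{O(1)}A$ intersected with a large congruence-ball, it produces (for some choice of $i$, of an element $x\in Xa_i$, and of auxiliary levels $n'\le n$ with $n-n'\ll \vare_1 N$) a centralizer $C_{\pi_q(\Gamma)}(\pi_q(x))\cap\pi_q(A\cdot A)$ of size at least $|\pi_Q(A)|^{-\Theta(\delta)}|\pi_{q'}(X)|^{\Theta(1)}$. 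The second step is to ensure that $x=\gamma a_i$ can be chosen to be $|p^{n_3}|_\nu$-regular semisimple with $n_3=\Theta(\vare_2 N)$: this is exactly what Proposition~\ref{p:q-nonRegularSemisimple} gives, since $\pcal_\Omega^{(l)}(W_{a_i,p^{n_3}})\le Q^{-\Theta(\vare)}$, which is negligible compared with the lower bound $Q^{-\delta}$ on $\pcal_\Omega^{(l)}(A)$ once $\delta\ll\vare$; so a positive-density portion of the mass of $A$ lands on regular semisimple translates, and the Helfgott element $x$ can be taken from that portion. This simultaneously pins down a maximal $k'$-torus $\bbt:=C_{\bbh}(x)^\circ$, and Lemma~\ref{l:q-regularBasics}(\ref{item3})(\ref{item}) supplies the splitting field $k''$ of degree $\le n!$ and the weight-space decomposition together with the estimate $p^{n_3}\hfr(\ocal_{\nu'})\subseteq(\hfr(\ocal_{\nu'})\cap\tfr)\oplus\bigoplus_\phi \ocal_{\nu'}x_\phi$, which is the content of the ``moreover'' clause.

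The third step is to transport this large centralizer into the Lie algebra via the finite logarithmic map. Here I would invoke Lemma~\ref{l:q-RegularSemisimpleCentralizer}: with $q=p^{n_3}$, $q_1=p^{n_1}$, $q_2=p^{n_1+n_2}$, $q_0=p^{\Theta_{n}(1)}$ and $n_1$ chosen so that the divisibility and largeness hypotheses $q_0q^2q_2\mid q_1^2$ and $\log_p(q_1^2/(q_0q^2q_2))\gg_\Gamma 1$ hold (this forces $n_1=\Theta(\vare_1 N)$ once $n_2=\Theta(\vare_1 N)$ and $n_3=\Theta(\vare_2 N)$ with $\vare_2\ll\vare_1$), the logarithm of $C_{\pi_{q_0q^2q_2}(\Gamma)}(\pi_{q_0q^2q_2}(x))\cap\pi_{q_2}(\Gamma[q_1])$ lands inside $\pi_{q_2/q_1}(\Lie(\cal)(\bbz_p)\cap\tfr(k'))$. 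Defining $\bfr$ to be (a $\bbz_p$-submodule generated by) this image pulled back appropriately, property~(2) of the proposition — namely $Q^{\Theta(\vare_1)}\le|{\rm pr}(\bfr)|$ — follows by combining the size lower bound on the centralizer coming from Step~1 with the lower bound $|\pi_{q'}(X)|\ge |\pi_{q'}(\Gamma[q_1])|^{\Theta(1)}$, which in turn uses the regularity and Proposition~\ref{p:EscapeSubvariety} (applied to the non-regular subvariety and its $\Gamma$-shifts) to guarantee that $X$ is spread out; property~(1) is just the statement that $\bfr$ is realized as the log-image of elements of $\prod_8 A$ intersected with the relevant congruence subgroup, after absorbing constants into the number of factors; property~(3), the ``thickness at every intermediate level $m\le n_2$'', is obtained by applying the same centralizer/escape estimates at the truncation $p^m$ in place of $p^{n_2}$, using that $\lin{q_1}{q_1 p^m}$ is still injective there.

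The main obstacle I anticipate is the bookkeeping of the exponents: one must choose $n_1,n_2,n_3$ as linear functions of $N$ with the right comparisons ($\vare_2\ll\vare_1$, $n_3$ small enough relative to $n_1$ so the finite-log map is an isomorphism on $\Gamma[p^{n_1}]$, $n_2$ large enough to see genuine ``thickness'' but small enough that $p^{n_1+n_2}\mid Q$), and simultaneously track how the multiplicative losses $|\pi_Q(A)|^{-\Theta(\delta)}$ from the approximate-subgroup hypothesis and $Q^{-\Theta(\vare)}$ from the escape estimates compound, ensuring that the net lower bound on $|{\rm pr}(\bfr)|$ survives as $Q^{\Theta(\vare_1)}$ once $\delta\ll\vare_2\ll\vare_1$. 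A secondary subtlety is the passage from $\hfr(\ocal_{\nu'})$-level statements over the splitting field $k''$ back down to $k'$-rational statements about $\bfr\subseteq\Lie(\cal)(\bbz_p)\cap\Lie(\bbt)(k')$ — the Galois-averaging trick already used in the proof of Lemma~\ref{l:q-RegularSemisimpleCentralizer} handles this, but one must check it is compatible with the projection ${\rm pr}={\rm pr}_{i_0}$ onto a single simple factor, which is where the hypothesis that ${\rm pr}_i(\Ad(\overline\Omega))$ freely generates a Zariski-dense subgroup of $\Ad(\bbg_1^{(i)})$ (item~(3) of Section~\ref{ss:InitialReductions}) enters, via Proposition~\ref{p:EscapeSubgroups}, to guarantee the relevant projections of $A$ are themselves not trapped in proper subgroups.
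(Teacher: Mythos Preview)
Your outline captures the core pipeline correctly---Helfgott's centralizer trick (Lemma~\ref{l:HelfgottLargeCentralizer}) combined with escape from the non-regular locus (Proposition~\ref{p:q-nonRegularSemisimple}) to get a $|p^{n_3}|_\nu$-regular semisimple element $x$ with large centralizer, then Lemma~\ref{l:q-RegularSemisimpleCentralizer} to land in $\tfr$---and this is exactly what the paper does (packaged as Proposition~\ref{p:q-regularLargeCentralizer}, after first passing to the simple factor via Lemma~\ref{l:ProjectionSimpleFactors}). But there is one genuine missing ingredient.

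The gap is the \emph{regularization} step (Corollary~\ref{c:Regularization}). After Helfgott, you have a large set
\[
\overline{C}:=C_{\pi_{q_2'}(\rho_i(\Gamma))}(\pi_{q_2'}(\rho_i(x)))\cap \pi_{q_2'}(\rho_i(\textstyle\prod_4 A))
\]
inside $\pi_{q_2'}(\rho_i(\Gamma))$, but this set has no reason to lie in a single coset of $\Gamma[p^{n_1}]$. Without that, you cannot apply the finite logarithmic map $\lin{p^{n_1}}{p^{n_1+n_2}}$ at all (its domain is $\pi_{p^{n_1+n_2}}(\Gamma[p^{n_1}])$), and conclusion~(1) of the proposition has no content. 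The paper views $\pi_{q_2'}(\rho_i(\Gamma))$ as the leaves of a rooted regular tree and applies Corollary~\ref{c:Regularization} to $\overline{C}$: this produces $\overline{B}_i\subseteq \overline{C}$ with $|\pi_{p^{n_1}}(\overline{B}_i)|=1$ (so $\overline{B}_i\cdot\overline{B}_i^{-1}\subseteq \rho_i(\Gamma)[p^{n_1}]\cap \rho_i(\prod_8 A)$, explaining the ``$8$'' in the statement) \emph{and} $|\pi_{p^l}(\overline{B}_i)|\gg p^{\Theta(l-n_1)}$ for every intermediate $n_1\le l\le \log_p q_2'$. This last property is precisely conclusion~(3). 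Your proposed alternative for~(3)---``applying the same centralizer/escape estimates at the truncation $p^m$''---does not work: Helfgott's trick run at a different modulus $p^m$ would in general select a different element $x$ and hence a different torus, whereas the proposition demands a single $\bbt$ and a single $\bfr\subseteq\Lie(\bbt)$ whose projections are large at \emph{every} scale simultaneously. Regularization is what buys this uniformity.
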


To this end, first we find a $q^{-1}$-regular semisimple element with {\em small} $q$ and {\em large} centralizer in Proposition~\ref{p:q-regularLargeCentralizer}. 

\begin{lem}\label{l:LargeSetNonsingular}
In the above setting, suppose $0<\delta\ll\vare\ll 1$ and $n, N$ are  positive integers such that $1\ll_{\Omega} N\vare\ll_{\Omega} n$. Let $Q:=p^N$ and $q:=p^n$. Also assume that $\Pfr_Q(\delta,A,l)$ holds and $\{a_i\}_{i=1}^{O_{\Omega}(1)}\subseteq \prod_{O_{\Omega}(1)} A$ is as in Proposition~\ref{p:LargeNumberOfConjugacyClasses}. Then 
\[
\pcal_{\Omega}^{(2l_0)}(A.A\setminus \bigcup_{i=1}^{O_{\Omega}(1)} W_{a_i,q})\ge Q^{-2\delta}/2,
\]
for any $\vare\log Q\ll_{\Omega} l_0\le l.$
\end{lem}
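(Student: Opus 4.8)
The plan is to bound the probability of the "bad" set $\bigcup_i W_{a_i,q}$ from above and combine this with a lower bound for $\pcal_{\Omega}^{(2l_0)}(A.A)$ coming from the hypothesis $\Pfr_Q(\delta,A,l)$.

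First I would establish the lower bound on $\pcal_{\Omega}^{(2l_0)}(A.A)$. Since $\Pfr_Q(\delta,A,l)$ holds, we have $\pcal_{\Omega}^{(l)}(A)>Q^{-\delta}$ and $l>\frac1\delta\log Q$; in particular $l_0<l$ once $\vare\log Q\ll_\Omega l_0\le l$ (using $\delta\ll\vare$). Applying Lemma~\ref{l:BGRemark} with $\mu=\pcal_\Omega$ and the symmetric set $A$, taking $l_0$ in the role of "$l_0$" and $l$ in the role of "$l$", gives $\pcal_\Omega^{(2l_0)}(A\cdot A)\ge \pcal_\Omega^{(l)}(A)^2> Q^{-2\delta}$.

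Next I would bound the bad set. For each fixed $i$, the set $W_{a_i,q}=\{\gamma\in\Gamma\mid \gamma a_i\text{ is not }|q|_\nu\text{-regular semisimple}\}$ is exactly the subject of Proposition~\ref{p:q-nonRegularSemisimple}: since $\|a_i\|_{S'}\le Q^{\Theta_\Omega(\vare)}\le Q^{\vare'}$ for a suitable $\vare'$, and since $n\ge N\vare\ge N\vare'$ (adjusting the implicit constants), that proposition applies with the walk length $2l_0\gg_\Omega \vare\log Q$ and yields $\pcal_\Omega^{(2l_0)}(W_{a_i,q})\le Q^{-\Theta_\Omega(\vare)}$. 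Summing over the $O_\Omega(1)$ indices $i$ gives
\[
\pcal_\Omega^{(2l_0)}\Big(\bigcup_{i=1}^{O_\Omega(1)} W_{a_i,q}\Big)\le O_\Omega(1)\cdot Q^{-\Theta_\Omega(\vare)}\le Q^{-\Theta_\Omega(\vare)}.
\]

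Finally I would subtract. Since $\delta\ll_\Omega\vare$, we may assume $2\delta<\Theta_\Omega(\vare)/2$, so $Q^{-\Theta_\Omega(\vare)}\le \tfrac12 Q^{-2\delta}$ once $Q^{\vare^{\Theta_\Omega(1)}}\gg_\Omega 1$. Therefore
\[
\pcal_\Omega^{(2l_0)}\Big(A\cdot A\setminus\bigcup_{i} W_{a_i,q}\Big)\ge \pcal_\Omega^{(2l_0)}(A\cdot A)-\pcal_\Omega^{(2l_0)}\Big(\bigcup_i W_{a_i,q}\Big)> Q^{-2\delta}-\tfrac12 Q^{-2\delta}=\tfrac12 Q^{-2\delta},
\]
which is the claim. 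The only mildly delicate point is bookkeeping the chain of constants — making sure the $\vare$ in the hypothesis of Proposition~\ref{p:q-nonRegularSemisimple} and the exponent in $\|a_i\|_{S'}\le Q^{\Theta_\Omega(\vare)}$ are compatible, and that $\delta$ is taken small enough relative to the $\Theta_\Omega(\vare)$ loss — but this is routine given that $0<\delta\ll_\Omega\vare\ll_\Omega 1$. I do not anticipate a genuine obstacle here; the substance is entirely in Lemma~\ref{l:BGRemark} and Proposition~\ref{p:q-nonRegularSemisimple}, which are already available.
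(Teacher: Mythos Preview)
Your proposal is correct and follows exactly the same approach as the paper: use Lemma~\ref{l:BGRemark} to get $\pcal_\Omega^{(2l_0)}(A\cdot A)\ge Q^{-2\delta}$, use Proposition~\ref{p:q-nonRegularSemisimple} to bound each $W_{a_i,q}$ by $Q^{-\Theta_\Omega(\vare)}$, and subtract. The paper's proof is in fact a two-line version of what you wrote, and your extra care in verifying the small-height hypothesis $\|a_i\|_{S'}\le Q^{\Theta_\Omega(\vare)}$ (which indeed holds by the construction in Proposition~\ref{p:SpanSmallElements}) is a welcome detail the paper leaves implicit.
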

\begin{proof}
Since $\pcal_{\Omega}^{(l)}(A)\ge Q^{-\delta}$, by Lemma~\ref{l:BGRemark} we have $\pcal_{\Omega}^{(2l_0)}(A.A)\ge Q^{-2\delta}$. Hence by Proposition~\ref{p:q-nonRegularSemisimple}, for $\delta\ll_{\Omega} \vare$ and $1\ll_{\Omega} Q^{\vare}$ we have
\[
\pcal_{\Omega}^{(2l_0)}(A.A\setminus \bigcup_{i=1}^{O_{\Omega}(1)} W_{a_i,q})\ge Q^{-2\delta}-O_{\Omega}(1)Q^{-O_{\Omega}(\vare)}\ge Q^{-2\delta}/2.
\] 
\end{proof}
\begin{prop}\label{p:q-regularLargeCentralizer}
In the above setting, suppose $0<\vare_2\ll_{\Omega} \vare_1\ll_{\Omega} 1$ and $N$ is a positive integer such that $1\ll_{\Omega} \vare_2 N$. Let $Q:=p^N$. Then there is a positive real number $\delta$ where the following holds: 

Let $q_1=p^{m_1}$ and $q_2=p^{m_1+m_2}$ such that $m_1=\Theta_{\Omega}(\vare_1 N)$ and $m_2=\Theta_{\Omega}(\vare_2 N)$. If $\Pfr_Q(\delta,A,l)$ holds, then there is $x\in \prod_{O_{\Omega,\vare_1}(1)}A$ such that
\begin{enumerate}
	\item $x$ is $|p^{m_2}|_{\nu}$-regular semisimple.
	\item $|C_{\pi_{q_2}(\Gamma)}(\pi_{q_2}(x))\cap \pi_{q_2}(A.A)|\ge Q^{\Theta_{\Omega}(\vare_1)}$.
\end{enumerate}
\end{prop}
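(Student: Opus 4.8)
\textbf{Proof plan for Proposition~\ref{p:q-regularLargeCentralizer}.}

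The plan is to combine the ``large centralizer'' mechanism (Lemma~\ref{l:HelfgottLargeCentralizer}) with the ``escape from non-regular-semisimple elements'' estimate (Proposition~\ref{p:q-nonRegularSemisimple}), and to interleave them so that the element produced simultaneously has large centralizer at level $q_2$ and is $|p^{m_2}|_\nu$-regular semisimple. First I would fix the auxiliary elements $\{a_i\}_{i=1}^{O_\Omega(1)}\subseteq \prod_{O_\Omega(1)}A$ supplied by Proposition~\ref{p:LargeNumberOfConjugacyClasses} (equivalently Proposition~\ref{p:ElementsToConjugacyClasses}): these have the key feature that, for a suitable $n'$ with $n'\le n\le n'+\vare\log_p Q$, the tuple of conjugacy classes $\Xi_{p^n}(\gamma)$ determines $\pi_{p^{n'}}(\gamma)$. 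I would apply this with $n=m_1+m_2$ so that $n' = \Theta_\Omega(\vare_1 N)$ as well (after shrinking the thickness slightly), which is precisely what makes the ``large number of conjugacy classes'' bound in Lemma~\ref{l:HelfgottLargeCentralizer}(2) nonvacuous: the bound there is $|\pi_{q'}(X)|^{\Theta_\Omega(1)}$ for the appropriate $q'=p^{n'}$.

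The second ingredient is the choice of the set $X$. I would take $X := A.A \setminus \bigcup_i W_{a_i,q_2}$, the set of $\gamma\in A.A$ such that $\gamma a_i$ is $|p^{m_2}|_\nu$-regular semisimple for \emph{every} $i$. By Lemma~\ref{l:LargeSetNonsingular}, under $\Pfr_Q(\delta,A,l)$ with $\delta\ll_\Omega \vare_2$ we have $\pcal_\Omega^{(2l_0)}(X)\ge Q^{-2\delta}/2$, so in particular $|\pi_{p^{n'}}(X)|$ is large: using Kesten's bound (or the injectivity of $\pi_{p^{n'}}$ on a ball of length $\Theta(\log Q)$ together with $\pcal^{(2l_0)}(X)\ge Q^{-2\delta}/2$) one gets $|\pi_{p^{n'}}(X)|\ge Q^{\Theta_\Omega(\vare_1)-\Theta_\Omega(\delta)}\ge Q^{\Theta_\Omega(\vare_1)}$ once $\delta\ll_{\Omega,\vare_1}1$ (here $n' = \Theta_\Omega(\vare_1 N)$, so $|\pi_{p^{n'}}(\Gamma)|\asymp Q^{\Theta_\Omega(\vare_1)}$ and $X$ occupies all but a $Q^{-\Theta_\Omega(\vare_1)}$-small but still $Q^{\Theta_\Omega(\vare_1)}$-sized portion). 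Now I would feed this $X\subseteq \prod_{O_{\Omega,\vare_1}(1)}A$ into Lemma~\ref{l:HelfgottLargeCentralizer}: it produces an index $i$, an element $x\in Xa_i\subseteq \prod_{O_{\Omega,\vare_1}(1)}A$, and $n'$ as above with
\[
|C_{\pi_{q_2}(\Gamma)}(\pi_{q_2}(x))\cap \pi_{q_2}(A\cdot A)| \ge |\pi_Q(A)|^{-\Theta_{\Omega,\vare_1}(\delta)}|\pi_{p^{n'}}(X)|^{\Theta_\Omega(1)} \ge Q^{-\Theta_{\Omega,\vare_1}(\delta)} Q^{\Theta_\Omega(\vare_1)} \ge Q^{\Theta_\Omega(\vare_1)}
\]
for $\delta\ll_{\Omega,\vare_1}1$, which is exactly property (2). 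Since $x = \gamma a_i$ with $\gamma\in X$ and $X$ was chosen to avoid $W_{a_i,q_2}$, the element $x$ is $|p^{m_2}|_\nu$-regular semisimple, which is property (1). The constraint $m_1=\Theta_\Omega(\vare_1 N)$, $m_2=\Theta_\Omega(\vare_2 N)$ is met because $q_1 = p^{m_1}$ corresponds to the level $n'\le n = m_1+m_2$ coming out of Lemma~\ref{l:HelfgottLargeCentralizer} (so $m_1$ can be taken to be $n'$, adjusting the bookkeeping) and the regularity scale $q_2/q_1 = p^{m_2}$ is the one used in $W_{a_i,q_2}$.

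The main obstacle I anticipate is the \emph{bookkeeping of the exponents} so that all three scales line up consistently: the $n'$ produced by Lemma~\ref{l:HelfgottLargeCentralizer} depends on $n$ and on $\vare$ only through $n' \ge n - \vare\log_p Q$, and one must choose the input $\vare$ (call it $\vare'$) small relative to $\vare_1$ so that $n' = \Theta_\Omega(\vare_1 N)$ really holds, while also choosing $\delta$ small relative to $\vare_1$ (for the centralizer bound) and relative to $\vare_2$ (so that Lemma~\ref{l:LargeSetNonsingular}, hence Proposition~\ref{p:q-nonRegularSemisimple}, applies with the error $Q^{-\Theta_\Omega(\vare_2)}$ beating $Q^{-2\delta}$). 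A minor technical point is that Lemma~\ref{l:HelfgottLargeCentralizer} yields a single $n'$ valid for a \emph{range} of $n$, whereas here we want it at the specific value $n=m_1+m_2$; this is handled by simply instantiating the lemma at that $n$. Once the hierarchy $\delta \ll_\Omega \vare_2 \ll_\Omega \vare_1 \ll_\Omega 1$ is fixed with these relations, the two estimates combine cleanly and the proof is short.
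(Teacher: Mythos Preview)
Your approach is essentially the paper's: take $X$ to be the elements of $A\cdot A$ (restricted to a short word-ball) whose shifts by every $a_i$ are $|p^{m_2}|_\nu$-regular semisimple, use Lemma~\ref{l:LargeSetNonsingular} plus Kesten to get $|\pi_{q'}(X)|\ge Q^{\Theta_\Omega(\vare_1)}$, then feed $X$ into Lemma~\ref{l:HelfgottLargeCentralizer}. Two small corrections: by the paper's convention $W_{a_i,q_2}$ encodes failure of $|q_2|_\nu$-regularity, which is \emph{weaker} than the $|p^{m_2}|_\nu$-regularity you need, so the correct set is $W_{a_i,p^{m_2}}$; and the paper makes the ball intersection $X\subseteq B_{2\lceil\vare_1\log Q\rceil}$ explicit so that $|\pi_{q'}(X)|=|X|$ follows directly from the $S'$-norm bound, rather than leaving it implicit in the support of $\pcal^{(2l_0)}$.
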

\begin{proof}
We can and will assume that $\delta\ll_{\Omega} \varepsilon_2$ and so there is $\{a_i\}_{i=1}^{O_{\Omega}(1)}\subseteq \prod_{O_{\Omega}(1)} A$ as in Proposition~\ref{p:LargeNumberOfConjugacyClasses}.
Let $X=(A.A\setminus \bigcup_i W_{a_i,p^{m_2}})\cap B_{2\lceil  \vare_1 \log Q\rceil}$. Now we know the following properties of $X$:
\begin{enumerate}
	\item By Lemma~\ref{l:LargeSetNonsingular} we have $\pcal_{\Omega}^{(2\lceil\vare_1\log Q\rceil)}(X)\ge Q^{-2\delta}/2$. So by the Kesten bound we have $|X|\ge Q^{\Theta_{\Omega}(\vare_1)}$.
	\item The $S$-norm of any element of $X$ is at most $Q^{\Theta_{\Omega}(\vare_1)}$. So $|\pi_{q'}(X)|=|X|$ if $\log q'\gg \vare_1 \log Q$. 
	\item By Lemma \ref{l:HelfgottLargeCentralizer}, for some $x'\in X$, some index $i$, and some $q'\ge q_2 Q^{-\vare_1}$, we have that
\[
|C_{\pi_{q_2}(\Gamma)}(\pi_{q_2}(x'a_i))\cap \pi_{q_2}(A.A)|\ge |\pi_Q(A)|^{-\Theta_{\Omega,\vare_1}(\delta)}|\pi_{q'}(X)|^{\Theta_{\Omega}(1)}.
\]
\end{enumerate}
Hence if we choose $\delta$ small enough depending on $\varepsilon_1$ and $\varepsilon_2$, then we have
\[
|C_{\pi_{q_2}(\Gamma)}(\pi_{q_2}(x'a_i))\cap \pi_{q_2}(A.A)|\ge Q^{\Theta_{\Omega}(\varepsilon_1)},
\]
for some $x'\in X$. Since $x'\in X$, $x'a_i$ is a $|p^{m_2}|_{\nu}$-regular semisimple element.
\end{proof} 
 
\begin{lem}[Going to $k'$-simple factors]\label{l:ProjectionSimpleFactors}
In the above setting, if $\Pfr_Q(\delta,A,l)$ holds for a positive integer $Q$ and $1\gg_{\Omega} \delta>0$, then $\Pfr_Q(\Theta_{\Omega}(\delta),{\rm pr}_i(\Ad(A\cdot A)),l)$ holds for any $i$.
\end{lem}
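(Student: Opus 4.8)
The plan is to verify the three clauses of $\Pfr_Q$ in~(\ref{e:P(delta)}) one at a time for the set $B_i:={\rm pr}_i(\Ad(A\cdot A))\subseteq\bbh_i(k')$, the point being that $\phi_i:={\rm pr}_i\circ\Ad$ is a group homomorphism of $\Gamma$ onto a Zariski-dense subgroup of the $k'$-simple group $\bbh_i$. By the free-generation arranged in Section~\ref{ss:InitialReductions} the map $\phi_i$ is injective on $\Omega$, so it carries the walk $\pcal_\Omega^{(l)}$ to the walk $\pcal_{\phi_i(\Omega)}^{(l)}$ on $\bbh_i$ (and, for $X\subseteq\bbh_i$, the quantity $\pcal_\Omega^{(l)}(X)$ is to be read as $\pcal_\Omega^{(l)}(\phi_i^{-1}(X))$); moreover $\phi_i$ is given by morphisms over $\ocal_{k'}(S')[1/a_0]$, so it descends to a homomorphism $\psi:\pi_Q(\Gamma)\to\bbh_i(\ocal/Q)$ with $\pi_Q\circ\phi_i=\psi\circ\pi_Q$. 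We may assume $1\in A$, since adjoining $1$ to a symmetric set changes neither $\pcal_\Omega^{(l)}(A\cdot A)$ nor $\pi_Q(A\cdot A)$ (as $1\in A\cdot A$ already). With this, the first two clauses are immediate: $1\in A$ gives $A\subseteq A\cdot A$, hence $\pcal_\Omega^{(l)}(\phi_i^{-1}(B_i))\ge\pcal_\Omega^{(l)}(A\cdot A)\ge\pcal_\Omega^{(l)}(A)>Q^{-\delta}$, while $l>\tfrac1\delta\log Q$; choosing the output parameter $\delta':=C_\Omega\,\delta\ge\delta$ makes both $\pcal_\Omega^{(l)}(\phi_i^{-1}(B_i))>Q^{-\delta'}$ and $l>\tfrac1{\delta'}\log Q$ hold.

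The content is the tripling clause. Since $(A\cdot A)^{\cdot3}=A^{\cdot6}$ and $\psi$ is a homomorphism, $|\pi_Q(B_i^{\cdot3})|=|\psi(\pi_Q(A^{\cdot6}))|\le|\pi_Q(A^{\cdot6})|$, and the Ruzsa inequality (see~\cite{Hel1}) turns the hypothesis $|\pi_Q(A^{\cdot3})|\le|\pi_Q(A)|^{1+\delta}$ into $|\pi_Q(A^{\cdot6})|\le|\pi_Q(A)|^{1+4\delta}$. It therefore suffices to establish the lower bound $|\pi_Q(B_i)|=|\psi(\pi_Q(A\cdot A))|\ge|\pi_Q(A)|^{1-\Theta_\Omega(\delta)}$, for then $|\pi_Q(B_i^{\cdot3})|\le|\pi_Q(A)|^{1+4\delta}\le|\pi_Q(B_i)|^{(1+4\delta)/(1-\Theta_\Omega(\delta))}\le|\pi_Q(B_i)|^{1+\delta'}$ once $C_\Omega$ is taken large enough; so all three clauses of $\Pfr_Q(\Theta_\Omega(\delta),{\rm pr}_i(\Ad(A\cdot A)),l)$ would follow.

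For that lower bound I would count $\ker\psi$-cosets. Writing $N_i:=\ker\psi\le\pi_Q(\Gamma)$, coset counting gives $|\pi_Q(A)|\le|\pi_Q(A\cdot A)|\le|\psi(\pi_Q(A\cdot A))|\cdot\max_{\bar c}|\pi_Q(A\cdot A)\cap\bar c N_i|\le|\pi_Q(B_i)|\cdot|\pi_Q(A^{\cdot4})\cap N_i|$, so the claim reduces to $|\pi_Q(A^{\cdot4})\cap N_i|\le|\pi_Q(A)|^{\Theta_\Omega(\delta)}$. Here $N_i$ is the $\pi_Q$-image of $\phi_i^{-1}(\bbh_i(\ocal)[Q])$; since $\bbh_i(\ocal)[Q]\subseteq\bbh_i(\ocal)[p]$ and the latter pulls back to a subgroup $\Delta$ of $\Gamma$ of index $\asymp p^{\dim\bbh_i}$ whose $\pi_p$-image is $\ker(\psi\bmod p)$, Theorem~\ref{t:ExpanderSquareFree} together with Lemma~\ref{l:EquidistributionErrorRate} shows the walk charges $\Delta$ with probability $\ll p^{-\dim\bbh_i}+\lambda_1^{l}$ once $l>\tfrac1\delta\log Q\gg\log p$. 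Converting this measure estimate into the required counting bound on $\pi_Q(A^{\cdot4})\cap N_i$ — using the Kesten pointwise bound on $\pcal_\Omega^{(l)}$, the approximate-subgroup structure of $\pi_Q(A)$ (so that the few elements of $A^{\cdot4}$ lying in the thin subgroup $\Delta$ cannot survive reduction mod $Q$ in large number), the hypothesis $l>\tfrac1\delta\log Q$, and, if needed, a prior regularization of $A$ as in Corollary~\ref{c:Regularization}, together with the observation that $\ker({\rm pr}_i\circ\Ad)$ is the $\Gamma$-points of a proper $k'$-subgroup of $\bbh$ so that Proposition~\ref{p:EscapeSubgroups} governs the coarse (Zariski) version of this escape — is the step I expect to be the \textbf{main obstacle}: it is precisely where one must control the interaction between reduction modulo the prime power $Q=p^N$ and the projection onto a single $k'$-simple factor. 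Once it is in place, the lemma is proved, and it is what allows the remainder of the proof of Proposition~\ref{p:LargeIntersectionMaximalTorus} to be carried out inside the $k'$-simple group $\bbh_{i_0}$.
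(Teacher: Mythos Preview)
Your reduction to the lower bound $|\pi_Q(B_i)|\ge|\pi_Q(A)|^{1-\Theta_\Omega(\delta)}$ is not just hard to prove---it is false in general. Take $\Ad(\bbh)=\bbh_1\times\bbh_2$ with both factors nontrivial, and let $A$ be a symmetric set with $\pi_Q(A)=\pi_Q(\Gamma)$; then $\Pfr_Q(\delta,A,l)$ holds trivially (the tripling ratio is $1$), yet $|\pi_Q(B_i)|=|\psi(\pi_Q(\Gamma))|\asymp Q^{\dim\bbh_i}$ while $|\pi_Q(A)|\asymp Q^{\dim\bbg_1}$, so $|\pi_Q(B_i)|\le|\pi_Q(A)|^{\dim\bbh_i/\dim\bbg_1}$ with a fixed exponent strictly below $1$. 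Equivalently, your intermediate target $|\pi_Q(A^{\cdot4})\cap N_i|\le|\pi_Q(A)|^{\Theta_\Omega(\delta)}$ fails: in this example $\pi_Q(A^{\cdot4})\cap N_i=N_i$ has size $\asymp Q^{\dim\bbg_1-\dim\bbh_i}$. The ``main obstacle'' you flag is a genuine obstruction, not a technicality.

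The paper avoids this by asking much less of $|\pi_Q(B_i)|$. One only needs the weak lower bound $|\pi_Q(A_i\cdot A_i)|\ge Q^{\Theta_\Omega(1)}$, which follows from Lemma~\ref{l:BGRemark}, Kesten's bound in the free group $\langle\overline{\Omega}_i\rangle$ (this is where the free generation of ${\rm pr}_i(\Ad(\overline{\Omega}))$ from Section~\ref{ss:InitialReductions} is used), and the injectivity of $\pi_Q$ on balls of radius $\Theta_\Omega(\log Q)$. The tripling clause then comes from Ruzsa \emph{covering} rather than a cardinality comparison: from $|\pi_Q(A^{\cdot3})|\le|\pi_Q(A)|^{1+\delta}$ one gets $\pi_Q(A^{\cdot6})\subseteq X\cdot\pi_Q(A^{\cdot2})$ with $|X|\le|\pi_Q(A)|^{O(\delta)}$; applying the homomorphism $\psi$ gives $|\pi_Q(B_i^{\cdot3})|\le|\pi_Q(A)|^{O(\delta)}\,|\pi_Q(B_i)|$, and since $|\pi_Q(A)|\le Q^{\dim\bbg_1}\le|\pi_Q(B_i)|^{\Theta_\Omega(1)}$ by the weak lower bound, this is $\le|\pi_Q(B_i)|^{1+\Theta_\Omega(\delta)}$. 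The first two clauses of $\Pfr_Q$ transfer exactly as you wrote, using that ${\rm pr}_i\circ\Ad$ is injective on $\Gamma$.
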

\begin{proof}
As it was discussed in Section~\ref{ss:StrongApproximation}, $R_{k(\pfr)/\bbq}(\bbg_{1,\pfr})$ is naturally isomorphic to $\bbg_1$.
 And so, if $\bbg'$ is a $k(\pfr)$-simple factor of $\Ad(\bbg_{1,\pfr})$, then $R_{k(\pfr)/\bbq}(\bbg')$ is naturally isomorphic to a $\bbq$-simple factor of $\bbg_1$. 
 Therefore, by Proposition~\ref{p:EscapeSubgroups}, we have that the ${\rm pr'}(\Ad(\overline{\Omega}))$ freely generates a Zariski-dense subgroup of $R_{k(\pfr)/\bbq}(\bbg')(\bbq)$. 
 Thus ${\rm pr}'(\Ad(\overline{\Omega}))$ freely generates a Zariski-dense subgroup of $\bbg'(k(\pfr))$. 
 Hence, for any index $i$, $\overline{\Omega}_i:={\rm pr}_i(\Ad(\overline{\Omega}))$ freely generates a Zariski-dense subgroup of $\bbh_i(k')$. 
 Let $\Omega_i:=\overline{\Omega}_i\cup \overline{\Omega}_i^{-1}$. Since $\pcal_{\Omega}^{(l)}(A)\ge Q^{-\delta}$ and the restriction of ${\rm pr}_i\circ \Ad$ to $\Gamma$ is one-to-one, we have $\pcal_{\Omega_i}^{(l)}(A_i)\ge Q^{-\delta}$. 
 So by Lemma~\ref{l:BGRemark} we have that $\pcal_{\Omega_i}^{(\lfloor \Theta_{\Omega}(\log Q)\rfloor)}(A_i\cdot A_i)\ge Q^{-2\delta}$. Hence by the Kesten bound we have $|A_i\cdot A_i\cap B_{\lfloor \Theta_{\Omega}(\log Q)\rfloor}|\ge Q^{\Theta_{\Omega}(1)-2\delta}\ge Q^{\Theta_{\Omega}(1)}$. 
 For a suitable (implied) constant we have also $|A_i\cdot A_i\cap B_{\lfloor \Theta_{\Omega}(\log Q)\rfloor}|=|\pi_Q(A_i\cdot A_i\cap B_{\lfloor \Theta_{\Omega}(\log Q)\rfloor})|$. 
 Thus overall we have that
\begin{enumerate}
	\item $\pcal_{\Omega_i}^{(l)}(A_i)\ge Q^{-\delta}$, and
	\item $|\pi_Q(A_i\cdot A_i)|\ge Q^{\Theta_{\Omega}(1)}\ge |\pi_Q(A)|^{\Theta_{\Omega}(1)}$.
\end{enumerate}
Hence by the Ruzsa inequality we have that $|\pi_Q(\prod_3 (A_i\cdot A_i))|\le |\pi_Q(A_i\cdot A_i)|^{1+\Theta_{\Omega}(\delta)}$, which implies that $\Pfr_Q(\Theta_{\Omega}(\delta),A_i\cdot A_i,l)$ holds for any $i$.
\end{proof}
\begin{proof}[Proof of Proposition~\ref{p:LargeIntersectionMaximalTorus}]
By Lemma~\ref{l:ProjectionSimpleFactors} and the virtue of the proof of Proposition~\ref{p:q-regularLargeCentralizer}, we have that there is $x_i\in \prod_{O_{\Omega,\vare_2}(1)}A$ such that
\begin{enumerate}
	\item $x_i$ is $|p^{n_3}|_{\nu}$-regular semisimple.
	\item $|C_{\pi_{q'_2}(\rho_i(\Gamma))}(\pi_{q'_2}(\rho_i(x_i)))\cap \pi_{q'_2}(\rho_i(\prod_2 (A.A)))|\ge Q^{\Theta_{\Omega}(\vare_1)}$, where $\rho_i={\rm pr}_i\circ\Ad$ and $q'_2=Q^{\Theta_{\Omega}(\vare_1)}$.
\end{enumerate}
Using the $p$-adic topology on $\rho_i(\Gamma)$ we can view $\pi_{q'_2}(\rho_i(\Gamma))$ as a subset of a rooted regular tree $T_{|\f_{\nu}|^{\dim\bbh},\log_p q'_2}$. Hence by regularization, see  Corollary~\ref{c:Regularization}, there are a positive integer $n_1=\Theta_{\Omega}(\vare_1 N)$ and 
\[\textstyle
\overline{B}_i\subseteq C_{\pi_{q'_2}(\rho_i(\Gamma))}(\pi_{q'_2}(\rho_i(x_i)))\cap \pi_{q' _2}(\rho_i(\prod_2 (A.A)))
\]
 such that $|\pi_{p^{n_1}}(\overline{B}_i)|=1$ and $|\pi_{p^l}(\overline{B}_i)|\gg_{\Omega} p^{\Theta_{\Omega}(l-n_1)}$ for any $n_1\le l\le \log_p q'_2$; in particular $|\overline{B}_i|\gg Q^{\Theta_{\Omega}(\vare_1)}$. 
 
Since $x_i$ is $|p^{n_3}|_{\nu}$-regular semisimple, $\rho_i(x_i)$ is also $|p^{n_3}|_{\nu}$-regular semisimple. Now, let's apply Lemma~\ref{l:q-RegularSemisimpleCentralizer} to $q_0:=p^{n_3}$, $q:=p^{n_3}$, $q_1:=p^{n_1}$, and $q_2:=p^{n_1+n_2}$ where 
\[
n_2:=\min\{n_1-4n_3,\log_p q_2'-n_1\}.
\]
We notice that 
\[
\log_p q_0=n_3=\Theta_{\Omega}(N \vare_2)\gg_{\Omega} 1,
\]
 and,
\[
\log_p (q_1^2/(q_0 q^2 q_2))= 2n_1-(n_3+2n_3+n_1+n_2)\ge n_3
\gg_{\Omega} 1.
\]
So the hypotheses of Lemma~\ref{l:q-RegularSemisimpleCentralizer} hold. Therefore we have
\[
\Psi^{q_2}_{q_1}(\pi_{q_2}(C_{\pi_{q_0 q^2 q_2}(\rho_i(\Gamma))}(\rho_i(x_i))\cap \pi_{q_0 q^2 q_2}(\rho_i(\Gamma)[q_1])))\subseteq \pi_{q_2/q_1}(\hfr_i(\bbz_p)\cap \overline{\tfr}_i(k')),
\]
 where $\overline{\tfr}_i=\Lie C_{\bbh_i}(\rho_i(x_i))$ and $\hfr_i:=\Lie(\cal_i)$ where $\cal_i$ is the closure of $\bbh_i$ in $\cal$. Since $|\pi_{q_1}(\overline{B}_i)|=1$, $\overline{B}_i\cdot \overline{B}_i^{-1}\subseteq \rho_i(\Gamma)[q_1]$. Thus we have
\[
 \lin{q_1}{q_2}(\pi_{q_2}(\overline{B}_i\cdot \overline{B}_i^{-1}))\subseteq \pi_{q_2/q_1}(\hfr_i(\bbz_p)\cap \overline{\tfr}_i(k')).
\]
 On the other hand, since $\rho_i(x_i)$ is a regular semisimple element, we have ${\rm pr}_i(\Lie C_{\bbh}(x_i))=\Lie C_{\bbh_i}(\rho_i(x_i))$. So we have
 \be\label{e:SetPart}
  \lin{q_1}{q_2}(\pi_{q_2}(\overline{B}_i\cdot \overline{B}_i^{-1}))\subseteq \pi_{q_2/q_1}(\hfr_i(\bbz_p)\cap {\rm pr}_i(\tfr_i(k')),
 \ee
 where $\tfr_i:=\Lie C_{\bbh}(x_i)$, and $C_{\bbh}(x_i)^{\circ}$ is a $k'$-torus.
 
Since $n_1\le n_1+n_2\le \log_p q_2'$, we have $|\pi_{q_2}(\overline{B}_i)|\gg_{\Omega} p^{\Theta_{\Omega}(n_2)}$. On the other hand, since $n_1=\Theta_{\Omega}(N\vare_1)$ and $\log_p q_2'-n_1=\Theta_{\Omega}(N\vare_1)$, we have
 \[
n_2=\min\{\log_p q_2'-n_1,n_1-4n_3\}=\Theta_{\Omega}(N\vare_1)
 \]
if $\vare_2\ll_{\Omega} \vare_1$ for suitable constant. 

The way we chose $B_i$ implies that  
\begin{enumerate}
\item For any $q_1|q|q_2$ we have $(q/q_1)^{\Theta_{\Omega}(1)}\ll_{\Omega} |\pi_q(\overline{B}_i\cdot \overline{B}_i^{-1})|=|\Psi_{q_1}^{q}(\pi_q(\overline{B}_i\cdot \overline{B}_i^{-1}))|$.
\item $Q^{\Theta_{\Omega}(\varepsilon_1)}\ll |\Psi_{q_1}^{q_2}(\pi_{q_2}(\overline{B}_i\cdot \overline{B}_i^{-1}))|$.
\end{enumerate}
Let $\bfr\subseteq \hfr_i\cap {\rm pr}_i(\tfr_i(k'))$ be such that $\pi_{q_2/q_1}$ induces a bijection between $\bfr$ and $\Psi_{q_1}^{q_2}(\pi_{q_2}(\overline{B}_i\cdot \overline{B}_i^{-1}))$. By (\ref{e:SetPart}), we know that there is such $\bfr$, and, by the discussed properties of $B_i$, $\bfr$ satisfies the claimed properties.

Now one can find the desired $k''$ and $x_{\phi}$'s by Lemma~\ref{l:q-regularBasics}, Part (\ref{item}).
\end{proof}

\subsection{Proof of Proposition \ref{p:ThickTopSlice}}\label{ss:ThickTopLevel}
As in the previous section, let $\Omega$ and $\Gamma$ be as in Section~\ref{ss:InitialReductions}, and $\gcal_1$, $\gcal_{1,\pfr}$, and $\pi_{p^n}$ be as in Section   \ref{ss:StatementApproximateSubgroupBoundedGeneration}. Let $\pfr\in V_f(k)\setminus S$ which divides $p\in V_f(\bbq)$. Let the quadruple  of a group scheme $\cal$, a number field $k'$, a place $\nu\in V_f(k')$, and a finite subset $S'$ of $V_f(k')$ be either $(\gcal_1,\bbq,p,S_0)$ if $p\not\in S_0$, or $(\gcal_{1,\pfr},k(\pfr),\pfr,S_{\pfr})$ if $p\in S_0$. In particular, $\Gamma\subseteq \cal(\ocal_{k'}(S'))$. Let $\bbh$ be the generic fiber of $\cal$. So $\Ad(\bbh)=\oplus_i \bbh_i$ where $\bbh_i$ is a $k'$-simple $k'$-group, and $\Lie(\bbh)=\oplus_i \Lie(\bbh_i)$. Let ${\rm pr}_i$ be the projection either from $\Ad(\bbh)$ onto $\bbh_i$, or from $\Lie(\bbh)$ onto $\Lie(\bbh_i)$.

For a given $\varepsilon_1$ and $\varepsilon_2$ (we consider $\vare_1$ and $\vare_2$ up to constants that depend only on $\Omega$ and it will be clear what the conditions of the implied constants are), let $\delta$ be the real number given by Proposition~\ref{p:LargeIntersectionMaximalTorus}. We use Proposition~\ref{p:LargeIntersectionMaximalTorus} for any $k'$-simple factor $\bbh_i$ of $\Ad(\bbh)$, and we get sets $\bfr_i$, tori $\bbt_i$, finite Galois extensions $k''_i$ of $k'$, divisors $\nu'_i$ of $\nu$, and elements $x_{\phi}^{(i)}$ that satisfy the properties of Proposition~\ref{p:LargeIntersectionMaximalTorus}. For simplicity let $K_i:=(k''_i)_{\nu_i'}$ and $\ocal_i$ be the ring of integers of $K
_i$. Let $\Phi_i$ be the set of roots of $\tfr_i$ in $d\rho_i={\rm pr}_i\circ{\rm ad}$. Let $D:\tfr_i(K_i)\rightarrow K_i^{|\Phi_i|}$, $
D(x):=(\phi(x))_{\phi\in \Phi_i},$
and $X_i:=D(\bfr_i)\subseteq {\ocal_i}^{|\Phi_i|}$.

If $D(b)\equiv D(b') \pmod{p^m {\ocal_i}^{|\Phi_i|}}$ for $b,b'\in \bfr_i$, then $\ad(b)(x_{\phi}^{(i)})\equiv \ad(b')(x_{\phi}^{(i)}) \pmod{p^m \hfr_i(\ocal_i)}$ for any $\phi\in \Phi_i$. So for any $x\in \hfr_i(\ocal_i)$ we have $p^{n_3} \ad(b)(x)\equiv p^{n_3} \ad(b')(x) \pmod{p^m\hfr_i(\ocal_i)}$, where $n_3$ is given by Proposition~\ref{p:LargeIntersectionMaximalTorus}. Hence $\pi_{p^{m'}}(b)=\pi_{p^{m'}}(b')$, where $m'=m-n_3-\Theta_{\Omega}(1)$. In particular, $|\pi_{p^{n_2'}}(X_i)|\ge Q^{\Theta_{\Omega}(\varepsilon_1)}$, where $n_2\ll_{\Omega}n_2'\le n_2$ and $n_2=\Theta_{\Omega}(\vare_1 N)$ is given in Proposition~\ref{p:LargeIntersectionMaximalTorus}. Hence by \cite[Corollary 3]{SG:SumProduct}, there are integers $m_1'$ and $m_2'=\Theta_{\Omega}(\vare_1 N)$ and ${\bf d}\in \ocal_i^{|\Phi_i|}\setminus p\ocal_i^{|\Phi_i|}$ such that $m_1'+m_2'\ll_{\Omega} n_2'$ and
\[
\textstyle\pi_{p^{m_1'+m_2'}}(p^{m_1'}\bbz\h {\bf d})\subseteq \pi_{p^{m_1'+m_2'}}(\sum_{O_{\Omega}(1)}\prod_{O_{\Omega}(1)}X_i-\sum_{O_{\Omega}(1)}\prod_{O_{\Omega}(1)}X_i).
\]
Let $\Bfr$ be a $K_i$-basis of $\hfr_i$ which consists of $\{x_{\phi}^{(i)}\}_{\phi\in \Phi_i}$ and a $K_i$-basis of ${\rm pr}_i(\tfr_i(K_i))$. For a linear map $L$ in ${\rm End}_{K_i}(\hfr_i(K_i))$, let $[L]_{\Bfr}$ be the corresponding matrix with respect to $\Bfr$. Therefore  we have:
\[
\textstyle\pi_{p^{m_1'+m_2'}}(p^{m_1'}\bbz\h \diag({\bf d}))\subseteq \pi_{p^{m_1'+m_2'}}(\sum_{O_{\Omega}(1)}\prod_{O_{\Omega}(1)}[\ad(\bfr_i)|_{\hfr_i(K_i)}]_{\Bfr}-\sum_{O_{\Omega}(1)}\prod_{O_{\Omega}(1)}[\ad(\bfr_i)|_{\hfr_i(K_i)}]_{\Bfr}).
\]
On the other hand, we know that $\hfr_i(K_i)$ has a basis $\Bfr'$ such that 
\[
[\ad(\bfr_i)|_{\hfr_i(K_i)}]_{\Bfr'}\in M_{d_i}(\bbz_p),
\]
where $d_i:=\dim_{K_i}(\hfr_i(K_i))$. Moreover by (the second part of)  Proposition~\ref{p:LargeIntersectionMaximalTorus}, by slightly enlarging $n_3$, we have that  $\pi_{p^s}([L_1]_{\Bfr}) =\pi_{p^s}([L_2]_{\Bfr})$ for $L_1,L_2\in {\rm End}_{\bbz_p}(\hfr_i(\bbz_p))$ implies $\pi_{p^{s-n_3}}([L_1]_{\Bfr'})=\pi_{p^{s-n_3}}([L_2]_{\Bfr'})$. Hence there is 
$b_i\in {\rm End}_{\bbz_p}(\hfr_i(\bbz_p))\setminus p {\rm End}_{\bbz_p}(\hfr_i(\bbz_p))$ which is in the $\bbq_p$-algebra generated by $\ad(\bfr_i)|_{\hfr_i(K_i)}$ such that 
\be\label{e:UseSumProduct}
\textstyle\pi_{p^{m_1'+m_2'-n_3}}(p^{m_1'}\bbz_p b_i)\subseteq \pi_{p^{m_1'+m_2'-n_3}}(\sum_{O_{\Omega}(1)}\prod_{O_{\Omega}(1)}d \rho_i(\bfr_i)-\sum_{O_{\Omega}(1)}\prod_{O_{\Omega}(1)}d\rho_i(\bfr_i)).
\ee
Since $b_i$ is in the $\bbq_p$-algebra generated by $\ad(\bfr_i)|_{\hfr_i(K_i)}$, we have that $b_i(x_{\phi}^{(i)})=b_{i,\phi} x_{\phi}^{(i)}$ for any $\phi\in \Phi_i$. 

\begin{lem}\label{l:UseSumProduct}
If $t_j\equiv 1+p^my_j \pmod{p^{2m}}$ for $1\le j\le N$, $\gamma\equiv 1+p^k\xi\pmod{p^{2k}}$ and $k\ge m$, then
\[
C_{t_1}\circ C_{t_2} \circ \cdots \circ C_{t_N} (\gamma)\equiv 1+p^{k+Nm} \ad(y_1)\ad(y_2)\cdots\ad(y_N)\xi \pmod{p^{k+(N+1)m}},
\]
where $C_t(\gamma):=t\gamma t^{-1} \gamma^{-1}$.
\end{lem}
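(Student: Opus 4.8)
The plan is to prove Lemma~\ref{l:UseSumProduct} by induction on $N$, with the base case $N=0$ being trivial ($C$ of nothing applied to $\gamma_0 \equiv 1 + p^k\xi_0 \pmod{p^{2k}}$ is just $\gamma_0$, and the statement reads $\gamma_0 \equiv 1 + p^k\xi_0 \pmod{p^{k+m}}$, which holds since $k \ge m$ forces $2k \ge k+m$). So the real content is a single-commutator estimate: if $t \equiv 1 + p^m y \pmod{p^{2m}}$ and $\gamma \equiv 1 + p^j \xi \pmod{p^{j+m}}$ with $j \ge m$ (in applications $j = k + (N-1)m$), then $C_t(\gamma) = t\gamma t^{-1}\gamma^{-1} \equiv 1 + p^{j+m}\,\ad(y)\xi \pmod{p^{j+2m}}$.

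First I would establish this single step by a direct matrix computation in $\gl_n(\bbz_p)$ (recall $\gfr \subseteq \gl_n(\bbz_p)$, so all these elements live in $\GL_n(\bbz_p)$). Write $t = 1 + p^m y + p^{2m}y'$, $t^{-1} = 1 - p^m y + p^{2m}(y^2 - y') + O(p^{3m})$, $\gamma = 1 + p^j\xi + O(p^{j+m})$, and $\gamma^{-1} = 1 - p^j\xi + O(p^{j+m})$ (here $O(p^s)$ means an element of $p^s\gl_n(\bbz_p)$). Multiplying these four factors and tracking which cross-terms survive: the terms with no $p^j$ factor combine to $tt^{-1}$-type cancellations giving $1 + O(p^{3m})$; the terms linear in $p^j\xi$ are $p^j\xi$ from $\gamma$ and $-p^j\xi$ from $\gamma^{-1}$ which cancel to leading order but leave $p^{m+j}(y\xi - \xi y) = p^{m+j}\ad(y)\xi$ from the two ways of inserting one $p^m y$ between a $\xi$ and its inverse; everything else is $O(p^{j+2m})$ using $j \ge m \ge 1$. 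This is the routine-calculation part, which I would state cleanly and not belabor. The composition then follows: assuming $C_{t_2}\circ\cdots\circ C_{t_N}(\gamma_0) \equiv 1 + p^{k+(N-1)m}\,\ad(y_2)\cdots\ad(y_N)\xi_0 \pmod{p^{k+Nm}}$, apply the single step with $t = t_1$, $j = k+(N-1)m$ (which is $\ge m$), and $\xi = \ad(y_2)\cdots\ad(y_N)\xi_0$ to conclude.

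The main obstacle — really the only place to be careful — is bookkeeping the error terms: one must verify that in the single-step estimate every neglected term genuinely lies in $p^{j+2m}\gl_n(\bbz_p)$, in particular the $p^{2m}y'$ correction to $t$ contributes only at order $p^{j+2m}$ when it meets a $p^j\xi$, and the $O(p^{2m})$ correction to $t^{-1}$ similarly, so that these do not pollute the $p^{j+m}\ad(y)\xi$ coefficient. Since the congruence in the conclusion is only modulo $p^{k+(N+1)m}$ — i.e. we only claim accuracy to one extra power of $p^m$ beyond the leading $p^{k+Nm}$ term — there is exactly enough slack for this, and no finer expansion of $t^{-1}$ or $\gamma^{-1}$ is needed. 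I would also note that the $\bbz_p$-integrality of $y_j, \xi_0$ (they come from $\hfr_i(\ocal_i) \subseteq \gl_n$, and after the basis change of the preceding paragraph from $\gfr \subseteq \gl_n(\bbz_p)$) is what makes all the $O(p^s)$ manipulations legitimate; this is implicit in the setup but worth a sentence.
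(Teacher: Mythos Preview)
Your proposal is correct and follows essentially the same approach as the paper: induction on $N$ with a single-step commutator estimate $C_t(\gamma)\equiv 1+p^{j+m}\ad(y)\xi\pmod{p^{j+2m}}$. The paper takes $N=1$ as the base case and writes the computation via $C_t(\gamma)\equiv 1+[t,\gamma]\pmod{p^{2m+k}}$ (matrix commutator), then says the inductive step follows ``by a similar argument''; you have in fact stated that similar argument more precisely by formulating the single-step lemma with hypothesis $\gamma\equiv 1+p^j\xi\pmod{p^{j+m}}$ rather than $\pmod{p^{2j}}$, which is exactly the accuracy delivered by the inductive hypothesis.
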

\begin{proof}
We proceed by induction on $N$. For $N=1$, we have 
\begin{align*}
C_t(\gamma)&\equiv 1+[t,\gamma] \pmod{p^{m+k+\min\{m,k\}}}\\
&\equiv 1+((1+p^my')(1+p^k\xi')-(1+p^k\xi')(1+p^my')) \pmod{p^{2m+k}}\\
&\equiv 1+p^{m+k}[y',\xi'] \pmod{p^{2m+k}}\\
&\equiv 1+p^{m+k}[y,\xi] \pmod{p^{2m+k}}\hspace{2cm}\text{(since $y'\equiv y \pmod{p^m}$, $\xi'\equiv\xi \pmod{p^k}$.)}\\
&\equiv 1+p^{m+k}\ad(y)\xi \pmod{p^{2m+k}}.
\end{align*}
By a similar argument the inductive step can be proved.
\end{proof}
\begin{cor}[Thick segment, conditional]\label{c:UseSumProduct}
In the above setting, suppose $0< \vare_2\ll_{\Omega}\vare_1\ll_{\Omega}1$. Let $N$ be a positive integer such that $1\ll_{\Omega} N\vare_2$, and $Q=P^N$. Then there are $\delta>0$, positive integers $n_1=\Theta_{\Omega,i}(\vare_1N)$ and $n_2=\Theta_{\Omega,i}(\vare_1N)$, and a positive integer $m_1$ such that the following holds:

If $\Pfr_Q(\delta,A,l)$ holds, then there is $b_i\in {\rm End}_{\bbz_p}(\hfr_i(\bbz_p))\setminus p\h {\rm End}_{\bbz_p}(\hfr_i(\bbz_p))$  such that for any $s\ge n_2$ and $\xi_0\in \Psi^{p^{2s}}_{p^s}(\prod_{O_{\Omega}(1)}A)$ we have:
\begin{enumerate}
\item $n_2-m_1=\Theta_{\Omega}(\varepsilon_1 N)$,
\item $\pi_{p^{n_2}}(p^{m_1}\bbz_p b_i({\rm pr}_i(\xi_0)))\subseteq {\rm pr}_i\left(\linValue{p^{s+n_1'}}{p^{s+n_1'+n_2}}{\Gamma[p^{s+n_1'}]\cap \prod_{O_{\Omega}(1)}A}\right)$, where $n_1'=\Theta_{\Omega}(n_1)$.
\end{enumerate}
\end{cor}
\begin{remark}\label{re:OK}
	The parameter $n_1$ will be the same as in Proposition~\ref{p:LargeIntersectionMaximalTorus}. The parameter $n_2$ is going to be smaller than the one in Proposition~\ref{p:LargeIntersectionMaximalTorus}. Hence the {\em new} $n_2$ satisfies the claims of Proposition~\ref{p:LargeIntersectionMaximalTorus}, too.
\end{remark}
\begin{proof}[Proof of Corollary~\ref{c:UseSumProduct}]
For now suppose $n_1,n_2$ and $\bfr_i$ are the ones given by 	Proposition~\ref{p:LargeIntersectionMaximalTorus}. Hence we have 
\[
\textstyle\pi_{p^{n_1+n_2}}(1+p^{n_1}\bfr_i) \subseteq \pi_{p^{n_1+n_2}}(\prod_{O_{\Omega}(1)}A).
\]
Hence by Lemma~\ref{l:UseSumProduct} (applied to the parameters $m=\lfloor(n_1+n_2)/2\rfloor$, $k=s$, $y_j\in p^{\lceil (n_1-n_2)/2\rceil}\bfr_i$, $\xi=\xi_0$, and $N=\Theta_{\Omega}(1)$) we have that
\be\label{e:SumProd}
\textstyle\pi_{p^{s+n_1'+n_2}}\left(1+p^{s+n_1'}\left(\sum_{O_{\Omega}(1)}\prod_{O_{\Omega}(1)}\ad(\bfr_i)-\sum_{O_{\Omega}(1)}\prod_{O_{\Omega}(1)}\ad(\bfr_i)\right)(\xi_0)\right)\subseteq \pi_{p^{s+n_1'+n_2}}(\prod_{O_{\Omega}(1)}A),
\ee
where $n_1'=\Theta_{\Omega}(n_1)$. Therefore 
\[\textstyle
\pi_{p^{n_2}}\left(\sum_{O_{\Omega}(1)}\prod_{O_{\Omega}(1)}d\rho_i(\bfr_i)-\sum_{O_{\Omega}(1)}\prod_{O_{\Omega}(1)}d\rho_i(\bfr_i)\right)(\xi_0)\subseteq
{\rm pr}_i\left(\Psi^{p^{s+n_1'+n_2}}_{p^{s+n_1'}}(\Gamma[p^{s+n_1'}]\cap \prod_{O_{\Omega}(1)}A)\right).
\]

On the other hand, by (\ref{e:UseSumProduct}), there are  $b_i\in {\rm End}_{\bbz_p}(\hfr_i(\bbz_p))\setminus p\h {\rm End}_{\bbz_p}(\hfr_i(\bbz_p))$ and positive integers $m_1',m_2',n_3$ such that 
\begin{enumerate}
	\item $m_1'+m_2'\ll_{\Omega} n_2$, $m_2'=\Theta_{\Omega}(n_2)$, and $n_3=\Theta_{\Omega}(\varepsilon_2 N)$,
	\item $\textstyle\pi_{p^{m_1'+m_2'-n_3}}(p^{m_1'}\bbz_p b_i)\subseteq \pi_{p^{m_1'+m_2'-n_3}}(\sum_{O_{\Omega}(1)}\prod_{O_{\Omega}(1)}d \rho_i(\bfr_i)-\sum_{O_{\Omega}(1)}\prod_{O_{\Omega}(1)}d\rho_i(\bfr_i)).$
\end{enumerate}
Hence the quadruple $(n_1,m_1'+m_2'-n_3,m_1',b_i)$ satisfies the claimed properties for $(n_1,n_2,m_1,b_i)$.
\end{proof}
Corollary~\ref{c:UseSumProduct} gives us a {\em thick segment} only when $b_i({\rm pr}_i(\xi_0))$ has a large $p$-adic norm, which happens if $\xi_0$ is away from $\bigcup_{\phi\in \Phi_i} W_{\phi}$ where $W_{\phi}:=\tfr_i\oplus\bigoplus_{\phi'\neq\phi} \hfr_{\phi'}$.

\begin{lem}[Thick segment]\label{l:ThickSegment}
Let $n_1=\Theta_{\Omega}(N\vare_1)$ and $n_2=\Theta_{\Omega}(N\vare_1)$ be as in Corollary~\ref{c:UseSumProduct} (see Remark~\ref{re:OK})~\ref{p:LargeIntersectionMaximalTorus}. There are integers $m_1$ and $n_1''=\Theta_{\Omega}(n_1)$ and $\xi_0\in \hfr_i(\bbz_p)\setminus p\hfr_i(\bbz_p)$ such that $n_2-m_1=\Theta_{\Omega}(N\vare_1)$ and 
\[
\textstyle\pi_{p^{n_2}}(p^{m_1}\bbz_p \xi_0)\subseteq {\rm pr}_i\left(
\linValue{p^{n_1''}}{p^{n_1''+n_2}}{\Gamma[p^{n_1''}]\cap \prod_{O_{\Omega}(1)}A}\right).
\]
\end{lem}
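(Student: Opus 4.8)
The goal is to upgrade the conditional statement of Corollary~\ref{c:UseSumProduct} to an unconditional one by producing a single input vector $\xi_0\in\hfr_i(\bbz_p)\setminus p\hfr_i(\bbz_p)$ whose image under ${\rm pr}_i$ lies in a thick slice $\Psi^{p^{2k}}_{p^k}(\prod_{O_\Omega(1)}A)$ \emph{and} which is ``regular'' in the torus-eigenbasis sense, i.e. $b_i({\rm pr}_i(\xi_0))$ has $p$-adic norm bounded below by a small power of $p$. Concretely, the obstruction in Corollary~\ref{c:UseSumProduct} is that $b_i(x_\phi^{(i)})=b_{i,\phi}x_\phi^{(i)}$ may have some $b_{i,\phi}$ highly divisible by $p$, or that the $x_\phi^{(i)}$-component of ${\rm pr}_i(\xi_0)$ may be small; both are controlled by how far $\xi_0$ is from the ``bad'' subspaces $W_\phi:=\tfr_i\oplus\bigoplus_{\phi'\neq\phi}\gfr_{\phi'}$. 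So the plan is: first establish that $A$ contains a thick slice at some level $k=\Theta_\Omega(N\vare_1)$ whose image in the Lie algebra fills (up to bounded index) $\pi_{p^{k+n_2}}(\gfr_i(\bbz_p))$ — this comes directly from iterating Theorem~\ref{ReductionBoundedGeneration}'s ingredients, or more precisely from the propagation already performed, together with the fact (Lemma~\ref{l:PropertiesOfLinearizationMap}, part 2) that $\Psi^{p^{2k}}_{p^k}$ is a $\Gamma$-module isomorphism onto $\gfr/p^k\gfr$. Then, among this large set of candidate $\xi_0$'s, use a counting/escape argument to discard those lying too close to $\bigcup_\phi W_\phi$.

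The key steps, in order. \textbf{Step 1.} Fix $k:=n_2$ (or a suitable multiple $\Theta_\Omega(N\vare_1)$) and apply Corollary~\ref{c:UseSumProduct} abstractly: for \emph{every} $\xi_0\in\Psi^{p^{2k}}_{p^k}(\prod_{O_\Omega(1)}A)$ we obtain $m_1$ and $b_i$ with $n_2-m_1=\Theta_\Omega(N\vare_1)$ and the inclusion $\pi_{p^{n_2}}(p^{m_1}\bbz_p\, b_i({\rm pr}_i(\xi_0)))\subseteq {\rm pr}_i(\Psi^{p^{n_1'+n_2}}_{p^{n_1'}}(\cdots))$. \textbf{Step 2.} Observe that $b_i$ acts on $\hfr_i(\ocal_i)$ diagonally in the basis $\{x_\phi^{(i)}\}\cup(\text{basis of }{\rm pr}_i(\tfr_i))$, with $b_i|_{{\rm pr}_i(\tfr_i)}$ being (after the $\bbq_p$-algebra-generation step) essentially invertible over $\bbz_p$, and $b_i(x_\phi^{(i)})=b_{i,\phi}x_\phi^{(i)}$; the thickness of the resulting segment $p^{m_1}\bbz_p b_i({\rm pr}_i(\xi_0))$ degrades by $v_p$ of the relevant coordinate of $b_i({\rm pr}_i(\xi_0))$. \textbf{Step 3 (the escape argument).} The set of $\xi_0$ for which $b_i({\rm pr}_i(\xi_0))$ is divisible by $p^{\Theta_\Omega(N\vare_1)}$ — equivalently, for which ${\rm pr}_i(\xi_0)$ is $p^{\Theta_\Omega(N\vare_1)}$-close to some $W_\phi$ — is the reduction mod $p^k$ of a proper $\bbz_p$-subscheme of $\gcal_{1,\pfr}$ (a union of translates of the $W_\phi$'s pulled back through $\Psi$), so by Proposition~\ref{p:EscapeSubvariety} applied along the walk (together with Proposition~\ref{p:q-nonRegularSemisimple}-style bounds, which handle exactly the distance-to-non-generic estimates) the probability $\pcal_\Omega^{(l)}$ of hitting this bad set is $\le Q^{-\Theta_\Omega(\vare)}$, while $\pcal_\Omega^{(l)}$ of the good slice is $\ge Q^{-O(\delta)}$. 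Since $\delta\ll\vare_1$, there is at least one $\xi_0\in\prod_{O_\Omega(1)}A$ with the desired non-degeneracy; set $n_1'':=\Theta_\Omega(n_1)$ (absorbing the $n_1'$ from Corollary~\ref{c:UseSumProduct} and the extra $v_p$-losses) and conclude.

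\textbf{Main obstacle.} The delicate point is \emph{Step 3}: making precise that ``${\rm pr}_i(\xi_0)$ is $p$-adically far from every $W_\phi$'' is a Zariski-closed condition of the right shape so that Proposition~\ref{p:EscapeSubvariety} applies with a height bound on the defining equations that is $O_\Omega(\log Q\cdot\vare)$ rather than $O_\Omega(\log Q)$ — this is exactly why the argument needs $\xi_0$ to come from $\prod_{O_\Omega(1)}A\cap B_{O_\Omega(\vare\log Q)}$ (bounded word-length) so that the torus $\bbt_i$, the extension $k_i''$, and the eigenvectors $x_\phi^{(i)}$ all have logarithmic height $O_\Omega(\vare\log Q)$, which in turn forces the ramified-prime bound $p_0\ll_\Omega\vare\log Q$ needed for Proposition~\ref{p:EscapeSubvariety} to give a nontrivial estimate. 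Keeping track of the several $\Theta_\Omega(N\vare_1)$ quantities ($k$, $m_1$, $n_1$, $n_1'$, $n_1''$, $n_2$, $n_3$) and verifying the chain of inequalities $n_3\ll n_1$, $n_2-m_1=\Theta_\Omega(N\vare_1)$ survives all the losses is bookkeeping, but the escape estimate is where the real content lies; it is essentially a repackaging of Proposition~\ref{p:q-nonRegularSemisimple} with $W_{a,q}$ replaced by the union of the affine subspaces $W_\phi$ shifted by $\Gamma$.
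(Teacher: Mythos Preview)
Your Step~3 has a genuine gap. You want to apply Proposition~\ref{p:EscapeSubvariety} (or a variant in the style of Proposition~\ref{p:q-nonRegularSemisimple}) to the set of $\xi_0$ that are $p$-adically close to $\bigcup_\phi W_\phi$, or equivalently to the set where $b_i({\rm pr}_i(\xi_0))$ is highly $p$-divisible. There are two problems. First, Proposition~\ref{p:EscapeSubvariety} bounds the $\pcal_\Omega^{(l)}$-measure of a \emph{global} subscheme of $\gcal_1$, but here the bad set lives inside the Lie-algebra slice $\hfr_i/p^k\hfr_i$ and depends on the operator $b_i$, which is a purely $p$-adic object produced by the sum-product construction (\ref{e:UseSumProduct}); there is no global variety with controlled height defining ``$b_i(\cdot)\equiv 0\pmod{p^m}$''. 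Second, even setting this aside, you need $\xi_0$ to lie in the specific slice $\Psi^{p^{2k}}_{p^k}(\prod_{O_\Omega(1)}A\cap\Gamma[p^k])$, not merely to be hit by the random walk. The only elements you know to be in that slice are those coming from $\bfr_i$, and these lie in $\tfr_i$ --- precisely the subspace on which $b_i$ (being in the algebra generated by $\ad(\bfr_i)$, which vanishes on $\tfr_i$) is most likely to degenerate. So the set you want to escape is not a priori small on the slice you have access to, and the random-walk bound does not transfer.

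The paper's argument is quite different and avoids any escape estimate here. It is deterministic: take \emph{any} single $x\in\hfr_i(\bbz_p)\setminus p\hfr_i(\bbz_p)$ realized in the slice at level $k\in[n_1,2n_1)$ (such an $x$ exists by Proposition~\ref{p:LargeIntersectionMaximalTorus}). Now invoke Proposition~\ref{p:SpanSmallElements} with $\vare=\vare_2$ to obtain finitely many $a_j\in\prod_{O_\Omega(1)}A$ of small height such that $\sum_j\bbz_p\rho_i(a_j)$ has index $\le Q^{\vare_2}$ in $\bbz_p[\rho_i(\Gamma)]$. Since $\hfr_i(\bbq_p)$ is a simple $\rho_i(\Gamma)$-module, a compactness argument gives $[\hfr_i(\bbz_p):\sum_j\bbz_p\rho_i(a_j)(x)]\ll_\Omega Q^{\vare_2}$. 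By $\Gamma$-equivariance of $\Psi$ (Lemma~\ref{l:PropertiesOfLinearizationMap}), each $\rho_i(a_j)(x)$ also lies in the slice, so Corollary~\ref{c:UseSumProduct} applies to each. The key pigeonhole step is then purely algebraic: if $\|b_i(\rho_i(a_j)(x))\|_p<|q|_p$ for all $j$, then $b_i(\sum_j\bbz_p\rho_i(a_j)(x))\subseteq q\hfr_i(\bbz_p)$, hence $b_i(p^{\Theta_\Omega(N\vare_2)}\hfr_i(\bbz_p))\subseteq q\hfr_i(\bbz_p)$; since $b_i\not\equiv 0\pmod p$ this forces $q\le Q^{\Theta_\Omega(\vare_2)}$. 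So some $j$ gives $b_i(\rho_i(a_j)(x))=p^{m_1'}\xi_0$ with $m_1'\ll_\Omega N\vare_2\ll_\Omega N\vare_1$ and $\xi_0$ a unit, and the lemma follows. The point you are missing is that the simplicity of the module lets one \emph{manufacture} a near-spanning family inside the slice from a single element, after which $b_i\not\equiv 0\pmod p$ is all that is needed --- no height control on $b_i$ or on the $W_\phi$'s, and no escape argument.
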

\begin{proof}
Applying Proposition~\ref{p:SpanSmallElements} for $\vare:=\vare_2$, representation $\rho:=\rho_i$ (recall that $\rho_i:=\Ad\circ \Pr_i$), number field $k'$, and $S'$, we get $\{a_j\}_{j=1}^{O_{\Omega}(1)}\subseteq \prod_{O_{\Omega}(1)}A$ such that 
\be\label{e:LargeSpan}
[\bbz_p[\rho_i(\Gamma)]:\sum_j \bbz_p\rho_i(a_j)]\le Q^{\vare_2}.
\ee
 On the other hand, $\hfr_i(k')$ is a simple $\rho_i(\Gamma)$-module, and $k'_{\nu}=\bbq_p$. Hence for any $x\in \hfr_i(\bbq_p)\setminus \{0\}$ we have 
\be\label{e:Simple} 
 \hfr_i(\bbq_p)=\bbq_p[\rho_i(\Gamma)](x).
\ee
Moreover for large enough $p$ we have $\bbz_p[\rho_i(\Gamma)]$ is a maximal order of the central simple algebra $\bbq_p[\rho_i(\Gamma)]$. Thus for large enough $p$ and $x\in \hfr_i(\bbz_p)\setminus p\hfr_i(\bbz_p)$ we have $\hfr_i(\bbz_p)=\bbz_p[\rho_i(\Gamma)](x)$. For arbitrary $p$, by a compactness argument similar to~\cite[Lemma 3.5]{SGChar}, we have:

{\bf Claim 1.} For $x\in \hfr_i(\bbz_p)\setminus p \hfr_i(\bbz_p)$,  $[\hfr_i(\bbz_p):\bbz_p[\rho_i(\Gamma)(x)]]\ll_{\Omega} 1$.

{\em Proof of Claim.} Suppose to the contrary that there are $x_j\in \hfr_i(\bbz_p)\setminus p \hfr_i(\bbz_p)$ such that $[\hfr_i(\bbz_p):\bbz_p[\rho_i(\Gamma)(x_j)]]$ goes to infinity. Since $\hfr_i(\bbz_p)\setminus p \hfr_i(\bbz_p)$ is a compact set, passing to a subsequence, if needed, we can assume that $x_j$ converges to $x\in\hfr_i(\bbz_p)\setminus p \hfr_i(\bbz_p)$ as $j$ goes to infinity. Since $\hfr_i(\bbq_p)=\bbq_p[\rho_i(\Gamma)](x)$, $\bbz_p[\rho_i(\Gamma)](x)$ is a neighborhood of $x$. As an additive group $\bbz_p[\rho_i(\Gamma)](x)$ is a finitely generated pro-$p$ group, and its Frattini subgroup is $p\bbz_p[\rho_i(\Gamma)](x)$. Suppose $\{\rho_i(\gamma_1)(x),\ldots,\rho_i(\gamma_{d_i})(x)\}$ is a set of coset representatives of the Frattini subgroup in $\bbz_p[\rho_i(\Gamma)](x)$. Since $p\bbz_p[\rho_i(\Gamma)](x)$ is an open set, for any large enough $j$ and $1\le j'\le d_i$, we have
\[
\rho_i(\gamma_{j'})(x_j)-\rho_i(\gamma_{j'})(x)\in p\bbz_p[\rho_i(\Gamma)](x).
\]
Hence, for large enough $j$, $\bbz_p[\rho_i(\Gamma)](x_j)\supseteq \bbz_p[\rho_i(\Gamma)](x)$ which contradicts the contrary assumption.

Therefore, by (\ref{e:LargeSpan}) and Claim 1, for $x\in \hfr_i(\bbz_p)\setminus p \hfr_i(\bbz_p)$, we have
\be\label{e:SmallIndex}
[\hfr_i(\bbz_p):\sum_j \bbz_p \rho_i(a_j)(x)]=[\hfr_i(\bbz_p):\bbz_p[\rho_i(\Gamma)](x)][\bbz_p[\rho_i(\Gamma)](x):\sum_j \bbz_p \rho_i(a_j)(x)]\ll_{\Omega} Q^{\vare_2}.
\ee
{\bf Claim 2.} We have $\max_j \|b_i(\rho_i(a_j)(x))\|_p \ge Q^{-\Theta_{\Omega}(\vare_2)}$ for any $x\in \hfr_i(\bbz_p)\setminus p\hfr_i(\bbz_p)$.

{\em Proof of Claim.} Suppose $\max_j \|b_i(\rho_i(a_j)(x))\|_p=|q|_p$ where $q$ is a power of $p$. So for any $j$ we have $b_i(\rho_i(a_j)(x))\in q\hfr_i(\bbz_p)$. Hence
\be\label{e:Small}
\sum_j \bbz_p b_i(\rho_i(a_j)(x)) \subseteq q \hfr_i(\bbz_p).
\ee
Therefore by (\ref{e:SmallIndex}) and (\ref{e:Small}) we have 
\[
p^{\Theta_{\Omega}(N\vare_2)} \hfr_i(\bbz_p) \subseteq \sum_j \bbz_p b_i(\rho_i(a_j)(x)) \subseteq q \hfr_i(\bbz_p),
\]
which implies that $q\le Q^{\Theta_{\Omega}(\vare_2)}$, and it gives us the claim. 

By Proposition~\ref{p:LargeIntersectionMaximalTorus}, there is $\gamma\in \prod_8 A\cap (\Gamma[p^{n_1}]\setminus \Gamma[p^{n_1+n_2}])$. Hence
there are  $x\in \hfr_i(\bbz_p)\setminus p\hfr_i(\bbz_p)$ and $n_1\le k< 2n_1$ such that $\pi_{p^k}(x)=\linValue{p^k}{p^{2k}}{\gamma}$. Therefore by Lemma~\ref{l:PropertiesOfLinearizationMap} and our choice of $\{a_j\}_{j=1}^{O_{\Omega}(1)}$ we have 
\[\textstyle
\pi_{p^{2k}}(\rho_i(a_j)(x))\in {\rm pr}_i\left(\linValue{p^k}{p^{2k}}{\Gamma[p^{k}]\cap \prod_{O_{\Omega}(1)}A}\right), 
\]
for any $i,j$, and $\rho_i(a_j)(x)\in \hfr_i(\bbz_p)\setminus p\hfr_i(\bbz_p)$. Hence by Corollary~\ref{c:UseSumProduct} applied to $\xi_0:=\rho_i(a_j)(x)$ we have
\[\textstyle
\pi_{p^{n_2}}(p^{m_1}\bbz_p b_i(\rho_i(a_j)(x))) \subseteq {\rm pr}_i\left(\linValue{p^{n_1'}}{p^{n_1'+n_2}}{\Gamma[p^{n_1'}]\cap \prod_{O_{\Omega}(1)}A}\right),
\]
where $n_1'=\Theta_{\Omega}(n_1)$ and $n_2-m_1=\Theta_{\Omega}(\vare_1 N)$. Thus by the above claim for some $j$ we have $b_i(\rho_i(a_j)(x))=p^{m_1'}\xi_0$ for some $\xi_0\in \hfr_i(\bbz_p)\setminus p\hfr_i(\bbz_p)$ and $m_1'\ll_{\Omega} \vare_2 N$. Since $n_2-m_1-m_1'=\Theta_{\Omega}(\vare_1 N)-\Theta_{\Omega}(\vare_2 N)$, for a suitable choice of the implied constant in $\vare_2\ll_{\Omega} \vare_1$ we have $n_2-m_1-m_1'=\Theta_{\Omega}(\vare_1 N)$. This completes the proof. 
\end{proof}
\begin{lem}[Thick top slice: simple factors]\label{l:ThickTopLevelProj}
For any $i$ there are positive integers $n_1^{(i)}$ (level) and $n_2^{(i)}$ (thickness) such that 
\begin{enumerate}
\item $n_1^{(i)}=\Theta_{\Omega}(\varepsilon_1n)$ and $n_2^{(i)}=\Theta_{\Omega}(\varepsilon_1n)$.
\item $\pi_{p^{n_2^{(i)}}}(\hfr_i(\bbz_p))\subseteq {\rm pr}_i\left(
\linValue{p^{n_1^{(i)}}}{p^{n_1^{(i)}+n_2^{(i)}}}{\Gamma[p^{n_1^{(i)}}]\cap \prod_{O_{\Omega}(1)}A}\right)$.

\end{enumerate}
\end{lem}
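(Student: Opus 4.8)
The plan is to promote the one-dimensional thick segment produced by Lemma~\ref{l:ThickSegment} into a thick copy of all of $\hfr_i(\bbz_p)$, by translating that segment along the directions $\rho_i(a_j)(\xi_0)$ and then adding. Fix $i$ and recall, from the proof of Lemma~\ref{l:ThickSegment}, that the elements $\{a_j\}_{j=1}^{O_\Omega(1)}\subseteq\prod_{O_\Omega(1)}A$ furnished by Proposition~\ref{p:SpanSmallElements} (applied with $\vare:=\vare_2$ and $\rho:=\rho_i$) satisfy $[\hfr_i(\bbz_p):\sum_j\bbz_p\,\rho_i(a_j)(x)]\ll_\Omega Q^{\vare_2}$ for every $x\in\hfr_i(\bbz_p)\setminus p\hfr_i(\bbz_p)$, and that Lemma~\ref{l:ThickSegment} itself produces such an $x=\xi_0$, an integer $m_1$, and an integer $n_1''=\Theta_\Omega(n_1)$ with $n_2-m_1=\Theta_\Omega(\vare_1 n)$ such that, writing $q_1:=p^{n_1''}$ and $q_2:=p^{n_1''+n_2}$,
\[
\pi_{p^{n_2}}(p^{m_1}\bbz_p\,\xi_0)\subseteq{\rm pr}_i\bigl(\lin{q_1}{q_2}(\textstyle\prod_{O_\Omega(1)}A\cap\Gamma[q_1])\bigr).
\]

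First I would translate this segment using the $\Gamma$-equivariance of $\lin{q_1}{q_2}$ (Lemma~\ref{l:PropertiesOfLinearizationMap}, part~2, whose hypotheses --- the $q_i$ being powers of $p$ and the relevant level gaps being $\gg_\Gamma 1$ --- are guaranteed by the choices of constants behind $n_1''$ and $n_2$ together with $n\vare_2\gg_\Omega 1$). For $g\in\prod_{O_\Omega(1)}A\cap\Gamma[q_1]$ the conjugate $a_j^{-1}g\,a_j$ again lies in $\prod_{O_\Omega(1)}A\cap\Gamma[q_1]$ by normality of $\Gamma[q_1]$, and $\lin{q_1}{q_2}(a_j^{-1}g\,a_j)=\pi_{p^{n_2}}(\Ad(a_j)(x))$ whenever $\lin{q_1}{q_2}(g)=\pi_{p^{n_2}}(x)$; composing with ${\rm pr}_i$ and using ${\rm pr}_i\circ\Ad(a_j)=\rho_i(a_j)\circ{\rm pr}_i$ gives, for each $j$,
\[
\pi_{p^{n_2}}(p^{m_1}\bbz_p\,\rho_i(a_j)(\xi_0))\subseteq{\rm pr}_i\bigl(\lin{q_1}{q_2}(\textstyle\prod_{O_\Omega(1)}A\cap\Gamma[q_1])\bigr).
\]
Since $\lin{q_1}{q_2}$ is an additive homomorphism whose preimages multiply, taking products over the $O_\Omega(1)$ indices $j$ (which multiplies the number of $A$-factors only by $O_\Omega(1)$) then yields
\[
\pi_{p^{n_2}}\bigl(p^{m_1}\textstyle\sum_j\bbz_p\,\rho_i(a_j)(\xi_0)\bigr)\subseteq{\rm pr}_i\bigl(\lin{q_1}{q_2}(\textstyle\prod_{O_\Omega(1)}A\cap\Gamma[q_1])\bigr).
\]

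By the index bound above, $p^{c}\hfr_i(\bbz_p)\subseteq\sum_j\bbz_p\,\rho_i(a_j)(\xi_0)$ with $c=\Theta_\Omega(\vare_2 n)$, so $\pi_{p^{n_2}}(p^{m_1+c}\hfr_i(\bbz_p))\subseteq{\rm pr}_i(\lin{q_1}{q_2}(\prod_{O_\Omega(1)}A\cap\Gamma[q_1]))$. Unwinding the definition of the finite logarithmic map, this means that $p^{m_1+c}y$ is the logarithm of an element of $\prod_{O_\Omega(1)}A$ lying in $\Gamma[p^{n_1''+m_1+c}]$ for every $y\in\hfr_i(\bbz_p)$; setting $n_1^{(i)}:=n_1''+m_1+c$ and $n_2^{(i)}:=n_2-m_1-c$ (so that $n_1^{(i)}+n_2^{(i)}=n_1''+n_2$) we obtain exactly
\[
\pi_{p^{n_2^{(i)}}}(\hfr_i(\bbz_p))\subseteq{\rm pr}_i\bigl(\lin{p^{n_1^{(i)}}}{p^{n_1^{(i)}+n_2^{(i)}}}(\textstyle\prod_{O_\Omega(1)}A\cap\Gamma[p^{n_1^{(i)}}])\bigr).
\]
Finally, $n_1''=\Theta_\Omega(n_1)=\Theta_\Omega(\vare_1 n)$, $0\le m_1<n_2=\Theta_\Omega(\vare_1 n)$ and $c=\Theta_\Omega(\vare_2 n)$ force $n_1^{(i)}=\Theta_\Omega(\vare_1 n)$, while $n_2-m_1=\Theta_\Omega(\vare_1 n)$ together with $c=\Theta_\Omega(\vare_2 n)$ and the standing hypothesis $\vare_2\ll_\Omega\vare_1$ force $n_2^{(i)}=\Theta_\Omega(\vare_1 n)>0$.

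The step I expect to be the main obstacle is the final bookkeeping: one must check that subtracting both the internal shift $m_1$ (inherited from the sum--product input of Lemma~\ref{l:ThickSegment}) and the shift $c=\Theta_\Omega(\vare_2 n)$ (coming from the index bound of Proposition~\ref{p:SpanSmallElements}) still leaves a genuinely thick slice, i.e.\ $n_2^{(i)}$ of order $\vare_1 n$, and, at the same time, that the level gaps stay $\gg_\Gamma 1$ so that the finite-logarithm formalism (additivity and $\Gamma$-equivariance) applies throughout. This is exactly where the hierarchy $\vare_2\ll_\Omega\vare_1\ll_\Omega 1$ and the assumption $n\vare_2\gg_\Omega 1$ are used in an essential way.
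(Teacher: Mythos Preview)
Your proposal is correct and follows essentially the same route as the paper's proof: start from the one-dimensional segment $\pi_{p^{n_2}}(p^{m_1}\bbz_p\xi_0)$ of Lemma~\ref{l:ThickSegment}, translate it via the $a_j$'s from Proposition~\ref{p:SpanSmallElements}, use the index bound to swallow $\sum_j\bbz_p\rho_i(a_j)(\xi_0)\supseteq p^{m}\hfr_i(\bbz_p)$ with $m=\Theta_\Omega(\vare_2 n)$, and then relabel levels. You are simply more explicit than the paper about the mechanism behind the passage from the single segment to the full lattice---namely the $\Gamma$-equivariance and additivity of $\lin{q_1}{q_2}$ from Lemma~\ref{l:PropertiesOfLinearizationMap}---which the paper's one-line ``Therefore by Lemma~\ref{l:ThickSegment}'' leaves to the reader.
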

\begin{proof}
Without loss of generality, we fix $i$. Let $\{a_j\}_{j=1}^{O_{\Omega}(1)}\subseteq \prod_{O_{\Omega,\rho_i}(1)}A$ be as in Proposition~\ref{p:SpanSmallElements} and $\xi_0\in \hfr_i(\bbz_p)\setminus p\hfr_i(\bbz_p)$ be as in Lemma~\ref{l:ThickSegment}. Then after changing $\vare_2$ by a constant which just depends on $\Omega$ we have $|\hfr_i(\bbz_p)/\sum_j\bbz_p \rho_i(a_j)(\xi_0)|\le Q^{\varepsilon_2}$. Hence for some positive integer $m=\Theta_{\Omega}(\varepsilon_2 n)$ we have $p^m\hfr_i(\bbz_p)\subseteq \sum_j\bbz_p \rho_i(a_j)(\xi_0)$. Therefore by Lemma~\ref{l:ThickSegment} 
\[
\textstyle\pi_{p^{n_2}}(p^{m_1+m}\hfr_i(\bbz_p))\subseteq {\rm pr}_i\left(
\linValue{p^{n_1''}}{p^{n_1''+n_2}}{\Gamma[p^{n_1''}]\cap \prod_{O_{\Omega}(1)}A}\right),
\]
where $n_2=\Theta_{\Omega}(\varepsilon_1n)$, $n_1''=\Theta_{\Omega}(\varepsilon_1n)$, $m_1$ are as in Lemma~\ref{l:ThickSegment}. Notice that $n_2-m_1-m=\Theta_{\Omega}(\varepsilon_1 n)-\Theta_{\Omega}(\varepsilon_2 n)$ and so $n_2-m_1-m=\Theta_{\Omega}(\varepsilon_1n)$ as $\varepsilon_2\ll_{\Omega}\varepsilon_1$.
\end{proof}
Inspired by Definition~\ref{d:ThickLayer}, let us say $\bcal_i(L,T)$ holds if for some $q_1|q_2|q_1^2$ that are powers of $p$ we have 
\[
\textstyle\pi_{q_2}(\hfr_i(\bbz_p))\subseteq \linValue{q_2}{q_1}{\Gamma[q_1]\cap \prod_{O_{\Omega,\vare_1,\vare_2}(1)}A},
\]
and $q_1\le L$ and $LT\le q_2$. By the virtue of proofs of Lemma~\ref{l:OneStepPropagation} and Corollary~\ref{c:OneStepPropagation}, we have that 
\begin{lem}\label{l:OneStepPropagationSimple}
There is a constant $q_0$ depending only on $\Omega$ such that if $\bcal_i(L,T)$ and $\bcal_i(L',T')$ hold, $L,L'\ge T$, $L,L'\gg_{\Omega} 1$ and $\log T'=\Theta_{\Omega}(\log T)$, then $\bcal_i(q_0LL',TT'/q_0)$ holds.
\end{lem}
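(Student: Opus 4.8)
The plan is to transcribe the proofs of Lemma~\ref{l:OneStepPropagation} and Corollary~\ref{c:OneStepPropagation} into the $i$-th factor, replacing the Lie algebra $\gfr$ by $\hfr_i=\Lie(\cal_i)(\bbz_p)$, where $\cal_i$ is the closure of $\bbh_i$ in $\cal$, and replacing the finite logarithmic map $\lin{q_1}{q_2}$ by ${\rm pr}_i\circ\lin{q_1}{q_2}$. Two structural facts make this possible. First, ${\rm pr}_i$ is the projection onto $\Lie(\bbh_i)$, which is at once a Lie ideal of $\Lie(\bbh)$ and an $\Ad(\Gamma)$-submodule; hence ${\rm pr}_i$ is simultaneously a Lie-algebra homomorphism and $\Gamma$-equivariant, so ${\rm pr}_i\circ\lin{q_1}{q_2}$ still satisfies the equivariance of Lemma~\ref{l:PropertiesOfLinearizationMap}(2-a) and the commutator identity of part (2-c), with brackets now taken in $\hfr_i$. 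Second, $\hfr_i\otimes\bbq_p$ is perfect, being semisimple, so there is an integer $q_0=q_0(\Omega)$, which I also take divisible by the finitely many $p$-powers relating a $\bbz_p$-basis of $\hfr_i$ to the standard basis of $\gl_n(\bbz_p)$, such that $q_0\hfr_i\subseteq[\hfr_i,\hfr_i]$. Throughout, the hypotheses $L,L'\gg_{\Omega}1$, $L,L'\ge T$ and $\log T'=\Theta_{\Omega}(\log T)$ will be used to guarantee that every finite logarithmic map occurring in the argument is defined and enjoys the properties of Lemma~\ref{l:PropertiesOfLinearizationMap}(2).

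First I would prove the one-step analogue of Lemma~\ref{l:OneStepPropagation}: if $\bcal_i(L,T)$ and $\bcal_i(L',q_0p)$ hold with $L,L'\ge T$ and $\log_p T\gg_{\Omega}1$, then $\bcal_i(q_0LL',T/q_0)$ holds. Taking $q'\le L'$ to be the level witnessing $\bcal_i(L',q_0p)$, enlarged via strong approximation so that ${\rm pr}_i\circ\lin{q'}{q'q_0p}$ surjects onto $\hfr_i/q_0p\,\hfr_i$, I pick $\gamma_j\in\prod_{O_{\Omega}(1)}A\cap\Gamma[q']$ realizing a $\bbz_p$-basis $\{x_j\}$ of $\hfr_i$ modulo $q_0p$, and from $\bcal_i(L,T)$ I take $q_1\le L$, $q_2\ge LT$, $q_3=q_2/q_1$ and $Y\subseteq\prod_{O_{\Omega}(1)}A$ with ${\rm pr}_i(\linValue{q_1}{q_2}{Y})=\pi_{q_3}(\hfr_i)$. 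Forming the commutator set $(\gamma_j,Y)\subseteq\prod_{O_{\Omega}(1)}A\cap\Gamma[q_1q']$ and applying the two properties of ${\rm pr}_i\circ\lin{\cdot}{\cdot}$ recorded above yields elements $y_j\in\hfr_i$ with $y_j\equiv x_j\pmod{q_0p}$ such that $\pi_{q_3}\big(\sum_j\ad(y_j)(\hfr_i)\big)$ lies in ${\rm pr}_i\big(\linValue{q_1q'}{q_2q'}{\prod_{O_{\Omega}(1)}A\cap\Gamma[q_1q']}\big)$. Writing the $y_j$ in the basis $\{x_j\}$, the coefficient matrix is $\equiv I\pmod p$, hence lies in $\GL_d(\bbz_p)$ for $d:={\rm rank}_{\bbz_p}(\hfr_i)$, so $\sum_j\bbz_p y_j=\hfr_i$; therefore $q_0\pi_{q_3}(\hfr_i)\subseteq\pi_{q_3}([\hfr_i,\hfr_i])$ is contained in that image, and dividing down by $p^{v_p(q_0)}$ gives exactly $\bcal_i(q_0LL',T/q_0)$.

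Then I would deduce the lemma as Corollary~\ref{c:OneStepPropagation} follows from Lemma~\ref{l:OneStepPropagation}: raising the level in $\bcal_i(L',T')$ (using $\Gamma[q_1'']\subseteq\Gamma[q_1']$ whenever $q_1'\mid q_1''$) shows that $\bcal_i(L'(T/q_0p)^j,q_0p)$ holds for every $0\le j<\log T'/\log(T/q_0p)$, which is an $O_{\Omega}(1)$-sized range of $j$ precisely because $\log T'=\Theta_{\Omega}(\log T)$ and $L,L'\ge T$; feeding each of these, paired with $\bcal_i(L,T)$, into the one-step statement produces $\bcal_i(q_0LL'(T/q_0p)^j,T/q_0)$, and combining over $j$ as in Corollary~\ref{c:OneStepPropagation} yields $\bcal_i(q_0LL',TT'/q_0)$ after enlarging $q_0$ by a factor depending only on $\Omega$, which is harmless since only $O_{\Omega}(1)$ steps occur. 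The part I expect to require the most care is not this level–thickness bookkeeping, which is identical to that in the proofs of Lemma~\ref{l:OneStepPropagation} and Corollary~\ref{c:OneStepPropagation}, but the verification that ${\rm pr}_i\circ\lin{q_1}{q_2}$ genuinely behaves like a finite logarithmic map — that it is additive, $\Gamma$-equivariant, and turns group commutators into Lie brackets in $\hfr_i$; this rests on ${\rm pr}_i$ being simultaneously a Lie-ideal projection and a $\Gamma$-module projection, and on absorbing the finitely many bounded $p$-adic denominators relating $\hfr_i\otimes\bbq_p$ to $\gfr\otimes\bbq_p$ into the constant $q_0$.
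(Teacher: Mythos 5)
Your proposal is correct and is exactly the paper's intended argument: the paper offers no proof beyond asserting that the lemma follows ``by the virtue of the proofs of Lemma~\ref{l:OneStepPropagation} and Corollary~\ref{c:OneStepPropagation}'', and you carry out precisely that transcription, correctly isolating the two points that need checking (that ${\rm pr}_i$ is simultaneously a Lie-ideal and $\Gamma$-module projection, so ${\rm pr}_i\circ\lin{q_1}{q_2}$ inherits additivity, equivariance, and the commutator-to-bracket identity, and that perfectness of $\hfr_i$ together with the bounded integral denominators yields the constant $q_0$). The remaining level--thickness bookkeeping, including the harmless bounded powers of $p$ and $q_0$ absorbed because only $O_{\Omega}(1)$ chaining steps occur thanks to $\log T'=\Theta_{\Omega}(\log T)$, matches the paper's treatment.
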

\begin{lem}[Leveling top slices]\label{l:Leveling}
There are positive integers $n_1$ and $n_2$ such that
\begin{enumerate}
\item$n_1=\Theta_{\Omega}(\varepsilon_1n)$ and $n_2=\Theta_{\Omega}(\varepsilon_1n)$.
\item $\pi_{p^{n_2}}(\hfr_i(\bbz_p))\subseteq {\rm pr}_i\left(\linValue{p^{n_1}}{p^{n_1+n_2}}{\Gamma[p^{n_1}]\cap \prod_{O_{\Omega,\vare_1,\vare_2}(1)}A}\right)$ for any $i$.
\end{enumerate}
In particular, we have $\pi_{p^{n_2}}(\hfr(\bbz_p))\subseteq \linValue{p^{n_1}}{p^{n_1+n_2}}{\Gamma[p^{n_1}]\cap \prod_{O_{\Omega}(1)}A}$.
\end{lem}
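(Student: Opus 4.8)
The plan is to obtain Lemma~\ref{l:Leveling} from the single‑factor statement Lemma~\ref{l:ThickTopLevelProj} in two stages: a \emph{leveling} stage, where the propagation lemma (Lemma~\ref{l:OneStepPropagationSimple}) is used to make the level and the thickness the same for all $k'$-simple factors $\bbh_i$, and an \emph{assembly} stage, where the decomposition $\hfr=\bigoplus_i\hfr_i$ into simple ideals (valid up to a bounded power of $p$) is used to glue the factor‑wise containments together. For the leveling, I would first apply Lemma~\ref{l:ThickTopLevelProj} to each of the $O_{\Omega}(1)$ factors, getting $\bcal_i(p^{n_1^{(i)}},p^{n_2^{(i)}})$ with $n_1^{(i)},n_2^{(i)}=\Theta_{\Omega}(\varepsilon_1 n)$; since there are boundedly many factors, the implied constants lie in a fixed interval $[c_0,C_0]$ depending only on $\Omega$. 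Iterating the self‑propagation of Lemma~\ref{l:OneStepPropagationSimple} exactly as in the proof of Theorem~\ref{ReductionBoundedGeneration} (one step sends $\bcal_i(p^a,p^b)$ to $\bcal_i(p^{2a+d_0},p^{2b-d_0})$ for a constant $d_0=O_{\Omega}(1)$, so after $m$ steps the level is $(n_1^{(i)}+d_0)2^m-d_0$ and similarly for the thickness), and choosing $m=O_{\Omega}(1)$ large enough that $2^mc_0>C_0$, every factor can be propagated so that its level bound and its reach $q_2$ cover one common window. By the monotonicity of $\bcal_i(L,T)$ (it weakens as $L$ increases within the admissible range and as $T$ decreases) one then fixes a single pair $q_1=p^{n_1}\mid q_2=p^{n_1+n_2}\mid q_1^2$ with $n_1,n_2=\Theta_{\Omega}(\varepsilon_1 n)$ and $n_1-n_2\gg_{\Omega}1$, and a single set $Y=\prod_{O_{\Omega}(1)}A\cap\Gamma[q_1]$ (enlarging the number of $\Omega$-factors to absorb the different constants, harmless since we may assume $1\in A$), such that $\pi_{q_2}(\hfr_i(\bbz_p))\subseteq{\rm pr}_i(\lin{p^{n_1}}{p^{n_1+n_2}}(Y))$ for all $i$. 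This is clause (2), the gap $n_1-n_2\gg_{\Omega}1$ being precisely what part (2) of Lemma~\ref{l:PropertiesOfLinearizationMap} needs for $\lin{p^{n_1}}{p^{n_1+n_2}}$ to be a $\Gamma$-module isomorphism $\pi_{q_2}(\Gamma[q_1])\xrightarrow{\sim}\hfr/q_3\hfr$ with $q_3=p^{n_2}$.

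For the ``in particular,'' I would first note that $q_2\mid q_1^2$ gives $[\Gamma[q_1],\Gamma[q_1]]\subseteq\Gamma[q_1^2]\subseteq\Gamma[q_2]$, so $\pi_{q_2}(\Gamma[q_1])$ is abelian and $\lin{p^{n_1}}{p^{n_1+n_2}}$ is an isomorphism of abelian groups onto $\hfr/q_3\hfr=\bigoplus_i(\hfr_i/q_3\hfr_i)$ (the splitting valid up to a bounded $p$-power absorbed into the constants). Since this isomorphism is $\Gamma$-equivariant and each $\hfr_i$ is a $\Gamma$-ideal, clause (2) says that the $\lin{p^{n_1}}{p^{n_1+n_2}}$-image of $Y$ has full projection onto every simple block $\hfr_i/q_3\hfr_i$; combining this with the approximate‑subgroup structure of $\pi_{q_2}(Y)$ inherited from $\Pfr_Q(\delta,A,l)$ ($1\in Y$, symmetry, small tripling) and the module‑generation statement \cite[Corollary 31]{SGV}, exactly as in the proof of Lemma~\ref{l:BoundedPower}, one gets $\lin{p^{n_1}}{p^{n_1+n_2}}(\prod_{O_{\Omega}(1)}A\cap\Gamma[q_1])=\hfr/q_3\hfr$, i.e.\ $\pi_{q_2}(\hfr(\bbz_p))\subseteq\lin{p^{n_1}}{p^{n_1+n_2}}(\prod_{O_{\Omega}(1)}A\cap\Gamma[q_1])$.

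The main obstacle is the bookkeeping in the leveling step: one must verify that a single target window $(n_1,n_2)$ is simultaneously reachable from each of the finitely many factors while both parameters stay of order $\varepsilon_1 n$ and the gap $n_1-n_2\gg_{\Omega}1$ is preserved, which is exactly where the finiteness of the number of simple factors and the precise shape of the propagation recursion $a_m=2a_{m-1}+d_0$, $b_m=2b_{m-1}-d_0$ are used. A secondary subtlety is the passage from ``full projection onto each simple block'' to ``all of $\hfr/q_3\hfr$'': since a submodule of $\bigoplus_iM_i$ with full projection onto each $M_i$ need not be the whole module, one genuinely needs the module‑generation input together with the approximate‑group hypothesis, and one must also track that the $\bbz_p$-decomposition of $\hfr$ is only valid up to a bounded power of $p$.
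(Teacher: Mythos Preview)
Your leveling stage is essentially the paper's argument: starting from $\bcal_i(p^{n_1^{(i)}},p^{n_2^{(i)}})$ supplied by Lemma~\ref{l:ThickTopLevelProj}, one iterates Lemma~\ref{l:OneStepPropagationSimple} to reach a common window. The paper organizes this slightly differently---first normalizing each factor so that $n_1^{(i)}=n_2^{(i)}$, then running a pairwise interval--covering (since $[n_1^{(i')},2n_1^{(i')}]$ is covered by the dyadic windows $[2^j n_1^{(i)}+2^{j-1}n_0,\,2^{j+1}n_1^{(i)}]$), and finally inducting on the number of simple factors---but the substance is the same.

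The gap is in your ``in particular'' step. You correctly observe that a subset $S\subseteq\bigoplus_i\hfr_i/q_3\hfr_i$ with ${\rm pr}_i(S)=\hfr_i/q_3\hfr_i$ for each $i$ need not equal the whole sum, but your proposed fix via \cite[Corollary~31]{SGV} does not close this. The adjoint action of $\Gamma$ is block--diagonal on $\bigoplus_i\hfr_i$, so any additive subgroup with full projection onto each block is automatically $\Ad(\Gamma)$-stable; passing to the $\Gamma$-module generated therefore does not mix the blocks, and the module--generation hypothesis of \cite[Corollary~31]{SGV} is not verified (nor is that result formulated for modulus $p^{n_2}$ rather than $p$). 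The paper's route is different and much shorter: its working definition of $\bcal_i$ (stated just before Lemma~\ref{l:OneStepPropagationSimple}) has no ${\rm pr}_i$---it asserts $\pi_{q_3}(\hfr_i(\bbz_p))\subseteq\lin{q_1}{q_2}(\prod_{O(1)}A\cap\Gamma[q_1])$ as a containment \emph{inside} $\hfr/q_3\hfr$. Under that reading, once all the $\bcal_i$ hold at a common $(q_1,q_2)$, additivity of $\lin{q_1}{q_2}$ and $\hfr(\bbz_p)=\bigoplus_i\hfr_i(\bbz_p)$ (up to a bounded $p$-power absorbed into $O_\Omega(1)$) give the full containment immediately by taking products. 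So to match the paper you should argue that the propagation actually produces elements whose $\Psi$-image lies in $\hfr_i$ (not merely projects onto $\hfr_i$), rather than invoking module generation after the fact.
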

\begin{proof}
By Lemma~\ref{l:ThickTopLevelProj}, we know that $\bcal_i(p^{n_1^{(i)}},p^{n_2^{(i)}})$ holds for any $i$, for some integers $n_1^{(i)}=\Theta_{\Omega}(\varepsilon_1n)$ and $n_2^{(i)}=\Theta_{\Omega}(\varepsilon_1n)$. Hence by repeated use of Lemma~\ref{l:OneStepPropagationSimple} we have that $\bcal_i(p^{ln_1^{(i)}+ln_0},p^{ln_2^{(i)}-ln_0})$ holds for any $l=O_{\Omega}(1)$ and any $i$, where $|q_0|_p=p^{-n_0}$. Therefore, since $\lceil (n_1^{(i)}+n_0)/(n_2^{(i)}-n_0)\rceil^2=O_{\Omega}(1)$, for any 
\[
\left\lceil \frac{n_1^{(i)}+n_0}{n_2^{(i)}-n_0}\right\rceil \le l\le \left\lceil \frac{n_1^{(i)}+n_0}{n_2^{(i)}-n_0}\right\rceil^2,
\]
we have $\bcal_i(p^{ln_1^{(i)}+ln_0},p^{ln_2^{(i)}-ln_0})$ holds . These properties together imply that 
\[
\bcal_i(p^{l_0n_1^{(i)}+l_0n_0},p^{l_0n_1^{(i)}+l_0n_0})
\]
 holds for $l_0=\lceil (n_1^{(i)}+n_0)/(n_2^{(i)}-n_0)\rceil=O_{\Omega}(1)$. So without loss of generality we can and will assume that $n_1^{(i)}=n_2^{(i)}$. 
 
For $i\neq i'$, we level the top thick layers of the $i$-th and the $i'$-th factors. Without loss of generality, let's assume that $n_1^{(i)}\leq n_1^{(i')}$. Since $n_1^{(i)}=\Theta_{\Omega}(N\vare_1)$ and $n_1^{(i')}=\Theta_{\Omega}(N\vare_1)$, there is a positive integer $l_0\ll_{\Omega} 1$ such that $n_1^{(i')}\le 2^{l_0} n_1^{(i)}$. Hence
\be\label{e:IntervalCovering}
[n_1^{(i')},2n_1^{(i')}]\subseteq \bigcup_{j=0}^{l_0-1}[2^j n_1^{(i)}+2^{j-1} n_0, 2^{j+1} n_1^{(i)}] \cup \bigcup_{j=1}^{l_0-1}(2^j n_1^{(i)}, 2^j n_1^{(i)}+2^{j-1} n_0).
\ee
Since $N\vare_1\gg_{\Omega} 1$, $n_1^{(i)}=\Theta_{\Omega}(N\vare_1)$, and $l_0\ll_{\Omega} 1$, there is $j$ such that
\be\label{e:LargeSubinterval}
|[n_1^{(i')},2n_1^{(i')}]\cap [2^j n_1^{(i)}+2^{j-1} n_0, 2^{j+1} n_1^{(i)}]| \gg_{\Omega} N\vare_1.
\ee
On the other hand, by Lemma~\ref{l:OneStepPropagationSimple}, for any integer $0\le j\le l_0-1$ we have that 
\[
\bcal_i(p^{2^j n_1^{(i)}+2^{j-1} n_0},p^{2^j n_1^{(i)}-2^{j-1} n_0})
\]
holds. Hence if $[t_j,t_j+t'_j]:=[n_1^{(i')},2n_1^{(i')}]\cap [2^j n_1^{(i)}+2^{j-1} n_0, 2^{j+1} n_1^{(i)}]$, then 
\[
\bcal_i(p^{t_j},p^{t_j'}),\h \text{ and } \bcal_{i'}(p^{t_j},p^{t_j'})
\] 
hold. By (\ref{e:LargeSubinterval}) for some $j$ the above closed interval $[t_j, t_j+t_j']$ is thick enough, and works for both the $i$-th and the $i'$-th factors.  Now by induction on the number of simple factors, we can find $n_1$ and $n_2$ that work for all the simple factors at the same time. And therefore we get 
\[
\textstyle \pi_{p^{n_2}}(\hfr(\bbz_p))\subseteq \linValue{p^{n_1}}{p^{n_1+n_2}}{\Gamma[p^{n_1}]\cap \prod_{O_{\Omega}(1)}A}.
\] 
\end{proof}
Lemma~\ref{l:Leveling} and Lemma~\ref{l:PropertiesOfLinearizationMap} imply Proposition~\ref{p:ThickTopSlice}.

\section{Appendix A: a small solution.}
\subsection{Logarithmic height, and the statement of the main result.}\label{ss:Norms}
In this appendix, we recall the (logarithmic and multiplicative) height of a point and prove that any closed subscheme of $(\bba_n)_\bbq$ with a closed geometric point has a {\em small} closed geometric point.

For any prime $p$, a place $|.|_p$ on $\bbq$ is fixed such that $|p|_p=1/p$. For $p=\infty$, the Archimedean place $|.|_p$ of $\bbq$ is the ordinary absolute value on $\bbq$. If $k$ is a Galois number field, then a place $\pfr|p$ is normalized such that $|x|_{\pfr}:=|N_{k_{\pfr}/\bbq_p}(x)|_p^{1/[k:\bbq]}$ (this is the case for all the places, including the Archimedean place). A place $\pfr$ of $k$ is called an Archimedean place if $\pfr|\infty$. The set of all the Archimedean places is denoted by $V_{\infty}(k)$.

 and in particular for any $x\in k$ we have  the product formula $\prod_{\pfr\in \pl{k}}|x|_{\pfr}=1$  where $\pl{k}$ is the set of inequivalence places of $k$ (within the article the set of finite places is denoted by $V_f(k)$). Let $|x|_{\pfr}^+:=\max\{1,|x|_{\pfr}\}$, and for a finite place let $\|{\bf x}\|_{\pfr}:=\max_i\{|x_i|_{\pfr}\}$ and $\|{\bf x}\|_{\pfr}^+:=\max_i\{|x_i|_{\pfr}^+\}$.

Any point $v\in \bbp^n(\qbar)$ can be represented by $\xbf:=(x_0,x_1,\ldots,x_n)\in k^{n+1}$ where $k$ is a Galois number field. We define the height $h(v)$ of $v$ to be
\[
h(v):=\sum_{\pfr\in\pl{k}}\log \|\xbf\|_{\pfr}.
\]
It is well-known that $h(v)$ is independent of the choice of $k$ and the choice of a representative (e.g. see \cite[Chapter 1]{BGBook}). The height $h(\xbf)$ of $\xbf=(x_1,\ldots,x_n)\in \qbar^n$ is defined to be $h(1\!:\!x_1\!:\!\cdots\!:\!x_n)=\sum_{\pfr\in\pl{k}}\log \|\xbf\|^+_{\pfr}$. The multiplicative height of a point $\xbf$ in $\bbp(\qbar)$ or $\bba^n(\qbar)$ is defined to be $e^{h(\xbf)}$. Considering $(\mathbb{GL}_d)_{\bbq}$ as an open subset of $\underline{\rm End}_{\bbq^d}$, we can talk about the height $h(X)$ of $X\in \GL_d(\qbar)$. The height $h(f)$ of a polynomial $f\in\bbq[\underline{X}]=\bbq[X_1,\ldots,X_n]$ is defined to be 
\[
h(\sum_I a_I \underline{X}^I):=\sum_{\pfr\in\pl{k}} \log (\max_I |a_I|_{\pfr}),
\]
where $k$ is a Galois number field such that $f\in k[\underline{X}]$ and $\underline{X}^I=\prod_{i=1}^n X_i^{m_i}$ for $I=(m_1,\ldots,m_n)$.

The main result of this appendix is the following Proposition.
\begin{prop}\label{p:SmallSolution}
Let $k$ be a number field, and $f_1,\ldots,f_m\in k[X_1,\ldots,X_n]$. If there is $\xbf\in \qbar^n$ such that $f_1(\xbf)=\cdots=f_m(\xbf)=0$, then there is $\xbf_0\in \qbar^n$ such that

\begin{enumerate}
\item $f_1(\xbf_0)=\cdots=f_m(\xbf_0)=0$,
\item $h(\xbf_0)\ll \max_i h(f_i),$ where the implied constant depends on $n$, $\max_i \deg f_i$, and $k$.
\end{enumerate}
\end{prop}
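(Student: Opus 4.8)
The plan is to reduce the statement to a height bound for the points of a \emph{zero-dimensional complete intersection} cut out by polynomials of bounded degree and of height $O(\max_i h(f_i))$, and then to invoke the arithmetic B\'ezout theorem, which bounds the height of such an intersection cycle in terms of the degrees and heights of the hypersurfaces defining it. Throughout write $D:=\max_i\deg f_i$ and $h:=\max_i h(f_i)$; all implied constants may depend on $n$, $D$ and $k$, and we may assume $h\gg 1$.

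First I would cut the common zero set $V\subseteq\bba^n$ of $f_1,\dots,f_m$ (non-empty by hypothesis) down to dimension $0$ without losing a point. Iterated B\'ezout shows every component of $V$ has degree $\ll1$. If $Y\subseteq\bba^n$ is non-empty, closed, of dimension $\ge1$ and of bounded degree, then the affine-linear forms failing to meet $Y$ properly lie in a ``bad'' hypersurface of bounded degree in the space of such forms, so an integer box of size $O(1)$ contains a good form, necessarily of height $\ll1$. Iterating $d:=\dim V$ times yields forms $L_1,\dots,L_d$ with $h(L_i)\ll1$ such that $Z:=V\cap\{L_1=\dots=L_d=0\}$ is finite, non-empty, and contained in $V$; in particular every $\xbf\in Z$ satisfies $f_1(\xbf)=\dots=f_m(\xbf)=0$. (This cutting step is essentially Lemma~\ref{l:CuttingByHyperplane}.)

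Next I would place $Z$ inside a genuine complete intersection. By a result of the type of Lemma~\ref{l:CompleteIntersection} --- take generic linear combinations $\sum_j c_jf_j$ with $c_j\in\bbz$ of bounded size --- one obtains $\widetilde f_1,\dots,\widetilde f_{n-d}\in\sum_j\bbz f_j$, of degree $\le D$ and height $\ll h$, cutting out an affine variety of pure dimension $d$ that contains $V$; after a generic linear change of coordinates (of height $\ll1$) its projective closure is pure of dimension $d$ in $\bbp^n$. Homogenization does not change polynomial heights, so, choosing the hyperplane extensions $\overline L_1,\dots,\overline L_d$ generic also for this larger variety (again with $h(\overline L_i)\ll1$), the hypersurfaces $\widetilde F_1,\dots,\widetilde F_{n-d},\overline L_1,\dots,\overline L_d$ cut out a zero-dimensional complete intersection cycle $\xi$ in $\bbp^n$ whose support contains $Z$. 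The arithmetic B\'ezout inequality then gives
\[
h(\xi)\ \ll\ \sum_{i=1}^{n-d}\Big(\prod_{j\ne i}\deg\widetilde F_j\Big)\,h(\widetilde F_i)\ +\ O(1)\ \ll\ h ,
\]
the contributions of the $\overline L_i$ being $O(1)$ since their degrees and heights are. Because $\xi$ is an effective zero-cycle, $h(P)\le h(\xi)+O(1)$ for every point $P$ of its support; taking $\xbf_0\in Z\subseteq\supp\xi$ produces a common zero of all the $f_i$ with $h(\xbf_0)\ll h$, which is the assertion.

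The only genuinely non-elementary ingredient, and the step I expect to be the crux, is the arithmetic B\'ezout estimate for $h(\xi)$; the surrounding argument is bookkeeping, with two points needing care. First, every genericity choice (properness of the linear sections, the linear combinations yielding a complete intersection, the coordinate change controlling behaviour at infinity) must be realizable with data of height $O(1)$, which holds because the relevant ``bad'' loci are varieties of bounded degree and hence cannot contain all integer points of a sufficiently large box. Second, the set $Z\subseteq V$ must be carried through the entire construction: the point produced must vanish on \emph{all} of $f_1,\dots,f_m$, not merely on the auxiliary $\widetilde f_i$ and the forms $\overline L_i$, so it is essential that $\xbf_0$ be chosen inside $Z$ rather than merely inside $\supp\xi$.
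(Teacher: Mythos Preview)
Your approach is correct and essentially the same as the paper's: reduce via Lemmas~\ref{l:CuttingByHyperplane} and~\ref{l:CompleteIntersection} to a zero-dimensional complete intersection of controlled height and degree, then invoke the arithmetic B\'ezout theorem of~\cite{BGiS}. One small point: Lemma~\ref{l:CuttingByHyperplane} as proved in the paper only produces cutting hyperplanes of height $\ll h$ (its argument passes through the effective Nullstellensatz and an $\bbf_p$-point count with $p\asymp h$), and your stronger assertion that the bad locus of affine-linear forms not meeting $Y$ properly has degree bounded purely in terms of $\deg Y$ --- in particular the ``empty intersection'' part --- is plausible but not justified by the sentence you give; either bound, however, feeds into the arithmetic B\'ezout step to yield the same conclusion $h(\xbf_0)\ll h$.
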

\subsection{Reduction to the geometrically zero-dimensional case.} Let $V=\Spec(k[\underline{X}]/\langle f_1,\ldots,f_m\rangle)$. In this section, we reduce the  dimension of $V$ by proving that $V$ intersects a hyperplane with {\em small} height (see Lemma~\ref{l:CuttingByHyperplane}). Providing such a process implies that it suffices to prove Proposition~\ref{p:SmallSolution} under the additional assumption $\dim V=0$. 

\begin{lem}\label{l:SmallRepresentatives}
Let $k$ be a number field, $\ocal_k$ be its ring of integers, and $\afr$ be a non-zero ideal of $\ocal_k$. Then there is a set $X\subseteq \ocal_k$ such that 
\begin{enumerate}
\item $X$ is a set of coset representatives of $\afr$ in $\ocal_k$, i.e. $\ocal_k$ is a disjoint union of $x+\afr$ for $x\in X$,
\item for any $x\in X$ and a complex embedding $\sigma:k\rightarrow \bbc$ we have $|\sigma(x)| \ll_k |\ocal/\afr|$. 
\end{enumerate}
\end{lem}
\begin{proof}
Let $\sigma_1,\ldots,\sigma_{r+2s}:k\rightarrow \bbc$ be all the distinct embeddings of $k$ into $\bbc$. Suppose $\sigma_i(k)\subseteq \bbr$ for $i\le r$, and $\sigma_{r+j}$ is the complex conjugate of $\sigma_{r+s+j}$ for any $1\le j\le s$ (with the understanding that either $r$ or $s$ can be zero). It is well-known that $r+2s=d:=[k:\bbq]$, 
\[
\sigma: k \rightarrow L_{rs}:=\bbr^r\oplus \bbc^s,\h \sigma(a):=(\sigma_1(a),\ldots,\sigma_{r+s}(a))
\] 
embeds $k$ as a lattice in $L_{rs}$ and its covolume with respect to the natural Euclidean structure of $L_{rs}$ is $\sqrt{|\Delta_k|}$ where $\Delta_k$ is the discriminant of $k$ (e.g. see~\cite[Corollary 8.3]{ST}). As any non-zero ideal $\afr$ is of finite index in $\ocal_k$, we have that $\sigma(\afr)$ is a lattice in $L_{rs}$ and ${\rm vol}(L_{rs}/\sigma(\afr))=N(\afr)\sqrt{|\Delta_k|}$, where $N(\afr):=|\ocal_k/\afr|$.

Let $\lambda_1(\afr),\ldots, \lambda_d(\afr)$ be Minkowski's successive minima of $\sigma(\afr)$ in $L_{rs}$, i.e. 
\[
\lambda_i(\afr):=\min\{r\in \bbr^+ |\h B_r\cap \sigma(\afr) \text{ has $i$ elements that are }  \bbr-\text{linearly independent elements}\}, 
\]
where $B_r$ is the Euclidean ball of radius $r$ centered at the origin. 

By Minkowski's second theorem on successive minima (e.g. see \cite[Chapter 8]{Cas}) we have
\[
\prod_{i=1}^{d} \lambda_i(\afr) \le \frac{2^d}{{\rm vol}(B_1)} {\rm vol}(L_{rs}/\sigma(\afr))\ll_k N(\afr).
\]

On the other hand, $\lambda_i(\ocal_k)\le \lambda_i(\afr)$, and $\lambda_1(\afr)\le \cdots \le \lambda_d(\afr)$. Hence we have
$
\lambda_d(\afr) \ll_k N(\afr).
$ Hence $\afr$ has a $\bbz$-basis in $B_{\Theta_k(N(\afr))}$. Therefore there is a set $X$ of coset representatives of $\afr$ in $\ocal_k$ which is a subset of $B_{\Theta_k(N(\afr))}$.
\end{proof}

\begin{lem}\label{l:CuttingByHyperplane}
Let $V=\Spec(k[\underline{X}]/\langle f_1,\ldots,f_m\rangle)$. If $\dim V\ge 1$, then there is $\xbf\in k^n$ such that
\begin{enumerate}
\item $V(\qbar)\cap \underline{\ker(l_{\xbf}})(\qbar) \neq \varnothing$, where $\underline{\ker(l_{\xbf}})(\qbar)=\{\ybf\in\qbar^n|\h l_{\xbf}(\ybf):=\sum_i x_i y_i=0\}$,
\item $V(\qbar)\not\subseteq \underline{\ker(l_{\xbf}})(\qbar)$,
\item $h(\xbf)\ll \max_i h(f_i)$, where the implied constant depends on $n$ and $\max_i \deg f_i$.
\end{enumerate}
\end{lem}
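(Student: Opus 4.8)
The plan is to take for $\xbf$ a sufficiently generic hyperplane through the origin; the content is that ``generic'' here amounts to lying off a \emph{bounded} number of proper linear subspaces, the bound depending only on $n$ and $\delta:=\max_i\deg f_i$ and \emph{not} on the heights of the $f_i$, so a point of bounded height survives. First I would pass to the projective picture: let $\overline V\subseteq\bbp^n$ be the closure of $V$ in the chart $\{X_0\neq 0\}$, fix an irreducible component $W$ of $V$ with $\dim W=d\ge 1$, and let $\overline W\subseteq\bbp^n$ be its closure, so that $W=\overline W\cap\{X_0\neq 0\}$ is dense in $\overline W$ and in particular $\overline W\not\subseteq\{X_0=0\}$. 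For $\xbf\in\bbq^n\setminus\{0\}$ write $H_\xbf=\{X\in\bbp^n:\sum_{i\ge 1}x_iX_i=0\}$, so that $\underline{\ker(l_{\xbf})}=H_\xbf\cap\{X_0\neq 0\}$. Let $Z_1,\dots,Z_r$ be the $(d-1)$-dimensional irreducible components of $\overline W\cap\{X_0=0\}$, put $L_j:=\{\xbf\in\bbq^n:Z_j\subseteq H_\xbf\}$ and $L_V:=\{\xbf\in\bbq^n:l_\xbf|_V\equiv 0\}$. Each $L_j$ and $L_V$ is a \emph{proper} linear subspace of $\bbq^n$: a nonempty subvariety of $\{X_0=0\}\cong\bbp^{n-1}$ cannot be contained in every hyperplane, and $\dim V\ge 1$ forces $V\not\subseteq\{0\}$.

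I then claim any $\xbf\in\bbq^n\setminus(L_V\cup L_1\cup\dots\cup L_r)$ satisfies (1)--(3). Condition (2) is immediate from $\xbf\notin L_V$. For (1): if $\overline W\subseteq H_\xbf$ then $\varnothing\neq W\subseteq\underline{\ker(l_{\xbf})}\cap V$ and we are done; otherwise $\overline W\not\subseteq H_\xbf$, and the projective dimension theorem gives that $\overline W\cap H_\xbf$ is nonempty and pure of dimension $d-1$. If this intersection were contained in $\{X_0=0\}$, each of its components would be a $(d-1)$-dimensional subvariety of $\overline W\cap\{X_0=0\}$, hence equal to some $Z_j$, forcing $\xbf\in L_j$ --- contrary to hypothesis. (Over $\qbar$ one runs this on geometric components, using that $H_\xbf$ and $\{X_0=0\}$ are $\bbq$-rational, hence Galois-stable, and that the geometric components of the $\bbq$-irreducible $\overline W$ form a single Galois orbit.) Thus $\overline W\cap H_\xbf$ has a point with $X_0\neq 0$, i.e.\ a point of $\underline{\ker(l_{\xbf})}\cap V$, which gives (1).

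It remains to bound $r$ and extract a small point. By the purely geometric Bézout bound (no reference to the coefficients of the $f_i$) the degree of $\overline V$, hence of $\overline W$, hence of $\overline W\cap\{X_0=0\}$, is $\ll_{n,\delta}1$, so $r\ll_{n,\delta}1$. Therefore $L_V\cup L_1\cup\dots\cup L_r$ lies in the zero locus of a product of $r+1$ nonzero linear forms, a polynomial of degree $r+1$, which cannot vanish identically on the box $\{0,1,\dots,r+1\}^n\subseteq\bbz^n$ of side $r+2>r+1$; choosing $\xbf$ in that box off the zero locus gives $\xbf\in\bbq^n$ avoiding all the bad subspaces, with $h(\xbf)\le\log(r+1)\ll_{n,\delta}1$, which absorbs into the implied constant the asserted bound $h(\xbf)\ll\max_i h(f_i)$. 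The main obstacle is the geometric bookkeeping at infinity: one must check that intersecting the positive-dimensional $\overline W$ with a hyperplane never produces $\varnothing$ and drops the dimension by exactly one, and that the obstruction to an \emph{affine} intersection point is genuinely confined to finitely many proper linear subspaces whose number is controlled by degrees alone --- this last point is what keeps the height of $\xbf$ essentially a constant rather than growing with the heights of the $f_i$.
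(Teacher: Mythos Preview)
Your argument is correct and takes a genuinely different, more geometric route than the paper. The paper characterizes the set $W$ of ``bad'' hyperplanes (those with $V(\qbar)\cap\underline{\ker(l_{\xbf})}(\qbar)=\varnothing$) via the effective Nullstellensatz: $[\xbf]\in W$ becomes a bilinear system in $\xbf$ and auxiliary coefficient-vectors, which after elimination is described by polynomials $r_i$ with $h(r_i)\ll\max_j h(f_j)$; one then reduces modulo a prime $p\asymp\max_j h(f_j)$ and counts to find a good $\xbf$ with $h(\xbf)\ll\max_j h(f_j)$. By contrast, you pass to the projective closure of a top-dimensional component $\overline W$ and observe that the only obstruction to $\overline W\cap H_{\xbf}$ having an affine point is that this intersection coincides with one of the finitely many $(d{-}1)$-dimensional components $Z_j$ of $\overline W\cap\{X_0=0\}$; together with the linear condition $L_V$, this confines the bad $\xbf$ to a union of at most $r+1$ proper linear subspaces, with $r$ bounded purely by B\'ezout in terms of $n$ and $\delta$. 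A Schwartz--Zippel argument on a box of side $r+2$ then produces $\xbf$ with $h(\xbf)\ll_{n,\delta}1$.

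What each approach buys: your argument is more elementary (no effective Nullstellensatz, no reduction mod $p$) and in fact yields the stronger conclusion $h(\xbf)=O_{n,\delta}(1)$, independent of the heights of the $f_i$. The paper's approach, while heavier, is more explicit about the algebraic structure of the bad locus and fits the broader pattern of the appendix, where effective Nullstellensatz and arithmetic B\'ezout are the workhorses. One small remark: your final inequality $h(\xbf)\ll\max_i h(f_i)$ should be read as $h(\xbf)\le C(n,\delta)$, which dominates the stated bound except in the degenerate case $\max_i h(f_i)=0$; this is harmless for the applications in the paper but worth noting.
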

\begin{proof}
Let $W(F):=\{[\xbf]\in \bbp^{n-1}(F)|\h V(\qbar)\cap \underline{\ker( l_{\xbf})}(\qbar)=\varnothing\}$, where $F$ is a subfield of $\qbar$. By the nullstellensatz theorem, $[\xbf]\in W(k)$ if and only if there are $q_0,q_i\in k[\underline{X}]$ such that 
\be\label{e:NonGenericHyperplane}
1=q_0(\underline{X}) l_{\xbf}(\underline{X})+\sum_i q_i(\underline{X}) f_i(\underline{X}).
\ee
So by the effective B\'{e}zout~\cite[Theorem IV]{MW} we can assume further that 
\be\label{e:EffectiveNonGenericDegree}
\max_i \deg(q_i)\ll 1,
\ee
and 
\be\label{e:EffectiveNonGenericHieght}
\max_i h(q_i)\ll \max\{h([\xbf]), h(f_1),\ldots, h(f_m)\},
\ee
where the implied constants depend on $n, k$ and $\max_j \deg f_j$. By (\ref{e:EffectiveNonGenericDegree}), the $q_i$ are in a finite-dimensional subspace of $k[\underline{X}]$ and so the vector $v_i$ of their coefficients are in a fixed finite-dimensional vector space. Comparing the coefficients of $\underline{X}^I$ in the both sides of Equation (\ref{e:NonGenericHyperplane}), we get a system of equations 
\be\label{e:BilinearNonGeneric}
L_i(\xbf,v_j)=n_i,
\ee
where $L_i$ are linear in $\xbf$ and $v_j$ for any $j$, $h(L_i)\ll \max_i h(f_i)$ where the implied constant depends on $n$ and $\max_i \deg(f_i)$, and $n_i$ are either 0 or 1 (they are zero except once). We consider (\ref{e:BilinearNonGeneric}) as a system of equations for the unknown $\vbf=\{v_j\}$, with parameter $\xbf$. So $[\xbf]\in W(F)$ if and only if Equation (\ref{e:BilinearNonGeneric}) has a solution with entries in $F$. Considering (\ref{e:BilinearNonGeneric}) as a system of linear equations over $k[\underline{X}]$, we get a (rectangular) matrix $A$ with entries in $k[\underline{X}]$ such that (\ref{e:BilinearNonGeneric}) has a solution if and only if $A\vbf = \ebf_1$ has a solution, where all the components of $\ebf_1$ are zero except the first one. It is well-known that there are invertible matrices $P_1$ and $P_2$ such that $A=P_1 D P_2$ where the only non-zero entries of $D$ are in the $i,i$ position for some $i$. Let $s(\underline{X})$ be the product of the denominators of the entries of $P_1^{-1}$ and $P_2$. Thus, if $s(\xbf)\neq 0$, then (\ref{e:BilinearNonGeneric}) has a solution if and only if $D(\xbf) \vbf=P_1(\xbf)^{-1}\ebf_1$ has a solution. And the latter happens if and only if $r_1(\xbf)=\cdots=r_c(\xbf)=0$ for certain polynomials $r_i(\underline{X})\in k[\underline{X}]$. Hence altogether there are polynomials $s, r_i$ such that 
\be\label{e:NonGenericHyperplaneUpper}
W(F)\setminus \{[\xbf]|\h \xbf \in V(s)(F)\}=\{[\xbf]|\h\ \xbf\in V(r_1,\ldots,r_c)(F)\setminus V(s)(F)\}.
\ee
Since $\dim V\ge 1$, $\dim W<n-1$. Hence in (\ref{e:NonGenericHyperplaneUpper}) we can assume that $r_1$ is a non-zero homogeneous polynomial. It is also easy to see that the logarithmic height of the non-zero entries of $P_1$, $P_2$ and $D$ are $O(\max_i h(f_i))$ where the implied constant depends on the size of matrices (which is a function of the number of variables $n$ and $\max_i\deg f_i$) and their degree depends just on $n$ and $\max_i\deg f_i$. Thus $h(r_1)\ll \max_i h(f_i)$ where the implied constant depends only on $n$ and $\max_i \deg f_i$. 

By Landau's prime ideal theorem, there is $\pfr\in \Spec(\ocal_k)$ such that $\max\{h(r_1),h(s)\}< \log N(\pfr)\ll \max_i h(f_i)$ for a suitable constant depending on $n, k$ and $\max_i \deg f_i$.

Let $X$ be a set of coset representatives of $\pfr$ in $\ocal_k$ which Lemma~\ref{l:SmallRepresentatives} gives us. 
Hence for any $x\in X$ we have $\log \max_{\nu\in V_{\infty}(k)} |x|_{\nu}\ll_k \log N(\pfr)=|\f_{\pfr}|$, and $\pi_{\pfr}:\ocal_k\rightarrow \f_{\pfr}$ induces a bijection between $X$ and $\pi_{\pfr}(X)$.

Then
\be\label{e:UpperBoundNumberOfBadPoints}
|\{\xbf\in X^n|\h [\xbf]\in W(k)\}|\le |\{ \xbf|\h \xbf\in V(r_1)(\f_{\pfr})\cup V(s)(\f_{\pfr})\}|\le C |\f_{\pfr}|^{n-1}. 
\ee
where the constant $C$ depends on the degrees of $r_1$ and $s$. We can further assume that $C< \log N(\pfr)$. Hence we can find a point $[\xbf]\in \bbp^n(k)$ such that
\begin{enumerate}
\item $h([\xbf])\ll \max_i h(f_i)$ where the implied constant depends only on $n$ and $\max_i\deg f_i$.
\item $[\xbf]$ is not in $W(k)$, which means $V(\qbar)\cap \underline{\ker(l_{\xbf})}(\qbar)\neq \varnothing$.
\end{enumerate}
If $V(\qbar)$ is not a subset of $\underline{\ker(l_{\xbf})}(\qbar)$, we are done. Otherwise, we can repeat the above argument after taking a basis $\vbf_1,\ldots, \vbf_{n-1}$ for $\ker l_{\xbf}$ such that $h(\vbf_i)\ll \max_i h(f_i)$, and rewriting all the functions in this coordinate systems. Now by a dimensional argument, in finitely many steps we get the desired $\xbf$.    
\end{proof}
\subsection{Reduction to the complete intersection, geometrically zero-dimensional case.} By Lemma~\ref{l:CuttingByHyperplane}, Proposition~\ref{p:SmallSolution} can be reduced to the (geometrically) zero-dimensional case. In this section, we make further reduction to the case of a complete intersection, geometrically zero-dimensional case. And then invoking the arithmetic B\'{e}zout theorem~\cite[Theorem 5.5.1]{BGiS} Propoductsition~\ref{p:SmallSolution} is proved. 

\begin{lem}\label{l:CompleteIntersection}
Let $V=\Spec(\qbar[\underline{X}]/\langle f_1,\ldots,f_m\rangle)$. Suppose that $\dim V=d$.
Then for $1\le l\le n-d$ there are $\tilde{f}_1,\ldots,\tilde{f}_l\in \qbar[\underline{X}]$ 
\begin{enumerate}
	\item For any $i$, $\tilde{f}_i\in \bbz f_1+\bbz f_2+\cdots+\bbz f_m$.
	\item For any $i$, $h(\tilde{f}_i)\ll \max_j h(f_j)$ where the implied constant depends on $n$ and the degree of $f_j$.
	\item $\dim (\qbar[\underline{X}]/\langle \tilde{f}_1,\ldots,\tilde{f}_l\rangle)=n-l.$
\end{enumerate}
\end{lem}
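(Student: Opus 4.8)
The plan is to construct the $\tilde f_i$ greedily, one at a time, as suitable $\bbz$-linear combinations of the original $f_j$, each step cutting the dimension of the vanishing locus down by exactly one. Suppose inductively that $\tilde f_1,\ldots,\tilde f_{i-1}\in \bbz f_1+\cdots+\bbz f_m$ have been chosen with $h(\tilde f_l)\ll \max_j h(f_j)$ and $\dim(\qbar[\underline X]/\langle \tilde f_1,\ldots,\tilde f_{i-1}\rangle)=n-(i-1)$. Since $i-1<n-d$, the variety $W:=V(\tilde f_1,\ldots,\tilde f_{i-1})$ has dimension strictly larger than $d=\dim V=\dim V(f_1,\ldots,f_m)$; hence not every $f_j$ vanishes on every irreducible component of $W$, for otherwise $V$ would contain a component of $W$ and thus have dimension $>d$. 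The goal is to find integers $c_1,\ldots,c_m$, not too large, so that $\tilde f_i:=\sum_j c_j f_j$ does not vanish identically on any top-dimensional component of $W$; for such a choice, $\dim V(\tilde f_1,\ldots,\tilde f_i)=n-i$ by Krull's principal ideal theorem (the hypersurface $\tilde f_i=0$ properly intersects each component of $W$).

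The key point is a counting argument to control the height of the coefficients $c_j$. For each irreducible component $Z$ of $W$ of maximal dimension, the set of $(c_1,\ldots,c_m)$ for which $\sum_j c_j f_j$ vanishes on $Z$ is a proper linear subspace of $\bbq^m$ (proper because some $f_j$ is nonzero on $Z$). There are boundedly many such components $Z$ --- the number is bounded in terms of $n$ and $\max_j\deg f_j$ by Bézout, applied to $W$ which is cut out by $\le n$ equations of controlled degree. So the ``bad'' set is contained in a bounded union of proper hyperplanes in $\bbq^m$, and by a pigeonhole/box argument (or reduction modulo a suitable small prime $p$ with $\log p\ll 1$) there is a good integer vector $(c_1,\ldots,c_m)$ with $|c_j|\ll 1$, the implied constant depending only on $n$ and $\max_j\deg f_j$. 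Then $h(\tilde f_i)=h(\sum_j c_j f_j)\le \max_j h(f_j)+\log(\sum_j|c_j|)+O(1)\ll \max_j h(f_j)$, using the elementary estimate on heights of sums with small integer coefficients. Since each stage preserves the height bound, after $k\le n-d$ steps we obtain $\tilde f_1,\ldots,\tilde f_k$ with all three required properties.

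The main obstacle is making precise the claim that the number of top-dimensional components of $W=V(\tilde f_1,\ldots,\tilde f_{i-1})$, and more importantly the degrees of the defining equations needed to detect whether $\sum c_j f_j$ vanishes on them, stay bounded in terms of $n$ and $\max\deg f_j$ throughout the induction. This is where an effective Bézout bound is needed: $W$ is cut out by $i-1<n$ polynomials each of degree $\le \max_j\deg f_j$ (since each $\tilde f_l$ is a $\bbz$-combination of the $f_j$, its degree is still $\le\max_j\deg f_j$), so the sum of the degrees of its irreducible components is $\le (\max_j\deg f_j)^{n}$, a bound independent of the heights. With that in hand, translating ``$\sum c_j f_j$ vanishes on $Z$'' into finitely many linear conditions on $(c_j)$ of bounded complexity --- e.g.\ via effective Nullstellensatz \cite{BY} as in the proof of Lemma~\ref{l:CuttingByHyperplane} --- and then running the mod-$p$ counting bound exactly as in \eqref{e:UpperBoundNumberOfBadPoints} above gives a good $(c_j)$ of bounded size. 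The rest is the routine height arithmetic for sums and the bookkeeping of the induction.
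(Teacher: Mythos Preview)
Your approach is correct and follows essentially the same inductive strategy as the paper: at each step, choose an integer combination of the $f_j$ that avoids the (finitely many, by B\'ezout) minimal primes over the ideal built so far, then invoke Krull's principal ideal theorem. The paper phrases this algebraically via associated primes and the unmixedness theorem (so that all primes to avoid are minimal of the same height), while you phrase it geometrically via top-dimensional components; these are equivalent here since the $\tilde f_1,\ldots,\tilde f_{i-1}$ already form a regular sequence.

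The one place where the paper is sharper is the ``find small integer coefficients avoiding finitely many hyperplanes'' step. You sketch a general pigeonhole/box argument (which works, and your invocation of effective Nullstellensatz and mod-$p$ counting in the last paragraph is unnecessary overkill). The paper instead uses an explicit one-parameter family: among the polynomials $\sum_{i=1}^m j^i f_i$ for $j=1,\ldots,l$ with $l=|\mathrm{Ass}(\afr)|(m-1)+1$, a Vandermonde argument shows that for each associated prime at most $m-1$ of these can lie in it, so some $j\ll 1$ works. This is cleaner and makes the bound on the coefficients completely explicit, but the content is the same as your counting argument.
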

\begin{proof}
We proceed by induction on $l$. The base of the induction is clear. Assume that we have found $\tilde{f}_1,\ldots,\tilde{f}_l$ with the desired properties and $l<n-d$. Now by Krull's height theorem \cite[Theorem 13.5]{Mat} and the fact that the ring of polynomials is a catenary ring~\cite[Chapter 5, Theorem 17.7, Theorem 17.9]{Mat} we have that ${\rm ht}(\afr)=l$ where $\afr=\langle \tilde{f}_1,\ldots,\tilde{f}_l\rangle$. So by the unmixedness theorem~\cite[Theorem 17.6, Theorem 17.7]{Mat} we have ${\rm ht}(\pfr)=l$ for any $\pfr\in {\rm Ass}(\afr)$ (see \cite[Chapter 5]{Mat} for the definition of height ${\rm ht}(\afr)$ of $\afr$ and \cite[Chapter 6]{Mat} for the definition of the set ${\rm Ass}(\afr)$ of associated prime ideals of $\afr$). Since ${\rm ht}(\langle f_1,\ldots,f_m\rangle)=n-d>l$, for any $\pfr\in{\rm Ass}(\afr)$ there is an $i$ such that $f_i\not\in \pfr$. This implies that for any $m$ distinct integers $n_1,\ldots,n_m$ and any $\pfr\in{\rm Ass}(\afr)$ we have that
\[
\{\sum_{i=1}^m {n_j}^i f_i|\h j=1,\ldots, m\}\not\subseteq \pfr.
\] 
Therefore
\be\label{e:AvoidingZeroDivisors}
\{\sum_{i=1}^m j^i f_i|\h j=1,\ldots,l'\}\not\subseteq \bigcup_{\pfr\in {\rm Ass}(\afr)} \pfr,
\ee
where $l'=| {\rm Ass}(\afr)|(m-1)+1$.

By the generalized B\'{e}zout theorem and the induction hypothesis $|{\rm Ass}(\afr)|\ll 1$ where the implied constant depends on $n$ and the degree of $f_i$. It is also well-known that the set of zero-divisors of $\qbar[\underline{X}]/\afr$ is equal to $\bigcup_{\pfr\in {\rm Ass}(\afr)}\pfr/\afr$. So for some $j\ll 1$, $\tilde{f}_{l+1}=\sum_{i=1}^m j^i f_i$ modulo $\afr$ is not a zero-divisor. Hence by Krull's principal ideal theorem~\cite[Theorem 13.5]{Mat}, the fact that ring of polynomials is catenary and the above discussion, we have that $\tilde{f}_{l+1}$ has the desired properties.   
\end{proof}

\begin{proof}[Proof of Proposition~\ref{p:SmallSolution}.]
By Lemma~\ref{l:CuttingByHyperplane} and Lemma~\ref{l:CompleteIntersection}, it is enough to prove that if 
\[
V(\qbar)=\{\xbf\in \qbar^n|\h f_1(\xbf)=\cdots=f_n(\xbf)=0\}
\]
is a finite non-empty set where $V=\Spec(k[\underline{X}]/\langle f_1,\ldots,f_n\rangle)$, then for any $\xbf\in V(\qbar)$ we have that
$
h(\xbf)\ll \max_i h(f_i),
$
where the implied constant depends on $n$ and $\max_i \deg f_i$. And this is an easy corollary of arithmetic B\'{e}zout theorem~\cite[Theorem 5.5.1]{BGiS}.
\end{proof}

\end{document}